\documentclass[12pt]{amsart}
\usepackage{}
\usepackage{amsmath}
\usepackage{amsfonts}
\usepackage{amssymb}
\usepackage[all]{xy}           %xypic macro for latex2.09

\usepackage{bbding}
\usepackage{txfonts}
\usepackage{amscd}
\usepackage{xspace}

\usepackage[shortlabels]{enumitem}
\usepackage{ifpdf}
\ifpdf
\usepackage[colorlinks,final,backref=page,hyperindex]{hyperref}
\else
\usepackage[colorlinks,final,backref=page,hyperindex,hypertex]{hyperref}
\fi
\usepackage{tikz}
\usepackage[active]{srcltx}
\usepackage{amsmath,amssymb,mathrsfs,mathtools,microtype}
%======================================================================
    %was    1, 1.5 for double sp
%======================================================================
%%standard setting
%\topmargin -0.3truein \textheight 8.4truein
%\oddsidemargin 0.2truein
%\evensidemargin 0.2truein \textwidth 440pt
%======================================================================
%%little larger standard setting: good setting
\topmargin -.8cm \textheight 22.8cm \oddsidemargin 0cm \evensidemargin -0cm \textwidth 16.3cm
%========================================================================================%%wide
%\topmargin -.8cm \textheight 21.5cm \oddsidemargin 0cm \evensidemargin -0cm \textwidth 18cm
%========================================================================================%%wide
%%lower setting for 1920x1080
%%\topmargin -.9cm \textheight 21cm \oddsidemargin 0cm \evensidemargin -0cm \textwidth 16.3cm
%%%%%%%%%%%%%%

\makeatletter
\newtheorem{thm}{Theorem}[section]
\newtheorem{lem}[thm]{Lemma}
\newtheorem{cor}[thm]{Corollary}
\newtheorem{pro}[thm]{Proposition}
\theoremstyle{definition}
\newtheorem{defi}[thm]{Definition}
\newtheorem{ex}[thm]{Example}
\newtheorem{rmk}[thm]{Remark}
\usepackage{difftrees}

\setlength{\baselineskip}{1.8\baselineskip}

\newcommand {\emptycomment}[1]{}
\newcommand {\yh}[1]{{\marginpar{*}\scriptsize\textcolor{purple}{yh: #1}}}

\newcommand{\postLie}{\mathsf{PL}}

\newcommand{\nc}{\newcommand}
\nc{\delete}[1]{{}}
\nc{\CV}{\mathbf{C}}

\nc{\oprn}{\theta}
\nc{\Oprn}{\Theta}

\newcommand{\lon }{\,\rightarrow\,}
\newcommand{\be }{\begin{equation}}
	\newcommand{\ee }{\end{equation}}

\nc{\frakg}{\mathfrak{g}}
\newcommand{\dgla}{{\rm dgLa}}
\newcommand{\gla}{{\rm gLa}}
\newcommand{\sgla}{{\rm sgLa}}
\newcommand{\g}{\mathfrak g}
\newcommand{\h}{\mathfrak h}

%\newcommand{\G}{\mathbb G}

%{{\mathcal{E}}}%{\mathcal{B}}

%\newcommand{\A}{\mathcal{A}}
%{{\mathcal{F}}}%{\mathcal{A}}

\newcommand{\huaR}{\mathcal{R}}

\nc{\adj}{\xspace}
\nc{\opt}{operator\xspace}
\nc{\Opt}{Operator\xspace}

\newcommand{\huaU}{\mathcal{U}}

\newcommand{\huaX}{\mathcal{X}}

\nc{\calo}{\mathcal{O}}
\newcommand{\huaP}{\mathcal{P}}
%{\mathcal{C}}

\newcommand{\huaH}{\mathcal{H}}

\newcommand{\huaO}{{\mathcal{O}}}

\newcommand{\huaPostLie}{\mathsf{PostLie}}
\newcommand{\huaComTrias}{\mathsf{ComTrias}}
\newcommand{\huaCom}{\mathsf{Com}}

\newcommand{\huacom}{\mathsf{com}}

\newcommand{\Pe}{\mathsf{Perm}}
\newcommand{\huaVect}{\mathsf{Vect}}

\newcommand{\Sym}{\mathsf{S}}
\newcommand{\Ten}{\mathsf{T}}
\newcommand{\NR}{\mathsf{NR}}

\newcommand{\frki}{\mathfrak i}

\newcommand{\frkl}{\mathfrak l}

\newcommand{\frkC}{\mathfrak C}

\newcommand{\frkM}{\mathfrak M}

\newcommand{\frkR}{\mathfrak R}

\newcommand{\half}{\frac{1}{2}}

\newcommand{\Courant}[1]{\left\llbracket  #1\right\rrbracket }

%{\frac{#1}{#2}}

\newcommand{\Id}{\rm{Id}}

\newcommand{\br}[1]{   [ \cdot,    \cdot  ]   }

\newcommand{\dM}{\mathrm{d}}

\newcommand{\Hom}{\mathrm{Hom}}

\newcommand{\Codiff}{\mathrm{Codiff}}
\newcommand{\dgOp}{\mathrm{dgOp}}
\newcommand{\dgCoop}{\mathrm{dgCoop}}
\newcommand{\Tw}{\mathrm{Tw}}
\newcommand{\B}{\mathrm{B}}

\newcommand{\PreLie}{\mathrm{PreLie}}
\newcommand{\PERM}{\mathrm{Perm}}

\newcommand{\coDer}{\mathsf{coDer}}
\newcommand{\Der}{\mathsf{Der}}

\newcommand{\Set}{\mathrm{Set}}

\newcommand{\Mod}{\mathrm{Mod}}
\newcommand{\End}{\mathrm{End}}
\newcommand{\ad}{\mathsf{ad}}
\newcommand{\pr}{\mathrm{pr}}

\newcommand{\Img}{\mathrm{Im}}

\newcommand{\K}{\bk}

\newcommand{\CE}{\mathsf{CE}}

\def\L{\mathcal{L}}

\newcommand{\bk}{{\mathbf{k}}}
\newcommand{\R}{\mathsf{R}}
\newcommand{\Def}{\mathsf{Def}}

\def\pcDGAloc{\operatorname{proArt_{loc}}}
\newcommand{\MCmodu}{\mathscr{MC}}

\begin{document}
	
	\title[Homotopy theory of post-Lie algebras]{Homotopy theory of post-Lie algebras}

	\author{Andrey Lazarev}
	\address{Department of Mathematics and Statistics, Lancaster University, Lancaster LA1 4YF, UK}
	\email{a.lazarev@lancaster.ac.uk}
	
	\author{Yunhe Sheng}
	\address{Department of Mathematics, Jilin University, Changchun 130012, Jilin, China}
	\email{shengyh@jlu.edu.cn}
	
	\author{Rong Tang}
	\address{Department of Mathematics, Jilin University, Changchun 130012, Jilin, China}
	\email{tangrong@jlu.edu.cn}
	
	%\vspace{-.1cm}
	
	%\date{\today}
	
	\begin{abstract}
		In this paper, we study the homotopy theory of post-Lie algebras. Guided by Koszul duality theory, we consider the graded Lie algebra of coderivations of the cofree conilpotent graded cocommutative cotrialgebra generated by $V$. We show that in the case of $V$ being a shift of an ungraded vector space $W$, Maurer-Cartan elements of this graded Lie algebra are exactly  post-Lie algebra structures on $W$. The cohomology of a post-Lie algebra  is then defined using Maurer-Cartan twisting. The second cohomology group of a post-Lie algebra has a familiar interpretation as equivalence classes of infinitesimal deformations. Next we define a post-Lie$_\infty$ algebra structure  on a graded vector space to be a Maurer-Cartan element of the aforementioned graded Lie algebra. Post-Lie$_\infty$ algebras admit a useful characterization in terms of $L_\infty$-actions (or open-closed homotopy Lie algebras). Finally, we introduce the notion of homotopy Rota-Baxter operators on open-closed homotopy Lie algebras and show that certain homotopy Rota-Baxter operators induce post-Lie$_\infty$ algebras.	
	\end{abstract}

	\keywords{post-Lie algebra, post-Lie$_\infty$ algebra, homotopy Rota-Baxter operator,  Maurer-Cartan element, cohomology\\
\qquad 2020 Mathematics Subject Classification.
17B38,%Yang-Baxter equations and Rota-Baxter operators
 17B56,%Cohomology of Lie (super)algebras
  18M70,%Algebraic operads, cooperads, and Koszul duality
   55P43%Spectra with additional structure (E¡Þ, A¡Þ, ring spectra, etc.)
   }
	
	\maketitle
	%\vspace{-10mm}
	\tableofcontents

	\allowdisplaybreaks
	
	%\vspace{-1.5cm}

\section{Introduction}

\subsection{Homotopy theories}

Homotopy invariant algebraic structures play a prominent role in modern mathematical physics  ~\cite{St19}.
Historically, the notion of $A_\infty$-algebras, which was introduced by Stasheff
in his study of based loop spaces~\cite{Sta63}, was the first such structure.
Relevant later  developments include the work of   Lada and Stasheff about
$L_\infty$-algebras in mathematical physics~\cite{LS}
and the work of Chapoton and Livernet about
pre-Lie$_\infty$ algebras~\cite{CL}. Strong homotopy (or infinity-) versions of a large class of algebraic
structures were studied in the context of
operads in ~\cite{LV,MSS,MV-1,MV-2}. A strong homotopy algebra is typically a Maurer-Cartan element in a certain differential graded (dg) Lie algebra (or possibly an $L_{\infty}$-algebra) and its Maurer-Cartan twisting is called the  cohomology of said strong homotopy algebra; it is known to control its deformation theory. Another important notion, particularly relevant to the present paper, is that of an $L_{\infty}$-action or, more generally, an open-closed homotopy Lie algebra introduced in  \cite{Lazarev,MZ,Mer-1}.

\subsection{Post-Lie algebras}

The notion of a post-Lie algebra has been introduced by Vallette in the course of study of Koszul duality of operads \cite{Val}.
Munthe-Kaas and his coauthors found that post-Lie algebras also naturally appear in differential geometry and numerical integration on  manifolds \cite{Munthe-Kaas-Lundervold}.  Meanwhile, it was found that post-Lie algebras play an essential role  in regularity structures in stochastic analysis \cite{BHZ,BK,Hairer}.
Recently, post-Lie algebras have been studied from different points of view including constructions of nonabelian generalized Lax pairs \cite{BGN},    Poincar\'e-Birkhoff-Witt type theorems \cite{D,Gubarev}, factorization theorems \cite{EMM} and relations to post-Lie groups \cite{AFM,BGST,MQS}. %See \cite{Burde16,Burde19} for the classifications of post-Lie algebras on certain Lie algebras

A pre-Lie algebra is a post-Lie algebra whose underlying Lie algebra is abelian; this notion is important in its own right~\cite{Bu,Ma,Sm22b}. The homotopy theory of pre-Lie algebras was given in \cite{CL} using the operadic approach. Furthermore, the cohomology theory of pre-Lie algebras was given by Dzhumadil'daev and Boyom in \cite{DA} and \cite{Boyom}.
It is, therefore, natural to develop the homotopy theory for post-Lie algebras, its cohomology theory and a strong homotopy version of a post-Lie algebra. The first step in this direction was \cite{TBGS}, where the notion of operator homotopy post-Lie algebras was introduced. A cohomology theory corresponding to operator homotopy post-Lie algebras was also given in \cite{TBGS}. In this paper,  the homotopy theory of post-Lie algebras is developed in full generality, including  the notion of post-Lie$_\infty$ algebra and the concomitant cohomology theory.

\subsection{Rota-Baxter operators}

The notion of Rota-Baxter operators on associative algebras was
introduced   by G. Baxter in his study of
fluctuation theory in probability \cite{Ba}. Rota-Baxter operators play important roles in Connes-Kreimer's~\cite{CK} algebraic
approach to the renormalization in perturbative quantum field
theory, the splitting of
operads~\cite{Aguiar,BBGN} and double Poisson algebras \cite{Goncharov,Goncharov-Gubarev}.
In the Lie algebra context, a Rota-Baxter operator was introduced as the
operator form of the classical Yang-Baxter equation.  To better understand the classical Yang-Baxter equation and
 related integrable systems, the more general notion of an $\huaO$-operator (also called relative Rota-Baxter operator)
on a Lie algebra was introduced by Kupershmidt~\cite{Ku}. Relative Rota-Baxter operators naturally give rise to pre-Lie algebras or post-Lie algebras \cite{Bai,BGN}. For further details on
Rota-Baxter operators, see ~\cite{Gub}.

The homotopy of Rota-Baxter associative algebras were studied in ~\cite{DK}, where it was noted that ``in general compact formulas
are yet to be found". Recently the homotopy theory of Rota-Baxter associative algebras was given in \cite{WZ}. For  Rota-Baxter  Lie algebras, one encounters a similarly challenging situation. Using higher derived brackets and  Maurer-Cartan elements approach,  the notion of a homotopy relative Rota-Baxter operator on an $L_\infty$-algebra with respect to a representation was formulated in \cite{LST}. Remarkably, strict homotopy relative Rota-Baxter operators give rise to pre-Lie$_\infty$ algebras. Later, homotopy relative Rota-Baxter operators on an $L_\infty$-algebra with respect to an $L_\infty$-action were studied in \cite{CC}. However, the underlying algebraic structures were still yet to be found. In this paper we introduce, on the one hand, the notion of homotopy Rota-Baxter operators on an open-closed homotopy Lie algebra using higher derived brackets; this contains the constructions given in \cite{CC} as special cases. On the other hand, we show that post-Lie$_\infty$ algebras are the underlying algebraic structures of certain homotopy Rota-Baxter operators.

\subsection{Main results and outline of the paper}
Since the Koszul dual of the operad $\huaPostLie$ for post-Lie algebras is the operad $\huaComTrias$ whose algebras are commutative trialgebras \cite{Val}, we are led to study graded Lie algebra of  coderivations of    the cofree conilpotent graded cocommutative cotrialgebra generated by a graded vector space $V$. If $V$ is a shift of an ordinary vector space $W$, Maurer-Cartan elements of this graded Lie algebra are exactly post-Lie algebra structures on $W$. Consequently, we obtain the dg Lie algebra that governs deformations of post-Lie algebras; the differential in it is the commutator with the given Maurer-Cartan element. This is the content of Section \ref{sec:con}.

In Section \ref{sec:coh}, we define the cohomology of a post-Lie algebra using the differential in the aforementioned dg Lie algebra that governs deformations of post-Lie algebras. We give a precise description of the coboundary operator, which can be viewed as a combination of the  Chevalley-Eilenberg coboundary operator for a Lie algebra and the coboundary operator for a pre-Lie algebra. As an application, we classify infinitesimal deformations of a post-Lie algebra in terms the second cohomology group of the constructed complex.

In Section \ref{sec:hom}, we define  post-Lie$_\infty$ algebra structures  as Maurer-Cartan elements of the aforementioned graded Lie algebra associated to a graded vector space. A post-Lie$_\infty$ algebra give rise to a sub-adjacent $L_\infty$-algebra which is itself part of a certain open-closed homotopy Lie algebra. We show how the original post-Lie$_\infty$ algebra is characterized in terms of this associated open-closed homotopy Lie algebra. Higher geometric structures, such as homotopy Poisson algebras and $L_\infty$-algebroids give rise to post-Lie$_\infty$ algebras naturally, providing a wealth of examples of these structures.

In Section \ref{sec:rb}, we introduce the notion of homotopy Rota-Baxter operators on an open-closed homotopy Lie algebra; these are certain Maurer-Cartan elements on $L_\infty$-algebras given by the higher derived brackets construction. Since actions of $L_\infty$-algebras are special open-closed homotopy Lie algebras, so homotopy Rota-Baxter operators on an open-closed homotopy Lie algebra  contain $\huaO$-operators on an $L_\infty$-algebra with respect to an action as special cases. In the classical case, a Rota-Baxter operator induces a post-Lie algebra. Now in the homotopy setting, we also show that certain homotopy Rota-Baxter operators induces post-Lie$_\infty$ algebras.	

	\subsection{Notation and conventions} We work in the category of
	cohomologically $\mathbb Z$-graded dg vector spaces $\operatorname{DGVect_{\bk}}$ (equivalently, cochain complexes) over a ground field
$\bk$ of characteristic zero.
Symbols $\otimes$ and $\Hom$ stand for tensor products and homomorphisms over $\bk$. A dg associative algebra is a dg vector space $A$ together with an associative and unital product $m:A\otimes A\to A$; formally an associative monoid in $\operatorname{DGVect_{\bk}}$. We will also need the notions of a commutative dg algebra (a commutative and associative monoid in $\operatorname{DGVect_{\bk}}$) and also that of a dg Lie algebra, which is a Lie object in $\operatorname{DGVect}$. Similarly, a (cocommutative) dg coalgebra is a comonoid in $\operatorname{DGVect_{\bk}}$. Thus, a dg coalgebra $C$ has a comultiplication $\Delta:C\to C\otimes C$. The $n$-fold iteration of $\Delta$ is the map $\Delta^{n}={(\Delta\otimes \Id\otimes\ldots\otimes \Id)\circ\ldots\circ\Delta}:C\to C^{\otimes (n+1)}$ and we say that $C$ is {\bf conilpotent} if for any $c\in C$ there exists $N\in \mathbb{Z}$ such that $\Delta^N(c)=0$.

Given a dg vector space $V$, we will denote by $\Ten(V)$ its tensor algebra and by $\Sym(V)$ its symmetric algebra which is the quotient of $\Ten(V)$ by the $2$-sided ideal of $\Ten(V)$ generated by all homogeneous elements $x,y\in V$ of the form
$
	x\otimes y-(-1)^{xy}y\otimes x.
$

We will write $\mathbb S_n$ for the group of permutations on $n$ symbols. A permutation $\sigma\in\mathbb S_n$ is called an $(i,n-i)$-shuffle if $\sigma(1)<\ldots <\sigma(i)$ and $\sigma(i+1)<\ldots <\sigma(n)$. If $i=0$ or $n$, we assume $\sigma=\Id$. The set of all $(i,n-i)$-shuffles will be denoted by $\mathbb S_{(i,n-i)}$. The notion of an $(i_1,\ldots,i_k)$-shuffle and the set $\mathbb S_{(i_1,\ldots,i_k)}$ are defined analogously.

Denote the product of homogeneous elements $v_1,\ldots,v_n\in V$ in $\Sym^n(V)$ by $v_1  \ldots  v_n$. The degree of $v_1 \ldots  v_n$ is by definition the sum of the degree of $v_i$. For a permutation $\sigma\in\mathbb S_n$ and $v_1,\ldots, v_n\in V$,  the Koszul sign $\varepsilon(\sigma;v_1,\ldots,v_n)\in\{-1,1\}$ is defined by
	\begin{eqnarray*}
		v_1\ldots v_n=\varepsilon(\sigma;v_1,\ldots,v_n)v_{\sigma(1)} \ldots  v_{\sigma(n)}.
	\end{eqnarray*}
%	and  the antisymmetric Koszul sign $\chi(\sigma;v_1,\ldots,v_n)\in\{-1,1\}$ is defined by
%	\begin{eqnarray*}
%		\chi(\sigma;v_1,\ldots,v_n):=(-1)^\sigma\varepsilon(\sigma;v_1,\ldots,v_n).
%	\end{eqnarray*}
	Moreover, there is a coalgebra structure on $\Sym(V)$  given by $\Delta(1_\bk)=1_\bk\otimes 1_\bk$ and
	\begin{eqnarray*}
		\Delta(v_1 \ldots  v_n)&=&\sum_{i=0}^{n}\sum_{\sigma\in\mathbb S_{(i,n-i)}}\varepsilon(\sigma;v_1,\ldots,v_n)\big(v_{\sigma(1)} \ldots  v_{\sigma(i)}\big)\otimes \big(v_{\sigma(i+1)} \ldots  v_{\sigma(n)}\big).
	\end{eqnarray*}
	The counit $\varepsilon:\Sym(V)\lon \bk$ is defined by
	$
		\varepsilon(1_\bk)=1_\bk$ and $\varepsilon(v_1 \ldots  v_n)=0.
$
	Next, the coaugmentation map  $\mu:\bk\lon \Sym(V)$ is defined by
	$
	\mu(1_\bk)=1_\bk.
	$
	Then $(\Sym(V),\Delta,\varepsilon,\mu)$ is the cofree conilpotent cocommutative coalgebra. More precisely, for any conilpotent cocommutative coalgebra $(C,\Delta_C,\varepsilon_C,\mu_C)$ and any linear map $f:C\lon V$ satisfying  $f(1_C)=0$, there exists a unique coalgebra homomorphism $\bar{f}:C\lon \Sym(V)$ such that $\pr_V\circ \bar{f}=f$,
	where $\bar{f}$ is given by
	\begin{eqnarray}
		\label{co-alg-homo}\bar{f}(1_C)=1_\bk,\,\,\bar{f}(x)=\sum_{i=1}^{+\infty}\sum_{x}\frac{1}{i!}f(x_{(1)}) \ldots f(x_{(i)}),\,\,\bar{\Delta}_C^{i-1}(x)=\sum_{x}x_{(1)}\otimes\ldots\otimes x_{(i)},
	\end{eqnarray}
	for all $x\in\bar{C}=\ker \varepsilon_C$. Here $\bar{\Delta}_C$ is the reduced coproduct $\bar{\Delta}_C:\bar{C}\lon \bar{C}\otimes \bar{C}$  given by
	\begin{eqnarray*}
		\bar{\Delta}_C(x)=\Delta_C(x)-x\otimes 1_C-1_C\otimes x,\,\,\,\,\forall x\in \bar{C}.
	\end{eqnarray*}

	\emptycomment{
		This paper carries out a homotopy study of Rota-Baxter operators, $\calo$-operators (also called relative Rota-Baxter operators and generalized Rota-Baxter operators) and the related pre-Lie algebras and post-Lie algebras.
		
		\subsection{Geometric numerical integration and post-Lie algebras}
		
		\subsection{Regularity structures and post-Lie algebras}
		
		\subsection{The Yang-Baxter equation and post-Lie algebras}
		
		\subsection{Rota-Baxter operators and post-Lie algebras}
		
		\subsection{Outline of the paper}

		In this paper, we will pursue the other direction, by taking homotopy of the Rota-Baxter operator $T$ and obtain $T_\infty:=\{T_i\}_{i= 0}^{+\infty}$, without taking homotopy of $\ell$. We call the resulting structure $(\ell, T_\infty)$ the {\bf \opt homotopy Rota-Baxter Lie algebra} to distinguish it from the above mentioned Rota-Baxter homotopy Lie algebra. The action of $T_\infty$ gives rise to a variation of the homotopy post-Lie algebra which we will call the {\bf \opt homotopy post-Lie algebra}. This gives another commutative diagram shown as the front rectangle in Eq.~\eqref{eq:diagb} while the diagram in is embedded as the right rectangle.
		
		Eventually, the {\em full} homotopy of the Rota-Baxter Lie algebra should come from the combined homotopies of both the Lie algebra structure and the Rota-Baxter operator structure, tentatively denoted by $(\ell_\infty, T_\infty)$ and called the {\bf full homotopy Rota-Baxter Lie algebra}. A suitable action of $T_\infty$ on $\ell_\infty$ should give the {\bf full homotopy post-Lie algebra} whose structure is still mysterious.
		These various homotopies of the Rota-Baxter Lie algebra, as well as their derived homotopies of the post-Lie algebra, could be put together and form the following diagram
		\begin{equation}
			\begin{split}
				\xymatrix@!0{
					&&& (\ell_\infty,T_\infty) \ar@{-}[rrrrrr]\ar@{-}'[d][dd]
					&&& &&& (\ell_\infty, T) \ar@{-}[dd]
					\\
					(\ell,T_\infty) \ar@{-}[urrr]\ar@{-}[rrrrrr]\ar@{-}[dd]
					&&&&& & (\ell,T) \ar@{-}[urrr]\ar@{-}[dd]&&&
					\\
					&&& \text{full homotopy} \atop \text{post-Lie} \ar@{-}'[rrr][rrrrrr]
					&&& &&& \text{homotopy} \atop \text{post-Lie}
					\\
					\text{\opt homotopy} \atop \text{post-Lie} \ar@{-}[rrrrrr]\ar@{-}[urrr]
					&&&&&& \text{post-Lie} \ar@{-}[urrr] &&&
				}
			\end{split}
			\label{eq:diagb}
		\end{equation}
		where going in the left and the inside directions should be taking various homotopy, and going downward should be taking the actions of (homotopy) Rota-Baxter operators.
	}

	\section{The controlling algebra of post-Lie algebras}\label{sec:con}

In this section, we associate to any graded vector space $V$ the graded Lie algebra of coderivations of the cofree conilpotent graded cocommutative cotrialgebra generated by $V$. We show that in the case of $V$ being a shift of an ungraded vector space $W$, this graded Lie algebra is exactly the controlling algebra of post-Lie algebras, i.e. its Maurer-Cartan elements are post-Lie algebra structures on $W$.
	
	\begin{defi} (\cite{Val})\label{post-lie-defi}
		A {\bf post-Lie algebra} $(\g,[\cdot,\cdot]_\g,\rhd)$ consists of a Lie algebra $(\g,[\cdot,\cdot]_\g)$ and a binary product $\rhd:\g\otimes\g\lon\g$ such that
		\begin{eqnarray}
			\label{Post-1}x\rhd[y,z]_\g&=&[x\rhd y,z]_\g+[y,x\rhd z]_\g,\\
			\label{Post-2}([x,y]_\g+x\rhd y-y\rhd x) \rhd z&=&x\rhd(y\rhd z)-y\rhd(x\rhd z),\,\,\forall x, y, z\in \g.
		\end{eqnarray}
	\end{defi}

		Any post-Lie algebra $(\g,[\cdot,\cdot]_\g,\rhd)$ gives rise to a new Lie algebra
		$\g_\rhd:=(\g,[\cdot,\cdot]_{\g_\rhd})$
		defined by
		$$[x,y]_{\g_\rhd} \coloneqq x\rhd y-y\rhd x+[x,y]_\g,\quad\forall x,y\in\g,$$
		which is called the {\bf sub-adjacent Lie algebra}. Moreover,
		Eqs.~\eqref{Post-1}-\eqref{Post-2} equivalently mean that the linear map $L:\g\to\mathfrak g\mathfrak l(\g)$ defined by $L_{x}y=x\rhd y$ is an action of $(\g,[\cdot,\cdot]_{\g_\rhd})$ on $(\g,[\cdot,\cdot]_\g)$.
	
	\begin{rmk}
		Let $(\g,[\cdot,\cdot]_\g,\rhd)$ be a post-Lie algebra. If the Lie bracket $[\cdot,\cdot]_\g$ vanishes, then $(\g,\rhd)$ becomes a pre-Lie algebra. Thus,  a post-Lie algebra can be viewed as a nonabelian generalization of a pre-Lie algebra.
	\end{rmk}

	\begin{defi}\label{post-lie-homo-defi}
		\begin{itemize}
			\item  A {\bf derivation} of a post-Lie algebra $(\g,[\cdot,\cdot]_\g,\rhd)$ is a linear map $\varphi:\g\lon\g$  satisfying $
				\varphi([x,y]_\g)=[\varphi(x),y]_\g+[x,\varphi(y)]_\g$ and $
				\varphi(x\rhd y)=\varphi(x)\rhd y+x\rhd \varphi(y)$ for all $x,y\in\g.$
			Denote by $\Der(\g)$ the set of derivations of the post-Lie algebra $(\g,[\cdot,\cdot]_\g,\rhd)$.
			
			\item   A {\bf homomorphism} $\phi$ from a post-Lie algebra $(\g,[\cdot,\cdot]_\g,\rhd_\g)$ to a post-Lie algebra $(\h,[\cdot,\cdot]_\h,\rhd_\h)$ is a Lie algebra morphism $\phi:\g\lon \h$ such that
			$
				\phi(x\rhd_\g y)=\phi(x)\rhd_\h \phi(y)$ for all $x,y\in\g.
		$
			In particular, if $\phi$ is  invertible,  then $\phi$ is called an  {\bf isomorphism}.
		\end{itemize}
		
	\end{defi}
	
	\begin{ex}
		{\rm
			Let $(V,\rhd)$ be a  magmatic algebra, i.e. $V$ is a vector space and $\rhd:V \otimes V\to V$ is an arbitrary linear operation, \cite[Definition 13.8.1]{LV}. Extend the  magmatic operation $\rhd:V \otimes V\to V$
			to the free Lie algebra $(Lie(V),[\cdot,\cdot])$. Then $(Lie(V),[\cdot,\cdot],\rhd)$ is a post-Lie algebra.			
		}
	\end{ex}

	\begin{ex}[Free post-Lie algebra]\label{free-post-Lie}
		Let $\huaO$ be the set of isomorphism classes of planar rooted trees:
		\[
		\huaO= \Big\{\begin{array}{c}
			\scalebox{0.6}{\ab}, \scalebox{0.6}{\aabb},
			\scalebox{0.6}{\aababb}, \scalebox{0.6}{\aaabbb},\scalebox{0.6}{\aabababb},
			\scalebox{0.6}{\aaabbabb},
			\scalebox{0.6}{\aabaabbb}, \scalebox{0.6}{\aaababbb}, \scalebox{0.6}{\aaaabbbb},\scalebox{0.6}{\aababababb},\scalebox{0.6}{\aaabbababb},\scalebox{0.6}{\aabaabbabb},\scalebox{0.6}{\aababaabbb},\scalebox{0.6}{\aaababbabb},\scalebox{0.6}{\aabaababbb},\scalebox{0.6}{\aaabbaabbb},\scalebox{0.6}{\aaabababbb},\scalebox{0.6}{\aaaabbbabb},\scalebox{0.6}{\aabaaabbbb},\scalebox{0.6}{\aaaababbbb},\scalebox{0.6}{\aaaabbabbb},\scalebox{0.6}{\aaabaabbbb},\scalebox{0.6}{\aaaaabbbbb},\ldots
		\end{array}
		\Big\}.
		\]
		Let $\bk\{\huaO\}$ be the free $\bk$-vector space generated by  $\huaO$. The {\bf left grafting operator} $\rhd:\bk\{\huaO\}\otimes \bk\{\huaO\}\to \bk\{\huaO\}$ is defined by		
		\begin{eqnarray}\label{free-post-product}
			\tau\rhd \omega=\sum_{s\in {\rm Nodes}(\omega)}\tau\circ_{s}\omega,\,\,\forall \tau,\omega\in \huaO,
			\vspace{-.1cm}
		\end{eqnarray}
		where $\tau\circ_{s}\omega$ is the planar  rooted tree resulting from attaching the root of $\tau$ to  the node $s$ of the tree $\omega$ from the left. Consider the free Lie algebra $Lie(\bk\{\huaO\})$ generated by the vector space $\bk\{\huaO\}$ and extend the left grafting operator $\rhd$ on $\bk\{\huaO\}$ to the free Lie algebra $Lie(\bk\{\huaO\})$ by \eqref{Post-1} and \eqref{Post-2}. Then $(Lie(\bk\{\huaO\}),[\cdot,\cdot],\rhd)$ is a post-Lie algebra, which is the free post-Lie algebra generated by  one generator $\{\scalebox{0.6}{\ab}\}$ \cite{Munthe-Kaas-Lundervold,Val}.
	\end{ex}

	\begin{ex}[Ihara Lie algebra]
		{\rm
			Any   element $f$ of the free  Lie algebra $Lie(\bk\{x,y\})$  gives rise to a derivation $D_f$  by
			$$
			D_f(x)=0,\,\,\,\,D_f(y)=[y,f].
			$$
			Moreover, for all $f,g\in Lie(\bk\{x,y\})$,  define  $\rhd:Lie(\bk\{x,y\})\otimes Lie(\bk\{x,y\})\lon Lie(\bk\{x,y\})$ by
			\begin{eqnarray*}
				f\rhd g=D_f(g),\,\,\,\,\forall f,g\in Lie(\bk\{x,y\}).
			\end{eqnarray*}
			Then $(Lie(\bk\{x,y\}),[\cdot,\cdot],\rhd)$ is a post-Lie algebra, whose sub-adjacent Lie algebra is exactly the  Ihara Lie algebra \cite{Schneps}. %Ulteriorly, the Grothendieck-Teichm\"uller Lie algebra $\huagrt_1$ \cite{Willwacher} is an important Lie subalgebra of the  Ihara Lie algebra.
		}
	\end{ex}
	
	\emptycomment{
		\begin{pro}
			Let $(\g,[\cdot,\cdot]_\g)$ be a Lie algebra, $(A,\cdot)$ a commutative associative algebra and $\rho:\g\lon \Der(A)$ a Lie algebra homomorphism. Then  there is a post-Lie algebra structure on $A\otimes \g$  given by
			\begin{eqnarray}
				\label{p-1}[a\otimes x,b\otimes y]&=&a\cdot b\otimes [x,y]_\g,\\
				\label{p-2}(a\otimes x)\rhd (b\otimes y)&=&a\cdot \rho(x)(b)\otimes y,\,\,\,\,\forall a,b\in A,~x,y\in\g.
			\end{eqnarray}
		\end{pro}
		
		\begin{cor}
			Let $(P,[\cdot,\cdot],\cdot)$ be a Poisson algebra. Then  there is a post-Lie algebra structure on $P\otimes P$  given by
			\begin{eqnarray}
				\label{p-1}[x\otimes y,z\otimes w]&=&x\cdot z\otimes [y,w],\\
				\label{p-2}(x\otimes y)\rhd (z\otimes w)&=&x\cdot [y,z]\otimes w,\,\,\,\,\forall x,y,z,w\in P.
			\end{eqnarray}
		\end{cor}
		
		\begin{cor}
			Let $({A},\L,[\cdot,\cdot]_\L,\alpha)$ be a Lie-Rinehart algebra. Then  there is a post-Lie algebra structure on $A\otimes \L$  given by
			\begin{eqnarray}
				\label{p-1}[a\otimes x,b\otimes y]&=&ab\otimes [x,y]_\L,\\
				\label{p-2}(a\otimes x)\rhd (b\otimes y)&=&a\alpha(x)(b)\otimes y,\,\,\,\,\forall a,b\in A,~x,y\in\L.
			\end{eqnarray}
		\end{cor}
	}

	Post-Lie algebras are algebras over the operad $\huaPostLie$. Its Koszul dual is the operad $\huaComTrias$ whose algebras are commutative trialgebras, \cite{Val}. We will now recall the definition of a commutative trialgebra.
	
	\begin{defi}
		A {\bf commutative trialgebra} is a triple $(A,\cdot,\ast)$, where $(A,\cdot)$ is a commutative associative algebra  and
		$(A,\ast)$ is a permutative  algebra, %:
		%\begin{eqnarray} 			x\ast (y\ast z)=(x\ast y)\ast z=(y\ast x)\ast z, 		\end{eqnarray}
		such that the following compatibility relations hold:
		\begin{eqnarray}
			(x\cdot y)\ast z&=&(x\ast y)\ast z\\
			x\ast (y\cdot z)&=&(x\ast y)\cdot z.
		\end{eqnarray}
	\end{defi}
	
	\begin{ex}{\rm(\cite[Theorem 28]{Val})}\label{free-comtrial} {\rm
			Let $V$ be a graded vector space. There is a commutative trialgebra structure on $\Sym(V)\otimes \bar{\Sym}(V)$  given by
			\begin{eqnarray}
				\label{ComTrias-1}\big(x_1\ldots x_m\otimes y_1\ldots y_n\big)\cdot \big(u_1\ldots u_k\otimes v_1\ldots v_l\big)&=&(-1)^{\alpha}x_1\ldots x_m u_1\ldots u_k\otimes y_1\ldots y_n v_1\ldots v_l,\\
				\label{ComTrias-2}\quad\big(x_1\ldots x_m\otimes y_1\ldots y_n\big)\ast \big(u_1\ldots u_k\otimes v_1\ldots v_l\big)&=&x_1\ldots x_m y_1\ldots y_nu_1\ldots u_k \otimes v_1\ldots v_l,
			\end{eqnarray}
			where $\alpha=(y_1+\ldots+y_n)(u_1+\ldots +u_k)$.
			Moreover, $(\Sym(V)\otimes \bar{\Sym}(V),\cdot,\ast)$ is the free graded  commutative trialgebra generated by the graded vector space $V$ and we denote it by $\huaComTrias(V)$.
		}
	\end{ex}
	
	\begin{defi}
		A {\bf derivation} of a graded commutative trialgebra $(A,\cdot,\ast)$ is a graded linear map $d:A\lon A$ such that for all $x,y\in A,$ the following equalities hold:
		\begin{eqnarray}
			\label{der-1}d(x\cdot y)&=&d(x)\cdot y+(-1)^{xd}x\cdot d(y),\\
			\label{der-2}d(x\ast y)&=&d(x)\ast y+(-1)^{xd}x\ast d(y).
		\end{eqnarray}
			\end{defi}
Let $d\in \Hom(V,\Sym(V)\otimes \bar{\Sym}(V))$. Define  $\Phi(d):\Sym(V)\otimes \bar{\Sym}(V)\lon \Sym(V)\otimes \bar{\Sym}(V)$ by
	\begin{eqnarray}\label{der-free-comtrial}
		\nonumber&&\Phi(d)(x_1\ldots x_m\otimes y_1\ldots y_n)\\
		&=&\sum_{i=1}^{m}(-1)^{(x_1+\ldots+x_{i-1})d}\big(1_\bk\otimes x_1\ast\ldots\ast d(x_i)\ast\ldots\ast 1_\bk\otimes x_m\big)\ast\big(1_\bk\otimes y_1\cdot\ldots\cdot 1_\bk\otimes y_n\big)\\
		\nonumber&+&\sum_{j=1}^{n}(-1)^{(x_1+\ldots+x_m+y_1+\ldots+y_{j-1})d}\big(1_\bk\otimes x_1\ast\ldots\ast 1_\bk\otimes x_m\big)\ast\big(1_\bk\otimes y_1\cdot\ldots \cdot d(y_j)\cdot\ldots\cdot 1_\bk\otimes y_n\big)
	\end{eqnarray}
	for all $x_1,\ldots,x_m,y_1,\ldots, y_n\in V.$
	
	\begin{pro}
		With notations as above, $\Phi$ is an isomorphism from the graded vector space $\Hom(V,\Sym(V)\otimes \bar{\Sym}(V))$ to $\Der(\Sym(V)\otimes \bar{\Sym}(V))$.
	\end{pro}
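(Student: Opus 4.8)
The plan is to exploit the fact, recorded in Example~\ref{free-comtrial}, that $\Sym(V)\otimes\bar{\Sym}(V)=\huaComTrias(V)$ is the \emph{free} graded commutative trialgebra generated by $V$, the generators forming the subspace $1_\bk\otimes V=\{1_\bk\otimes x: x\in V\}$. For a free algebra over an operad, a derivation is freely and uniquely determined by its values on the generators, so one expects an isomorphism $\Der(\Sym(V)\otimes\bar{\Sym}(V))\cong\Hom(V,\Sym(V)\otimes\bar{\Sym}(V))$ given by restriction to generators, with $\Phi$ supplying the inverse. Concretely I would reduce the statement to three claims: (i) the formula \eqref{der-free-comtrial} defines a well-defined graded derivation; (ii) $\Phi(d)$ extends $d$, i.e. $\Phi(d)(1_\bk\otimes x)=d(x)$; and (iii) a derivation of $\Sym(V)\otimes\bar{\Sym}(V)$ is determined by its restriction to $1_\bk\otimes V$.

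For the generation statement underlying (iii), I would first record the normal-form decomposition of an arbitrary element as a product of generators. Using \eqref{ComTrias-1} and \eqref{ComTrias-2} one computes $(1_\bk\otimes x_1)\ast\cdots\ast(1_\bk\otimes x_m)=x_1\ldots x_{m-1}\otimes x_m$ and $(1_\bk\otimes y_1)\cdot\cdots\cdot(1_\bk\otimes y_n)=1_\bk\otimes y_1\ldots y_n$, whence
\begin{eqnarray*}
x_1\ldots x_m\otimes y_1\ldots y_n=\big((1_\bk\otimes x_1)\ast\cdots\ast(1_\bk\otimes x_m)\big)\ast\big((1_\bk\otimes y_1)\cdot\cdots\cdot(1_\bk\otimes y_n)\big).
\end{eqnarray*}
Thus $1_\bk\otimes V$ generates the trialgebra. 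Consequently, if $D$ is a derivation with $D(1_\bk\otimes x)=0$ for all $x\in V$, then the Leibniz rules \eqref{der-1}--\eqref{der-2} force $D$ to vanish on every such product, hence $D=0$; this is precisely (iii), the injectivity of the restriction map $R:\Der(\Sym(V)\otimes\bar{\Sym}(V))\to\Hom(V,\Sym(V)\otimes\bar{\Sym}(V))$, $R(D)(x)=D(1_\bk\otimes x)$. Claim (ii) is then immediate by specializing \eqref{der-free-comtrial} to $m=0$, $n=1$.

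For claim (i) I would first verify that the right-hand side of \eqref{der-free-comtrial} is invariant, up to the Koszul sign $\varepsilon$, under permutations of the $x_i$ and of the $y_j$, so that $\Phi(d)$ descends to a well-defined graded linear map on $\Sym(V)\otimes\bar{\Sym}(V)$ of the same degree as $d$; the point is that the sign produced by moving $d$ across the preceding factors, the permutative symmetry of $\ast$, and the graded commutativity of $\cdot$ conspire exactly. Then I would check the two Leibniz identities \eqref{der-1} and \eqref{der-2} by evaluating both sides on a pair of normal-form elements and matching terms slot by slot. Finally, claims (i) and (ii) say $R\circ\Phi=\Id$, while claim (iii) says $R$ is injective; an injective map possessing a right inverse is bijective, so $R$ is an isomorphism and $\Phi=R^{-1}$, which proves the proposition. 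The main obstacle is the computational part of claim (i): establishing well-definedness and the derivation property of $\Phi(d)$ is a sign-bookkeeping exercise, and the most delicate point is tracking the Koszul signs generated when $d$ is carried past the $x_i$ and $y_j$ and when the two products $\cdot$ and $\ast$ are interchanged via the compatibility relations. Everything else is formal once the generation statement and the normal-form decomposition are in place.
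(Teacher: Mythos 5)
Your proposal is correct and takes essentially the same route as the paper: both proofs hinge on the restriction-to-generators map (the paper's $\frki(f)=f\circ i_V$, your $R$) together with the normal form $x_1\ldots x_m\otimes y_1\ldots y_n=\big((1_\bk\otimes x_1)\ast\cdots\ast(1_\bk\otimes x_m)\big)\ast\big((1_\bk\otimes y_1)\cdot\cdots\cdot(1_\bk\otimes y_n)\big)$ and a Leibniz expansion on it, the only repackaging being that the paper verifies $\Phi\circ\frki=\Id$ by computing $f$ directly on the normal form, whereas you equivalently combine the right-inverse identity $R\circ\Phi=\Id$ with injectivity of $R$ (a derivation vanishing on generators vanishes). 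Your explicit claim (i), that $\Phi(d)$ is well defined and actually a derivation, is a verification the paper silently omits, so your outline is if anything slightly more complete, though you too defer that sign computation rather than carry it out.
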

	
	\begin{proof}
		We define a graded linear map $\frki:\Der(\Sym(V)\otimes \bar{\Sym}(V))\lon \Hom(V,\Sym(V)\otimes \bar{\Sym}(V))$ as follows:
		\begin{eqnarray}\label{der-free-comtrial-re}
			\frki(f)=f\circ i_V,\,\,\,\,\forall f\in\Der(\Sym(V)\otimes \bar{\Sym}(V)).
		\end{eqnarray}
		Here $i_V:V\lon \Sym(V)\otimes \bar{\Sym}(V))$ is the embedding given by
	$
			i_V(v)=1_\bk\otimes v$ for all $v\in V.
	$
		For any $d\in \Hom(V,\Sym(V)\otimes \bar{\Sym}(V))$, by \eqref{der-free-comtrial} and \eqref{der-free-comtrial-re}, we have
		\begin{eqnarray*}
			\big((\frki\circ\Phi)(d)\big)(v)=\Phi(d)(i_V(v))=\Phi(d)(1_\bk\otimes v)=d(v).
		\end{eqnarray*}
		Therefore $\frki\circ\Phi=\Id$. On the other hand, for any $f\in\Der(\Sym(V)\otimes \bar{\Sym}(V))$, we obtain that
	\begin{eqnarray*}
			&&f(x_1\ldots x_m\otimes y_1\ldots y_n)\\
			&=&f\big((1_\bk\otimes x_1\ast\ldots\ast 1_\bk\otimes x_m)\ast(1_\bk\otimes y_1\cdot\ldots\cdot 1_\bk\otimes y_n)\big)\\
			&=&f\big((1_\bk\otimes x_1\ast\ldots\ast 1_\bk\otimes x_m)\big)\ast(1_\bk\otimes y_1\cdot\ldots\cdot 1_\bk\otimes y_n)\\
			&&+(-1)^{(x_1+\ldots+x_m)d}(1_\bk\otimes x_1\ast\ldots\ast 1_\bk\otimes x_m)\ast f\big((1_\bk\otimes y_1\cdot\ldots\cdot 1_\bk\otimes y_n)\big)\\
			&{=}&\sum_{i=1}^{m}(-1)^{(x_1+\ldots+x_{i-1})d}\big(1_\bk\otimes x_1\ast\ldots\ast f(1_\bk\otimes x_i)\ast\ldots\ast 1_\bk\otimes x_m\big)\ast\big(1_\bk\otimes y_1\cdot\ldots\cdot 1_\bk\otimes y_n\big)\\
			\nonumber&+&\sum_{j=1}^{n}(-1)^{(x_1+\ldots+x_m+y_1+\ldots+y_{j-1})d}\big(1_\bk\otimes x_1\ast\ldots\ast 1_\bk\otimes x_m\big)\ast\big(1_\bk\otimes y_1\cdot\ldots \cdot f(1_\bk\otimes y_j)\cdot\ldots\cdot 1_\bk\otimes y_n\big).
		\end{eqnarray*}
		Moreover,   \eqref{der-free-comtrial} implies that $f=\Phi(\frki(f))$, i.e. $\Phi\circ\frki=\Id$. This finishes the proof.
	\end{proof}

	\begin{defi}
		A {\bf cocommutative cotrialgebra} is a triple $(C,\Delta,\delta)$, where $(C,\Delta)$ is a cocommutative coassociative coalgebra:
		\begin{eqnarray}
			\Delta=\tau\circ\Delta,\,\,\,\,(\Id\otimes\Delta)\circ\Delta=(\Delta\otimes\Id)\circ\Delta,
		\end{eqnarray}
		$(C,\delta)$ is a copermutative  coalgebra:
		\begin{eqnarray}
			(\Id\otimes\delta)\circ\delta=(\delta\otimes\Id)\circ\delta=(\tau\otimes\Id)\circ(\delta\otimes\Id)\circ\delta,
		\end{eqnarray}
		and the following compatibility relations hold:
		\begin{eqnarray}
			(\Delta\otimes\Id)\circ\delta&=&(\delta\otimes\Id)\circ\delta,\\
			(\delta\otimes\Id)\circ\Delta&=&(\Id\otimes\Delta)\circ\delta,
		\end{eqnarray}
		where $\tau(a\otimes b)=(-1)^{ab}b\otimes a$.
	\end{defi}

	A cocommutative cotrialgebra $(C,\Delta,\delta)$ is called conilpotent if for any $x\in C$ there exists $N\in\mathbb{Z}$ such that $\theta^{n}(x)=0$ for $n>N$, where $\theta^{n}:C\to C^{\otimes (n+1)}$ is any $n$-fold iteration of  $\Delta$ or $\delta$.

	\begin{thm}
		Let $V$ be a graded vector space. Then there is a cocommutative cotrialgebra structure on $\Sym(V)\otimes \bar{\Sym}(V)$   given by
		\begin{eqnarray*}
			\label{coComTrias-1}\Delta(x_1\ldots x_m\otimes y_1\ldots y_n)&=&\sum_{0\le i\le m\atop 1\le j\le n-1}\sum_{\sigma\in\mathbb S_{(i,m-i)}\atop\tau\in\mathbb S_{(j,n-j)}}\varepsilon(\sigma)\varepsilon(\tau)(-1)^{(x_{\sigma(i+1)}+\ldots+x_{\sigma(m)})(y_{\tau(1)}+\ldots+y_{\tau(j)})}\\
			&&\big(x_{\sigma(1)}\ldots x_{\sigma(i)}\otimes y_{\tau(1)}\ldots y_{\tau(j)}\big)\otimes  \big(x_{\sigma(i+1)}\ldots x_{\sigma(m)}\otimes y_{\tau(j+1)}\ldots y_{\tau(n)}\big),\\
			\label{coComTrias-2}\delta(x_1\ldots x_m\otimes y_1\ldots y_n)&=&\sum_{0\le i\le m-1\atop 1\le j\le m}\sum_{\sigma\in\mathbb S_{(i,j,m-i-j)}}\varepsilon(\sigma)\\
			&&\big(x_{\sigma(1)}\ldots x_{\sigma(i)}\otimes x_{\sigma(i+1)}\ldots x_{\sigma(i+j)}\big)\otimes  \big(x_{\sigma(i+j+1)}\ldots x_{\sigma(m)}\otimes y_1\ldots y_n\big).
		\end{eqnarray*}
		Moreover, $(\Sym(V)\otimes \bar{\Sym}(V),\Delta,\delta)$ is the cofree conilpotent graded cocommutative cotrialgebra generated by the graded vector space $V$ and we denote it by $\huaComTrias^c(V)$.
	\end{thm}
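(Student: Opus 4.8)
The statement has two assertions: that $(\Sym(V)\otimes\bar{\Sym}(V),\Delta,\delta)$ is a conilpotent cocommutative cotrialgebra, and that it enjoys the cofreeness universal property. For the first, the plan is to exploit duality with the free commutative trialgebra $\huaComTrias(V)$ of Example \ref{free-comtrial}. Comparing formulas, $\Delta$ deconcatenates both tensor slots whereas $\cdot$ concatenates them, and $\delta$ shuffles the $\Sym(V)$-factor into three blocks whereas $\ast$ merges blocks; concretely, $\Delta$ and $\delta$ are the graded transposes of $\cdot$ and $\ast$ under the canonical degreewise pairing on $\Sym(V)\otimes\bar{\Sym}(V)$, with the displayed Koszul signs being exactly those produced by the braiding $\tau$. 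Each of the five cotrialgebra axioms is then the formal transpose of one of the associativity/commutativity/permutativity/compatibility identities already recorded for $(\huaComTrias(V),\cdot,\ast)$ in Example \ref{free-comtrial}: cocommutativity and coassociativity of $\Delta$ transpose the commutativity and associativity of $\cdot$, the copermutative identity transposes the permutative axioms for $\ast$, and the two compatibility relations transpose $(x\cdot y)\ast z=(x\ast y)\ast z$ and $x\ast(y\cdot z)=(x\ast y)\cdot z$. Since each axiom, evaluated on a fixed monomial, involves only the finitely many generators occurring in it, it suffices to check the transposition on the finite-dimensional subspace they span, so no finiteness hypothesis on $V$ is needed; alternatively, the five identities can be verified directly as shuffle computations.

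Conilpotency is immediate from a weight count: assign to $x_1\ldots x_m\otimes y_1\ldots y_n$ the weight $m+n$. Because $j\ge 1$ in both sums (and $i+j\le m$ in $\delta$), every tensor factor produced by a single application of $\Delta$ or $\delta$ has weight strictly smaller than its input, so any $n$-fold iterate $\theta^{n}$ vanishes on elements of weight at most $n$, providing the required bound $N$. The substantive part is cofreeness. Given a conilpotent cocommutative cotrialgebra $(C,\Delta_C,\delta_C)$ and a linear map $f\colon C\to V$, I would construct the lift $\bar f\colon C\to \Sym(V)\otimes\bar{\Sym}(V)$ explicitly, in the spirit of \eqref{co-alg-homo}. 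The recipe is dictated by the decomposition $x_1\ldots x_m\otimes y_1\ldots y_n=(1_\bk\otimes x_1\ast\cdots\ast 1_\bk\otimes x_m)\ast(1_\bk\otimes y_1\cdot\cdots\cdot 1_\bk\otimes y_n)$: one uses $\delta_C$ to break $c$ into the $\ast$-combined $\Sym(V)$-block of length $m$ and $\Delta_C$ to break it into the $\cdot$-combined $\bar{\Sym}(V)$-block of length $n$, applies $f$ to each of the $m+n$ resulting factors, and symmetrizes with factors $\tfrac{1}{m!}$, $\tfrac{1}{n!}$ together with the appropriate Koszul signs. Conilpotency of $C$ makes this sum finite, so $\bar f$ is well defined, and one checks $\pr_V\circ\bar f=f$.

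The main obstacle will be the bookkeeping in this last step: arranging the lift formula so that $\bar f$ is simultaneously compatible with both coproducts, and then verifying the morphism property $(\bar f\otimes\bar f)\circ\Delta_C=\Delta\circ\bar f$ and $(\bar f\otimes\bar f)\circ\delta_C=\delta\circ\bar f$, requires careful tracking of the interaction between the two families of shuffles and of the Koszul signs, which do not interact transparently. Uniqueness is the easier half: a cotrialgebra morphism into $\Sym(V)\otimes\bar{\Sym}(V)$ is determined weight by weight by its projection $\pr_V$, by induction on weight using the strict weight-drop of $\Delta$ and $\delta$ established above. As a consistency check once the formula is pinned down, one may identify $\bar f$ with the canonical lift furnished by the general cofree-coalgebra construction over the cooperad dual to $\huaComTrias$, which guarantees both the universal property and that the two coproducts coincide with those in the statement.
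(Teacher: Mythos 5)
Your proposal is correct and follows essentially the same route as the paper: the cotrialgebra axioms are obtained by (graded) dualization of the free commutative trialgebra of Example \ref{free-comtrial}, and cofreeness is established by reconstructing any morphism from its corestriction $\pr^{0,1}_V\circ \bar f$ via the iterated coproducts $(\delta^{m-1}\otimes\Delta^{n-1})\circ\delta$ with $\frac{1}{m!\,n!}$ symmetrization, which is precisely the paper's identity \eqref{product-coproduct} and reconstruction formula \eqref{cofree-cotri-homo}. The only differences are cosmetic: your explicit weight-count argument for conilpotency fills in a step the paper declares clear, and, like the paper, you leave the routine verification that the constructed lift respects both coproducts to bookkeeping.
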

	\begin{proof}
	Since $(\Sym(V)\otimes \bar{\Sym}(V),\Delta,\delta)$ is the graded dual of the free graded  commutative trialgebra $(\Sym(V^*)\otimes \bar{\Sym}(V^*),\cdot,\ast)$, it follows that $(\Sym(V)\otimes \bar{\Sym}(V),\Delta,\delta)$ is a graded  cocommutative cotrialgebra. It is clear that it is conilpotent. Let us show that it is cofree. For $m,n\geq1$, we write $$\Delta^{n}=(\Delta\otimes{\Id}^{\otimes n-1})\ldots(\Delta\otimes{\Id})\Delta,\quad\delta^{m}=(\delta\otimes{{\Id}}^{\otimes m-1})\ldots(\delta\otimes\Id)\delta.$$
	\emptycomment{	We have
		\begin{eqnarray*}
			\Delta^{n}(x_1\ldots x_m\otimes y_1\ldots y_n)&=&0,\\
			\delta^{m}(x_1\ldots x_m\otimes y_1\ldots y_n)&=&0.
		\end{eqnarray*}
		\textcolor{red}{By the definition of $\Delta$ and $\delta$, we deduce that  $\theta^{m+n}(x_1\ldots x_m\otimes y_1\ldots y_n)=0$ which implies that $(\Sym(V)\otimes \bar{\Sym}(V),\Delta,\delta)$ is a graded  conilpotent cocommutative cotrialgebra.
		}}
			Then we have
		\begin{eqnarray}\label{product-coproduct}
			\ast(\ast_{m}\otimes \cdot_{n})((\delta^{m-1}\otimes \Delta^{n-1})\circ\delta)(x_1\ldots x_m\otimes y_1\ldots y_n)=m!n!x_1\ldots x_m\otimes y_1\ldots y_n,
		\end{eqnarray}
		where
		\begin{eqnarray*}
			\ast_{m}(X_1\otimes Y_1,\ldots,X_m\otimes Y_m)&=&X_1Y_1\ldots Y_{m-1}X_m\otimes Y_m,\\
			\cdot_{n}(X_1\otimes Y_1,\ldots,X_n\otimes Y_n)&=&(-1)^{\sum_{i=1}^{n}(Y_1+\ldots+Y_{i-1})X_i}X_1\ldots X_n\otimes Y_1\ldots Y_n.
		\end{eqnarray*}
		Let $(C,\Delta,\delta)$ be a graded  conilpotent cocommutative cotrialgebra and $f:C\lon \Sym(V)\otimes \bar{\Sym}(V)$ be a coalgebra homomorphism. Note that $f\in\Hom^0(C,\Sym(V)\otimes \bar{\Sym}(V))$ is the sum of $f^{m,n}:C\lon \Sym^m(V)\otimes \Sym^n(V),~\forall m\ge0,n\ge1$. Moreover, denote the projection $\Sym(V)\otimes \bar{\Sym}(V)\lon \Sym^m(V)\otimes \Sym^n(V)$ by $\pr^{m,n}_V$. Then $f^{0,1}=\pr^{0,1}_V\circ f$. Since $f$ is a coalgebra homomorphism, we have the following diagram:
		\begin{equation}\label{cotri-homor}
			\begin{split}
				\xymatrix{ C \ar^{f}[rr]\ar_{\text{$(\delta^{m-1}\otimes \Delta^{n-1})\circ\delta$}}[d] && \Sym(V)\otimes \bar{\Sym}(V) \ar^{\text{$(\delta^{m-1}\otimes \Delta^{n-1})\circ\delta$}}[d] \\
					\text{$C^{\otimes m+n}$} \ar^{f^{\otimes m+n}}[rr]&& \text{$(\Sym(V)\otimes \bar{\Sym}(V))^{\otimes m+n}$}
					.}
			\end{split}
		\end{equation}
		By \eqref{product-coproduct} and \eqref{cotri-homor}, we obtain that $$\pr^{m,n}_V\circ \big(\ast(\ast_{m}\otimes \cdot_{n})\big)\circ f^{\otimes m+n}\circ (\delta^{m-1}\otimes \Delta^{n-1})\circ\delta=m!n!f^{m,n}.$$ Therefore, we deduce that
		\begin{eqnarray*}
			f^{m,n}(x)&=&\sum_{x}\frac{1}{m!n!}\ast(\ast_{m}\otimes \cdot_{n})\Big(\big(1_\bk\otimes f^{0,1}(x_{(1)})\big)\otimes \big(1_\bk\otimes f^{0,1}(x_{(2)})\big)\otimes\ldots\otimes\big(1_\bk\otimes f^{0,1}(x_{(m+n)})\big)\Big)\\
			&=&\sum_{x}\frac{1}{m!n!}f^{0,1}(x_{(1)})\ldots f^{0,1}(x_{(m)})\otimes f^{0,1}(x_{(m+1)})\ldots f^{0,1}(x_{(m+n)}),
		\end{eqnarray*}
		where $(\delta^{m-1}\otimes \Delta^{n-1})\delta(x)=\sum_{x}x_{(1)}\otimes x_{(2)}\otimes\ldots\otimes x_{(m+n)},$ for all $x\in C.$ Moreover, we can reconstruct the coalgebra homomorphism $f$ by $f^{0,1}$ as follows:
		\begin{eqnarray}\label{cofree-cotri-homo}
			f(x)=\sum_{m\ge 0,n\ge 1}\sum_{x}\frac{1}{m!n!}f^{0,1}(x_{(1)})\ldots f^{0,1}(x_{(m)})\otimes f^{0,1}(x_{(m+1)})\ldots f^{0,1}(x_{(m+n)}).
		\end{eqnarray}
		Therefore, for any graded  conilpotent cocommutative cotrialgebra $(C,\Delta,\delta)$ and any linear map $\phi:C\lon V$, by \eqref{cofree-cotri-homo}, there exists a unique coalgebra homomorphism $\Phi:C\lon \Sym(V)\otimes \bar{\Sym}(V)$ such that we have the following commutative diagram:
		$$
		\xymatrix{
			C\ar[rr]^{\Phi}\ar[dr]_{\phi} & & \Sym(V)\otimes \bar{\Sym}(V)\ar[dl]^{\pr^{0,1}_V}. \\
			& V&
		}
		$$
		Here $\Phi$ is given by
		\begin{eqnarray}
			\Phi(x)=\sum_{m\ge 0,n\ge 1}\sum_{x}\frac{1}{m!n!}\phi(x_{(1)})\ldots \phi(x_{(m)})\otimes \phi(x_{(m+1)})\ldots \phi(x_{(m+n)}).
		\end{eqnarray}
		This shows that   $(\Sym(V)\otimes \bar{\Sym}(V),\Delta,\delta)$ is the cofree  conilpotent cocommutative cotrialgebra generated by the graded vector space $V$.
	\end{proof}
	
	\emptycomment{
		Let $V$  be a graded vector space. By  a straightforward computation, we have
		\begin{eqnarray*}
			&&((\delta^{k-1}\otimes \Delta^{l-1})\circ\delta)(x_1\ldots x_n\otimes x_{n+1}\ldots x_{n+m})\\
			&&=\sum_{\sigma\in\mathbb S_{(i_1,j_1,\ldots,i_k,j_k,i_{k+1},\ldots,i_{k+l})}\atop i_1+j_1+\ldots+i_k+j_k+i_{k+1}+i_{k+l}=n}\sum_{\tau\in\mathbb S_{(r_1,\ldots,r_l)}\atop r_1+\ldots+r_l=m}\varepsilon(\sigma)\varepsilon(\tau)(-1)^{\sum_{s=1}^{l}\alpha_s}\\
			&&(x_{\sigma(1)}\ldots x_{\sigma(i_1)}\otimes x_{\sigma(i_1+1)}\ldots x_{\sigma(i_1+j_1)})\otimes \ldots \otimes\\ &&(x_{\sigma(i_1+j_1+\ldots+i_{k-1}+j_{k-1}+1)}\ldots x_{\sigma(i_1+j_1+\ldots+i_{k-1}+j_{k-1}+i_k)}\otimes x_{\sigma(i_1+j_1+\ldots+i_{k-1}+j_{k-1}+i_k+1)}\ldots x_{\sigma(i_1+j_1+\ldots+i_{k-1}+j_{k-1}+i_k+j_k)})\\
			&&\otimes(x_{\sigma(i_1+j_1+\ldots+i_{k}+j_{k}+1)}\ldots x_{\sigma(i_1+j_1+\ldots+i_{k}+j_{k}+i_{k+1})}\otimes x_{n+\tau(1)}\ldots x_{n+\tau(r_1)})\otimes \ldots\otimes\\
			&&(x_{\sigma(i_1+j_1+\ldots+i_{k}+j_{k}+i_{k+1}+\ldots+i_{k+l-1}+1)}\ldots x_{\sigma(i_1+j_1+\ldots+i_{k}+j_{k}+i_{k+1}+\ldots+i_{k+l})}\otimes x_{n+\tau(r_1+\ldots+r_{l-1}+1)}\ldots x_{n+\tau(r_1+\ldots+r_{l})}),
		\end{eqnarray*}
		where $$\alpha_s=(x_{n+\tau(r_1+\ldots+r_{s-1}+1)}+\ldots+x_{n+\tau(r_1+\ldots+r_{s})})
		(x_{\sigma(i_1+j_1+\ldots+i_{k}+j_{k}+i_{k+1}+\ldots+i_{k+s}+1)}+\ldots+x_{\sigma(i_1+j_1+\ldots+i_{k}+j_{k}+i_{k+1}+\ldots+i_{k+l})}).$$

		Let $V$ and $W$ be graded vector spaces. Note that an element $f\in\Hom_{vect}^0(\Sym(V)\otimes \bar{\Sym}(V),W)$ is the sum of $f_{p,q}:\Sym^p(V)\otimes \Sym^q(V)\lon W$. By Theorem \ref{free-comtrial}, we can construct a homomorphism $F$ of conilpotent cocommutative cotrialgebras from $\huaComTrias^c(V)$ to $\huaComTrias^c(W)$ as following:
		\begin{eqnarray*}
			&&F(x_1\ldots x_n\otimes x_{n+1}\ldots x_{n+m})\\
			&&=\sum_{k\ge 0,l\ge 1}\sum_{\sigma\in\mathbb S_{(i_1,j_1,\ldots,i_k,j_k,i_{k+1},\ldots,i_{k+l})}\atop i_1+j_1+\ldots+i_k+j_k+i_{k+1}+i_{k+l}=n}\sum_{\tau\in\mathbb S_{(r_1,\ldots,r_l)}\atop r_1+\ldots+r_l=m}\frac{\varepsilon(\sigma)\varepsilon(\tau)}{k!l!}(-1)^{\sum_{s=1}^{l}\alpha_s}\\
			&&f_{i_1,j_1}(x_{\sigma(1)}\ldots x_{\sigma(i_1)}\otimes x_{\sigma(i_1+1)} \ldots x_{\sigma(i_1+j_1)})\ldots \\ &&f_{i_k,j_k}(x_{\sigma(i_1+j_1+\ldots+i_{k-1}+j_{k-1}+1)}\ldots x_{\sigma(i_1+j_1+\ldots+i_{k-1}+j_{k-1}+i_k)}\otimes x_{\sigma(i_1+j_1+\ldots+i_{k-1}+j_{k-1}+i_k+1)} \ldots x_{\sigma(i_1+j_1+\ldots+i_{k-1}+j_{k-1}+i_k+j_k)})\\&&\otimes
			f_{i_{k+1},r_1}(x_{\sigma(i_1+j_1+\ldots+i_{k}+j_{k}+1)} \ldots x_{\sigma(i_1+j_1+\ldots+i_{k}+j_{k}+i_{k+1})}\otimes x_{n+\tau(1)} \ldots x_{n+\tau(r_1)})\ldots\\
			&&f_{i_{k+l},r_l}(x_{\sigma(i_1+j_1+\ldots+i_{k}+j_{k}+i_{k+1}+\ldots+i_{k+l-1}+1)} \ldots x_{\sigma(i_1+j_1+\ldots+i_{k}+j_{k}+i_{k+1}+\ldots+i_{k+l})}\otimes x_{n+\tau(r_1+\ldots+r_{l-1}+1)} \ldots x_{n+\tau(r_1+\ldots+r_{l})}),
		\end{eqnarray*}
		where $$\alpha_s=(x_{n+\tau(r_1+\ldots+r_{s-1}+1)}+\ldots+x_{n+\tau(r_1+\ldots+r_{s})})
		(x_{\sigma(i_1+j_1+\ldots+i_{k}+j_{k}+i_{k+1}+\ldots+i_{k+s}+1)}+\ldots+x_{\sigma(i_1+j_1+\ldots+i_{k}+j_{k}+i_{k+1}+\ldots+i_{k+l})}).$$
		Then there is a one-to-one correspondence between $\Hom_{coalg}(\huaComTrias^c(V),\huaComTrias^c(W))$ and $\Hom_{vect}^0(\Sym(V)\otimes \bar{\Sym}(V),W)$.
	}
	
	\begin{defi}
		A {\bf coderivation} of a graded cocommutative cotrialgebra $(C,\Delta,\delta)$ is a graded linear map $D:C\lon C$ such that
		\begin{eqnarray}
			\label{coder-1}\Delta\circ D&=&(D\otimes {\Id}+{\Id}\otimes D)\circ \Delta,\\
			\label{coder-2}\delta\circ D&=&(D\otimes {\Id}+{\Id}\otimes D)\circ \delta.
		\end{eqnarray}
	\end{defi}
	
	Note that an element $\frkR\in\Hom^k(\Sym(V)\otimes \bar{\Sym}(V),V)$ is the sum of $\frkR_{p,q}:\Sym^p(V)\otimes \Sym^q(V)\lon V$. Write  $\frkR=\sum_{p\ge 0,q\ge 1} \frkR_{p,q}$ and  $$\Hom(\Sym(V)\otimes \bar{\Sym}(V),V)=\oplus_{n\in\mathbb Z}\Hom^n(\Sym(V)\otimes \bar{\Sym}(V),V),$$ the decomposition of the space of homomorphisms into graded pieces.
	For $\frkR_{p,q}:\Sym^p(V)\otimes \Sym^q(V)\lon V$, we define  $\hat{\frkR}_{p,q}\in \Hom(\Sym(V)\otimes \bar{\Sym}(V),\Sym(V)\otimes \bar{\Sym}(V))$ by
	\begin{eqnarray}
		\nonumber&&\hat{\frkR}_{p,q}(x_1\ldots x_n\otimes x_{n+1}\ldots x_{n+m})\\
		\label{coder-post-lie}&=&\sum_{\sigma\in \mathbb S_{(p,q,n-p-q)}}\varepsilon(\sigma)\frkR_{p,q}(x_{\sigma(1)} \ldots x_{\sigma(p)}\otimes x_{\sigma(p+1)} \ldots x_{\sigma(p+q)})x_{\sigma(p+q+1)}\ldots x_{\sigma(n)}\otimes x_{n+1}\ldots x_{n+m}\\
		\nonumber&&+\sum_{\sigma\in\mathbb S_{(n-p,p)}}\sum_{ \tau\in\mathbb S_{(q,m-q)}}\varepsilon(\sigma)\varepsilon(\tau)(-1)^{k(x_{\sigma(1)}+\ldots+x_{\sigma(n-p)})}\\
		\nonumber&&x_{\sigma(1)}\ldots x_{\sigma(n-p)}\otimes \frkR_{p,q}\big(x_{\sigma(n-p+1)} \ldots x_{\sigma(n)}\otimes x_{n+\tau(1)} \ldots x_{n+\tau(q)}\big)x_{n+\tau(q+1)}\ldots x_{n+\tau(m)},
	\end{eqnarray}
	for all $n\ge p,m\ge q,$ and
	$\hat{\frkR}_{p,q}(x_1\ldots x_n\otimes x_{n+1}\ldots x_{m+n})=0$ for other cases.
	
	The following result holds.
	
	\begin{pro}\label{important-iso}
		  Define $\Psi:\Hom(\Sym(V)\otimes \bar{\Sym}(V),V)\lon \coDer(\huaComTrias^c(V))	$	  by
		\begin{eqnarray}
			\nonumber&&\Psi(\frkR)(x_1\ldots x_n\otimes x_{n+1}\ldots x_{n+m})=\sum_{p\ge 0,q\ge 1}\hat{\frkR}_{p,q}(x_1\ldots x_n\otimes x_{n+1}\ldots x_{n+m}).
		\end{eqnarray}
Then $\Psi$ is an isomorphism of graded vector spaces.
	\end{pro}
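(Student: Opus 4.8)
The plan is to exhibit an explicit two-sided inverse of $\Psi$ given by corestriction to the cogenerators, mirroring the proof of the preceding cofreeness theorem. Concretely, I would define
$\frki\colon\coDer(\huaComTrias^c(V))\to\Hom(\Sym(V)\otimes\bar{\Sym}(V),V)$ by $\frki(D)=\pr^{0,1}_V\circ D$, where $\pr^{0,1}_V$ is the projection onto the space of cogenerators $\Sym^0(V)\otimes\Sym^1(V)\cong V$ (recall $i_V(v)=1_\bk\otimes v$). Conceptually this is forced: the cofreeness theorem identifies $\huaComTrias^c(V)$ with the graded dual of the free commutative trialgebra $\huaComTrias(V^*)$, so coderivations of $\huaComTrias^c(V)$ are transposes of derivations of $\huaComTrias(V^*)$, and under this duality $\Psi$ becomes the transpose of the derivation isomorphism $\Phi$ of the previous proposition. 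An isomorphism is thus to be expected, with corestriction dual to restriction to generators. I would nonetheless argue directly, in three steps: (a) $\Psi(\frkR)$ really is a coderivation; (b) $\frki\circ\Psi=\Id$; (c) $\frki$ is injective on $\coDer(\huaComTrias^c(V))$. Granting (a)--(c), the identity in (b) makes $\Psi$ injective and $\frki$ surjective, and then (c) forces $\frki$ to be bijective with $\Psi=\frki^{-1}$; combined with (a) this shows $\Psi$ is an isomorphism onto $\coDer(\huaComTrias^c(V))$.

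Step (b) is a short projection computation and I would carry it out first to pin down signs. Applying $\pr^{0,1}_V$ to $\Psi(\frkR)(x_1\ldots x_n\otimes x_{n+1}\ldots x_{n+m})$, I note that in the first summand of $\hat{\frkR}_{p,q}$ the output has bidegree $(1+n-p-q,\,m)$, whose first component is $\ge 1$, so it never lands in $\Sym^0(V)$; in the second summand the output has bidegree $(n-p,\,1+m-q)$, which equals $(0,1)$ exactly when $n=p$ and $m=q$, forcing the shuffles $\sigma,\tau$ to be trivial and all Koszul signs to be $+1$. Hence only the term $(p,q)=(n,m)$ survives and returns $\frkR_{n,m}(x_1\ldots x_n\otimes x_{n+1}\ldots x_{n+m})$, giving $\frki\circ\Psi=\Id$. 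For step (a) I would verify \eqref{coder-1} and \eqref{coder-2} directly: the two summands of $\hat{\frkR}_{p,q}$ are precisely the two ways $\frkR_{p,q}$ can consume cogenerators compatibly with $\Delta$ (splitting the $\Sym$-factor) and with $\delta$ (moving a $\Sym$-block into the $\bar{\Sym}$-factor), so the co-Leibniz identities should hold termwise after reindexing the shuffle sums; alternatively (a) follows at once from the duality with $\Phi$, since the transpose of a derivation is automatically a coderivation.

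The main obstacle is step (c): showing that a coderivation annihilated by $\pr^{0,1}_V$ vanishes. Here I would exploit conilpotency. Relations \eqref{coder-1}--\eqref{coder-2} show that any coderivation respects the filtration defined by the iterated coproducts $\theta^{n}$ (arbitrary composites of $\Delta$'s and $\delta$'s), with $\Sym^0(V)\otimes\Sym^1(V)$ the layer of cogenerators. For a coderivation $E$ with $\pr^{0,1}_V\circ E=0$ one first obtains $E|_{\Sym^0(V)\otimes\Sym^1(V)}=0$, and then, pushing $E$ through the full splitting map $(\delta^{p-1}\otimes\Delta^{q-1})\circ\delta$ by the co-Leibniz rule and reconstructing via $\ast(\ast_p\otimes\cdot_q)$ exactly as in \eqref{product-coproduct}, one shows inductively on the filtration degree that $E$ vanishes on every $\Sym^p(V)\otimes\Sym^q(V)$. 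Applied to $E=D-\Psi(\frki(D))$, which is a coderivation by (a) and lies in $\ker\frki$ by (b), this yields $D=\Psi(\frki(D))$ and hence the injectivity of $\frki$. The delicate point throughout is the Koszul-sign bookkeeping: because $\Delta$ and $\delta$ act differently on the two tensor factors, the shuffle signs $\varepsilon(\sigma),\varepsilon(\tau)$ and the transposition signs appearing in $\Delta$ and $\delta$ must be matched exactly against those in $\hat{\frkR}_{p,q}$. This is where I expect the calculation to be heaviest, and it is cleanest to organize it by treating the pure-$\Delta$ (cocommutative) reconstruction and the pure-$\delta$ (copermutative) reconstruction separately before combining them through \eqref{product-coproduct}.
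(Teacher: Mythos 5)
Your proposal is correct and takes essentially the same route as the paper: the paper also inverts $\Psi$ by corestriction to the $(0,1)$-component, using the reconstruction identity \eqref{product-coproduct} together with the co-Leibniz diagram \eqref{cotri-coder} to show that every coderivation $D$ satisfies $D=\Psi(D^{0,1})$ --- which is precisely your steps (b) and (c) --- and then reads off injectivity and surjectivity. The only difference is presentational: you make explicit the verification that $\Psi(\frkR)$ is indeed a coderivation (your step (a), by direct check of \eqref{coder-1}--\eqref{coder-2} or by duality with the derivation isomorphism $\Phi$), a point the paper compresses into the remark that $D=\Psi(D^{0,1})$ shows $\Psi$ is well defined.
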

	\begin{proof}
		Let $D\in \coDer(\huaComTrias^c(V))$. Then $D$ is the sum of $D^{m,n}:\huaComTrias^c(V)\lon \Sym^m(V)\otimes \Sym^n(V),$  for all $m\ge0,n\ge1$. %Moreover,  denote the projection $\Sym(V)\otimes \bar{\Sym}(V)\lon V$ by $\pr_V$ and then $D^{0,1}=\pr_V\circ D$.
		Since $D$ is a coderivation, we have the following commutative diagram:
		\begin{equation}\label{cotri-coder}
			\begin{split}
				\xymatrix{ \huaComTrias^c(V) \ar^{D}[rr]\ar_{\text{$(\delta^{m-1}\otimes \Delta^{n-1})\circ\delta$}}[d] && \huaComTrias^c(V) \ar^{\text{$(\delta^{m-1}\otimes \Delta^{n-1})\circ\delta$}}[d] \\
					\text{$(\huaComTrias^c(V))^{\otimes m+n}$} \ar_{\sum_{i=1}^{m+n}{\Id}^{\otimes i-1}\otimes D\otimes {\Id}^{\otimes m+n-i}}[rr]&& \text{$(\huaComTrias^c(V))^{\otimes m+n}$}
					.}
			\end{split}
		\end{equation}
		By \eqref{product-coproduct} and \eqref{cotri-coder}, we obtain that $$\pr^{m,n}_V\circ \big(\ast(\ast_{m}\otimes \cdot_{n})\big)\circ \Big(\sum_{i=1}^{m+n}{\Id}^{\otimes i-1}\otimes D\otimes {\Id}^{\otimes m+n-i}\Big)\circ (\delta^{m-1}\otimes \Delta^{n-1})\circ\delta=m!n!D^{m,n}.$$
		Thus, we deduce that
\begin{eqnarray*}
			&&D^{m,n}(X)\\
			&=&\sum_{X}\sum_{i=1}^{m}\frac{1}{m!n!}(-1)^{\alpha_i}\pr^{0,1}_V(X_{(1)})\ldots  D^{0,1}(X_{(i)})\ldots \pr^{0,1}_V(X_{(m)})\otimes \pr^{0,1}_V(X_{(m+1)})\ldots \pr^{0,1}_V(X_{(m+n)})\\
			&&+\sum_{X}\sum_{j=1}^{n}\frac{1}{m!n!}(-1)^{\beta_j}\pr^{0,1}_V(X_{(1)})\ldots \pr^{0,1}_V(X_{(m)})\otimes \pr^{0,1}_V(X_{(m+1)})\ldots D^{0,1}(X_{(m+j)})\ldots \pr^{0,1}_V(X_{(m+n)}),
		\end{eqnarray*}
where $\alpha_i=D(X_{(1)}+\ldots +X_{(i-1)})$,~$\beta_j=D(X_{(1)}+\ldots +X_{(m+j-1)})$ and $(\delta^{m-1}\otimes \Delta^{n-1})\delta(X)=\sum_{X}X_{(1)}\otimes X_{(2)}\otimes\ldots\otimes X_{(m+n)},$ for all $X\in \huaComTrias^c(V).$ Therefore, the coderivation $D$ is completely determined by $D^{0,1}$. Moreover, there holds $D=\Psi(D^{0,1})$. On the one hand, this shows that the map $\Psi$ is well defined, and on the other hand, the map $\Psi$ is surjective. It is obvious that the map $\Psi$ is injective. Thus,  the map $\Psi$ is an isomorphism.
	\end{proof}
	
	Now we are ready to formulate the main result of this section.
	
	\begin{thm}\label{gla-post-lie}
		Let $V$ be a graded vector space. Then $(\Hom(\Sym(V)\otimes \bar{\Sym}(V),V),[\cdot,\cdot]_\postLie)$ is a graded Lie algebra, where the graded Lie bracket $[\cdot,\cdot]_\postLie$  is given by
		\begin{eqnarray*}
			[f,g]_\postLie:=f\circ g-(-1)^{kl}g\circ f,\,\,\,\,
			\label{eq:gfgcirc}
		\end{eqnarray*}
		for all $    f=\sum_{r\ge 0,s\ge 1} f_{r,s}\in \Hom^k(\Sym(V)\otimes \bar{\Sym}(V),V),
		g=\sum_{p\ge 0,q\ge 1} g_{p,q}\in \Hom^l(\Sym(V)\otimes \bar{\Sym}(V),V),$
		where $f\circ g\in \Hom^{k+l}(\Sym(V)\otimes \bar{\Sym}(V),V)$ is defined by
		{\small{
				\begin{eqnarray*}
					\nonumber&&(f\circ g)_{n,m}(x_1 \ldots  x_n\otimes x_{n+1} \ldots  x_{n+m})\\
					&=&\sum_{\sigma\in \mathbb S_{(p,q,n-p-q)}\atop 0\le p\le n-1,1\le q\le n}\varepsilon(\sigma)f_{n-p-q+1,m}\Big(g _{p,q}(x_{\sigma(1)} \ldots x_{\sigma(p)}\otimes x_{\sigma(p+1)} \ldots x_{\sigma(p+q)}) x_{\sigma(p+q+1)} \ldots  x_{\sigma(n)}\otimes  x_{n+1} \ldots x_{n+m}\Big)\\
					\nonumber&&+\sum_{\sigma\in\mathbb S_{(n-p,p)}\atop 0\le p\le n}\sum_{ \tau\in\mathbb S_{(q,m-q)}\atop 1\le q\le m}\varepsilon(\sigma)\varepsilon(\tau)(-1)^{l(x_{\sigma(1)}+\ldots+x_{\sigma(n-p)})}\\
					\nonumber&&f_{n-p,m-q+1}\Big(x_{\sigma(1)} \ldots  x_{\sigma(n-p)}\otimes  g_{p,q}(x_{\sigma(n-p+1)} \ldots x_{\sigma(n)}\otimes x_{n+\tau(1)} \ldots x_{n+\tau(q)}) x_{n+\tau(q+1)} \ldots  x_{n+\tau(m)}\Big).
				\end{eqnarray*}
		}}
	\end{thm}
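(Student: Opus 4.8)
The plan is to obtain the graded Lie algebra structure on $\Hom(\Sym(V)\otimes \bar{\Sym}(V),V)$ by transporting, through the isomorphism $\Psi$ of Proposition \ref{important-iso}, the standard graded Lie algebra structure carried by coderivations. Recall the general fact that for any graded coalgebra the graded commutator $[D_1,D_2]:=D_1\circ D_2-(-1)^{|D_1||D_2|}D_2\circ D_1$ of two coderivations is again a coderivation; here one verifies directly that $[D_1,D_2]$ satisfies both \eqref{coder-1} and \eqref{coder-2}, using that each $D_i$ does and that the mixed terms $D_1\otimes D_2$ and $D_2\otimes D_1$ cancel after applying the Koszul sign rule for the interchange $(A\otimes B)\circ(C\otimes D)=(-1)^{|B||C|}(A\circ C)\otimes(B\circ D)$. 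Thus $\coDer(\huaComTrias^c(V))$ is a graded Lie subalgebra of $\big(\End(\huaComTrias^c(V)),[\cdot,\cdot]\big)$, and it will suffice to show that $\Psi$ intertwines this commutator with the bracket $[\cdot,\cdot]_\postLie$; the graded antisymmetry and Jacobi identity for $[\cdot,\cdot]_\postLie$ then follow automatically by transport of structure.

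Since the proof of Proposition \ref{important-iso} shows that every coderivation $D$ is recovered from its $V$-component via $D=\Psi(\pr^{0,1}_V\circ D)$, and since $[\Psi(f),\Psi(g)]$ is a coderivation by the previous paragraph, it is enough to establish the single identity
\begin{eqnarray*}
	\pr^{0,1}_V\circ[\Psi(f),\Psi(g)]=[f,g]_\postLie.
\end{eqnarray*}
The key preliminary step is the observation that $\pr^{0,1}_V\circ\Psi(f)=f$. Inspecting \eqref{coder-post-lie} with $\frkR=f$, the first sum of each $\hat{f}_{r,s}$ places the value $f_{r,s}(\cdots)$ in the left tensor factor, hence has image in $\Sym^{\ge 1}(V)\otimes\bar{\Sym}(V)$ and is annihilated by $\pr^{0,1}_V$; the second sum has image in $\Sym^{n-r}(V)\otimes\Sym^{m-s+1}(V)$, so it survives the projection to $\Sym^0(V)\otimes\Sym^1(V)\cong V$ only when $r=n$ and $s=m$, in which case both shuffle sets degenerate to the identity and the surviving term is exactly $f_{n,m}(x_1\ldots x_n\otimes x_{n+1}\ldots x_{n+m})$.

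Granting this, I would conclude $\pr^{0,1}_V\circ\Psi(f)\circ\Psi(g)=f\circ\Psi(g)$, and then simply apply $f=\sum_{r,s}f_{r,s}$ termwise to the two sums making up $\Psi(g)(x_1\ldots x_n\otimes x_{n+1}\ldots x_{n+m})$ as given by \eqref{coder-post-lie}. A term coming from the first sum of $\Psi(g)$ has bidegree $(n-p-q+1,m)$ and is therefore acted on by $f_{n-p-q+1,m}$, while a term from the second sum has bidegree $(n-p,m-q+1)$ and is acted on by $f_{n-p,m-q+1}$, the sign $(-1)^{l(x_{\sigma(1)}+\ldots+x_{\sigma(n-p)})}$ being inherited verbatim. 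This reproduces the two sums in the stated formula for $(f\circ g)_{n,m}$ precisely; subtracting $(-1)^{kl}$ times the same expression with $f$ and $g$ interchanged then yields the required identity, whence $\Psi([f,g]_\postLie)=[\Psi(f),\Psi(g)]$ and $\Psi$ is an isomorphism of graded Lie algebras.

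I expect the main obstacle to be the sign and shuffle bookkeeping in the final step: one must check that applying each $f_{r,s}$ to the homogeneous components of $\Psi(g)(\cdots)$ matches the shuffle sets $\mathbb S_{(p,q,n-p-q)}$ and $\mathbb S_{(n-p,p)}\times\mathbb S_{(q,m-q)}$ together with the Koszul prefactors of $(f\circ g)_{n,m}$ on the nose, with no residual signs produced by moving $f$ past the symmetric products. The reduction $\pr^{0,1}_V\circ\Psi(f)=f$ is what makes this manageable, since it discards at the outset all \emph{double insertion} contributions in which $f$ would act on letters left untouched by $g$.
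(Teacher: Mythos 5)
Your proposal is correct and takes essentially the same route as the paper: the paper's proof also obtains $[\cdot,\cdot]_\postLie$ by transporting the commutator bracket on $\coDer(\huaComTrias^c(V))$ through the isomorphism $\Psi$ of Proposition \ref{important-iso}. The points you verify explicitly --- that the commutator of coderivations is a coderivation, that $\pr^{0,1}_V\circ\Psi(f)=f$, and the bidegree bookkeeping identifying $\pr^{0,1}_V\circ[\Psi(f),\Psi(g)]$ with the stated formula for $[f,g]_\postLie$ --- are precisely the details the paper compresses into the phrase ``transferring the graded Lie algebra structure via $\Psi$''.
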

	\begin{proof}
		It is obvious that $\coDer(\huaComTrias^c(V))$ is a graded Lie algebra, in which the graded Lie bracket $[\cdot,\cdot]_C$ is just the commutator. Transferring the graded Lie algebra structure from  $\big(\coDer(\huaComTrias^c(V)),[\cdot,\cdot]_C\big)$ to   $\Hom(\Sym(V)\otimes \bar{\Sym}(V),V)$ via the graded linear isomorphism  $\Psi$, we obtain a graded Lie algebra $(\Hom(\Sym(V)\otimes \bar{\Sym}(V),V),[\cdot,\cdot]_\postLie)$ as claimed.
	\end{proof}
	\begin{rmk}
		In fact, it is easy to see that $(\Hom(\Sym(V)\otimes \bar{\Sym}(V),V),\circ)$ is a graded pre-Lie algebra.
	\end{rmk}
\emptycomment{	\yh{\begin{rmk}
			Let $\huaP$ be a Koszul operad. The set of $\huaP_\infty$-algebra structures on a graded vector space $V$ is equivalently given by
			\begin{eqnarray*}
				\Hom_{\dgOp}(\Omega\huaP^{\mathrm{i}},\End_V)\cong \Tw(\huaP^{\mathrm{i}},\End_V)\cong\Hom_{\dgCoop}(\huaP^{\mathrm{i}},\B\End_V)\cong \Codiff(\huaP^{\mathrm{i}}(V[1])).
			\end{eqnarray*}
			We note that $\Tw(\huaP^{\mathrm{i}},\End_V)$ is the set of Maurer-Cartan elements of the graded Lie algebra $\Hom_{\mathbb S-\Mod}(\huaP^{\mathrm{i}},\End_V)$, which is the sub-adjacent Lie algebra of the graded  pre-Lie algebra of the convolution operad $\Hom_{\mathbb S-\Mod}(\huaP^{\mathrm{i}},\End_V)$. Since $\huaPostLie$ operad is a Koszul operad \cite{Val}, therefore $(\Hom(\Sym(V[1])\otimes \bar{\Sym}(V[1]),V[1]),\circ)$ is a graded pre-Lie algebra.
	\end{rmk}}
}
	An ordinary vector space $V$ can be naturally viewed as a graded vector space concentrated at degree $-1$. We will see in a moment that  in the corresponding graded Lie algebra obtained by Theorem \ref{gla-post-lie}  `controls' post-Lie algebra structures  on $V$.
	
	Let $V$ be a vector space. Consider the graded vector space
	$
	C_{\postLie}^*(V,V)=\oplus_{n=0}^{+\infty}C_{\postLie}^{n}(V,V),
	$
	 given by the formula
	$$
	C_{\postLie}^{n}(V,V)=\oplus_{i=0}^{n}\Hom(\wedge^{i}V\otimes \wedge^{n+1-i}V,V).
	$$
	We will write $f=(f_0,\ldots,f_n)\in C_{\postLie}^{n}(V,V)$, where $f_i\in \Hom(\wedge^iV\otimes\wedge^{n+1-i}V,V)$. For $f\in C_{\postLie}^{n}(V,V)$ and $g\in C_{\postLie}^{m}(V,V)$, define  $f\circ g=\big((f\circ g)_0,\ldots,(f\circ g)_{n+m}\big)\in C^{n+m}_\postLie(V,V)$ for $0\le k\le m$ by
	\begin{eqnarray*}
		&&(f\circ g)_k(x_1 \ldots x_{k}\otimes x_{k+1} \ldots x_{n+m+1})\\
		&=&\sum_{\sigma\in\mathbb S_{(k-j,j)}\atop 0\le j\le k}\sum_{ \tau\in\mathbb S_{(m+1-j,n+j-k)}}(-1)^\sigma(-1)^\tau(-1)^{m(k-j)}\\
		\nonumber&&f_{k-j}\big(x_{\sigma(1)} \ldots  x_{\sigma(k-j)}\otimes  g_{j}(x_{\sigma(k-j+1)} \ldots x_{\sigma(k)}\otimes x_{k+\tau(1)} \ldots x_{k+\tau(m+1-j)}) x_{k+\tau(m+2-j)} \ldots  x_{k+\tau(n+m+1-k)}\big),
	\end{eqnarray*}
	and for $m+1\le k\le m+n$ by
	\begin{eqnarray*}
		\nonumber&&(f\circ g)_{k}(x_1 \ldots  x_k\otimes x_{k+1} \ldots  x_{n+m+1})\\
		&=&\sum_{\sigma\in \mathbb S_{(j,m+1-j,k-m-1)}\atop 0\le j\le m}(-1)^{\sigma}f_{k-m}\big(g _{j}(x_{\sigma(1)} \ldots x_{\sigma(j)}\otimes x_{\sigma(j+1)} \ldots x_{\sigma(m+1)}) x_{\sigma(m+2)} \ldots  x_{\sigma(k)}\otimes  x_{k+1} \ldots x_{n+m+1}\big)\\
		\nonumber&&+\sum_{\sigma\in\mathbb S_{(k-j,j)}\atop 0\le j\le m}\sum_{ \tau\in\mathbb S_{(m+1-j,n+j-k)}}(-1)^\sigma(-1)^\tau(-1)^{m(k-j)}\\
		\nonumber&&f_{k-j}\big(x_{\sigma(1)} \ldots  x_{\sigma(k-j)}\otimes  g_{j}(x_{\sigma(k-j+1)} \ldots x_{\sigma(k)}\otimes x_{k+\tau(1)} \ldots x_{k+\tau(m+1-j)}) x_{k+\tau(m+2-j)} \ldots  x_{k+\tau(n+m+1-k)}\big).
	\end{eqnarray*}

	\begin{thm}\label{post-lie-gla}
		Let $V$ be a vector space. Then  $(C_{\postLie}^*(V,V),[\cdot,\cdot]_{\postLie}) $ is a graded Lie algebra, where the graded Lie bracket $[\cdot,\cdot]_{\postLie}$ is given by
		\begin{equation}
			[f,g]_{\postLie}:=f\circ g-(-1)^{nm}g\circ f,
			\quad \forall f\in C_{\postLie}^{n}(V,V),~g\in C_{\postLie}^{m}(V,V).
			\label{eq:glapL}
		\end{equation}
		Moreover, its Maurer-Cartan elements are precisely post-Lie algebra structures on the vector space  $V$.
	\end{thm}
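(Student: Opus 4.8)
The plan is to establish Theorem \ref{post-lie-gla} by transferring the already-proven graded Lie algebra structure of Theorem \ref{gla-post-lie} along a degree-shifting identification, and then to unwind the Maurer-Cartan equation explicitly. First I would set $V$ to be the ordinary vector space concentrated in degree $0$ and work with the shifted space $V[1]$, which is concentrated in degree $-1$. The decomposition $\Hom(\Sym(V[1])\otimes\bar{\Sym}(V[1]),V[1])=\oplus_n\Hom^n$ into graded pieces should match, after the standard d\'ecalage isomorphism relating symmetric powers of a shift to exterior powers, the complex $C_{\postLie}^*(V,V)=\oplus_n\oplus_{i=0}^n\Hom(\wedge^iV\otimes\wedge^{n+1-i}V,V)$. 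Concretely, an element $\frkR_{p,q}\colon\Sym^p(V[1])\otimes\Sym^q(V[1])\to V[1]$ of internal degree $k$ corresponds, under the sign-twisting d\'ecalage map, to a multilinear map $f_i\in\Hom(\wedge^iV\otimes\wedge^{n+1-i}V,V)$ with $n$, $i$ determined by $p,q,k$. The main content of the first half is to verify that the pre-Lie product $\circ$ of Theorem \ref{gla-post-lie} is carried to the product $\circ$ displayed just before the statement of Theorem \ref{post-lie-gla}, so that the commutator bracket \eqref{eq:glapL} is literally the image of $[\cdot,\cdot]_\postLie$; granting this, the graded Jacobi identity for $[\cdot,\cdot]_\postLie$ is inherited for free.

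For the identification of the two products I would track the d\'ecalage signs carefully: replacing symmetric variables by antisymmetric ones introduces Koszul signs $\varepsilon(\sigma;\ldots)$ that become $\pm1$ signs $(-1)^\sigma$ because $V$ sits in a single degree, and the internal-degree factor $(-1)^{l(x_{\sigma(1)}+\ldots)}$ in the formula of Theorem \ref{gla-post-lie} collapses to the combinatorial sign $(-1)^{m(k-j)}$ appearing in the $C_{\postLie}^*$ formula. The shuffle index sets $\mathbb S_{(p,q,n-p-q)}$ and $\mathbb S_{(n-p,p)}\times\mathbb S_{(q,m-q)}$ map directly to $\mathbb S_{(j,m+1-j,k-m-1)}$ and $\mathbb S_{(k-j,j)}\times\mathbb S_{(m+1-j,n+j-k)}$ under the reindexing forced by the shift, and the split of the $C_{\postLie}$ product into the two ranges $0\le k\le m$ and $m+1\le k\le m+n$ reflects exactly which of the two summands in $\hat{\frkR}_{p,q}$ (the grafting-into-$\Sym$ term versus the grafting-into-$\bar{\Sym}$ term) can contribute. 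I expect this sign and index bookkeeping to be the main obstacle, since it is the only place where genuine care is needed; everything else is formal.

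For the Maurer-Cartan claim, I would take $f=(f_0,f_1)\in C_{\postLie}^1(V,V)$, so that $f_1\in\Hom(\wedge^2 V,V)$ and $f_0\in\Hom(V\otimes V,V)$ (with $f_1$ antisymmetric and $f_0$ having no symmetry in its two arguments); I claim $f_1$ is a candidate Lie bracket $[\cdot,\cdot]_\g$ and $f_0$ a candidate product $\rhd$. Writing out $[f,f]_\postLie=2f\circ f=0$ and separating it into its homogeneous components $\Hom(\wedge^iV\otimes\wedge^{2-i}V,V)$ for $i=0,1,2$, I would show that the $i=2$ component is precisely the Jacobi identity for $f_1$, the $i=0$ component is the left Leibniz-type identity \eqref{Post-1} expressing that $x\rhd(\cdot)$ derives $[\cdot,\cdot]_\g$, and the mixed $i=1$ component reproduces the weak-associativity relation \eqref{Post-2}. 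Since $\tfrac12[f,f]_\postLie=f\circ f$ has degree $2$ and lives in $C^2_\postLie$, its vanishing is equivalent to all three post-Lie axioms of Definition \ref{post-lie-defi} holding simultaneously, which gives the stated bijection between Maurer-Cartan elements and post-Lie structures; this last step is a direct computation using the explicit $f\circ f$ formula in the regime $n=m=1$.
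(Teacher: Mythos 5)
Your proposal is correct and follows the paper's own route: the graded Lie algebra structure is obtained exactly as you say, by specializing Theorem \ref{gla-post-lie} to $V$ viewed as a graded vector space concentrated in degree $-1$ (the paper states this transfer in one line, leaving the d\'ecalage sign bookkeeping you describe implicit), and the Maurer--Cartan claim is proved by the same direct expansion of $f\circ f=\frac{1}{2}[f,f]_{{PostLie}}$ into its three homogeneous components. Only your component bookkeeping is off: in the paper's convention $f_i\in\Hom(\wedge^{i}V\otimes\wedge^{n+1-i}V,V)$, so for $n=1$ the bracket candidate is $f_0\in\Hom(\wedge^2V,V)$ and $\rhd$ is $f_1\in\Hom(V\otimes V,V)$, the components of $f\circ f$ lie in $\Hom(\wedge^{i}V\otimes\wedge^{3-i}V,V)$ (not $\wedge^{2-i}V$), and it is the $i=0$ component that yields the Jacobi identity, the $i=1$ component that yields \eqref{Post-1}, and the $i=2$ component that yields \eqref{Post-2} --- harmless slips that a careful execution of your plan would correct.
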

	\begin{proof}
		
		The graded Lie algebra structure $[\cdot,\cdot]_{\postLie}$ is deduced from  Theorem \ref{gla-post-lie}.
		For all $\pi=(\pi_0,\pi_1)\in C_{\postLie}^{1}(V,V)$, where $\pi_0\in\Hom(\wedge^2V,V)$ and $\pi_1\in\Hom(V\otimes V,V)$, we have $$[\pi,\pi]_{\postLie}=2\pi\circ \pi=2((\pi\circ\pi)_0,(\pi\circ\pi)_1,(\pi\circ\pi)_2),$$ where the maps
		$$
		(\pi\circ\pi)_0\in\Hom(\wedge^3V,V),\quad (\pi\circ\pi)_1\in\Hom(V\otimes \wedge^2V,V),\quad (\pi\circ\pi)_2\in\Hom(\wedge^2V\otimes V,V)
		$$
		are given by
		\begin{eqnarray*}
			(\pi\circ\pi)_0(x_1 x_2 x_3)&=&\pi_0(\pi_0(x_1 x_2) x_3)-\pi_0(\pi_0(x_1 x_3) x_2)+\pi_0(\pi_0(x_2 x_3) x_1),\\
			(\pi\circ\pi)_1(x_1\otimes x_2x_3)&=&-\pi_1(x_1\otimes \pi_0(x_2 x_3))+\pi_0(\pi_1(x_1\otimes x_2) x_3)-\pi_0(\pi_1(x_1\otimes x_3) x_2),\\
			(\pi\circ\pi)_2(x_1 x_2\otimes x_3)&=&\pi_1(\pi_0(x_1 x_2)\otimes x_3)+\pi_1(\pi_1(x_1\otimes x_2)\otimes x_3)-\pi_1(\pi_1(x_2\otimes x_1)\otimes x_3)\\
			&&-\pi_1(x_1\otimes \pi_1(x_2\otimes x_3))+\pi_1(x_2\otimes \pi_1(x_1\otimes x_3)).
		\end{eqnarray*}
		Therefore, $\pi=(\pi_0,\pi_1)\in C_{\postLie}^{1}(V,V)$ is a Maurer-Cartan element of $(C_{\postLie}^*(V,V),[\cdot,\cdot]_{\postLie})$ if and only if   $(\pi\circ\pi)_0=0,~(\pi\circ\pi)_1=0,~(\pi\circ\pi)_2=0$.  Note that $(\pi\circ\pi)_0=0$ implies that $\pi_0$ is a Lie algebra structure on $V$, $(\pi\circ\pi)_1=0$ implies that $\pi_1(x_1,\cdot):V\to V$ is a derivation on the Lie algebra   $(V,\pi_0)$, and $(\pi\circ\pi)_2=0$ implies the compatibility condition \eqref{Post-2} for $\pi_0$ and $\pi_1$.
		Therefore, $\pi=(\pi_0,\pi_1)$ is a Maurer-Cartan element if and only if $(V,\pi_0,\pi_1)$ is a post-Lie algebra.
	\end{proof}
	
	Let $(\g,[\cdot,\cdot]_\g,\rhd)$ be a post-Lie algebra. Denote by $\pi=(\pi_0,\pi_1)$, where $\pi_0=[\cdot,\cdot]_\g$ and $\pi_1=\rhd$. By Theorem \ref{post-lie-gla},   $\pi$ is a Maurer-Cartan element of the graded Lie algebra   $(C_{\postLie}^*(\g,\g),[\cdot,\cdot]_{\postLie})$.
	Define the inner derivation $d_\pi:C_{\postLie}^*(\g,\g)\lon C_{\postLie}^*(\g,\g)$ by
	\begin{eqnarray*}
		d_\pi(f):=[\pi,f]_{\postLie},\,\,\,\,\forall f\in C_{\postLie}^*(\g,\g).
	\end{eqnarray*}
	By  $[\pi,\pi]_{\postLie}=0$, we obtain a
	dg Lie algebra $(C_{\postLie}^*(\g,\g),[\cdot,\cdot]_{\postLie},d_\pi)$. This dg Lie algebra governs deformations of the post-Lie algebra $(\g,[\cdot,\cdot]_\g,\rhd)$.

	\begin{thm}\label{def-post-lie}
		Let $(\g,\pi_0,\pi_1)$ be a post-Lie algebra. Then   $ (\g,\pi_0+\pi'_0,\pi_1+\pi'_1)$ is still a post-Lie algebra, where $\pi'_0:\g\wedge\g\longrightarrow \g$ and $\pi'_1:\g\otimes\g\longrightarrow \g$ are linear maps,   if and only if $\pi'=(\pi_0',\pi_1')$ is a Maurer-Cartan
		element of the dg Lie algebra
		$(C_{\postLie}^*(\g,\g),[\cdot,\cdot]_{\postLie},d_\pi)$.
	\end{thm}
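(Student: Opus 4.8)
The plan is to recognize this as the standard fact that Maurer--Cartan elements of a twisted dg Lie algebra correspond, after subtracting the base point, to Maurer--Cartan elements of the underlying graded Lie algebra. Concretely, by Theorem \ref{post-lie-gla} a pair $\mu=(\mu_0,\mu_1)\in C_{\postLie}^{1}(\g,\g)$ defines a post-Lie algebra structure on $\g$ if and only if $[\mu,\mu]_{\postLie}=0$. Applying this criterion with $\mu=\pi+\pi'=(\pi_0+\pi_0',\pi_1+\pi_1')$, the assertion that $(\g,\pi_0+\pi_0',\pi_1+\pi_1')$ is a post-Lie algebra is equivalent to the single equation $[\pi+\pi',\pi+\pi']_{\postLie}=0$. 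So the whole proof reduces to massaging this equation into Maurer--Cartan form for $d_\pi$.

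First I would expand by bilinearity of the bracket. Both $\pi$ and $\pi'$ lie in $C_{\postLie}^{1}(\g,\g)$, hence have degree $1$, and the graded antisymmetry coming from the definition $[f,g]_{\postLie}=f\circ g-(-1)^{nm}g\circ f$ gives $[f,g]_{\postLie}=-(-1)^{nm}[g,f]_{\postLie}$, so that $[\pi,\pi']_{\postLie}=[\pi',\pi]_{\postLie}$. Therefore
\[
[\pi+\pi',\pi+\pi']_{\postLie}=[\pi,\pi]_{\postLie}+2[\pi,\pi']_{\postLie}+[\pi',\pi']_{\postLie}.
\]
Next I would invoke the hypothesis that $(\g,\pi_0,\pi_1)$ is already a post-Lie algebra, which by Theorem \ref{post-lie-gla} means exactly $[\pi,\pi]_{\postLie}=0$; this is precisely what guarantees $d_\pi=[\pi,\cdot]_{\postLie}$ squares to zero. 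The displayed expansion then collapses to $2[\pi,\pi']_{\postLie}+[\pi',\pi']_{\postLie}=0$, equivalently
\[
d_\pi(\pi')+\tfrac{1}{2}[\pi',\pi']_{\postLie}=0,
\]
which is the Maurer--Cartan equation of the dg Lie algebra $(C_{\postLie}^*(\g,\g),[\cdot,\cdot]_{\postLie},d_\pi)$. Reading this chain of equivalences in both directions proves the theorem.

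The step requiring the most care, and the only genuine obstacle, is the sign and degree bookkeeping in the expansion. One must confirm that $\pi$ and $\pi'$ really are of degree $1$, so that the Koszul factor $(-1)^{nm}$ with $n=m=1$ produces the cross term $2[\pi,\pi']_{\postLie}$ rather than a cancellation (consistent with $[\pi,\pi]_{\postLie}=2\,\pi\circ\pi$ as computed in the proof of Theorem \ref{post-lie-gla}), and that the normalization of the Maurer--Cartan equation $d\alpha+\tfrac12[\alpha,\alpha]=0$ matches the convention used for $d_\pi$. Once these conventions are pinned down the argument is entirely formal, being nothing more than the standard twisting identity $[\pi+\pi',\pi+\pi']=2d_\pi(\pi')+[\pi',\pi']$ for a Maurer--Cartan base point $\pi$.
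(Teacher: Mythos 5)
Your proof is correct and is exactly the standard twisting computation the paper has in mind: the paper's own proof of Theorem \ref{def-post-lie} is simply ``It is straightforward,'' and the intended argument is precisely your expansion $[\pi+\pi',\pi+\pi']_{\postLie}=[\pi,\pi]_{\postLie}+2[\pi,\pi']_{\postLie}+[\pi',\pi']_{\postLie}$ combined with Theorem \ref{post-lie-gla} and $[\pi,\pi]_{\postLie}=0$. Your sign bookkeeping is also right: since both elements have degree $1$, the graded antisymmetry $[f,g]_{\postLie}=-(-1)^{nm}[g,f]_{\postLie}$ yields $[\pi,\pi']_{\postLie}=[\pi',\pi]_{\postLie}$, so the equation collapses to $d_\pi(\pi')+\tfrac{1}{2}[\pi',\pi']_{\postLie}=0$ as claimed.
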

	\begin{proof}
		It is straightforward.
	\end{proof}

	\section{Cohomologies of post-Lie algebras}\label{sec:coh}

	In this section, we define the cohomology of a  post-Lie algebra   using the dg Lie algebra given in Theorem \ref{def-post-lie}. As an application, we classify infinitesimal deformations of a post-Lie algebra using the second cohomology group of the relevant complex.
	
	Let $(\g,[\cdot,\cdot]_\g,\rhd)$ be a post-Lie algebra, and set $\pi:=(\pi_0,\pi_1)$, where $\pi_0=[\cdot,\cdot]_\g$ and $\pi_1=\rhd$. Define the set of  $0$-cochains $\frkC^0(\g;\g)$ to be $0$, and define
	the set of  $n$-cochains $\frkC^n(\g;\g)$ to be $C_{\postLie}^{n-1}(\g,\g)$ for $~n\geq 1$, i.e.
	$$
	\frkC^n(\g;\g)=\oplus_{i=0}^{n-1}\Hom(\wedge^{i}\g\otimes \wedge^{n-i}\g,\g).
	$$
	\emptycomment{
	\yh{
		Let $\huaP$ be a Koszul operad. The operadic cohomology complex of a $\huaP$-algebra $(V,\pi)$ is  the complex
		$\big(\coDer(\huaP^{\mathrm{i}}(V[1])),d_\pi\big)$. Here $\huaP^{\mathrm{i}}(V[1]))$ is the cofree conilpotent $\huaP^{\mathrm{i}}$ coalgebra generated by the graded vector space $V[1]$.
		Since the Koszul dual of the $\PreLie$ operad is the $\PERM$ operad, while $(\Sym(V)\otimes V,\delta)$ is the cofree conilpotent $\PERM$ coalgebra generated by the graded vector space $V$, where the coproduct $\delta$ is given by
		\begin{eqnarray*}
			\delta(x_1\ldots x_m\otimes y)=\sum_{0\le i\le m-1}\sum_{\sigma\in\mathbb S_{(i,1,m-i-1)}}\varepsilon(\sigma)
			\big(x_{\sigma(1)}\ldots x_{\sigma(i)}\otimes x_{\sigma(i+1)}\big)\otimes  \big(x_{\sigma(i+2)}\ldots x_{\sigma(m)}\otimes y\big),
		\end{eqnarray*} so the  cohomology complex of the pre-Lie algebra $(V,\pi)$ is the complex $\big(\coDer(\Sym(V[1])\otimes V[1]),d_\pi\big)$. Since $(\Sym(V[1])\otimes V[1],\delta)$ is cofree, we obtain that $\coDer(\Sym(V[1])\otimes V[1])\cong\Hom(\Sym(V[1])\otimes V[1],V[1])$, and we recover the cohomology of the pre-Lie algebra $(V,\pi)$ which is given by Chapoton and Livernet \cite{CL}. Note that  the $0$-cochains of the pre-Lie algebra $(V,\pi)$ is also to be $0$. For the case of a post-Lie algebra $(V,\pi)$, the cofree conilpotent cocommutative cotrialgebra generated by the graded vector space $V[1]$ is given by $(\Sym(V[1])\otimes \bar{\Sym}(V[1]),\Delta,\delta)$, which  can not be expanded to the bigger space $\Sym(V[1])\otimes \Sym(V[1])$. This is why $\frkC^0(\g;\g)=0.$
	}
}
	Define the {\bf coboundary operator} $\partial:\frkC^n(\g;\g)\lon \frkC^{n+1}(\g;\g)$ by
	\begin{equation}\label{cohomology-of-RB}
		\partial(f)=(-1)^{n-1}d_\pi(f)=(-1)^{n-1}[\pi,f]_{\postLie},\,\,\,\,\forall f\in \frkC^n(\g;\g).
	\end{equation}
	
	\begin{defi}\label{defi:cohomology of pL}
		The cohomology of the cochain complex $(\oplus _{n=0}^{+\infty}\frkC^n(\g;\g),\partial)$ is called the {\bf cohomology  of the post-Lie algebra} $(\g,[\cdot,\cdot]_\g,\rhd)$. We denote its $n$-th cohomology group by $\huaH^n(\g;\g)$.
	\end{defi}
	
	\begin{rmk}
		Since $\huaPostLie$ operad is a Koszul operad \cite{Val}, the operadic  cohomology theory of post-Lie algebras introduced here
		is isomorphic to the Andr\'e-Quillen cohomology \cite{LV,Mil}.
	\end{rmk}

	To better understand this cohomology, we write the differential $d_\pi$ more clearly.
	For any $f\in C_{\postLie}^{n-1}(\g,\g)$, we have
	\begin{eqnarray*}
		(d_\pi(f))_k(x_1 \ldots  x_k\otimes x_{k+1} \ldots  x_{n+1})=(\pi\circ f-(-1)^{n-1}f\circ \pi)_k(x_1 \ldots  x_k\otimes x_{k+1} \ldots  x_{n+1}).
	\end{eqnarray*}
	For $k=0$, we have
	\begin{eqnarray*}
		&&(\pi\circ f-(-1)^{n-1}f\circ \pi)_0(x_1 \ldots  x_{n+1})\\
		&=&\sum_{i=1}^{n+1}(-1)^{n+1-i}[f_0(x_1 \ldots \hat{x}_i\ldots  x_{n+1}),x_i]_\g\\
		&&-(-1)^{n-1}\sum_{1\le i<j\le n+1}(-1)^{i+j-1}f_0([x_i,x_j]_\g x_1\ldots \hat{x}_i\ldots  \hat{x}_j\ldots  x_{n+1}).
	\end{eqnarray*}
	For $k=1$, we have
	\begin{eqnarray*}
		&&(\pi\circ f-(-1)^{n-1}f\circ \pi)_1(x_1\otimes x_2 \ldots  x_{n+1})\\
		&=&(-1)^{n-1}x_1\rhd f_0(x_2 \ldots  x_{n+1})+\sum_{i=2}^{n+1}(-1)^{n+1-i}[f_1(x_1\otimes x_2 \ldots \hat{x}_i\ldots    x_{n+1}),x_i]_\g\\
		&&-(-1)^{n-1}\sum_{2\le i<j\le n+1}(-1)^{i+j}f_1(x_1\otimes [x_i,x_j]_\g x_2\ldots \hat{x}_i\ldots  \hat{x}_j\ldots  x_{n+1})\\
		&&-(-1)^{n-1}\sum_{i=2}^{n+1}(-1)^{i}f_0(x_1\rhd x_i x_2 \ldots \hat{x}_i\ldots    x_{n+1}).
	\end{eqnarray*}
	For $2\le k\le n-1$, we have
	\begin{eqnarray*}
		&&(\pi\circ f-(-1)^{n-1}f\circ \pi)_k(x_1 \ldots x_{k}\otimes x_{k+1} \ldots x_{n+1})\\
		&=&(-1)^{n-1}\sum_{i=1}^{k}(-1)^{i-1}x_i\rhd f_{k-1}(x_1\ldots \hat{x}_i\ldots x_k \otimes x_{k+1} \ldots x_{n+1})\\
		&&+\sum_{i=k+1}^{n+1}(-1)^{n+1-i}[f_k(x_1 \ldots x_{k}\otimes x_{k+1} \ldots \hat{x}_i\ldots x_{n+1}),x_i]_\g\\
		&&-(-1)^{n-1}\sum_{1\le i< j\le k}(-1)^{i+j-1}f_{k-1}([x_i,x_j]_\g x_1\ldots \hat{x}_i\ldots \hat{x}_j\ldots x_k\otimes x_{k+1} \ldots x_{n+1})\\
		&&-(-1)^{n-1}\sum_{1\le i< j\le k}(-1)^{i+j-1}f_{k-1}\big((x_i\rhd x_j-x_j\rhd x_i) x_1\ldots \hat{x}_i\ldots \hat{x}_j\ldots x_k\otimes x_{k+1} \ldots x_{n+1}\big)\\
		&&-(-1)^{n-1}\sum_{k+1\le i< j\le n+1}(-1)^{k+i+j-1}f_{k}(x_1 \ldots x_{k}\otimes [x_i,x_j]_\g x_{k+1}\ldots \hat{x}_i\ldots \hat{x}_j \ldots x_{n+1})\\
		&&-(-1)^{n-1}\sum_{i=1}^{k}\sum_{j=k+1}^{n+1}(-1)^{i+j+k}f_{k-1}(x_1 \ldots\hat{x}_i\ldots x_{k}\otimes x_i\rhd x_j x_{k+1}\ldots \hat{x}_j\ldots x_{n+1}).
	\end{eqnarray*}
	For $k=n$, we have
	\begin{eqnarray*}
		&&(\pi\circ f-(-1)^{n-1}f\circ \pi)_n(x_1 \ldots  x_{n}\otimes x_{n+1})\\
		&&=\sum_{j=0}^{n-1}\sum_{\sigma\in \mathbb S_{(j,n-j)}}(-1)^{\sigma}f_j(x_{\sigma(1)} \ldots x_{\sigma(j)}\otimes x_{\sigma(j+1)} \ldots x_{\sigma(n)})\rhd x_{n+1}\\
		&&+\sum_{i=1}^{n}(-1)^{n+i}x_i\rhd f_{n-1}(x_1 \ldots\hat{x}_i\ldots  x_{n}\otimes x_{n+1})\\
		&&-(-1)^{n-1}\sum_{1\le i<j\le n}(-1)^{i+j-1}f_{n-1}([x_i,x_j]_\g x_{1}\ldots \hat{x}_i\ldots \hat{x}_j \ldots x_{n}\otimes x_{n+1})\\
		&&-(-1)^{n-1}\sum_{1\le i<j\le n}(-1)^{i+j-1}f_{n-1}\big((x_i\rhd x_j-x_j\rhd x_i) x_{1}\ldots \hat{x}_i\ldots \hat{x}_j \ldots x_{n}\otimes x_{n+1}\big)\\
		&&-(-1)^{n-1}\sum_{i=1}^{n}(-1)^{i+1}f_{n-1}(x_{1}\ldots \hat{x}_i\ldots  x_{n}\otimes x_i\rhd x_{n+1}).
	\end{eqnarray*}

	We write $\partial =\sum_{k=0}^{n-1}\partial^k$, where $\partial^k=\partial|_{\Hom(\wedge^k\g\otimes \wedge^{n-k}\g,\g)}$. Then by the formula of $d_\pi$ given above, we can write
	$$
	\partial^{n-1}=\partial^{n-1}_{n-1}+\partial^{n-1}_n,\quad \partial^k=\partial^k_k+\partial^k_{k+1}+\partial^k_{n},\quad\mbox{for}~ k=0,\ldots,n-2,
	$$
	where $\partial^k_j$, $j\in\{k,k+1,n\}$ is the map $\Hom(\wedge^k\g\otimes \wedge^{n-k}\g,\g)\to\Hom(\wedge^j\g\otimes \wedge^{n+1-j}\g,\g)$ given by the following formulas:
	\begin{itemize}
		\item  for $k=0,1,\cdots,n-1$,
		\begin{eqnarray*}
			&&(\partial^k_{k}f_{k})(x_1\ldots x_{k}; x_{k+1} \ldots x_{n+1})\\
			&=&\sum_{i=k+1}^{n+1}(-1)^{i+1}[x_i,f_k(x_1 \ldots x_{k}; x_{k+1} \ldots \hat{x}_i\ldots x_{n+1})]_\g\\
			&&+\sum_{k+1\le i< j\le n+1}(-1)^{k+i+j}f_{k}(x_1 \ldots x_{k}; [x_i,x_j]_\g x_{k+1}\ldots,\hat{x}_i\ldots \hat{x}_j \ldots x_{n+1}),
		\end{eqnarray*}

		\item for $k=0,1,\cdots,n-2$,
		\begin{eqnarray*}
			&&(\partial_{k+1}^{k}f_{k})(x_1\ldots x_{k+1}; x_{k+2} \ldots x_{n+1})\\
			&=&\sum_{i=1}^{k+1}(-1)^{i-1}x_i\rhd f_{k}(x_1\ldots \hat{x}_i\ldots x_{k+1}; x_{k+2} \ldots x_{n+1})\\
			&&+\sum_{1\le i< j\le k+1}(-1)^{i+j}f_{k}\big(([x_i,x_j]_\g +x_i\rhd x_j-x_j\rhd x_i) x_1\ldots \hat{x}_i\ldots \hat{x}_j\ldots x_{k+1}; x_{k+2} \ldots x_{n+1}\big)\\
			&&-\sum_{i=1}^{k+1}\sum_{j=k+2}^{n+1}(-1)^{i+j+k+1}f_{k}(x_1 \ldots\hat{x}_i\ldots x_{k+1}; x_i\rhd x_j x_{k+2}\ldots \hat{x}_j\ldots x_{n+1}),
		\end{eqnarray*}

		\item for $k=0,1,\cdots,n-2$,
		
		\begin{eqnarray*}
			(\partial_{n}^{k} f_k)(x_1 \ldots  x_{n}; x_{n+1})=\sum_{\sigma\in \mathbb S_{(k,n-k)}}(-1)^{n-1}(-1)^{\sigma}f_k(x_{\sigma(1)} \ldots x_{\sigma(k)}; x_{\sigma(k+1)} \ldots x_{\sigma(n)})\rhd x_{n+1},
		\end{eqnarray*}
	\end{itemize}
	and
\begin{eqnarray*}
		&&(\partial_{n}^{n-1} f_{n-1})(x_1 \ldots  x_{n}; x_{n+1})\\
		&=&\sum_{i=1}^{n}(-1)^{i+1}x_i\rhd f_{n-1}(x_1 \ldots\hat{x}_i\ldots  x_{n}; x_{n+1})\\
		&&+\sum_{i=1}^{n}(-1)^{i+1}f_{n-1}(x_1\ldots\hat{x}_i\ldots x_{n};x_i)\rhd x_{n+1}\\
		&&+\sum_{1\le i<j\le n}(-1)^{i+j}f_{n-1}\big(([x_i,x_j]_\g +x_i\rhd x_j-x_j\rhd x_i) x_{1}\ldots \hat{x}_i\ldots \hat{x}_j \ldots x_{n}; x_{n+1}\big)\\
		&&-\sum_{i=1}^{n}(-1)^{i+1}f_{n-1}(x_{1}\ldots \hat{x}_i\ldots  x_{n}; x_i\rhd x_{n+1}).
	\end{eqnarray*}
We introduce the notations $C^{k,s}=\Hom(\wedge^k\g\otimes\wedge^{s}\g,\g)$.
This is	illustrated by the following diagram:
{\footnotesize
\begin{equation*}
\xymatrix@C=7pt@R=50pt{
&&C^{0,n-1}
\ar[dl]|{\color{red}{ {\partial}^0_0 }} \ar[dr]|{{\partial}^0_1} \ar[drrrrrrrrr]|{{\partial}^0_{n-1}} &
&C^{1,n-2}
\ar[dl]|{\color{red}{{\partial}^1_1}} \ar[dr]|{{\partial}^1_2} \ar[drrrrrrr]|{{\partial}^1_{n-1}}&
&&\cdots&
&&C^{n-2,1}
\ar[dl]|{\color{red}{{\partial}^{n-2}_{n-2}}} \ar[dr]| {\color{blue}{{\partial}^{n-2}_{n-1}}}&
&&\\
&C^{0,n}
\ar[dl]|{\color{red}{{\partial}^0_0 }} \ar[dr]|{{\partial}^0_1} \ar[drrrrrrrrrrr]|{{\partial}^0_n}&
&C^{1,n-1}
\ar[dl]|{\color{red}{{\partial}^1_1}} \ar[dr]|{{\partial}^1_2} \ar[drrrrrrrrr]|{{\partial}^1_n}&
&C^{2,n-2}
\ar[dl]|{\color{red}{{\partial}^2_2}} \ar[dr]|{{\partial}^2_3} \ar[drrrrrrr]|<<<<<<<<<<<<<<<<<<<<<<<<<{{\partial}^2_n}&
&\cdots&
&C^{n-2,2}
\ar[dl]|<<<<<<{\color{red}{{\partial}^{n-2}_{n-2}}} \ar[dr]|{{\partial}^{n-2}_{n-1}} \ar[drrr]|<<<<<<<<<<<<{{\partial}^{n-2}_{n}}&
&C^{n-1,1}
\ar[dl]|<<<<<<{\color{red}{{\partial}^{n-1}_{n-1}}} \ar[dr]|{\color{blue}{{\partial}^{n-1}_{n}}}&\\
C^{0,n+1}& & C^{1,n}& & C^{2,n-1}& & C^{3,n-2}& \cdots& C^{n-2,3}& & C^{n-1,2}& &C^{n,1}
}
\end{equation*}
}
	
	\begin{lem}\label{complex-lie}
		Let $(\g,[\cdot,\cdot]_\g,\rhd)$ be a post-Lie algebra. For all $k=0,1,2,\cdots$, the maps $\partial^k_k$ induce a differential on  $\oplus_{l=1}^{+\infty} \Hom(\wedge^{k}\g\otimes\wedge^l\g,\g)$, so that we have the following cochain complex:
		\begin{eqnarray*}
			\Hom(\wedge^{k}\g\otimes\g,\g)\stackrel{\partial^k_{k}}{\longrightarrow}\Hom(\wedge^{k}\g\otimes\wedge^{2}\g,\g)\stackrel{\partial^k_{k}}{\longrightarrow}\Hom(\wedge^{k}\g\otimes\wedge^{3}\g,\g)\stackrel{\partial^k_{k}}{\longrightarrow} \cdots.
		\end{eqnarray*}
	\end{lem}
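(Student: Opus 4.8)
The plan is to identify the operator $\partial^k_k$, for any fixed entries in the first $k$ slots, with the Chevalley--Eilenberg coboundary of the Lie algebra $(\g,[\cdot,\cdot]_\g)$ with coefficients in the adjoint representation, and then to invoke the classical nilpotency $\mathrm{d}_{\mathrm{CE}}^2=0$. Inspecting the displayed formula defining $\partial^k_k$, one sees that this map alters only the second (wedge) tensor factor and uses only the Lie bracket $[\cdot,\cdot]_\g$: the arguments $x_1,\ldots,x_k$ occupying the first $k$ slots are never touched and the first wedge degree $k$ is preserved. The very same formula defines $\partial^k_k$ on each $\Hom(\wedge^k\g\otimes\wedge^l\g,\g)$ (take $n=k+l$, so the output lies in $\Hom(\wedge^k\g\otimes\wedge^{l+1}\g,\g)$). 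This spectator structure is what makes the reduction to Lie algebra cohomology possible.

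First I would fix a tuple $\mathbf{x}=(x_1,\ldots,x_k)\in\g^{\times k}$ and, for any $h\in\Hom(\wedge^k\g\otimes\wedge^l\g,\g)$, set $h^{\mathbf{x}}:=h(x_1\ldots x_k;-)\in\Hom(\wedge^l\g,\g)$, which is skew-symmetric in its $l$ arguments and hence a legitimate $l$-cochain of $(\g,[\cdot,\cdot]_\g)$ with values in the adjoint module $(\g,\ad)$, where $\ad_x y=[x,y]_\g$; this is a representation because $[\cdot,\cdot]_\g$ satisfies the Jacobi identity. Substituting $y_p:=x_{k+p}$ for $1\le p\le n+1-k$ and using $[u,v]_\g=-[v,u]_\g$, I would verify the key identity
\begin{equation*}
(\partial^k_k h)(x_1\ldots x_k;-)=(-1)^k\,\mathrm{d}_{\mathrm{CE}}\big(h^{\mathbf{x}}\big),
\end{equation*}
valid for every $l$. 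Indeed, the first sum in $\partial^k_k$ contributes $\sum_p(-1)^{k+p+1}[y_p,h^{\mathbf{x}}(\ldots\hat{y}_p\ldots)]_\g=(-1)^k\sum_p(-1)^{p+1}\ad_{y_p}h^{\mathbf{x}}(\ldots\hat{y}_p\ldots)$, the adjoint-action term of $\mathrm{d}_{\mathrm{CE}}$, while the second sum contributes $(-1)^k\sum_{p<q}(-1)^{p+q}h^{\mathbf{x}}([y_p,y_q]_\g\ldots)$, its bracket term.

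Granting this identity and iterating, I obtain for every $\mathbf{x}$ that
\begin{equation*}
\big((\partial^k_k)^2 f_k\big)(x_1\ldots x_k;-)=(-1)^k\mathrm{d}_{\mathrm{CE}}\big((\partial^k_k f_k)(x_1\ldots x_k;-)\big)=(-1)^{2k}\mathrm{d}_{\mathrm{CE}}^2\big(f_k^{\mathbf{x}}\big)=0,
\end{equation*}
since the Chevalley--Eilenberg differential squares to zero. As $\mathbf{x}$ is arbitrary and the first $k$ slots are undisturbed throughout, this gives $(\partial^k_k)^2=0$, so the displayed sequence is a cochain complex, as claimed.

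The main obstacle will be the sign bookkeeping in establishing the displayed identity: one must track the global factor $(-1)^{n-1}$ coming from $\partial=(-1)^{n-1}d_\pi$, the reindexing $i=k+p$ (which turns the weights $(-1)^{n+1-i}$ and $(-1)^{k+i+j-1}$ of the $d_\pi$-formula into the clean $(-1)^{p+1}$ and $(-1)^{p+q}$, up to the common factor $(-1)^k$), and the reversal $[f_k(\ldots),x_i]_\g=-[x_i,f_k(\ldots)]_\g$ that converts the bracket appearing in $d_\pi$ into the adjoint action appearing in $\mathrm{d}_{\mathrm{CE}}$. Once these are reconciled, the conclusion is immediate from the classical nilpotency of $\mathrm{d}_{\mathrm{CE}}$, and no separate Jacobi-identity cancellation argument is required.
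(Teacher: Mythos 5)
Your proof is correct, and it takes a genuinely different route from the paper's. The paper never manipulates the explicit formula for $\partial^k_k$: it starts from $\partial\circ\partial=0$ (a consequence of the Maurer-Cartan equation $[\pi,\pi]_{\postLie}=0$), expands $(\partial\circ\partial)f$ into the compositions $\partial^a_b\circ\partial^c_d$ allowed by the decomposition of $\partial$, and observes that $\partial^k_k\partial^k_k f$ is the unique composition whose image lies in $\Hom(\wedge^{k}\g\otimes\wedge^{l+2}\g,\g)$ --- every other composition strictly raises the first wedge degree --- so this component must vanish on its own; the cases $l=1$ and $l\ge 2$ are treated separately only because the list of nonzero components of $\partial f$ differs. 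You instead freeze the first $k$ arguments and identify $\partial^k_k$ with $(-1)^k\dM_\CE$ for the adjoint representation of $(\g,[\cdot,\cdot]_\g)$, then quote the classical nilpotency $\dM_\CE^2=0$. Your sign bookkeeping does check out: with $i=k+p$ the weight $(-1)^{i+1}$ becomes $(-1)^k(-1)^{p+1}$, with $i=k+p$, $j=k+q$ the weight $(-1)^{k+i+j}$ becomes $(-1)^{3k+p+q}=(-1)^k(-1)^{p+q}$, and the common prefactor $(-1)^k$ is harmless upon squaring; one small simplification is that the displayed formulas for the $\partial^k_j$ already absorb the global sign $(-1)^{n-1}$ from $\partial=(-1)^{n-1}d_\pi$, so you need not track that factor separately as your last paragraph suggests. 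The trade-off between the two arguments: the paper's bidegree bookkeeping is computation-free and the identical mechanism also yields the other subcomplexes discussed afterwards (the reduced complex of Lemma \ref{lem:reduced complex} and the complexes on $\oplus_{n}\Hom(\wedge^{n-1}\g\otimes\wedge^{k}\g,\g)$ for $k\ge 3$), where no Chevalley-Eilenberg identification is available; your argument, on the other hand, proves rather than merely asserts the structural fact behind the lemma --- that for each fixed $k$ one gets a Chevalley-Eilenberg-type complex --- which the paper records only as a remark in the case $k=0$ (namely $\partial^0_0=\dM_\CE$), so your key identity upgrades that remark to all $k$.
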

	
	\begin{proof} We need to show that $\partial^k_k\circ \partial^k_k=0$.
		For any $f\in \Hom(\wedge^{k}\g\otimes\g,\g)$, we have
		\begin{eqnarray*}
			(\partial\circ \partial) f=\partial(\partial^{k}_{k}f+\partial^{k}_{k+1}f)=\partial^{k}_{k}\partial^{k}_{k}f+\partial^{k}_{k+1}\partial^{k}_{k}f+\partial^{k}_{k+2}\partial^{k}_{k}f+\partial^{k+1}_{k+1}\partial^{k}_{k+1}f+\partial^{k+1}_{k+2}\partial^{k}_{k+1}f=0.
		\end{eqnarray*}
		Thus, we deduce that $\partial^{k}_{k}\partial^{k}_{k}f=0,$ for all $f\in \Hom(\wedge^{k}\g\otimes\g,\g)$.
		
		For $m\ge 2$ and $g\in \Hom(\wedge^{k}\g\otimes\wedge^{m}\g,\g)$, we have
		\begin{eqnarray*}
			(\partial\circ \partial) g&=&\partial(\partial^{k}_{k}g+\partial^{k}_{k+1}g+\partial^{k}_{k+m}g)\\&=&\partial^{k}_{k}\partial^{k}_{k}g+\partial^{k}_{k+1}\partial^{k}_{k}g+\partial^{k}_{k+m+1}\partial^{k}_{k}g
+\partial^{k+1}_{k+1}\partial^{k}_{k+1}g+\partial^{k+1}_{k+2}\partial^{k}_{k+1}g+\partial^{k+1}_{k+m+1}\partial^{k}_{k+1}g\\
&&+\partial^{k+m}_{k+m}\partial^{k}_{k+m}g+\partial^{k+m}_{k+m+1}\partial^{k}_{k+m}g
			\\&=&0,
		\end{eqnarray*}
		which implies  that $\partial^{k}_{k}\partial^{k}_{k}g=0,$ for all $g\in \Hom(\wedge^{k}\g\otimes\wedge^{m}\g,\g)$.
	\end{proof}
	
	\begin{rmk}
		By the definition of $\partial^k_{k}$, obviously we have  $\partial^0_0=\dM_\CE$,  the Chevalley-Eilenberg coboundary operator of the Lie algebra $\g$ with coefficients in the adjoint representation. Thus the cochain complex $(\oplus_{l=1}^{+\infty} \Hom(\wedge^{k}\g\otimes\wedge^l\g,\g),\partial^k_k)$ can be viewed as  a higher version of the Chevalley-Eilenberg complex of the  Lie algebra $(\g,[\cdot,\cdot]_\g)$.
	\end{rmk}
	
	Now we have seen that the  Chevalley-Eilenberg complex of a Lie algebra is a subcomplex of the cochain complex associated to a post-Lie algebra defined above. This is reasonable and expected, since there is a Lie algebra inside a post-Lie algebra. Recall from \cite{TBGS} that there is another complex of a post-Lie algebra defined in terms of its sub-adjacent Lie algebra. Later in this paper, we indicate the precise relation between the cohomology given in \cite{TBGS} and the cohomology of a post-Lie algebra defined in Definition \ref{defi:cohomology of pL}.
	
	Note that $(\oplus_{n=1}^{+\infty}\Hom(\wedge^{n-1} \g\otimes \g,\g), \partial^{n-1}_n)$ is not a subcomplex. In fact, for all $f\in \Hom(\wedge^{n-1}\g\otimes\g,\g)$, we have
	\begin{eqnarray*}
		0=(\partial\circ \partial) f=\partial(\partial^{n-1}_{n-1}f+\partial^{n-1}_{n}f)=\partial^{n-1}_{n-1}\partial^{n-1}_{n-1}f+\partial^{n-1}_{n}\partial^{n-1}_{n-1}f
		+\partial^{n-1}_{n+1}\partial^{n-1}_{n-1}f+\partial^{n}_{n}\partial^{n-1}_{n}f
		+\partial^{n}_{n+1}\partial^{n-1}_{n}f,
	\end{eqnarray*}
	which implies that $$\partial^{n}_{n+1}\partial^{n-1}_{n}f+\partial^{n-1}_{n+1}\partial^{n-1}_{n-1}f=0,\quad\forall f\in \Hom(\wedge^{n-1}\g\otimes\g,\g).$$
	Thus, the obstruction of $\partial^{n}_{n+1}\partial^{n-1}_{n}f$ being zero is given by $\partial^{n-1}_{n+1}\partial^{n-1}_{n-1}f$. So one natural approach to obtain a reduced complex is considering the graded space $\oplus_{n=1}^{+\infty}\ker(\partial^{n-1}_{n-1})\subset \oplus_{n=1}^{+\infty}\Hom(\wedge^{n-1} \g\otimes \g,\g)$. By the explicit formula of $\partial^{n-1}_{n-1}$, we have
	\begin{eqnarray*}
		&&(\partial^{n-1}_{n-1}f)(x_1\ldots x_{n-1}; x_{n} x_{n+1})\\
		&=&(-1)^{n+1}[x_{n},f(x_1\ldots x_{n-1}; x_{n+1})]_\g+(-1)^{n+2}[x_{n+1},f(x_1\ldots x_{n-1}; x_{n})]_\g\\
		&&+(-1)^{n+2}f(x_1 \ldots x_{n-1}; [x_{n},x_{n+1}]_\g).
	\end{eqnarray*}
	In \cite{TBGS}, $\ker(\partial^{n-1}_{n-1})$ is denoted by $C_{\Der}^{n}(\g;\g)$.  Now we obtain a reduced complex as follows.

	\begin{lem}\label{lem:reduced complex}
		Let $(\g,[\cdot,\cdot]_\g,\rhd)$ be a post-Lie algebra. Then $\partial^{*-1}_*$ determines a differential on  $\oplus_{n=1}^{+\infty}\Der^n(\g,\g)$ so that we have a cochain complex as follows:
		\begin{eqnarray*}
			C_{\Der}^{1}(\g;\g)\stackrel{\partial^0_{1}}{\longrightarrow}C_{\Der}^{2}(\g;\g)\stackrel{\partial^1_{2}}{\longrightarrow}C_{\Der}^{3}(\g;\g)\stackrel{\partial^2_{3}}{\longrightarrow} \cdots.
		\end{eqnarray*}
	\end{lem}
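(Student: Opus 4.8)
The plan is to deduce the entire statement from the single identity $\partial^2=0$ already available for the full coboundary operator $\partial$, by tracking bidegrees. Fix $f\in\Hom(\wedge^{n-1}\g\otimes\g,\g)$. Since $\partial f=\partial^{n-1}_{n-1}f+\partial^{n-1}_{n}f$ has components in $\Hom(\wedge^{n-1}\g\otimes\wedge^{2}\g,\g)$ and $\Hom(\wedge^{n}\g\otimes\g,\g)$ respectively, I would apply $\partial$ once more and expand each summand according to the stratified decomposition $\partial^{n-1}=\partial^{n-1}_{n-1}+\partial^{n-1}_n+\partial^{n-1}_{n+1}$ (on degree $n+1$) and $\partial^{n}=\partial^{n}_{n}+\partial^{n}_{n+1}$. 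This writes $\partial^2 f$ as a sum of terms landing in $\Hom(\wedge^{n-1}\g\otimes\wedge^{3}\g,\g)$, in $\Hom(\wedge^{n}\g\otimes\wedge^{2}\g,\g)$, and in $\Hom(\wedge^{n+1}\g\otimes\g,\g)$; since $\partial^2 f=0$, each homogeneous component vanishes separately.

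First I would record the three resulting relations. The top component reproduces $\partial^{n-1}_{n-1}\partial^{n-1}_{n-1}f=0$, already the content of Lemma \ref{complex-lie}. The bottom component is the identity $\partial^n_{n+1}\partial^{n-1}_n f+\partial^{n-1}_{n+1}\partial^{n-1}_{n-1}f=0$ derived in the discussion preceding the lemma. The remaining, and for us decisive, middle component gives
$$\partial^n_n\partial^{n-1}_n f+\partial^{n-1}_n\partial^{n-1}_{n-1}f=0.$$
With these in hand the lemma is immediate. To see that $\partial^{n-1}_n$ maps $C_{\Der}^{n}(\g;\g)=\ker(\partial^{n-1}_{n-1})$ into $C_{\Der}^{n+1}(\g;\g)=\ker(\partial^n_n)$, take $f\in\ker(\partial^{n-1}_{n-1})$; the second term of the middle identity then vanishes, so $\partial^n_n(\partial^{n-1}_n f)=0$, that is $\partial^{n-1}_n f\in\ker(\partial^n_n)$. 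This is exactly well-definedness of the operators on the reduced spaces. For the square-zero property, apply the bottom identity to the same $f$: again $\partial^{n-1}_{n-1}f=0$ kills the second term, leaving $\partial^n_{n+1}(\partial^{n-1}_n f)=0$, which is precisely the statement that the composite $\partial^{n}_{n+1}\circ\partial^{n-1}_n$ vanishes on $C_{\Der}^{n}(\g;\g)$, i.e. $\partial^{\ast-1}_{\ast}$ squares to zero.

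The only genuine work is the bookkeeping in the first step: enumerating correctly which strata $\partial^{n-1}_{n-1}f$ and $\partial^{n-1}_n f$ feed into, and confirming that the two contributions to $\Hom(\wedge^{n}\g\otimes\wedge^{2}\g,\g)$ come exactly from $\partial^n_n\partial^{n-1}_n$ and $\partial^{n-1}_n\partial^{n-1}_{n-1}$, while the two contributions to $\Hom(\wedge^{n+1}\g\otimes\g,\g)$ come from $\partial^n_{n+1}\partial^{n-1}_n$ and $\partial^{n-1}_{n+1}\partial^{n-1}_{n-1}$. This is where I expect the main, though purely combinatorial, effort to lie; once the bidegree components are separated, no computation with the explicit formulas for the $\partial^k_j$ is required, since both vanishings we need are forced merely by the hypothesis $f\in\ker(\partial^{n-1}_{n-1})$.
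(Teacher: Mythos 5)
Your proposal is correct and follows essentially the same route as the paper: the identity $\partial^{n}_{n+1}\partial^{n-1}_{n}f+\partial^{n-1}_{n+1}\partial^{n-1}_{n-1}f=0$ you extract is exactly the one derived in the discussion preceding the lemma, obtained likewise by decomposing $\partial^2 f=0$ into its bigraded components. You even make explicit the middle-component identity $\partial^{n}_{n}\partial^{n-1}_{n}f+\partial^{n-1}_{n}\partial^{n-1}_{n-1}f=0$ guaranteeing that $\partial^{n-1}_{n}$ maps $\ker(\partial^{n-1}_{n-1})$ into $\ker(\partial^{n}_{n})$, a well-definedness point the paper leaves implicit.
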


	\begin{rmk}
		The cochain complex given in Lemma \ref{lem:reduced complex} is exactly the one  given in \cite[Theorem 4.9]{TBGS}, whose cohomology are used to classify certain operator homotopy post-Lie algebras.
	\end{rmk}
	
	\begin{rmk}
		Besides the higher versions of the Chevalley-Eilenberg cochain complex given in  Lemma \ref{complex-lie} and the reduced cochain complex of  Lemma \ref{lem:reduced complex}, there are other cochain complexes inside the cochain complex of a post-Lie algebra. Similarly to the proof of Lemma \ref{complex-lie}, it is straightforward to obtain that $(\oplus_{n=1}^{+\infty}\Hom(\wedge^{n-1} \g\otimes \wedge^k\g,\g), \partial^{*-1}_*)$ is a cochain complex for all $k=3,4,\cdots.$
	\end{rmk}
	
\emptycomment{	
	In addition, we have the following theorem.
	\begin{thm}\label{complex-pre}
		Let $(\g,[\cdot,\cdot]_\g,\rhd)$ be a post-Lie algebra. Then we have the following cochain complex:
		\begin{eqnarray*}
			\Hom(\wedge^{k}\g,\g)\stackrel{\partial^0_{1}}{\longrightarrow}\Hom(\g\otimes\wedge^{k}\g,\g)\stackrel{\partial^1_{2}}{\longrightarrow}\Hom(\wedge^{2}\g\otimes\wedge^{k}\g,\g)\stackrel{\partial^2_{3}}{\longrightarrow} \cdots,\quad\forall k=3,4,\cdots.
		\end{eqnarray*}
	\end{thm}
	
	\begin{proof}
		For any $f\in \Hom(\wedge^{m}\g\otimes\wedge^{k}\g,\g)$, we have
		\begin{eqnarray*}
			(\partial\circ \partial) f=\partial(\partial^{m}_{m}f+\partial^{m}_{m+1}f+\partial^{m}_{m+k}f)&=&\partial^{m}_{m}\partial^{m}_{m}f+\partial^{m}_{m+1}\partial^{m}_{m}f+\partial^{m}_{m+k+1}\partial^{m}_{m}f\\
			&&+\partial^{m+1}_{m+1}\partial^{m}_{m+1}f+\partial^{m+1}_{m+2}\partial^{m}_{m+1}f+\partial^{m+1}_{m+k+1}\partial^{m}_{m+1}f\\
			&&+\partial^{m+k}_{m+k}\partial^{m}_{m+k}f+\partial^{m+k}_{m+k+1}\partial^{m}_{m+k}f\\
			&=&0.
		\end{eqnarray*}
		By $k\ge 3$, we deduce that $\partial^{m+1}_{m+2}\partial^{m}_{m+1}f=0$. The proof is finished.
	\end{proof}
	
	\begin{rmk}\label{No-complex}
		By the proof of Theorem \ref{complex-pre}, for $k=2$, we observe $(\oplus_{l=0}^{+\infty} \Hom(\wedge^{l}\g\otimes\wedge^2\g,\g),\partial^l_{l+1})$ isn't a complex. For $k=1$ and $f\in \Hom(\wedge^{m}\g\otimes\g,\g)$, we have
		\begin{eqnarray*}
			(\partial\circ \partial) f=\partial(\partial^{m}_{m}f+\partial^{m}_{m+1}f)&=&\partial^{m}_{m}\partial^{m}_{m}f+\partial^{m}_{m+1}\partial^{m}_{m}f+\partial^{m}_{m+2}\partial^{m}_{m}f+\partial^{m+1}_{m+1}\partial^{m}_{m+1}f+\partial^{m+1}_{m+2}\partial^{m}_{m+1}f\\
			&=&0.
		\end{eqnarray*}
		Then we deduce that $\partial^{m+1}_{m+2}\partial^{m}_{m+1}f+\partial^{m}_{m+2}\partial^{m}_{m}f=0,~\forall f\in \Hom(\wedge^{m}\g\otimes\g,\g).$ This shows that $(\oplus_{l=0}^{+\infty} \Hom(\wedge^{l}\g\otimes\wedge^2\g,\g),\partial^l_{l+1})$ isn't a complex in general.
	\end{rmk}

	Let $(\g,[\cdot,\cdot]_\g,\rhd)$ be a post-Lie algebra. For $n\ge1$, we define  the set of $n$-cochains $$C_{\Der}^{n}(\g;\g)\subset \Hom(\wedge^{n-1} \g\otimes \g,\g)$$
	and such that for any $f\in C_{\Der}^{n}(\g;\g)$ if and only if
	\begin{eqnarray*}
		f(x_1,\cdots,x_{n-1};[x_n,x_{n+1}]_\g)=[f(x_1,\cdots,x_{n-1};x_n),x_{n+1}]_\g+[x_n,f(x_1,\cdots,x_{n-1};x_{n+1})]_\g.
	\end{eqnarray*}
	
	\begin{lem}\label{O-post-co}
		Let $(\g,[\cdot,\cdot]_\g,\rhd)$ be a post-Lie algebra and $f$ be an element of $\Hom(\wedge^{m}\g\otimes\g,\g)$. Then $\partial^{m}_{m}f=0$ if and only if $f\in C_{\Der}^{m+1}(\g;\g)$.
	\end{lem}
	
	\begin{proof}
		By the definition of $\partial^{m}_{m}$, we have
		\begin{eqnarray*}
			&&(\partial^{m}_{m}f)(x_1,\ldots, x_{m}; x_{m+1}, x_{m+2})\\
			&=&(-1)^{m+2}[x_{m+1},f(x_1,\ldots, x_{m}; x_{m+2})]_\g+(-1)^{m+3}[x_{m+2},f(x_1,\ldots, x_{m}; x_{m+1})]_\g\\
			&&+(-1)^{m+1+2}f(x_1, \ldots, x_{m}; [x_{m+1},x_{m+2}]_\g).
		\end{eqnarray*}
		It is deduce $\partial^{m}_{m}f=0$ if and only if $f\in C_{\Der}^{m+1}(\g;\g)$. The proof is finished.
	\end{proof}
}

	Next we give applications of the first and the second cohomology group.

 Note that $f\in \frkC^1(\g;\g)$  is closed  if and only if
	\begin{eqnarray}
		\partial(f)_0(x_1x_2)&=&[x_1,f(x_2)]_\g+[f(x_1),x_2]_\g-f([x_1,x_2]_\g)=0,\\
		\partial(f)_1(x_1\otimes x_2)&=&f(x_1)\rhd x_2+x_1\rhd f(x_2)-f(x_1\rhd x_2)=0.
	\end{eqnarray}
	Therefore, we have the following proposition.
	
	\begin{pro}
		A $1$-cochain  $ f\in \frkC^1(\g;\g)$ is a $1$-cocycle if and only $f$ is a derivation. Moreover,  $\huaH^1(\g;\g)=\Der(\g)$.
	\end{pro}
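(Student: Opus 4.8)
The plan is to deduce both assertions directly from the two cocycle equations displayed immediately before the statement, which together express the condition $\partial(f)=0$ componentwise. First I would observe that $\frkC^1(\g;\g)=C_{\postLie}^0(\g,\g)=\Hom(\wedge^0\g\otimes\wedge^1\g,\g)=\Hom(\g,\g)$, so that a $1$-cochain $f$ is nothing but a linear endomorphism of $\g$, and $\partial(f)$ has exactly the two components $\partial(f)_0\in\Hom(\wedge^2\g,\g)$ and $\partial(f)_1\in\Hom(\g\otimes\g,\g)$. Thus $f$ is a $1$-cocycle precisely when $\partial(f)_0=0$ and $\partial(f)_1=0$, i.e. when both displayed equations hold for all $x_1,x_2\in\g$.

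Next I would match these two equations against the two defining identities of a derivation of a post-Lie algebra from Definition \ref{post-lie-homo-defi}. The condition $\partial(f)_0(x_1x_2)=0$ rearranges to $f([x_1,x_2]_\g)=[f(x_1),x_2]_\g+[x_1,f(x_2)]_\g$, which is exactly the requirement that $f$ be a derivation of the Lie bracket $[\cdot,\cdot]_\g$; likewise $\partial(f)_1(x_1\otimes x_2)=0$ rearranges to $f(x_1\rhd x_2)=f(x_1)\rhd x_2+x_1\rhd f(x_2)$, the requirement that $f$ be a derivation of the product $\rhd$. Hence $f$ is a $1$-cocycle if and only if $f$ satisfies both derivation identities, that is, if and only if $f\in\Der(\g)$, which is the first assertion.

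For the identification $\huaH^1(\g;\g)=\Der(\g)$, the key point is that there are no nontrivial coboundaries in degree one: since $\frkC^0(\g;\g)$ was defined to be $0$, the map $\partial\colon\frkC^0(\g;\g)\to\frkC^1(\g;\g)$ is necessarily zero, so the space of $1$-coboundaries vanishes. Therefore $\huaH^1(\g;\g)$ coincides with the space of $1$-cocycles, which by the previous paragraph is $\Der(\g)$. I do not expect any real obstacle here: the only point requiring a moment's care is verifying that the signs appearing in $\partial=(-1)^{n-1}[\pi,\cdot]_{\postLie}$ and in the bracket $[\cdot,\cdot]_{\postLie}$ collapse so that each cocycle condition is literally an unsigned derivation axiom, but this is already exhibited by the two displayed formulas, so the argument reduces to reading them off together with the vanishing of $\frkC^0(\g;\g)$.
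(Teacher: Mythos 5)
Your proposal is correct and follows exactly the paper's route: the two displayed cocycle equations preceding the proposition are precisely the components $\partial(f)_0=0$ and $\partial(f)_1=0$, which are the two derivation axioms of Definition \ref{post-lie-homo-defi}, and since $\frkC^0(\g;\g)=0$ there are no $1$-coboundaries, giving $\huaH^1(\g;\g)=\Der(\g)$. You make explicit the coboundary-vanishing step that the paper leaves implicit, but there is no substantive difference in approach.
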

	%\yh{Introduce derivations at the beginning of the paper, or at here.}
	
	Now we introduce the notion of an $\R$-deformation of a post-Lie algebra, where $\R$ is a local pro-Artinian $\K$-algebra. Since $\R$ is the projective limit of commutative local Artinian $\K$-algebras, $\R$ is equipped with an augmentation $\epsilon:\R\lon \K$. See \cite{Ma2} for more details about $\R$-deformation theory of algebraic structures. An infinitesimal deformation corresponds to the case $\R=\K[t]/(t^{2})$.
	
	Replacing the $\K$-vector spaces and $\K$-linear maps by $\R$-modules and $\R$-linear maps in Definition \ref{post-lie-defi} and Definition \ref{post-lie-homo-defi}, we obtain the definitions of $\R$-post-Lie algebras and homomorphisms between them.
	
	Any post-Lie algebra  $(\g,[\cdot,\cdot]_\g,\rhd_\g)$ can be viewed as an $\R$-post-Lie algebra    with the help of the augmentation map $\epsilon$. More precisely, the $\R$-module structure on $\g$ is given by
	$$
	r\cdot x:=\epsilon(r)x,\quad \forall r\in\R,~x\in\g.
	$$
%	Conversely, given a post-Lie algebra $\g$, the (completed) tensor product $\R\hat{\otimes}\g$ is an $\R$-post-Lie algebra in an obvious fashion.
	\begin{defi}
		An {\bf $\R$-deformation} of a $\K$-post-Lie algebra $(\g,[\cdot,\cdot]_\g,\rhd_\g)$ consists of an $\R$-post-Lie algebra structure $[\cdot,\cdot]_\R$ on the completed  tensor product $\R\hat{\otimes}\g$ such that $\epsilon\hat{\otimes}\Id_\g$ is an $\R$-post-Lie algebra homomorphism from $(\R\hat{\otimes}\g,[\cdot,\cdot]_\R,\rhd_\R)$ to $(\g,[\cdot,\cdot]_\g,\rhd_\g)$.
	\end{defi}
	
	We denote an $\R$-deformation of  $(\g,[\cdot,\cdot]_\g,\rhd_\g)$ by a triple $(\R\hat{\otimes}\g,[\cdot,\cdot]_\R,\rhd_\R)$.
	There is also a natural notion of equivalence between   $\R$-deformations.
	\begin{defi}
		Let $(\R\hat{\otimes}\g,[\cdot,\cdot]_\R,\rhd_\R)$ and $(\R\hat{\otimes}\g,[\cdot,\cdot]_\R',\rhd_\R')$ be two $\R$-deformations of a post-Lie algebra $(\g,[\cdot,\cdot]_\g,\rhd_\g)$. We call them {\bf equivalent} if there exists an $\R$-post-Lie algebra isomorphism $\phi:(\R\hat{\otimes}\g,[\cdot,\cdot]_\R',\rhd_\R')\lon(\R\hat{\otimes}\g,[\cdot,\cdot]_\R,\rhd_\R)$
		such that
		\begin{eqnarray}\label{equivalent-deformation-2}
			\epsilon\hat{\otimes}\Id_\g=(\epsilon\hat{\otimes}\Id_\g)\circ\phi.
		\end{eqnarray}
	\end{defi}
	
	\begin{rmk}
		Let $(\g,[\cdot,\cdot]_\g,\rhd_\g)$ be a post-Lie algebra. We have the following  {\bf deformation functor} $$\Def_\g:\pcDGAloc\lon \Set\footnote{$\pcDGAloc$  is the category of commutative local pro-Artinian $\K$-algebras and $\Set$ is the category of sets.}$$ which is associated to the equivalent classes of deformations of the post-Lie algebra $(\g,[\cdot,\cdot]_\g,\rhd_\g)$. Moreover,  the deformation functor $\Def_\g$ is isomorphic to
		the MC moduli functor $\MCmodu({C_{\postLie}^*(\g,\g)},-)$, where $C_{\postLie}^*(\g,\g)$   is   the dg Lie algebra given in Theorem \ref{def-post-lie}.
	\end{rmk}
	
	We will now introduce infinitesimal deformations and see that they are classified by the second cohomology group.
	\begin{defi}
		A $\K[t]/(t^{2})$-deformation of the  $\K$-post-Lie algebra $(\g,[\cdot,\cdot]_\g,\rhd_\g)$ is called an {\bf infinitesimal deformation}.
	\end{defi}
	
	Note that $\K[t]/(t^{2})\hat{\otimes}\g=\K[t]/(t^{2})\otimes\g$.  Let $\R=\K[t]/(t^{2})$ and $(\R\otimes\g,[\cdot,\cdot]_\R,\rhd_\R)$ be an infinitesimal deformation of $(\g,[\cdot,\cdot]_\g,\rhd_\g)$. Since $(\R\otimes\g,[\cdot,\cdot]_\R,\rhd_\R)$ is a post-Lie algebra, there exist $\pi_0,\omega_0\in\Hom(\g\wedge\g,\g)$ and $\pi_1,\omega_1\in\Hom(\g\otimes\g,\g)$ such that
	\begin{eqnarray}\label{rota-baxter-infinitesimal-deformation}
		[\cdot,\cdot]_\R=\pi_0+t\omega_0,\quad \rhd_\R=\pi_1+t\omega_1.
	\end{eqnarray}
	Since $\epsilon\otimes\Id_\g$ is an $\R$-post-Lie algebra homomorphism from $(\R\otimes\g,[\cdot,\cdot]_\R,\rhd_\R)$ to $(\g,[\cdot,\cdot]_\g,\rhd_\g)$, we deduce that $\pi_0=[\cdot,\cdot]_\g$ and $\pi_1=\rhd_\g$. Therefore, an infinitesimal deformation of $(\g,[\cdot,\cdot]_\g,\rhd_\g)$  is determined by a pair $(\omega_0,\omega_1)$.
	By the fact that $(\R\otimes\g,[\cdot,\cdot]_\g+t\omega_0)$ is an $\R$-Lie algebra, we get
	\begin{eqnarray}
		\label{post-def-1} \dM_\CE \omega_0&=&0.
	\end{eqnarray}
	Then by \eqref{Post-1}, we deduce that
	\begin{eqnarray}
		\label{post-def-2}&&\omega_1(x,[y,z]_\g)+x\rhd_\g\omega_0(y,z)-\omega_0(x\rhd_\g y,z)\\
		\nonumber&&-[\omega_1(x,y),z]_\g-\omega_0(y,x\rhd_\g z)-[y,\omega_1(x,z)]_\g=0.
	\end{eqnarray}
	Finally by \eqref{Post-2}, we obtain that
	\begin{eqnarray}
		\label{post-def-3}&&\big(\omega_1(x,y)-\omega_1(y,x)+\omega_0(x,y)\big)\rhd_\g z+\omega_1\big((x\rhd_\g y-y\rhd_\g x+[x,y]_\g),z\big)\\
		\nonumber&&-\omega_1(x,y\rhd_\g z)-x\rhd_\g \omega_1(y,z)+\omega_1(y,x\rhd_\g z)+y\rhd_\g \omega_1(x,z)=0.
	\end{eqnarray}
%\comment{I found it should be $+\omega_1(y,x\rhd_\g z)+y\rhd_\g \omega_1(x,z)$ in the last formula}
	\begin{thm}
		The pair $(\omega_0,\omega_1)$ determines an  infinitesimal  deformation  of the post-Lie algebra $(\g,[\cdot,\cdot]_\g,\rhd_\g)$ if and only if  $(\omega_0,\omega_1)$ is a $2$-cocycle of the post-Lie algebra $(\g,[\cdot,\cdot]_\g,\rhd_\g)$.
		
		Moreover, there is a one-to-one correspondence between   equivalence classes of infinitesimal deformations of the post-Lie algebra $(\g,[\cdot,\cdot]_\g,\rhd_\g)$ and the second cohomology group $\huaH^2(\g;\g)$.
	\end{thm}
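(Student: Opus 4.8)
The plan is to read both halves of the statement off the complex of Definition~\ref{defi:cohomology of pL}, treating an infinitesimal deformation as a Maurer--Cartan element over the base ring $\R=\K[t]/(t^2)$. For the first assertion, I would write the deformed structure as $\pi+t\omega$ with $\pi=(\pi_0,\pi_1)=([\cdot,\cdot]_\g,\rhd_\g)$ and $\omega=(\omega_0,\omega_1)$. By Theorem~\ref{post-lie-gla}, the pair $(\omega_0,\omega_1)$ yields a post-Lie algebra on $\R\hat{\otimes}\g$ exactly when $[\pi+t\omega,\pi+t\omega]_{\postLie}=0$. Expanding and using $t^2=0$ in $\R$ together with $[\pi,\pi]_{\postLie}=0$ (valid since $\pi$ is itself a Maurer--Cartan element) collapses this to $t\,[\pi,\omega]_{\postLie}=0$, i.e. $d_\pi(\omega)=0$; by~\eqref{cohomology-of-RB} this is precisely the $2$-cocycle condition $\partial(\omega_0,\omega_1)=0$. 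Unwinding the three graded pieces of $[\pi,\omega]_{\postLie}$ then reproduces the equations~\eqref{post-def-1}--\eqref{post-def-3} verbatim, so beyond matching components no further work is required.

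For the second assertion I would first pin down the shape of an equivalence. Any $\R$-linear endomorphism of $\R\hat{\otimes}\g$ is determined by its restriction to $\g$, and the normalization~\eqref{equivalent-deformation-2} forces that restriction to be $x\mapsto x+tN(x)$ for a single linear map $N:\g\lon\g$; thus $\phi=\Id+tN$, which is automatically invertible over $\K[t]/(t^2)$. The key step is then to expand the two compatibility conditions $\phi([x,y]'_\R)=[\phi(x),\phi(y)]_\R$ and $\phi(x\rhd'_\R y)=\phi(x)\rhd_\R\phi(y)$ modulo $t^2$. A direct computation gives
\begin{align*}
\omega_0'(x,y)-\omega_0(x,y)&=[N(x),y]_\g+[x,N(y)]_\g-N([x,y]_\g),\\
\omega_1'(x,y)-\omega_1(x,y)&=N(x)\rhd_\g y+x\rhd_\g N(y)-N(x\rhd_\g y).
\end{align*}
The right-hand sides are exactly the two components $\partial(N)_0$ and $\partial(N)_1$ of the coboundary of $N\in\frkC^1(\g;\g)$, as recorded in the description of $1$-cochains just above. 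Consequently two infinitesimal deformations are equivalent if and only if their defining cocycles differ by $\partial(N)$ for some $N$; since conversely every such $N$ produces an equivalence via $\phi=\Id+tN$, the assignment $(\omega_0,\omega_1)\mapsto[(\omega_0,\omega_1)]$ descends to a well-defined bijection between equivalence classes of infinitesimal deformations and $\huaH^2(\g;\g)$.

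All the computations are elementary, so the only genuine difficulty is bookkeeping. The main obstacle I anticipate is keeping the degree and sign conventions consistent throughout, in particular the index shift $\frkC^n(\g;\g)=C_{\postLie}^{n-1}(\g,\g)$ and the sign $(-1)^{n-1}$ in~\eqref{cohomology-of-RB}, so that the graded components of $[\pi,\omega]_{\postLie}$ align with the signs in~\eqref{post-def-1}--\eqref{post-def-3} and the components of $\partial(N)$ align with the two displayed equivalence equations. Once these alignments are verified, both the cocycle characterization and the bijection with $\huaH^2(\g;\g)$ follow immediately.
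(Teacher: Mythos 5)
Your proposal is correct and takes essentially the same route as the paper: the paper likewise obtains \eqref{post-def-1}--\eqref{post-def-3} by expanding the post-Lie axioms of $[\cdot,\cdot]_\g+t\omega_0$ and $\rhd_\g+t\omega_1$ modulo $t^2$ and identifies them with the $2$-cocycle condition, then deduces $\phi=\Id_\g+tf$ from the normalization \eqref{equivalent-deformation-2} and extracts exactly your two coboundary relations from the Lie-bracket and $\rhd$ compatibility conditions. Your Maurer--Cartan packaging of the first half, via $[\pi+t\omega,\pi+t\omega]_{\postLie}=2t[\pi,\omega]_{\postLie}$ over $\K[t]/(t^2)$, is just the paper's Theorem \ref{def-post-lie} computation in bracket form, so there is no substantive difference.
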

	\begin{proof}
		By \eqref{post-def-1}, \eqref{post-def-2} and \eqref{post-def-3},    $(\omega_0,\omega_1)$ determines an  infinitesimal  deformation  of the post-Lie algebra $(\g,[\cdot,\cdot]_\g,\rhd_\g)$ if and only if $(\omega_0,\omega_1)$ is a $2$-cocycle.

		If two infinitesimal deformations   determined by $(\omega_0,\omega_1)$ and $(\omega_0',\omega_1')$ are equivalent, then there exists an $\R$-post-Lie algebra isomorphism $\phi$ from $(\R\otimes\g,[\cdot,\cdot]_\g+t\omega_0',\rhd_\g+t\omega_1')$ to $(\R\otimes\g,[\cdot,\cdot]_\g+t\omega_0,\rhd_\g+t\omega_1)$. By \eqref{equivalent-deformation-2}, we deduce that
		\begin{eqnarray}
			\phi={\Id_\g}+tf,\quad \mbox{where}\quad f\in\Hom(\g,\g).
		\end{eqnarray}
		
		Since ${\Id_\g}+tf$ is an  isomorphism from $(\R\otimes\g,[\cdot,\cdot]_\g+t\omega_0')$ to $(\R\otimes\g,[\cdot,\cdot]_\g+t\omega_0)$, we get
		\begin{equation}
			\label{eq:equmor1} \omega_0'-\omega_0=\dM_\CE f.
		\end{equation}
		By the equality
		$$
		({\Id_\g}+tf)\big(x\rhd_\g y+t\omega_1'(x,y)\big)=\big(({\Id_\g}+tf)x\big)\rhd_\g\big(({\Id_\g}+tf)y\big)+t\omega_1\big(({\Id_\g}+tf)x,({\Id_\g}+tf)y\big),
		$$
		we deduce that
		\begin{equation}
			\label{eq:equmor2}\omega_1'(x,y)-\omega_1(x,y)=f(x)\rhd_\g y+x\rhd_\g f(y)-f(x\rhd_\g y).
		\end{equation}
		
		By \eqref{eq:equmor1} and \eqref{eq:equmor2}, we deduce that
		$
		(\omega_0',\omega_1')-(\omega_0,\omega_1)=\partial(f),
		$
		which implies that $(\omega_0',\omega_1')$ and $(\omega_0,\omega_1)$ are in the same cohomology class if and only if the corresponding infinitesimal deformations of $(\g,[\cdot,\cdot]_\g,\rhd_\g)$ are equivalent.
	\end{proof}

	%\begin{rmk}
	%	One can study deformations of post-Lie algebras over more general bases such as $\R=\K[t]/(t^{n})$, $\R=\K[[t]]= \underleftarrow{\lim} \K[t]/(t^{n})$ or indeed over dg local pro-Artinian $\K$-algebras. %We will return to this more general context in a forthcoming paper \cite{Lazarev-1}.
	%\end{rmk}

	\section{Post-Lie$_\infty$ algebras}\label{sec:hom}

	In this section, we introduce the notion  of post-Lie$_\infty$ algebras using the graded Lie algebra given in Theorem \ref{gla-post-lie}. There is an equivalent characterization of post-Lie$_\infty$ algebras via open-closed homotopy Lie algebras. Various examples of post-Lie$_\infty$ algebras are constructed from higher geometric structures, e.g. homotopy Poisson structures and $L_\infty$-algebroids.

	\subsection{Post-Lie$_\infty$ algebras and  open-closed homotopy Lie algebras}
	
	Recall that for a graded vector space $V$ there is a symmetric coalgebra $\Sym(V)$ (endowed with the cocommutative shuffle coproduct).
	A linear map $D:\Sym(V)\lon \Sym(V)$ of degree $n$ is called a {\bf coderivation of degree $n$} on the coalgebra $(\Sym(V),\Delta,\varepsilon)$ if
	\begin{eqnarray}\label{coder}
		\Delta\circ D=(D\otimes {\Id}+{\Id}\otimes D)\circ \Delta.
	\end{eqnarray}
	Denote the vector space of coderivations of degree $n$ by $\coDer^n(\Sym(V))$.  Then $\coDer(\Sym(V)):=\oplus_{n\in\mathbb Z}\coDer^n(\Sym(V))$ has a graded Lie algebra structure as follows:
	\begin{eqnarray*}
		[D_1,D_2]_C:=D_1\circ D_2-(-1)^{D_1D_2}D_2\circ D_1,\,\,\forall D_1,D_2\in \coDer(\Sym(V)).
	\end{eqnarray*}

	Denote by $\Hom^n(\Sym(V),V)$ the space of total degree $n$ linear maps from  $\Sym(V)$ to $V$. An element $f\in\Hom^n(\Sym(V),V)$ is the sum of its components $f_i:\Sym^i(V)\lon V$ so that  $f=\sum_{i=0}^{+\infty} f_i$. Moreover, we can write any homomorphism $S(V)\to V$ as a sum of its graded components: $$\Hom(\Sym(V),V)=\oplus_{n\in\mathbb Z}\Hom^n(\Sym(V),V).$$
	Since $(\Sym(V),\Delta,\varepsilon)$ is the cofree object in the category of conilpotent cocommutative coalgebras, we deduce that $\coDer(\Sym(V))\cong \Hom(\Sym(V),V)$.  	More precisely, we have the  isomorphism $\Psi:\Hom(\Sym(V),V)\lon \coDer(\Sym(V))$ of graded vector spaces, which is given by
	\begin{eqnarray}
		\label{coderivation}&&\Psi(f)(v_1 \ldots  v_n)=\sum_{i=0}^{n}\sum_{\sigma\in\mathbb S_{(i,n-i)}}\varepsilon(\sigma;v_1,\ldots,v_n)f_i\big(v_{\sigma(1)},\ldots,v_{\sigma(i)}\big)  v_{\sigma(i+1)} \ldots  v_{\sigma(n)},
	\end{eqnarray}
	here $n=0,1,\ldots,$ and $v_1,\ldots,v_n\in V.$ We will denote $\Psi(f)$ by $\hat{f}$.
	
	Transferring  the graded Lie algebra structure $\big(\coDer(\Sym(V)),[\cdot,\cdot]_C\big)$ to   $\Hom(\Sym(V),V)$ via the graded linear isomorphism  $\Psi$, we obtain a graded Lie algebra  $(\Hom(\Sym(V),V),[\cdot,\cdot]_{\NR})$. The graded Lie bracket $[\cdot,\cdot]_{\NR}$, the {\bf Nijenhuis-Richardson bracket},  is given by the formula
	\begin{eqnarray*}
		[f,g]_{\NR}:=f\circ g-(-1)^{mn}g\circ f,\,\,\,\,\forall f=\sum_{i=0}^{+\infty} f_i\in \Hom^m(\Sym(V),V),~g=\sum_{j=0}^{+\infty} g_j\in \Hom^n(\Sym(V),V),
	\end{eqnarray*}
	where $f\circ g\in \Hom^{m+n}(\Sym(V),V)$ is defined by
	\begin{eqnarray*}
		f\circ g&=&\Big(\sum_{i=0}^{+\infty}f_i\Big)\circ\Big(\sum_{j=0}^{+\infty}g_j\Big):=\sum_{k=0}^{+\infty}\Big(\sum_{i+j=k+1}f_i\circ g_j\Big),
	\end{eqnarray*}
	while $f_i\circ g_j\in \Hom(\Sym^{i+j-1}(V),V)$ is defined by
	\begin{eqnarray*}
		(f_i\circ g_j)(v_1\ldots v_{i+j-1})
		:=\sum_{\sigma\in\mathbb S_{(j,i-1)}}\varepsilon(\sigma)f_i(g_j(v_{\sigma(1)}\ldots v_{\sigma(j)}) v_{\sigma(j+1)} \ldots v_{\sigma(i+j-1)}).
	\end{eqnarray*}
	
	It is clear that  $(\Hom(\overline{\Sym}(V),V),[\cdot,\cdot]_{\NR})$ is a graded Lie subalgebra of  $(\Hom(\Sym(V),V),[\cdot,\cdot]_{\NR})$.

	\begin{defi}\label{graded-Nijenhuis-Richardson-bracket}{\rm (\cite{LS})}
		An {\bf $ L_\infty$-algebra} structure  on a graded vector space $\g$ is a Maurer-Cartan element  $\sum_{i=1}^{+\infty}l_i$ of the graded Lie algebra $(\Hom(\overline{\Sym}(\g),\g),[\cdot,\cdot]_{\NR})$. More precisely, an $L_\infty$-algebra  is a $\mathbb Z$-graded vector space $\g=\oplus_{k\in\mathbb Z}\g^k$ equipped with a collection $(k\ge 1)$ of linear maps $l_k:\otimes^k\g\lon\g$ of degree $1$ with the property that, for any homogeneous elements $x_1,\ldots,x_n\in \g$, we have
		\begin{itemize}\item[\rm(i)]
			{\em (graded symmetry)} for every $\sigma\in\mathbb S_{n}$,
			\begin{eqnarray*}
				l_n(x_{\sigma(1)},\ldots,x_{\sigma(n-1)},x_{\sigma(n)})=\varepsilon(\sigma)l_n(x_1,\ldots,x_{n-1},x_n),
			\end{eqnarray*}
			\item[\rm(ii)] {\em (generalized Jacobi identity)} for all $n\ge 0$,
			\begin{eqnarray*}\label{sh-Lie}
				\sum_{i=1}^{n}\sum_{\sigma\in \mathbb S_{(i,n-i)} }\varepsilon(\sigma)l_{n-i+1}(l_i(x_{\sigma(1)},\ldots,x_{\sigma(i)}),x_{\sigma(i+1)},\ldots,x_{\sigma(n)})=0.
			\end{eqnarray*}
		\end{itemize}
	\end{defi}
	We will write $(\g,\{l_k\}_{k=1}^{+\infty})$ for a graded vector space $\g$ together with an $L_\infty$-algebra structure on it.   %See~\cite{KS,Ma2} for more applications of $L_\infty$-algebras.
	
	There is a canonical way to view a dg Lie algebra as an $L_\infty$-algebra.
	
	\begin{lem}\label{Quillen-construction}
		Let $(\g,[\cdot,\cdot]_\g,d)$ be a dg Lie algebra. Then  $(s^{-1}\g,\{l_i\}_{i=1}^{+\infty})$ is an $L_\infty$-algebra, where $
		l_1(s^{-1}x)=-s^{-1}d(x),~
		l_2(s^{-1}x,s^{-1}y)=(-1)^{x}s^{-1}[x,y]_\g,~
		l_k=0,$ for all $k\ge 3,
		$
		and homogeneous elements $x,y\in \g$.
	\end{lem}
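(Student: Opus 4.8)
The plan is to verify directly that the collection $\{l_i\}_{i=1}^{+\infty}$ satisfies the two requirements of Definition \ref{graded-Nijenhuis-Richardson-bracket}, namely graded symmetry and the generalized Jacobi identities; equivalently, setting $l:=l_1+l_2\in\Hom^1(\overline{\Sym}(s^{-1}\g),s^{-1}\g)$, one checks that $l$ is a Maurer--Cartan element, i.e. $[l,l]_{\NR}=0$. Graded symmetry is automatic for the unary map $l_1$, so the only point is $l_2$. I would compute $l_2(s^{-1}y,s^{-1}x)=(-1)^{y}s^{-1}[y,x]_\g$ and compare it with $(-1)^{(x-1)(y-1)}l_2(s^{-1}x,s^{-1}y)$, using $[y,x]_\g=-(-1)^{xy}[x,y]_\g$ together with the fact that the degree of $s^{-1}x$ equals $x-1$. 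The prefactor $(-1)^{x}$ in the definition of $l_2$ is exactly what is needed for these two expressions to agree, confirming that $l_2$ is graded symmetric on $s^{-1}\g$.

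For the generalized Jacobi identities, the key simplification is that $l_k=0$ for $k\ge 3$. Hence in the sum $\sum_{i}\sum_{\sigma\in\mathbb S_{(i,n-i)}}\varepsilon(\sigma)l_{n-i+1}(l_i(\cdots),\cdots)$ a term can survive only when both $i\le 2$ and $n-i+1\le 2$, which forces $n\in\{1,2,3\}$. Thus there are exactly three relations to verify. For $n=1$ the identity reads $l_1\circ l_1=0$, which unwinds to $s^{-1}d^2=0$ and so is equivalent to $d^2=0$. For $n=2$ it is the compatibility of $l_1$ with $l_2$ (the $\mathbb S_{(1,1)}$-shuffle sum together with the $\mathbb S_{(2,0)}$ term), which translates into the statement that $d$ is a derivation of $[\cdot,\cdot]_\g$. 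For $n=3$ only $l_2\circ l_2$ contributes, and the sum over $\mathbb S_{(2,1)}$-shuffles becomes precisely the graded Jacobi identity for $[\cdot,\cdot]_\g$.

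The main obstacle is purely the sign bookkeeping induced by the desuspension $s^{-1}$. In each of the three cases I would apply $s$ to transport the identity back to $\g$, tracking the Koszul signs $\varepsilon(\sigma)$ attached to the shuffles, the shift in degree ($s^{-1}x$ having degree $x-1$) wherever it appears, and the factor $(-1)^{x}$ produced on each application of $l_2$. The claim is that the explicit signs chosen in the statement---the minus sign in $l_1(s^{-1}x)=-s^{-1}d(x)$ and the sign $(-1)^{x}$ in $l_2$---are exactly those that make the shifted relations coincide term by term with $d^2=0$, the Leibniz rule for $d$, and the Jacobi identity, respectively. This verification is routine but constitutes essentially all of the content of the lemma.
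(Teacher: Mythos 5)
Your proposal is correct: the paper states Lemma \ref{Quillen-construction} without proof (it is the standard d\'ecalage fact), and your direct verification is exactly the intended argument. In particular, your sign checks are right --- with $|s^{-1}x|=x-1$ the factor $(-1)^{x}$ makes $l_2$ graded symmetric, and since $l_k=0$ for $k\ge 3$ the generalized Jacobi identity is nontrivial only for $n=1,2,3$, where it reduces respectively to $d^2=0$, the Leibniz rule $d[x,y]_\g=[dx,y]_\g+(-1)^{x}[x,dy]_\g$, and the graded Jacobi identity for $[\cdot,\cdot]_\g$.
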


	Let $(\g,\{l_k\}_{k=1}^{+\infty})$ be an $L_\infty$-algebra. Then $(\Sym(\g),\Delta,\Psi(\sum_{i=1}^{+\infty}l_i))$ is a dg coalgebra. Homomorphisms between $L_\infty$-algebras can be defined in term of  corresponding dg coalgebra structures.

	\begin{defi}{\rm (\cite{KS})}
		Let $(\g,\{l_k\}_{k=1}^{+\infty})$ and $(\g',\{l_k'\}_{k=1}^{+\infty})$  be  two $L_\infty$-algebras.
		An $L_\infty$-algebra {\bf homomorphism} from $(\g,\{l_k\}_{k=1}^{+\infty})$ to $(\g',\{l_k'\}_{k=1}^{+\infty})$ consists of a collection of degree $0$ graded multilinear maps $f_k:\g^{\otimes k}\lon \g',~ k\ge 1$ with the property
		$f_n(v_{\sigma(1)},\ldots,v_{\sigma(n)})
		=\varepsilon(\sigma)f_n(v_1,\ldots,v_n),
		$ for any $n\geq 1$ and homogeneous elements $v_1,\ldots,v_n\in \g$,
		and
		\begin{eqnarray}
			\nonumber&&\sum_{i=1}^n\sum_{\sigma\in \mathbb S_{(i,n-i)} }\varepsilon(\sigma)f_{n-i+1}\Big(l_i(v_{\sigma(1)},\ldots,v_{\sigma(i)}),v_{\sigma(i+1)},\ldots,v_{\sigma(n)}\Big)\\
			\label{L-infty-homo}&=&\sum_{i=1}^n\sum_{k_1+\ldots+k_i=n}\sum_{\sigma\in \mathbb S_{(k_1,\ldots,k_i)}}\frac{\varepsilon(\sigma)}{i!}l_i'\Big(f_{k_1}(v_{\sigma(1)},\ldots,v_{\sigma(k_1)}),\ldots,f_{k_i}(v_{\sigma(k_1+\ldots+k_{i-1}+1)},\ldots,v_{\sigma(n)})\Big).
		\end{eqnarray}
		Equivalently, $\bar{f}:\Sym(\g)\to \Sym(\g')$ is a homomorphism of dg coalgebras, where $f=\sum_{k=1}^{+\infty}f_k$ and $\bar{f}$ is given by \eqref{co-alg-homo}. %\yh{It seems that $\bar{f}$ first appear here with no explicit formulas.}
	\end{defi}

	\emptycomment{
		\begin{thm}
			An action $\{\rho_k\}_{k=1}^{+\infty}$ of an $L_\infty$-algebra $(\g,\{l_k\}_{k=1}^{+\infty})$ on an  $L_\infty$-algebra $(\h,\{\mu_k\}_{k=1}^{+\infty})$  is equivalent to having an $L_\infty$-algebra $(\g,\{l_k\}_{k=1}^{+\infty})$ and a family of graded linear maps $\frkR_{p,q}:\Sym^p(\g)\otimes \Sym^q(\h)\lon\h$ of degree $1$ for $p\ge 0,q\ge1$ \yh{can $p$ be zero?}satisfying the
			compatibility conditions:
			\begin{eqnarray}
				\nonumber&&\sum_{i=1}^{n}\sum_{\sigma\in \mathbb S_{(i,n-i)}}\varepsilon(\sigma)\frkR_{n-i+1,m}\Big(l_i(x_{\sigma(1)},\ldots,x_{\sigma(i)}),x_{\sigma(i+1)},\ldots,x_{\sigma(n)};u_1,\ldots,u_m\Big)\\
				\nonumber&&+\sum_{k=1}^{m}\sum_{i=0}^{n}\sum_{\sigma\in\mathbb S_{(i,n-i)}}\sum_{ \tau\in\mathbb S_{(k,m-k)}}\varepsilon(\sigma)\varepsilon(\tau)(-1)^{x_{\sigma(1)}+\ldots+x_{\sigma(i)}}\\
				\nonumber&&\frkR_{i,m-k+1}\Big(x_{\sigma(1)},\ldots,x_{\sigma(i)};\frkR_{n-i,k}\big(x_{\sigma(i+1)},\ldots,x_{\sigma(n)};u_{\tau(1)},\ldots,u_{\tau(k)}\big),u_{\tau(k+1)},\ldots,u_{\tau(m)}\Big)\\
				\label{homotopy-lie-action}&=&0,
			\end{eqnarray}
			for $n\ge0,m\ge1,\forall x_1,\ldots,x_n\in\g,u_1,\ldots,u_m\in\h$. Here we write $\mu_q=\frkR_{0,q}$ for any $q\ge1$.
		\end{thm}
	}
	
	\emptycomment{
		\begin{defi}
			A {\bf homotopy post-Lie algebra} is a $\mathbb Z$-graded vector space $\g=\oplus_{k\in\mathbb Z}\g^k$ equipped with a collection of linear maps $\frkM_{p,q}:\Sym^p(\g)\otimes \Sym^q(\g)\lon\g$ of degree $1$ for $p\ge 0,q\ge1$ satisfying, for every collection of
			homogeneous elements $x_1,\ldots,x_n,\ldots,x_{m+n}\in \g$,
			\emptycomment{
				\begin{eqnarray*}
					\nonumber&&\sum_{i=1}^{n}\sum_{\sigma\in \mathbb S_{(i,n-i)}}\varepsilon(\sigma)\frkM_{n-i+1,m}\Big(l^c_i(x_{\sigma(1)},\ldots,x_{\sigma(i)}),x_{\sigma(i+1)},\ldots,x_{\sigma(n)};x_{n+1},\ldots,x_{n+m}\Big)\\
					\nonumber&&+\sum_{k=1}^{m}\sum_{i=0}^{n}\sum_{\sigma\in\mathbb S_{(i,n-i)}}\sum_{ \tau\in\mathbb S_{(k,m-k)}}\varepsilon(\sigma)\varepsilon(\tau)(-1)^{x_{\sigma(1)}+\ldots+x_{\sigma(i)}}\\
					\nonumber&&\frkM_{i,m-k+1}\Big(x_{\sigma(1)},\ldots,x_{\sigma(i)};\frkM_{n-i,k}\big(x_{\sigma(i+1)},\ldots,x_{\sigma(n)};x_{n+\tau(1)},\ldots,x_{n+\tau(k)}\big),x_{n+\tau(k+1)},\ldots,x_{n+\tau(m)}\Big)\\
					\label{homotopy-post-lie}&=&0.
				\end{eqnarray*}
			}
			\begin{eqnarray}
				\nonumber&&\sum_{i=1}^{n}\sum_{\sigma\in \mathbb S_{(i,n-i)}}\varepsilon(\sigma)\frkM_{n-i+1,m}\Big(l^c_i(x_{\sigma(1)},\ldots,x_{\sigma(i)}),x_{\sigma(i+1)},\ldots,x_{\sigma(n)};x_{n+1},\ldots,x_{n+m}\Big)\\
				\nonumber&&+\sum_{i=0}^{n}\sum_{k=1}^{m}\sum_{\sigma\in\mathbb S_{(i,n-i)}}\sum_{ \tau\in\mathbb S_{(k,m-k)}}\varepsilon(\sigma)\varepsilon(\tau)(-1)^{x_{\sigma(1)}+\ldots+x_{\sigma(i)}}\\
				\nonumber&&\frkM_{i,m-k+1}\Big(x_{\sigma(1)},\ldots,x_{\sigma(i)};\frkM_{n-i,k}\big(x_{\sigma(i+1)},\ldots,x_{\sigma(n)};x_{n+\tau(1)},\ldots,x_{n+\tau(k)}\big),x_{n+\tau(k+1)},\ldots,x_{n+\tau(m)}\Big)\\
				\label{homotopy-post-lie}&&=0,
			\end{eqnarray}
			for $n\ge 0,m\ge1.$ Here $l^c_i:\Sym^i(\g)\lon\g$ is defined by
			\begin{eqnarray*}
				l^c_i(x_1,\ldots,x_i)\triangleq \sum_{j=0}^{i-1}\sum_{\sigma\in\mathbb S_{(j,i-j)}}\varepsilon(\sigma)\frkM_{j,i-j}(x_{\sigma(1)},\ldots,x_{\sigma(j)};x_{\sigma(j+1)},\ldots,x_{\sigma(i)}),\forall x_1,\ldots,x_i\in \g.
			\end{eqnarray*}
		\end{defi}
		
		\yh{controlling algebra?}
		
		\begin{defi}
			A {\bf homotopy post-Lie algebra} is a $\mathbb Z$-graded vector space $\g=\oplus_{k\in\mathbb Z}\g^k$ equipped with a collection of linear maps $\frkM_{p,q}:\Sym^p(\g)\otimes \Sym^q(\g)\lon\g$ of degree $1$ for $p\ge 0,q\ge1$ satisfying, for every collection of
			homogeneous elements $x_1,\ldots,x_n,\ldots,x_{m+n}\in \g$,
			{\small{
					\begin{eqnarray}
						\label{homotopy-post-lie}&&\sum_{\sigma\in \mathbb S_{(j,i,n-j-i)}\atop 0\le j\le n-1, 1\le i\le n}\varepsilon(\sigma)\frkM_{n-j-i+1,m}\Big(\frkM_{j,i}(x_{\sigma(1)},\ldots,x_{\sigma(j)};x_{\sigma(j+1)},\ldots,x_{\sigma(j+i)}),x_{\sigma(j+i+1)},\ldots,x_{\sigma(n)};x_{n+1},\ldots,x_{n+m}\Big)\\
						\nonumber&&+\sum_{\sigma\in\mathbb S_{(i,n-i)}\atop 0\le i\le n}\sum_{ \tau\in\mathbb S_{(k,m-k)}\atop 1\le k\le m}\varepsilon(\sigma)\varepsilon(\tau)(-1)^{x_{\sigma(1)}+\ldots+x_{\sigma(i)}}\\
						\nonumber&&\frkM_{i,m-k+1}\Big(x_{\sigma(1)},\ldots,x_{\sigma(i)};\frkM_{n-i,k}\big(x_{\sigma(i+1)},\ldots,x_{\sigma(n)};x_{n+\tau(1)},\ldots,x_{n+\tau(k)}\big),x_{n+\tau(k+1)},\ldots,x_{n+\tau(m)}\Big)\\
						\nonumber&&=0,
					\end{eqnarray}
			}}
			for $n\ge 0,m\ge1.$
		\end{defi}
	}

	Now we are ready to introduce the notion of a post-Lie$_\infty$ algebra using the graded Lie algebra given in Theorem  \ref{gla-post-lie}.
	\begin{defi}\label{defi:homtopy-post-lie}
		A {\bf post-Lie$_\infty$ algebra} structure on a graded vector space $\g$ is defined to be a Maurer-Cartan element $\sum_{p\ge 0,q\ge 1}\frkM_{p,q}$ of the graded Lie algebra $(\Hom(\Sym(\g)\otimes \bar{\Sym}(\g),\g),[\cdot,\cdot]_\postLie)$. More precisely,
		a  post-Lie$_\infty$ algebra is a $\mathbb Z$-graded vector space $\g=\oplus_{k\in\mathbb Z}\g^k$ equipped with a collection of linear maps $\frkM_{p,q}:\Sym^p(\g)\otimes \Sym^q(\g)\lon\g$ of degree $1$ for $p\ge 0,q\ge1$ satisfying, for every collection of
		homogeneous elements $x_1,\ldots,x_{m+n}\in \g$, the following identities:
		{\small{
				\begin{eqnarray}
					\label{homotopy-post-lie}&&\sum_{\sigma\in \mathbb S_{(j,i,n-j-i)}\atop 0\le j\le n-1, 1\le i\le n}\varepsilon(\sigma)\frkM_{n-j-i+1,m}\Big(\frkM_{j,i}(x_{\sigma(1)}\ldots x_{\sigma(j)}\otimes x_{\sigma(j+1)}\ldots x_{\sigma(j+i)})x_{\sigma(j+i+1)}\ldots x_{\sigma(n)}\otimes x_{n+1}\ldots x_{n+m}\Big)\\
					\nonumber&&+\sum_{\sigma\in\mathbb S_{(n-j,j)}\atop 0\le j\le n}\sum_{ \tau\in\mathbb S_{(i,m-i)}\atop 1\le i\le m}\varepsilon(\sigma)\varepsilon(\tau)(-1)^{x_{\sigma(1)}+\ldots+x_{\sigma(n-j)}}\\
					\nonumber&&\frkM_{n-j,m-i+1}\Big(x_{\sigma(1)} \ldots x_{\sigma(n-j)}\otimes \frkM_{j,i}\big(x_{\sigma(n-j+1)} \ldots x_{\sigma(n)}\otimes x_{n+\tau(1)} \ldots x_{n+\tau(i)}\big) x_{n+\tau(i+1)} \ldots x_{n+\tau(m)}\Big)\\
					\nonumber&&=0,
				\end{eqnarray}
		}}
		for $n\ge 0,m\ge1.$
	\end{defi}
	
	Several remarks are in order to illustrate the relation between post-Lie$_\infty$ algebras, $L_\infty$-algebras, pre-Lie$_\infty$ algebras and operator homotopy post-Lie algebras.

	\begin{rmk}
		Let $(\g,\{\frkM_{p,q}\}_{p\ge0,q\ge1})$ be a post-Lie$_\infty$ algebra. Setting $n=0$ in \eqref{homotopy-post-lie}, we obtain that $\{\frkM_{0,q}\}_{q\ge1}$ is an {\bf $L_\infty$-algebra} structure on the graded vector space $\g$.
	\end{rmk}

	\begin{rmk}
		Let $(\g,\{\frkM_{p,q}\}_{p\ge0,q\ge1})$ be a post-Lie$_\infty$ algebra that satisfies $\frkM_{p,q}=0,q\ge 2$. For all $n\ge 1$, we define $\oprn_n:\otimes^n \g\lon \g, n\ge 1,$ by %\yh{$\theta$ is also used here, so not suitable for that extension}
		\begin{eqnarray}\label{homotopy-prelie}
			\oprn_n(x_1,\ldots,x_n)\triangleq\frkM_{n-1,1}(x_1 \ldots x_{n-1}\otimes x_n).
		\end{eqnarray}
		Then $(\g,\{\oprn_n\}_{n\ge1})$ is a {\bf pre-Lie$_\infty$ algebra}, which was introduced in \cite{CL}. See \cite{Mer} for more applications of pre-Lie$_\infty$ algebras in geometry.
	\end{rmk}
	%\begin{proof}
	%By $\frkM_{n-1,1}:\Sym^{n-1}(\g)\otimes \g\lon\g$, we deduce that $\oprn_n$ satisfies the condition \eqref{homotopy-prelie-1}. Moreover, by $\frkM_{p,q}=0,q\ge 2$ and \eqref{homotopy-post-lie}, we gain that the condition \eqref{homotopy-prelie-2} holds. The proof is finished.
	%\end{proof}

	\begin{rmk}
		Let $(\g,\{\frkM_{p,q}\}_{p\ge0,q\ge1})$ be a post-Lie$_\infty$ algebra that satisfies $\frkM_{0,q}=0,q\ge 3$ and $\frkM_{p,q}=0,p\ge 1,q\ge 2$. We define $[\cdot,\cdot]_\g:\g\otimes\g\lon\g$ by
		$
			[x,y]_\g\triangleq\frkM_{0,2}(x y), $ for all $x,y\in\g.
		$
		For all $n\ge 1$, we define $\oprn_n:\otimes^n \g\lon \g, n\ge 1,$  by
		\begin{eqnarray}\label{homotopy-postlie-2}
			\oprn_n(x_1,\ldots,x_n)\triangleq\frkM_{n-1,1}(x_1 \ldots x_{n-1}\otimes x_n).
		\end{eqnarray}
		Then $(\g,[\cdot,\cdot]_\g,\{\oprn_n\}_{n\ge1})$ is an  {\bf\opt homotopy post-Lie algebra}, which was introduced in \cite{TBGS} and the first attempt to study the homotopy theory of post-Lie algebras.
	\end{rmk}
	
Now we recall the notion of an  open-closed homotopy Lie algebra, and use open-closed homotopy Lie algebras to characterize post-Lie$_\infty$ algebras.	
	\begin{defi}{\rm (\cite{Lazarev,MZ,Mer-1})}
		An  {\bf open-closed homotopy Lie algebra} $(\g,\h,\{l_k\}_{k=1}^{+\infty},\{\frkR_{p,q}\}_{p+q\ge1})$  consists of an $L_\infty$-algebra $(\g,\{l_k\}_{k=1}^{+\infty})$ and
		a family of graded linear maps $\frkR_{p,q}:\Sym^p(\g)\otimes \Sym^q(\h)\lon\h$ of degree $1$ for $p\ge 0,q\ge0$ with $p+q\ge1$ satisfying the following compatibility conditions:
		\emptycomment{
			\begin{eqnarray}
				\nonumber&&\sum_{i=1}^{n}\sum_{\sigma\in \mathbb S_{(i,n-i)}}\varepsilon(\sigma)\frkR_{n-i+1,m}\Big(l_i(x_{\sigma(1)},\ldots,x_{\sigma(i)}),x_{\sigma(i+1)},\ldots,x_{\sigma(n)};u_1,\ldots,u_m\Big)\\
				\nonumber&&+\sum_{k=0}^{m}\sum_{i=0}^{n}\sum_{\sigma\in\mathbb S_{(i,n-i)}}\sum_{ \tau\in\mathbb S_{(k,m-k)}}\varepsilon(\sigma)\varepsilon(\tau)(-1)^{x_{\sigma(1)}+\ldots+x_{\sigma(i)}}\\
				\nonumber&&\frkR_{i,m-k+1}\Big(x_{\sigma(1)},\ldots,x_{\sigma(i)};\frkR_{n-i,k}\big(x_{\sigma(i+1)},\ldots,x_{\sigma(n)};u_{\tau(1)},\ldots,u_{\tau(k)}\big),u_{\tau(k+1)},\ldots,u_{\tau(m)}\Big)\\
				\label{open-closed homotopy}&=&0,
			\end{eqnarray}
		}
		\begin{eqnarray}
			\nonumber&&\sum_{i=1}^{n}\sum_{\sigma\in \mathbb S_{(i,n-i)}}\varepsilon(\sigma)\frkR_{n-i+1,m}\Big(l_i(x_{\sigma(1)},\ldots,x_{\sigma(i)})x_{\sigma(i+1)}\ldots x_{\sigma(n)}\otimes u_1 \ldots u_m\Big)\\
			\nonumber&&+\sum_{i=0}^{n}\sum_{k=0}^{m}\sum_{\sigma\in\mathbb S_{(i,n-i)}}\sum_{ \tau\in\mathbb S_{(k,m-k)}}\varepsilon(\sigma)\varepsilon(\tau)(-1)^{x_{\sigma(1)}+\ldots+x_{\sigma(i)}}\\
			\nonumber&&\frkR_{i,m-k+1}\Big(x_{\sigma(1)} \ldots x_{\sigma(i)}\otimes \frkR_{n-i,k}\big(x_{\sigma(i+1)} \ldots x_{\sigma(n)}\otimes u_{\tau(1)} \ldots u_{\tau(k)}\big) u_{\tau(k+1)} \ldots u_{\tau(m)}\Big)\\
			\label{open-closed homotopy}&=&0,
		\end{eqnarray}
		for $n+m\ge1$ and $ x_1,\ldots,x_n\in\g,u_1,\ldots,u_m\in\h.$
	\end{defi}
	
	\begin{rmk}
		When $n=0$,  \eqref{open-closed homotopy} implies that $\{\frkR_{0,q}\}_{q=1}^{+\infty}$ is an $L_\infty$-algebra structure on the graded vector space $\h$. Moreover, if  $\frkR_{p,0}=0$ for all $p\ge1$, then an open-closed homotopy Lie algebra $(\g,\h,\{l_k\}_{k=1}^{+\infty},\{\frkR_{p,q}\}_{p+q\ge1})$ reduces to an $L_\infty$-action of the $L_\infty$-algebra $(\g,\{l_k\}_{k=1}^{+\infty})$ on the  $L_\infty$-algebra $(\h,\{\frkR_{0,q}\}_{q=1}^{+\infty})$. See \cite[Definition 2.4]{CC} for more details about   $L_\infty$-actions of   $L_\infty$-algebras.
		In general, an open-closed homotopy Lie algebra is equivalent to a (nonabelian) extension of $L_\infty$-algebras.  In particular, there is an $L_\infty$-algebra structure $\{\huaR_k\}_{k=1}^{+\infty}$ on $\g\oplus \h$ given by
		\begin{eqnarray}
			\nonumber&&\huaR_k\big((x_1,u_1),\ldots,(x_k,u_k)\big)\\
			\label{extension-homotopy-lie}&=&\Big(l_k(x_1,\ldots,x_k),\sum_{i=0}^{k}\sum_{\sigma\in \mathbb S_{(i,k-i)} }\varepsilon(\sigma)\frkR_{i,k-i}(x_{\sigma(1)} \ldots x_{\sigma(i)}\otimes u_{\sigma(i+1)} \ldots u_{\sigma(k)})\Big).
		\end{eqnarray}
		One can say that the $L_\infty$-algebra $(\g\oplus \h,\{\huaR_k\}_{k=1}^{+\infty})$ is an extension of $(\g,\{l_k\}_{k=1}^{+\infty})$ by $(\h,\{\frkR_{0,q}\}_{q=1}^{+\infty})$.
	\end{rmk}

\emptycomment{
	Now we recall actions of an $L_\infty$-algebra on another $L_\infty$-algebra, and use actions of $L_\infty$-algebras to characterize post-Lie$_\infty$ algebras.

	Let $(\g,\{l_k\}_{k=1}^{+\infty})$ be an $L_\infty$-algebra. Then it is obvious that $(\coDer(\Sym(\g)),[\cdot,\cdot]_c,d=[\Psi(\sum_{i=1}^{+\infty}l_i),\cdot]_c)$ is a dg Lie algebra.
	Let $D:\Sym(\g)\lon \Sym(\g)$ be a coderivation  on the coalgebra $\Sym(\g).$
	By \eqref{coder}, we see that $D(1_\bk)\in \g$. Moreover,  denote by   $$\overline{\coDer^n}(\Sym(\g))=\{D\in \coDer^n(\Sym(\g))|D(1_\bk)=0\}\quad \mbox{and}\quad \overline{\coDer}(\Sym(\g))=\oplus_{n\in\mathbb Z}\overline{\coDer^n}(\Sym(\g)).$$ Obviously, $\overline{\coDer}(\Sym(\g)) $ is a graded Lie subalgebra of $\coDer(\Sym(\g))$. Moreover, for an $L_\infty$-algebra $(\h,\{\mu_k\}_{k=1}^{+\infty})$, it is clear that $\overline{\coDer}(\Sym(\h))$ is a Lie subalgebra of the dg Lie algebra $(\coDer(\Sym(\h)),[\cdot,\cdot]_c,d=[\Psi(\sum_{i=1}^{+\infty}\mu_i),\cdot]_c)$.
	\begin{defi}\label{homotopy-action}
		An {\bf action} of an $L_\infty$-algebra $(\g,\{l_k\}_{k=1}^{+\infty})$ on an  $L_\infty$-algebra $(\h,\{\mu_k\}_{k=1}^{+\infty})$ is an $L_\infty$-algebra homomorphism $\rho=\{\rho_k\}_{k=1}^{+\infty}$ from $\g$ to $s^{-1}\overline{\coDer}(\Sym(\h))$.
	\end{defi}
	
	Now we give an explicit equivalent description of actions of $L_\infty$-algebras on $L_\infty$-algebras.
	\emptycomment{
		\begin{lem}\label{lem:equi-action}
			An action $\{\rho_k\}_{k=1}^{+\infty}$ of an $L_\infty$-algebra $(\g,\{l_k\}_{k=1}^{+\infty})$ on an  $L_\infty$-algebra $(\h,\{\mu_k\}_{k=1}^{+\infty})$  is equivalent to    a family of graded linear maps $\rho_{p,q}:\Sym^p(\g)\otimes \Sym^q(\h)\lon\h$ of degree $1$ ($p\ge 1,q\ge1$) satisfying the
			compatibility conditions:
			\begin{eqnarray}
				\nonumber&&\sum_{i=1}^{n}\sum_{\sigma\in \mathbb S_{(i,n-i)}}\varepsilon(\sigma)\rho_{n-i+1,m}\Big(l_i(x_{\sigma(1)},\ldots,x_{\sigma(i)}) x_{\sigma(i+1)} \ldots x_{\sigma(n)}\otimes u_1 \ldots u_m\Big)\\
				\nonumber&&+\sum_{i=0}^{n}\sum_{k=1}^{m}\sum_{\sigma\in\mathbb S_{(i,n-i)}}\sum_{ \tau\in\mathbb S_{(k,m-k)}}\varepsilon(\sigma)\varepsilon(\tau)(-1)^{x_{\sigma(1)}+\ldots+x_{\sigma(i)}}\\
				\nonumber&&\rho_{i,m-k+1}\Big(x_{\sigma(1)} \ldots x_{\sigma(i)}\otimes \rho_{n-i,k}\big(x_{\sigma(i+1)} \ldots x_{\sigma(n)}\otimes u_{\tau(1)} \ldots u_{\tau(k)}\big) u_{\tau(k+1)} \ldots u_{\tau(m)}\Big)\\
				\label{homotopy-lie-action}&=&0,
			\end{eqnarray}
			for $n\ge 0,m\ge1,\forall x_1,\ldots,x_n\in\g,u_1,\ldots,u_m\in\h$.  \yh{I change this lemma. Please check whether this line (just $ n\ge 0,m\ge1$) need to be revised.}
		\end{lem}
	}
	
	\begin{lem}\label{lem:equi-action}
		An action $\{\rho_k\}_{k=1}^{+\infty}$ of an $L_\infty$-algebra $(\g,\{l_k\}_{k=1}^{+\infty})$ on an  $L_\infty$-algebra $(\h,\{\mu_k\}_{k=1}^{+\infty})$  is equivalent to    a family of graded linear maps $\rho_{p,q}:\Sym^p(\g)\otimes \Sym^q(\h)\lon\h$ of degree $1$ ($p\ge 1,q\ge1$) satisfying the following
		compatibility conditions:
		\begin{eqnarray}
			\nonumber&&\sum_{i=1}^{n}\sum_{\sigma\in \mathbb S_{(i,n-i)}}\varepsilon(\sigma)\rho_{n-i+1,m}\Big(l_i(x_{\sigma(1)},\ldots,x_{\sigma(i)}) x_{\sigma(i+1)} \ldots x_{\sigma(n)}\otimes u_1 \ldots u_m\Big)\\
			\nonumber&&+\sum_{i=1}^{n-1}\sum_{k=1}^{m}\sum_{\sigma\in\mathbb S_{(i,n-i)}}\sum_{ \tau\in\mathbb S_{(k,m-k)}}\varepsilon(\sigma)\varepsilon(\tau)(-1)^{x_{\sigma(1)}+\ldots+x_{\sigma(i)}}\\
			\nonumber&&\rho_{i,m-k+1}\Big(x_{\sigma(1)} \ldots x_{\sigma(i)}\otimes \rho_{n-i,k}\big(x_{\sigma(i+1)} \ldots x_{\sigma(n)}\otimes u_{\tau(1)} \ldots u_{\tau(k)}\big) u_{\tau(k+1)} \ldots u_{\tau(m)}\Big)\\
			\nonumber&&+\sum_{k=1}^{m}\sum_{ \tau\in\mathbb S_{(k,m-k)}}\varepsilon(\tau)\mu_{m-k+1}\Big(\rho_{n,k}(x_1\ldots x_n\otimes u_{\tau(1)} \ldots u_{\tau(k)})u_{\tau(k+1)} \ldots u_{\tau(m)}\Big)\\
			\nonumber&&+\sum_{k=1}^{m}\sum_{ \tau\in\mathbb S_{(k,m-k)}}\varepsilon(\tau)(-1)^{x_1+\ldots+x_n}\rho_{n,m-k+1}\Big(x_1\ldots x_n\otimes \mu_{k}(u_{\tau(1)} \ldots u_{\tau(k)})u_{\tau(k+1)} \ldots u_{\tau(m)}\Big)\\
			\label{homotopy-lie-action}&=&0,
		\end{eqnarray}
		for $n\ge 1,m\ge1,\forall x_1,\ldots,x_n\in\g,u_1,\ldots,u_m\in\h$.  %\yh{I change this lemma. Please check whether this line (just $ n\ge 0,m\ge1$) need to be revised.}
	\end{lem}

	\begin{proof}
		Let $\{\rho_k\}_{k=1}^{+\infty}$ be an action of the $L_\infty$-algebra $(\g,\{l_k\}_{k=1}^{+\infty})$ on the  $L_\infty$-algebra $(\h,\{\mu_k\}_{k=1}^{+\infty})$. We define the graded linear map $\rho_{p,q}:\Sym^p(\g)\otimes \Sym^q(\h)\lon\h$ by
		\begin{eqnarray*}
			\rho_{p,q}(x_1 \ldots x_p\otimes u_1 \ldots u_q):=\pr_\h\Big(\big(s\rho_p(x_1\ldots x_p)\big)(u_1 \ldots  u_q)\Big),
		\end{eqnarray*}
		for all $p\ge 1,~q\ge 1$. Then the
		compatibility conditions \eqref{homotopy-lie-action} follows from that fact that $\{\rho_k\}_{k=1}^{+\infty}$ is an action.
		
		Conversely, let $\rho_{p,q}:\Sym^p(\g)\otimes \Sym^q(\h)\lon\h,~p\ge 1,q\ge1$ be a family of graded linear maps satisfying \eqref{homotopy-lie-action}.  For $p\geq1$,   define the graded linear maps $\rho_p:\Sym^p(\g)\lon s^{-1}\coDer(\Sym(\h)),~p\ge 1$ by
		\begin{eqnarray*}
			\rho_p(x_1,\ldots,x_p):=s^{-1}\Psi\big(\rho_{p,\bullet}(x_1\ldots x_p)\big).
		\end{eqnarray*}
		By \eqref{homotopy-lie-action}, we deduce that $\{\rho_k\}_{k=1}^{+\infty}$ is an action of the $L_\infty$-algebra $(\g,\{l_k\}_{k=1}^{+\infty})$ on the  $L_\infty$-algebra $(\h,\{\mu_{k}\}_{k=1}^{+\infty})$.
	\end{proof}

	\begin{rmk}
		%When $n=0$,  \eqref{homotopy-lie-action} implies that $\{\mu_q=\frkR_{0,q}\}_{q=1}^{+\infty}$ is an $L_\infty$-algebra structure on the graded vector space $\h$.
		When $m=1$, \eqref{homotopy-lie-action} implies that $(\h,\{\rho_{p,1}\}_{p=1}^{+\infty})$ is a representation of the $L_\infty$-algebra $(\g,\{l_k\}_{k=1}^{+\infty})$.
	\end{rmk}

	A generalization of the notion of an $L_\infty$-action is that of an open-closed homotopy Lie algebra.
}

Now we give an explicit equivalent description of open-closed homotopy Lie algebras.	Let $(\g,\{l_k\}_{k=1}^{+\infty})$ and  $(\h,\{\mu_k\}_{k=1}^{+\infty})$ be $L_\infty$-algebras, and $\{{\rho_k}\}_{k=1}^{+\infty}$ be a graded linear map of degree $0$ from $\bar{\Sym}(\g)$ to $s^{-1}\coDer(\Sym(\h))$. Define $\frkR_{p,q}:\Sym^p(\g)\otimes \Sym^q(\h)\lon\h$ by
\begin{eqnarray}\label{open-closed-to-lie-homo}
	\frkR_{p,q}(x_1 \ldots x_p\otimes u_1 \ldots u_q):=\left\{
	\begin{array}{ll}
		\mu_q(u_1,\ldots,u_q), &\mbox {$p=0,q\ge 1$, }\\
		\pr_\h\big((s\rho_p)(x_1,\ldots,x_p)(u_1,\ldots,u_q)\big), &\mbox {$p\ge1,q\ge 0$.}
	\end{array}
	\right.
\end{eqnarray}

	\begin{thm}{\rm (\cite{Laz',MZ})}
		With the above notations, $(\g,\h,\{l_k\}_{k=1}^{+\infty},\{\frkR_{p,q}\}_{p+q\ge1})$ is an open-closed homotopy Lie algebra if and only if $\{{\rho_k}\}_{k=1}^{+\infty}$ is an $L_\infty$-algebra homomorphism from $(\g,\{l_k\}_{k=1}^{+\infty})$ to $s^{-1}\coDer(\Sym(\h))$.
	\end{thm}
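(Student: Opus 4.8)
The plan is to translate the $L_\infty$-homomorphism equation \eqref{L-infty-homo} for the target $s^{-1}\coDer(\Sym(\h))$ into the open-closed compatibility conditions \eqref{open-closed homotopy} using the dictionary between coderivations of $\Sym(\h)$ and their corestrictions. Recall from the discussion around \eqref{coderivation} that $\Psi\colon\Hom(\Sym(\h),\h)\to\coDer(\Sym(\h))$ is an isomorphism, that composition of coderivations corresponds to the circle product of corestrictions, and hence that the graded commutator $[\cdot,\cdot]_C$ corresponds to the Nijenhuis--Richardson bracket $[\cdot,\cdot]_{\NR}$. The defining formula \eqref{open-closed-to-lie-homo} says precisely that, for each $p\ge1$, the coderivation $(s\rho_p)(x_1,\ldots,x_p)\in\coDer(\Sym(\h))$ has corestriction $\frkR_{p,\bullet}(x_1 \ldots x_p\otimes-)=\sum_{q\ge0}\frkR_{p,q}(x_1 \ldots x_p\otimes-)\in\Hom(\Sym(\h),\h)$, while $\sum_{q\ge1}\frkR_{0,q}=\sum_{q\ge1}\mu_q$ is the corestriction of the codifferential $\Psi(\sum_i\mu_i)$ encoding the $L_\infty$-structure on $\h$.

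First I would record that, by Lemma \ref{Quillen-construction}, the $L_\infty$-structure on the target $s^{-1}\coDer(\Sym(\h))$ has only two nonzero brackets, namely $\mathcal{L}_1(s^{-1}D)=-s^{-1}[\Psi(\sum_i\mu_i),D]_C$ and $\mathcal{L}_2(s^{-1}D_1,s^{-1}D_2)=(-1)^{D_1}s^{-1}[D_1,D_2]_C$, with $\mathcal{L}_k=0$ for $k\ge3$. Consequently the right-hand side of \eqref{L-infty-homo} collapses to the two terms $\mathcal{L}_1(\rho_n(x_1,\ldots,x_n))$ and $\tfrac12\sum_{k_1+k_2=n}\sum_{\sigma\in\mathbb S_{(k_1,k_2)}}\varepsilon(\sigma)\mathcal{L}_2\big(\rho_{k_1}(x_{\sigma(1)},\ldots),\rho_{k_2}(x_{\ldots})\big)$, with $k_1,k_2\ge1$.

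The core of the argument is then to apply $s$, pass to corestrictions via $\Psi^{-1}$, and evaluate on an arbitrary word $u_1 \ldots u_m\in\Sym^m(\h)$; I claim the homomorphism equation at arity $n$ becomes exactly \eqref{open-closed homotopy} with $n$ fixed and $m$ arbitrary. Concretely: the left-hand side of \eqref{L-infty-homo}, corestricted, reproduces the $l_i$-term of \eqref{open-closed homotopy}, since $\rho_{n-i+1}$ corresponds to $\frkR_{n-i+1,\bullet}$; the term $\mathcal{L}_1(\rho_n(x))$ unfolds, via $[\Psi(\sum\mu),\Psi(\frkR_{n,\bullet}(x))]_C\leftrightarrow[\sum\mu,\frkR_{n,\bullet}(x)]_{\NR}$, into precisely the summands of the double sum in \eqref{open-closed homotopy} in which one of the two nested $\frkR$'s has first index $0$ (that is, the $i=0$ and $i=n$ contributions involving the $\mu_k$); and the $\mathcal{L}_2$-term produces the remaining summands with $1\le i\le n-1$, because the graded commutator $[\Psi(\frkR_{k_1,\bullet}),\Psi(\frkR_{k_2,\bullet})]_C$, corestricted and evaluated, is a sum of nested compositions of the shuffle form appearing in \eqref{open-closed homotopy}, while the graded symmetry of $\mathcal{L}_2$ together with the factor $\tfrac12$ converts the sum over unordered splittings into the ordered sum over $i$. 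Reading these identifications in reverse gives the converse implication, so the equivalence follows for every $n$ and $m$.

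The routine but genuinely delicate part will be the sign and combinatorial bookkeeping: one must check that the suspension signs from Lemma \ref{Quillen-construction} (the $-s^{-1}d$ and the $(-1)^{D_1}$), the Koszul signs $\varepsilon(\sigma)$ from the shuffles in both \eqref{L-infty-homo} and \eqref{open-closed homotopy}, and the internal sign $(-1)^{x_{\sigma(1)}+\ldots+x_{\sigma(i)}}$ in \eqref{open-closed homotopy} all match after corestriction, and that the factor $1/i!$ (here $1/2!$) is absorbed exactly by passing from the symmetric bracket $\mathcal{L}_2$ to the ordered sum over $i$. This is the main obstacle; once the dictionary of the first paragraph is set up carefully, it is a direct if lengthy verification.
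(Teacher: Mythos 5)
The paper offers no proof of this statement---it is quoted from \cite{Laz',MZ}---and your outline reconstructs exactly the standard argument behind it, in the form the paper's conventions are set up for: the isomorphism $\Psi$ between coderivations of $\Sym(\h)$ and their corestrictions, the two-bracket $L_\infty$-structure on $s^{-1}\coDer(\Sym(\h))$ supplied by Lemma \ref{Quillen-construction} with differential $[\Psi(\sum_i\mu_i),\cdot]_C$, and the term-by-term matching in which the collapsed right-hand side of \eqref{L-infty-homo} contributes the $i=0$ and $i=n$ (i.e.\ $\mu$-) summands of \eqref{open-closed homotopy} through $\mathcal{L}_1$ and the $1\le i\le n-1$ summands through $\mathcal{L}_2$, the factor $\tfrac12$ being absorbed as you describe. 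Your identifications are correct---including the essential point, implicit in your $\frkR_{p,\bullet}=\sum_{q\ge 0}\frkR_{p,q}$, that coderivations of the full coalgebra $\Sym(\h)$ rather than $\overline{\Sym}(\h)$ carry $q=0$ components, which is exactly what produces the $\frkR_{p,0}$ terms distinguishing open-closed homotopy Lie algebras from mere $L_\infty$-actions (cf.\ the remark following the theorem), while the $n=0$ instance of \eqref{open-closed homotopy} holds automatically since $(\h,\{\mu_k\}_{k=1}^{+\infty})$ is assumed to be an $L_\infty$-algebra---so the deferred sign bookkeeping is indeed the only remaining, routine, step.
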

\begin{rmk}	
An  $L_\infty$-action of an $L_\infty$-algebra $(\g,\{l_k\}_{k=1}^{+\infty})$ on an  $L_\infty$-algebra $(\h,\{\mu_k\}_{k=1}^{+\infty})$ is equivalent to an $L_\infty$-algebra homomorphism $\rho=\{\rho_k\}_{k=1}^{+\infty}$ from $\g$ to $s^{-1}\coDer(\overline{\Sym}(\h))$. Such an action is, therefore, a special case of an open-closed homotopy Lie algebra.
\end{rmk}

	\begin{thm}\label{homotopy-post-lie-to-lie-cor}
		Let $(\g,\{\frkM_{p,q}\}_{p\ge0,q\ge1})$ be a post-Lie$_\infty$ algebra. Define $l^C_k$ by
		\begin{eqnarray}\label{sub-homotopy-lie}
			l^C_k(x_1,\ldots,x_k)\triangleq \sum_{j=0}^{k-1}\sum_{\sigma\in\mathbb S_{(j,k-j)}}\varepsilon(\sigma)\frkM_{j,k-j}(x_{\sigma(1)} \ldots x_{\sigma(j)}\otimes x_{\sigma(j+1)} \ldots x_{\sigma(k)}),
		\end{eqnarray}
		for all $x_1,\ldots,x_k\in \g$. Then $(\g,\{l^C_k\}_{k=1}^{+\infty})$ is an $L_\infty$-algebra, which is denoted by $\g^C$ and called  the {\bf sub-adjacent $L_\infty$-algebra} of the post-Lie$_\infty$ algebra $(\g,\{\frkM_{p,q}\}_{p\ge0,q\ge1})$. %Moreover, action %Moreover, $(\g,\{\oprn_k\}_{k=1}^{+\infty})$ is called the {\em compatible pre-Lie$_\infty$-algebra} structure on the $L_\infty$-algebra $\g^C$.
		
		Moreover, $(\g,\g,\{l^C_k\}_{k=1}^{+\infty},\{\frkM_{p,q}\}_{p+q\ge1})$ is an open-closed homotopy Lie algebra such that  $\frkM_{p,0}=0$ for all $p\ge1$.
	\end{thm}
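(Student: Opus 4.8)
The plan is to treat the two assertions separately: I would deduce the $L_\infty$-structure on $\g^C$ from a comparison of cofree coalgebras, and then check that the open-closed compatibility is, term for term, the post-Lie$_\infty$ Maurer-Cartan equation \eqref{homotopy-post-lie}.

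For the first assertion I would introduce the linear map $\iota\colon\Sym(\g)\lon\huaComTrias^c(\g)=\Sym(\g)\otimes\bar{\Sym}(\g)$ sending $x_1\ldots x_k$ to $\sum_{j=0}^{k-1}\sum_{\sigma\in\mathbb S_{(j,k-j)}}\varepsilon(\sigma)\,(x_{\sigma(1)}\ldots x_{\sigma(j)})\otimes(x_{\sigma(j+1)}\ldots x_{\sigma(k)})$, i.e. the map distributing the arguments between the $\ast$-slot and a nonempty $\cdot$-slot. The key structural observation is that if one equips $\Sym(\g)$ with the cotrialgebra structure in which both coproducts equal the shuffle coproduct $\Delta_0$ (a legitimate conilpotent cocommutative cotrialgebra, since the copermutative and compatibility axioms collapse to coassociativity and cocommutativity of $\Delta_0$), then $\iota$ is an injective morphism of conilpotent cocommutative cotrialgebras: $\Delta\circ\iota=(\iota\otimes\iota)\circ\Delta_0$ and $\delta\circ\iota=(\iota\otimes\iota)\circ\Delta_0$. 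Granting this, definition \eqref{sub-homotopy-lie} gives $l^C=\pr_\g\circ\Psi(\frkM)\circ\iota$, and both $\Psi(\frkM)\circ\iota$ and $\iota\circ\hat{l^C}$ (where $\hat{l^C}\in\coDer(\Sym(\g))$ is the coderivation with $\pr_\g\hat{l^C}=l^C$, automatically a coderivation of the doubled cotrialgebra because its two coproducts coincide) are cotrialgebra coderivations relative to $\iota$ whose $\g$-component is $l^C$; by the same reconstruction argument that underlies Proposition \ref{important-iso} such a coderivation is determined by its $\g$-component, whence $\Psi(\frkM)\circ\iota=\iota\circ\hat{l^C}$. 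Since $\frkM$ is a Maurer-Cartan element we have $\Psi(\frkM)\circ\Psi(\frkM)=0$, so $\iota\circ\hat{l^C}\circ\hat{l^C}=\Psi(\frkM)\circ\Psi(\frkM)\circ\iota=0$, and injectivity of $\iota$ forces $\hat{l^C}\circ\hat{l^C}=0$; thus $\sum_{k\ge1}l^C_k$ is a Maurer-Cartan element of $(\Hom(\bar{\Sym}(\g),\g),[\cdot,\cdot]_{\NR})$, i.e. $(\g,\{l^C_k\})$ is an $L_\infty$-algebra.

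For the second assertion I would extend the family by $\frkM_{p,0}:=0$ for $p\ge1$ and compare \eqref{open-closed homotopy} for $(\g,\g,\{l^C_k\},\{\frkM_{p,q}\})$ with \eqref{homotopy-post-lie} directly. The second double sum of \eqref{homotopy-post-lie} matches the second double sum of \eqref{open-closed homotopy} after the relabelling (outer $\ast$-arity $n-j\leftrightarrow i$, inner $\cdot$-arity $i\leftrightarrow k$), the Koszul signs and the factor $(-1)^{x_{\sigma(1)}+\ldots}$ coinciding, while the cases with inner factor $\frkM_{0,k}$ or outer factor $\frkM_{0,m-i+1}$ reproduce the pure-$L_\infty$ terms $\mu_k=\frkM_{0,k}$. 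For the first sum I would use that composing an $(a,n-a)$-shuffle with a $(j,a-j)$-shuffle yields exactly the $(j,a-j,n-a)$-shuffles with multiplicative Koszul signs; summing the inner $\frkM_{j,i}$ over $j+i=a$ therefore reassembles precisely $l^C_a$, turning the first sum of \eqref{homotopy-post-lie} into the first sum of \eqref{open-closed homotopy}. The degenerate terms take care of themselves: for $m=0$ the identity \eqref{open-closed homotopy} is vacuous because every factor $\frkM_{p,0}$ with $p\ge1$ vanishes, and the $k=0$ terms vanish for the same reason, matching the constraint $q\ge1$ built into \eqref{homotopy-post-lie}. Hence \eqref{open-closed homotopy} holds for all $n,m$, and together with the first assertion this exhibits $(\g,\g,\{l^C_k\}_{k=1}^{+\infty},\{\frkM_{p,q}\}_{p+q\ge1})$ as an open-closed homotopy Lie algebra with $\frkM_{p,0}=0$.

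I expect the main obstacle to be the first assertion, specifically the verification that $\iota$ is a cotrialgebra morphism and that a relative coderivation is pinned down by its $\g$-component. The coproduct comparison is a bijection-of-shuffles argument: a term of $\Delta\circ\iota$ or of $\delta\circ\iota$ is indexed by an ordered partition of the arguments into four blocks (the two $\ast$-slots and the two $\cdot$-slots, both $\cdot$-slots nonempty), and one checks that each admissible partition occurs exactly once on each side with the same sign; the asymmetry of $\delta$, by which all original $\cdot$-entries pass to the right, is absorbed once one sums over the $\iota$-splittings. Keeping the signs $\varepsilon(\sigma)$, $\varepsilon(\tau)$ and the quadratic sign $(-1)^{(\cdots)(\cdots)}$ of $\Delta$ and $\delta$ consistent with those produced by $\iota$ and $\Delta_0$ is the only delicate point; everything else is formal.
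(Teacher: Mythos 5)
Your proposal is correct, and it is in essence the coalgebra-side rendition of the paper's own argument, carried out without ever dualizing. The paper proves the first assertion by passing to linear duals: a post-Lie$_\infty$ structure is a continuous square-zero derivation of the free complete commutative trialgebra $\widehat{\huaComTrias}(\g^*)$, and the functor $(-)_{\huacom}$ (quotient by the ideal $I$ generated by $x\cdot y-x\ast y$, left adjoint to the doubling embedding $\frki:\huaCom\lon\huaComTrias$) carries it to a square-zero derivation of $\widehat{\bar{\Sym}}(\g^*)$, the structure maps being read off from the identification \eqref{eq:quotient} --- whose dual splitting map is exactly your $\iota$; the paper even displays this map at the end of its argument. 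Your version replaces the quotient by the injection: the doubled cotrialgebra $(\Sym(\g),\Delta_0,\Delta_0)$ is the precise dual of $\frki$, the verification that $\iota$ intertwines $\Delta_0$ with both $\Delta$ and $\delta$ is dual to $[\widehat{\Sym}(\g^*)\otimes\widehat{\bar{\Sym}}(\g^*)]/I\cong\widehat{\bar{\Sym}}(\g^*)$, uniqueness of coderivations along a morphism into the cofree cotrialgebra is the same reconstruction engine as Proposition \ref{important-iso}, and injectivity of $\iota$ (clear from its $j=0$ component $1_\bk\otimes x_1\ldots x_k$) stands in for the paper's check that $d(I)\subset I$. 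This buys something real: the paper's dualization-and-completion step is delicate for infinite-dimensional graded $\g$, whereas your argument stays inside conilpotent coalgebras, where cofreeness holds on the nose. Two small points to make explicit: $\Delta_0$ must be the \emph{reduced} shuffle coproduct, since the $\Delta$ and $\delta$ of $\huaComTrias^c(\g)$ are reduced (e.g. $\Delta$ forces $1\le j\le n-1$), and your identities $\Delta\circ\iota=(\iota\otimes\iota)\circ\Delta_0$, $\delta\circ\iota=(\iota\otimes\iota)\circ\Delta_0$ hold only in that reduced form, as one sees already on $x_1x_2$; and you should record that $\pr_\g\circ\Psi(\frkM)=\frkM$, so that $\pr_\g\circ\Psi(\frkM)\circ\iota$ is indeed $l^C$ (the $(0,1)$-component of $\hat{\frkM}_{p,q}$ in \eqref{coder-post-lie} arises only from the second sum with $n=p$, $m=q$). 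For the second assertion, your term-by-term matching of \eqref{open-closed homotopy} against \eqref{homotopy-post-lie} --- reassembling $l^C$ via the composition $\mathbb S_{(i,n-i)}\circ\mathbb S_{(j,i-j)}=\mathbb S_{(j,i-j,n-i)}$ with multiplicative Koszul signs, and disposing of the $m=0$ and $k=0$ terms by $\frkM_{p,0}=0$ --- is exactly the computation displayed in the paper's proof.
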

	
	\begin{proof}
		By Definition \ref{defi:homtopy-post-lie} and Proposition \ref{important-iso},  $\{\frkM_{p,q}\}_{p\ge0,q\ge1}$ is a post-Lie$_\infty$ algebra structure on $\g$ if and only if $m=\sum_{p\ge 0,q\ge 1}\widehat{\frkM}_{p,q}$ is a differential of the cocommutative cotrialgebra $(\Sym(V)\otimes \bar{\Sym}(V),\Delta,\delta)$. By the linear duality, we also  obtain that $\{\frkM_{p,q}\}_{p\ge0,q\ge1}$ is a post-Lie$_\infty$ algebra structure on $\g$ if and only if
		$m^*$ is a continuous differential on the  free complete commutative trialgebra $\widehat{\huaComTrias}(\g^*)$. Note that an $L_\infty$-algebra structure on $\g$ is a continuous differential  \cite{Laz'} on the free complete nonunital commutative algebra $\widehat{\bar{\Sym}}(\g^*)$. We will now show how to construct a continuous differential on $\widehat{\bar{\Sym}}(\g^*)$ from  $m^*$.
		
		Denote the category of graded  vector spaces by $\huaVect$, the category of graded  nonunital commutative algebras by $\huaCom$ and the category of commutative trialgebras by $\huaComTrias$. There is an embedding functor $\frki:\huaCom\lon \huaComTrias$, which is defined by $\frki(A,\cdot):=(A,\cdot,\cdot)$. Let $(A,\cdot,\ast)$ be a graded commutative trialgebra.  We consider the $2$-sided ideal $I$ of the graded commutative trialgebra $(A,\cdot,\ast)$ which is generated by all homogeneous elements  of the form
		$$
		x\cdot y-x\ast y,~\forall x,y\in A.
		$$
		Then the quotient algebra $A/I$ is a graded commutative algebra and we denote it by $A_{\huacom}$. We have constructed a functor $$(-)_{\huacom}:\huaComTrias\lon \huaCom.$$ By a direct calculation we deduce  that $(-)_{\huacom}$ is left adjoint to $\frki$. Since the free  commutative trialgebra functor is left adjoint to the forgetful functor from $\huaComTrias$ to $\huaVect$, we conclude that $\huaComTrias(\g^*)_{\huacom}$ is isomorphic to the free nonunital commutative algebra $\bar{\Sym}(\g^*)$. By completion, we obtain that $\widehat{\huaComTrias}(\g^*)_{\huacom}\cong\widehat{\bar{\Sym}}(\g^*).$
		
Moreover, let $d$ be a continuous derivation of $\widehat{\huaComTrias}(\g^*)$. We have:
		\emptycomment{
\begin{eqnarray*}
			&&d(x\ast y-(-1)^{xy}y\ast x)\\
			&=&d(x)\ast y+(-1)^{xd}x\ast d(y)-(-1)^{xy}\big(d(y)\ast x+(-1)^{yd}y\ast d(x)\big)\\
			&=&d(x)\ast y-(-1)^{y(x+d)}y\ast d(x)+(-1)^{xd}\big(x\ast d(y)-(-1)^{x(y+d)}d(y)\ast x\big),
		\end{eqnarray*}
		and
}
		\begin{eqnarray*}
			&&d(x\cdot y-x\ast y)\\
			&=&d(x)\cdot y+(-1)^{xd}x\cdot d(y)-\big(d(x)\ast y+(-1)^{xd}x\ast d(y)\big)\\
			&=&d(x)\cdot y-d(x)\ast y+(-1)^{xd}\big(x\cdot d(y)-x\ast d(y)\big).
		\end{eqnarray*}
		Therefore, we obtain that $d(I)\subset I$, which implies that $d$ restricts to a unique continuous derivation $\widetilde{d}$ on $\widehat{\bar{\Sym}}(\g^*)$. More precisely, $\widetilde{d}:\widehat{\bar{\Sym}}(\g^*)\lon\widehat{\bar{\Sym}}(\g^*)$ is given by
		\begin{eqnarray}
			\widetilde{d}([x]):=[d(x)],\,\quad\forall d\in \Der(\widehat{\huaComTrias}(\g^*)),~ x\in \widehat{\huaComTrias}(\g^*).
		\end{eqnarray}
	Furthermore, given $v\in \widehat{\Sym}(\g^*)$ and $v'\in \widehat{\bar{\Sym}}(\g^*)$ we have, modulo the relation $*=\cdot$:
	\begin{align*}
	1\otimes vv'=(1\otimes v)\cdot (1\otimes v')=(1\otimes v)* (1\otimes v')=v\otimes v'
	\end{align*}	
and similarly $1\otimes vv'=(-1)^{|v||v'|}1\otimes v'v=(-1)^{|v||v'|}v'\otimes v$. It follows that we have the following isomorphism of commutative trialgebras
\begin{equation}\label{eq:quotient} [\widehat{\Sym}(\g^*)\otimes \widehat{\bar{\Sym}}(\g^*)]/I\cong \prod_{\sigma\in \mathbb S_{(n,m)}\atop n\geq 0, m\geq 1} [\widehat{\Sym}^n(\g^*) \otimes \widehat{\bar{\Sym}^m}(\g^*)]_\sigma\cong \widehat{\bar{\Sym}}(\g^*).
\end{equation}
The structure maps $\{l^C_k\}_{k=1}^{+\infty}$ of the corresponding sub-adjacent $L_\infty$  algebra are dual to the induced differential $\tilde{d}$ on $\widehat{\bar{\Sym}}(\g^*)$.
It follows from (\ref{eq:quotient}) that the dual map
$\Sym(\g)\to\Sym(\g)\otimes\bar{\Sym}(\g)$ has the form
\[
x_1\ldots x_k
\mapsto\sum_{j=0}^{k-1}\sum _{\sigma\in \mathbb S_{(j,k-j)}}
 \epsilon(\sigma)(x_{\sigma(1)}\ldots x_{\sigma(j)})\otimes(x_{\sigma(j+1)}\ldots x_{\sigma(k)})
\]
which implies that the maps $\{l^C_k\}_{k=1}^{+\infty}$ are as stated (the above argument is similar to deducing the structure maps of the $L_\infty$-algebra associated to an $A_\infty$-algebra).

A similar, but simpler, argument, shows that the collection of tensors 		$(\g,\{\frkM_{0,q}\}_{q\ge1})$ forms an $L_\infty$-algebra. Indeed, in the dual language this corresponds to setting $*=0$ in the free commutative trialgebra $\widehat{\Sym}(\g^*)\otimes \widehat{\bar{\Sym}}(\g^*)$ and observing that the resulting structure is a square zero derivation of $\widehat{\bar{\Sym}}(\g^*)$, that is an $L_\infty$-structure on the graded vector space $\g$.

Finally, by \eqref{sub-homotopy-lie}, we have
{\footnotesize{
\begin{eqnarray*}
&&\sum_{i=1}^{n}\sum_{\sigma\in \mathbb S_{(i,n-i)}}\varepsilon(\sigma)\frkM_{n-i+1,m}\Big(l^C_i(x_{\sigma(1)}\ldots x_{\sigma(i)}),x_{\sigma(i+1)},\ldots,x_{\sigma(n)}\otimes x_{n+1}\ldots x_{n+m}\Big)\\
&&+\sum_{i=0}^{n}\sum_{k=1}^{m}\sum_{\sigma\in\mathbb S_{(i,n-i)}}\sum_{ \tau\in\mathbb S_{(k,m-k)}}\varepsilon(\sigma)\varepsilon(\tau)(-1)^{x_{\sigma(1)}+\ldots+x_{\sigma(i)}}\\
&&\frkM_{i,m-k+1}\Big(x_{\sigma(1)}\ldots x_{\sigma(i)}\otimes \frkM_{n-i,k}\big(x_{\sigma(i+1)}\ldots x_{\sigma(n)}\otimes x_{n+\tau(1)}\ldots x_{n+\tau(k)}\big),x_{n+\tau(k+1)}\ldots x_{n+\tau(m)}\Big)\\
&\stackrel{\eqref{sub-homotopy-lie}}{=}&\sum_{i=1}^{n}\sum_{\sigma\in \mathbb S_{(i,n-i)}}\sum_{j=0}^{i-1}\sum_{\tau\in\mathbb S_{(j,i-j)}}\varepsilon(\sigma)\varepsilon(\tau)\\
&&\frkM_{n-i+1,m}\Big(\frkM_{j,i-j}(x_{\sigma(\tau(1))} \ldots x_{\sigma(\tau(j))}\otimes x_{\sigma(\tau(j+1))} \ldots x_{\sigma(\tau(k))}),x_{\sigma(i+1)}\ldots x_{\sigma(n)}\otimes x_{n+1}\ldots x_{n+m}\Big)\\
&&+\sum_{i=0}^{n}\sum_{k=1}^{m}\sum_{\sigma\in\mathbb S_{(i,n-i)}}\sum_{ \tau\in\mathbb S_{(k,m-k)}}\varepsilon(\sigma)\varepsilon(\tau)(-1)^{x_{\sigma(1)}+\ldots+x_{\sigma(i)}}\\
&&\frkM_{i,m-k+1}\Big(x_{\sigma(1)}\ldots x_{\sigma(i)}\otimes \frkM_{n-i,k}\big(x_{\sigma(i+1)}\ldots x_{\sigma(n)}\otimes x_{n+\tau(1)}\ldots x_{n+\tau(k)}\big),x_{n+\tau(k+1)},\ldots,x_{n+\tau(m)}\Big)\\
&=&\sum_{\sigma\in \mathbb S_{(j,i,n-j-i)}\atop 0\le j\le n-1, 1\le i\le n}\varepsilon(\sigma)\frkM_{n-j-i+1,m}\Big(\frkM_{j,i}(x_{\sigma(1)}\ldots x_{\sigma(j)}\otimes x_{\sigma(j+1)}\ldots x_{\sigma(j+i)})x_{\sigma(j+i+1)}\ldots x_{\sigma(n)}\otimes x_{n+1}\ldots x_{n+m}\Big)\\
					\nonumber&&+\sum_{\sigma\in\mathbb S_{(n-j,j)}\atop 0\le j\le n}\sum_{ \tau\in\mathbb S_{(i,m-i)}\atop 1\le i\le m}\varepsilon(\sigma)\varepsilon(\tau)(-1)^{x_{\sigma(1)}+\ldots+x_{\sigma(n-j)}}\\
					\nonumber&&\frkM_{n-j,m-i+1}\Big(x_{\sigma(1)} \ldots x_{\sigma(n-j)}\otimes \frkM_{j,i}\big(x_{\sigma(n-j+1)} \ldots x_{\sigma(n)}\otimes x_{n+\tau(1)} \ldots x_{n+\tau(i)}\big) x_{n+\tau(i+1)} \ldots x_{n+\tau(m)}\Big)\\
&\stackrel{\eqref{homotopy-post-lie}}{=}&0.
\end{eqnarray*}
}}
Thus, $(\g,\g,\{l^C_k\}_{k=1}^{+\infty},\{\frkM_{p,q}\}_{p+q\ge1})$ is an open-closed homotopy Lie algebra such that  $\frkM_{p,0}=0$ for all $p\ge1$.
	\end{proof}

\begin{thm}\label{th:characterization-more}
Let $\g$ be a graded vector space and $\frkM_{p,q}:\Sym^p(\g)\otimes \Sym^q(\g)\lon\g,~p\ge 0,q\ge1$  be a family of graded linear maps of degree $1$.
Then $(\g,\{\frkM_{p,q}\}_{p\ge0,q\ge1})$ is a post-Lie$_\infty$ algebra if and only if
 $(\g,\g,\{l^C_k\}_{k=1}^{+\infty},\{\frkM_{p,q}\}_{p+q\ge1})$ is an open-closed homotopy Lie algebra which satisfies
\begin{eqnarray*}
l^C_k(x_1,\ldots,x_k)&=&\sum_{j=0}^{k-1}\sum_{\sigma\in\mathbb S_{(j,k-j)}}\varepsilon(\sigma)\frkM_{j,k-j}(x_{\sigma(1)} \ldots x_{\sigma(j)}\otimes x_{\sigma(j+1)} \ldots x_{\sigma(k)}),\\
\frkM_{p,0}&=&0,\,\,\,\forall p\ge 1.
		\end{eqnarray*}
	\end{thm}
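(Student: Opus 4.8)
The plan is to reduce the biconditional to the identity of multilinear operators that is already buried inside the proof of Theorem \ref{homotopy-post-lie-to-lie-cor}, now read as an equivalence rather than as a one-way implication. The ``only if'' direction requires nothing new: it is precisely the conclusion of Theorem \ref{homotopy-post-lie-to-lie-cor}, which says that a post-Lie$_\infty$ algebra $(\g,\{\frkM_{p,q}\}_{p\ge0,q\ge1})$ yields the sub-adjacent $L_\infty$-algebra $(\g,\{l^C_k\}_{k=1}^{+\infty})$ with $l^C_k$ given by \eqref{sub-homotopy-lie}, and that $(\g,\g,\{l^C_k\}_{k=1}^{+\infty},\{\frkM_{p,q}\}_{p+q\ge1})$ is then an open-closed homotopy Lie algebra with $\frkM_{p,0}=0$ for all $p\ge1$.

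For the converse I would assume that $(\g,\g,\{l^C_k\}_{k=1}^{+\infty},\{\frkM_{p,q}\}_{p+q\ge1})$ is an open-closed homotopy Lie algebra with $l^C_k$ as in \eqref{sub-homotopy-lie} and $\frkM_{p,0}=0$, and deduce \eqref{homotopy-post-lie} for all $n\ge0,\,m\ge1$. The key observation is that the final chain of equalities in the proof of Theorem \ref{homotopy-post-lie-to-lie-cor} is a \emph{purely formal} identity between multilinear maps: after substituting the definition \eqref{sub-homotopy-lie} of $l^C_k$ into the left-hand side of the compatibility relation \eqref{open-closed homotopy} for a fixed pair $(n,m)$ with $m\ge1$, one merely reorganizes the shuffle sums to recover the left-hand side of \eqref{homotopy-post-lie}. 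This manipulation uses only the defining formula of $l^C_k$ and the combinatorics of shuffles, never the vanishing of either side. Consequently the identity may be read in either direction, so that for each such $(n,m)$ the left-hand side of \eqref{open-closed homotopy} equals the left-hand side of \eqref{homotopy-post-lie}, and one vanishes if and only if the other does.

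Two bookkeeping points close the argument. First, in the second sum of \eqref{open-closed homotopy} the inner index ranges a priori over $0\le k\le m$, but every term with $k=0$ carries a factor $\frkM_{n-i,0}$, which is zero by the hypothesis $\frkM_{p,0}=0$ (and is simply absent when $n-i=0$, since $\frkM_{0,0}$ is not part of the structure); thus the $k=0$ contributions disappear and the sum genuinely matches the range $1\le k\le m$ occurring in \eqref{homotopy-post-lie}. Likewise, the instances of \eqref{open-closed homotopy} with $m=0$ impose no condition, since each of their terms carries a factor $\frkM_{p,0}=0$ and the relation collapses to $0=0$. Second, the residual clauses in the definition of an open-closed homotopy Lie algebra are harmless: the statement that $(\g,\{\frkM_{0,q}\}_{q\ge1})$ is an $L_\infty$-algebra is exactly the $n=0$ instance of \eqref{homotopy-post-lie}, while the statement that $(\g,\{l^C_k\}_{k=1}^{+\infty})$ is an $L_\infty$-algebra is automatic once \eqref{homotopy-post-lie} holds, by the ``only if'' direction already proved.

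Since the reasoning is entirely combinatorial, there is no analytic obstacle here; the only delicate part is the careful accounting of the degenerate terms ($k=0$ and $m=0$) against the hypothesis $\frkM_{p,0}=0$, together with the verification that the shuffle-sum reorganization in the proof of Theorem \ref{homotopy-post-lie-to-lie-cor} is an identity valid term by term, not merely an equality of two separately vanishing expressions. Once this is checked, the equivalence in Theorem \ref{th:characterization-more} follows at once.
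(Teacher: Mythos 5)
Your proposal is correct and takes essentially the same route as the paper: the ``only if'' direction is quoted from Theorem \ref{homotopy-post-lie-to-lie-cor}, and the converse is obtained by observing that the shuffle-sum computation in that theorem's proof is a formal identity, so that \eqref{open-closed homotopy} with $l^C_k$ substituted (and the $k=0$ and $m=0$ instances annihilated by $\frkM_{p,0}=0$) is precisely \eqref{homotopy-post-lie}. Your explicit bookkeeping of the degenerate terms merely spells out what the paper's terse converse (``the equation \eqref{open-closed homotopy} is precisely the equation \eqref{homotopy-post-lie}'') leaves implicit.
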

\begin{proof}
By Theorem \ref{homotopy-post-lie-to-lie-cor}, we obtain that $(\g,\g,\{l^C_k\}_{k=1}^{+\infty},\{\frkM_{p,q}\}_{p+q\ge1})$ is an open-closed homotopy Lie algebra satisfying $\frkM_{p,0}=0$ for all $p\ge1$.

Conversely, by  the proof of Theorem \ref{homotopy-post-lie-to-lie-cor}, we deduce that the equation \eqref{open-closed homotopy} is precisely  the equation \eqref{homotopy-post-lie}. Then we obtain that $(\g,\{\frkM_{p,q}\}_{p\ge0,q\ge1})$ is a post-Lie$_\infty$ algebra.
\end{proof}
	%\yh{I do not think we need the following subsection 4.2 and the language of vector field. We can just use continuous derivation. Instead, we put the intrinsic step in subsection 4.2 directly in the proof of Theorem 4.11.   }
	
	%\subsection{Geometric definitions of post-Lie$_\infty$ algebras, pre-Lie$_\infty$ algebras and $L_\infty$-algebras}
	\emptycomment{
		Let $V$ be a graded vector space. We denote by $\huaComTrias(V)=\Sym(V)\otimes \bar{\Sym}(V),~\Pe (V)=\Sym(V)\otimes V$ and $\bar{\Sym}(V)$ the free  commutative trialgebra, free perm algebra and free nonunital commutative algebra respectively. We will denote by $\widehat{\huaComTrias}(V),~\widehat{\Pe }(V)$ and $\widehat{\bar{\Sym}}(V)$ the corresponding completions. We will call a {\bf vector field} on the corresponding completed algebra a continuous derivation
		of it. With this we have the following definition.
		\begin{defi}\label{Geometric-homotopy-alg}
			Let $V$ be a graded vector space.
			\begin{enumerate}
				\item[\rm(i)]
				A post-Lie$_\infty$ algebra structure on $V$ is a vector field
				\begin{equation}
					m:\widehat{\huaComTrias}(V^*)\lon \widehat{\huaComTrias}(V^*)
				\end{equation}
				of degree $1$, such that $[m, m]=0$.
				\item[\rm(ii)] A pre-Lie$_\infty$ algebra structure on $V$ is a vector field
				\begin{equation}
					m:\widehat{\Pe}(V^*)\lon \widehat{\Pe}(V^*)
				\end{equation}
				of degree $1$, such that $[m, m]=0$.
				\item[\rm(iii)]
				An $L_\infty$ algebra structure on $V$ is a vector field
				\begin{equation}
					m:\widehat{\bar{\Sym}}(V^*)\lon \widehat{\bar{\Sym}}(V^*)
				\end{equation}
				of degree $1$, such that $[m, m]=0$.
			\end{enumerate}
		\end{defi}
		We denote the category of graded  nonunital commutative algebras by $\huaCom$ and the category of commutative trialgebras by $\huaComTrias$. Moreover, there is an embedding functor $\frki:\huaCom\lon \huaComTrias$, which is defined by $\frki(A,\cdot):=(A,\cdot,\cdot)$. Let $(A,\cdot,\ast)$ be a graded commutative trialgebra.  We consider the $2$-sided ideal $I$ of the graded commutative trialgebra $(A,\cdot,\ast)$ which is generated by all homogeneous elements  of the form
		$$
		x\ast y-(-1)^{xy}y\ast x,\,\,\,\,\,x\cdot y-x\ast y,~\forall x,y\in A.
		$$
		Then the quotient algebra $A/I$ is a graded commutative algebra and we denote it by $A_{\huacom}$. Moreover, the construction of quotient algebra $A_{\huacom}$ is functorial. It means that we have a functor $(-)_{\huacom}:\huaComTrias\lon \huaCom$. By direct calculation, we deduce  that $(-)_{\huacom}$ is left adjoint to $\frki$.
		
		\begin{pro}
			Let $V$ be a graded vector space. Then $\widehat{\huaComTrias}(V^*)_{\huacom}$ is isomorphic to $\widehat{\bar{\Sym}}(V^*)$ which is
			completion of the free nonunital  commutative algebra $\bar{\Sym}(V^*)$.
		\end{pro}
		
		\begin{proof}
			Because the functor $(-)_{\huacom}$ and the free  commutative trialgebra functor are left adjoint to $\frki$ and the forgetful functor. The proof is finished.
		\end{proof}
		Similarly, we have the following proposition.
		\begin{pro}
			Let $V$ be a graded vector space. Then $\widehat{\Pe}(V^*)_{\huacom}$ is isomorphic to $\widehat{\bar{\Sym}}(V^*)$ which is
			completion of the free nonunital  commutative algebra $\bar{\Sym}(V^*)$.
		\end{pro}
		Moreover, we have  the relationships between the vector fields.
		\begin{thm}\label{lie-homo-from-post-to-Lie}
			Let $V$ be a graded vector space.
			\begin{itemize}
				\item[\rm(i)] There is a graded Lie algebra homomorphism $\Omega_\huaComTrias$ from $\Der(\widehat{\huaComTrias}(V^*))$ to $\Der(\widehat{\bar{\Sym}}(V^*))$, which is defined by
				\begin{eqnarray}
					\Omega_\huaComTrias(d)([x]):=[d(x)],\,\,\forall d\in \Der(\widehat{\huaComTrias}(V^*)),~ x\in \widehat{\huaComTrias}(V^*).
				\end{eqnarray}
				\item[\rm(ii)] There is a graded Lie algebra homomorphism $\Omega_\Pe$ from $\Der(\widehat{\Pe}(V^*))$ to $\Der(\widehat{\bar{\Sym}}(V^*))$, which is defined by
				\begin{eqnarray}
					\Omega_\Pe(d)([x]):=[d(x)],\,\,\forall d\in \Der(\widehat{\Pe}(V^*)),~ x\in \widehat{\Pe}(V^*).
				\end{eqnarray}
			\end{itemize}
		\end{thm}
		
		\begin{proof}
			(i) Since $d$ is a derivation of $\widehat{\huaComTrias}(V^*)$, we deduce that
			\begin{eqnarray*}
				&&d(x\ast y-(-1)^{xy}y\ast x)\\
				&=&d(x)\ast y+(-1)^{xd}x\ast d(y)-(-1)^{xy}\big(d(y)\ast x+(-1)^{yd}y\ast d(x)\big)\\
				&=&d(x)\ast y-(-1)^{y(x+d)}y\ast d(x)+(-1)^{xd}\big(x\ast d(y)-(-1)^{x(y+d)}d(y)\ast x\big),
			\end{eqnarray*}
			and
			\begin{eqnarray*}
				&&d(x\cdot y-x\ast y)\\
				&=&d(x)\cdot y+(-1)^{xd}x\cdot d(y)-\big(d(x)\ast y+(-1)^{xd}x\ast d(y)\big)\\
				&=&d(x)\cdot y-d(x)\ast y+(-1)^{xd}\big(x\cdot d(y)-x\ast d(y)\big).
			\end{eqnarray*}
			Therefore, we obtain that $d(I)\subset I$, which implies that $\Omega_\huaComTrias(d)$ is well-defined. By direct calculation, we gain that $\Omega_\huaComTrias$ is a graded Lie algebra homomorphism immediately.
			
			(ii) The proof is similar to the proof of (i). We leave it to the readers.
		\end{proof}
	}
	
	%\yh{Then we can add the following subsection, which is more important. We need to unify the terminology of $L_\infty$-action and action}
	\subsection{Post-Lie$_\infty$ algebras and higher geometric structures}
	
	\begin{defi}\label{L-infty-action-on-dgca}{\rm (\cite{Hueb})}
		Let $(\g,\{l_k\}_{k=1}^{+\infty})$ be an $L_\infty$-algebra and $(A,\cdot,d)$  be a dg commutative algebra. An {\bf action} of the $L_\infty$-algebra~$(\g,\{l_k\}_{k=1}^{+\infty})$ on the dg commutative algebra~$(A,\cdot,d)$ is a family of  homomorphisms $\{\rho_k:\Sym^k(\g)\lon \Der(A)\}_{k=1}^{+\infty}$ of graded vector spaces  of degree $1$
		for any $k\geq 1$ satisfying, for every collection of
		homogeneous elements $x_1,\ldots,x_n\in \g$,
		\begin{eqnarray}
			\nonumber&&\sum_{i=1}^n\sum_{\sigma\in \mathbb S_{(i,n-i)} }\varepsilon(\sigma)\rho_{n-i+1}\Big(l_i(x_{\sigma(1)},\ldots,x_{\sigma(i)}),x_{\sigma(i+1)},\ldots,x_{\sigma(n)}\Big)\\
			\label{L-infty-homo-on-dgca}&&+\sum_{i=1}^{n-1}\sum_{\sigma\in \mathbb S_{(i,n-i)}}(-1)^{x_{\sigma(1)}+\cdots+x_{\sigma(i)}}\rho_i(x_{\sigma(1)},\ldots,x_{\sigma(i)})\circ \rho_{n-i}(v_{\sigma(k_1+\ldots+k_{i-1}+1)},\ldots,v_{\sigma(n)})\Big)\\
			\nonumber&&+d\circ \rho_n(x_1,\ldots,x_n)-(-1)^{x_1+\cdots+x_n+1}\rho_n(x_1,\ldots,x_n)\circ d\\
			\nonumber&=&0.
		\end{eqnarray}
		Equivalently, $\{s^{-1}\circ\rho_k\}_{k=1}^{+\infty}$ is an $L_\infty$-algebra homomorphism form  $(\g,\{l_k\}_{k=1}^{+\infty})$ to $s^{-1}\Der(A)$.
	\end{defi}

	\begin{thm}\label{action-L-infty}
		Let $\{\rho_k:\Sym^k(\g)\lon \Der(A)\}_{k=1}^{+\infty}$ be an action of an $L_\infty$-algebra $(\g,\{l_k\}_{k=1}^{+\infty})$ on a dg commutative algebra $(A,\cdot,d)$. Then there is an $L_\infty$-algebra structure on $A\otimes \g$, which is defined by
		\begin{eqnarray*}
			\frkl_1(a_1\otimes x_1)&=&d(a_1)\otimes x_1+(-1)^{a_1}a_1\otimes l_1(x_1),\\
			\frkl_k(a_1\otimes x_1,\ldots,a_k\otimes x_k)&=&(-1)^{\sum_{i=1}^{k}(1+x_1+\ldots+x_{i-1})a_i}a_1\ldots a_k\otimes l_k(x_1,\ldots,x_k)\\
			\nonumber&&+\sum_{j=1}^{k}(-1)^{(1+x_1+\ldots+x_{j-1}+a_{j+1}+x_{j+1}+\ldots+a_{k}+x_{k})a_j+(x_{j+1}+\ldots+x_k)x_j}\\
			&&(-1)^{\sum_{i=1}^{k}(1+x_1+\ldots+x_{i-1})a_i}a_1\ldots \hat{a}_j\ldots a_k\rho_{k-1}(x_1,\ldots,\hat{x}_j,\ldots,x_k)(a_{j})\otimes x_{j}.
		\end{eqnarray*}
		It is called the action $L_\infty$-algebra of $\{\rho_k:\Sym^k(\g)\lon \Der(A)\}_{k=1}^{+\infty}$.
	\end{thm}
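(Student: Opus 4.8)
The plan is to show that the collection $\{\frkl_k\}_{k\ge 1}$ satisfies the generalized Jacobi identities of Definition \ref{graded-Nijenhuis-Richardson-bracket}(ii), equivalently that $\sum_{k\ge1}\frkl_k$ is a Maurer--Cartan element of the Nijenhuis--Richardson graded Lie algebra on $\Hom(\overline{\Sym}(A\otimes\g),A\otimes\g)$. The graded symmetry of each $\frkl_k$ is built into the formula, so only the quadratic relation needs checking. Throughout I would use that, by Lemma \ref{Quillen-construction}, $s^{-1}\Der(A)$ is the $L_\infty$-algebra underlying the dg Lie algebra $\Der(A)$, and that, by the last sentence of Definition \ref{L-infty-action-on-dgca}, the maps $\{s^{-1}\circ\rho_k\}$ form an $L_\infty$-morphism $\g\to s^{-1}\Der(A)$; this repackages the hypothesis \eqref{L-infty-homo-on-dgca} in a form ready for use.

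First I would split each bracket as $\frkl_k=\frkl_k^{\mathrm{br}}+\frkl_k^{\mathrm{act}}$, where $\frkl_k^{\mathrm{br}}(a_1\otimes x_1,\dots,a_k\otimes x_k)=\pm\, a_1\cdots a_k\otimes l_k(x_1,\dots,x_k)$ is the ``coefficientwise'' part and $\frkl_k^{\mathrm{act}}$ is the sum over $j$ involving $\rho_{k-1}(\dots\hat{x}_j\dots)(a_j)$. The family $\{\frkl_k^{\mathrm{br}}\}$ alone is the extension-of-scalars $L_\infty$-structure: since $A$ is a commutative dg algebra and $L_\infty$-algebras are algebras over a dg operad, $A\otimes\g$ carries an $L_\infty$-structure with these brackets, so the ``pure bracket'' contributions to the Jacobi relation vanish on the nose, using only associativity and commutativity of the product on $A$ together with the generalized Jacobi identity for $\{l_k\}$. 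Thus the verification reduces to the terms in which at least one $\frkl^{\mathrm{act}}$ occurs.

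These remaining terms sort themselves according to how many action factors they carry. The terms that are linear in $\rho$ but in which a derivation is applied to a product $a_ia_j$ split by the Leibniz rule---this is precisely where I use that each $\rho_\bullet(\dots)$ lands in $\Der(A)$---and one summand recombines with a $\frkl^{\mathrm{br}}$-term while the other pairs with a second action term. The terms that are quadratic in $\rho$ (a derivation applied to the output of another), together with the term feeding an $l_i$ into $\rho$ and the terms in which the differential $d$ of $A$ meets $\rho$, reassemble exactly into the left-hand side of the action-compatibility condition \eqref{L-infty-homo-on-dgca}, evaluated on the distinguished slot, and therefore cancel. I expect the main obstacle to be the sign bookkeeping: matching the Koszul signs produced by shuffling the pairs $a_i\otimes x_i$ past one another against those recorded in \eqref{L-infty-homo-on-dgca} and in the definition of $\frkl_k$. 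A clean way to keep this under control is to pass to the dual coalgebra picture, realizing $\{\frkl_k\}$ as a coderivation on $\Sym(s(A\otimes\g))$ assembled from the codifferential of $\g$, the multiplication of $A$, and the coderivation-valued morphism $\hat\rho$; the square-zero condition then factors through the $L_\infty$-morphism property of $\rho$ rather than requiring term-by-term cancellation. Conceptually, the output is the $L_\infty$-algebra of sections of the action $L_\infty$-algebroid determined by $\rho$, which is the invariant content of the statement.
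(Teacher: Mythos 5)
Your proposal is correct in outline, but it takes a genuinely different route from the paper. The paper does not verify the generalized Jacobi identity at all: it invokes Vitagliano's result \cite[Corollary 38]{Vita} on representations of homotopy Lie--Rinehart algebras, according to which the action $\{\rho_k\}_{k=1}^{+\infty}$ endows the pair $(A,A\otimes\g)$ with an $L_\infty$-algebroid structure; a short computation then identifies the structure maps of this algebroid with the stated $\frkl_k$, and since an $L_\infty$-algebra is an $L_\infty$-algebroid of a special kind, the theorem follows. Amusingly, your closing sentence --- that the output is the $L_\infty$-algebra of sections of the action $L_\infty$-algebroid determined by $\rho$ --- is precisely the paper's entire proof; you relegate to a conceptual remark what the paper uses as the argument, and instead carry out the direct verification that the paper (and \cite{Vita}) leave implicit. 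Your decomposition $\frkl_k=\frkl_k^{\mathrm{br}}+\frkl_k^{\mathrm{act}}$ is the right skeleton: the coefficientwise part is the Hadamard-type extension of scalars by the cdga $A$ (valid in the paper's shifted convention, whose conjugation by the suspension is exactly the source of the sign $(-1)^{\sum_i(1+x_1+\cdots+x_{i-1})a_i}$), and the residual terms do reassemble, via the Leibniz rule for $\Img\rho\subset\Der(A)$, into instances of \eqref{L-infty-homo-on-dgca} evaluated on the distinguished coefficient. Two small caveats: in the paper's conventions the brackets are symmetric of degree $1$, so the coderivation you build lives on $\Sym(A\otimes\g)$ rather than $\Sym(s(A\otimes\g))$ --- the shift is already absorbed; and your description of the Leibniz recombination is slightly loose (both Leibniz summands of $\rho_{n-i}(\ldots)(a_{\sigma(1)}\cdots a_{\sigma(i)})$ have the same shape and cancel collectively against the $\frkl^{\mathrm{br}}\circ\frkl^{\mathrm{act}}$ terms, rather than splitting into one of each kind), though the cancellation pattern is correct. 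What each approach buys: the paper's citation is short and situates the result conceptually, but outsources the hard work and still owes the reader the ``straightforward computation'' matching structure maps; your argument is self-contained and makes the sign bookkeeping explicit, and your suggested coalgebra-level repackaging --- realizing $\sum_k\frkl_k$ as a coderivation assembled from the codifferential of $\g$, the multiplication of $A$, and the $L_\infty$-morphism $\{s^{-1}\circ\rho_k\}$ of Definition \ref{L-infty-action-on-dgca} --- is a sound and cleaner way to obtain the square-zero condition without term-by-term chasing.
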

For the proof (though not for a formulation) we will need the notion of an {\bf $L_\infty$-algebroid}, also known as a homotopy Lie-Rinehart algebra. Recall  from \cite{Hueb,Yu}
that an  $L_\infty$-algebroid over a graded  commutative algebra ${A}$ is an $L_\infty$-algebra $(\g,\{l_k\}_{k=1}^{+\infty})$ together with an ${A}$-module structure on $\g$ and an  action $\{\rho_k:\Sym^k(\g)\lon \Der(A)\}_{k=1}^{+\infty}$ (called the anchor)  such that
\begin{eqnarray*}
	\rho(ax_1,\ldots,x_k)&=&(-1)^{a}a\rho(x_1,\ldots,x_k),\\
	l_{k+1}(x_1,\ldots,x_k,ax_{k+1})&=&\rho_k(x_1,\ldots,x_k)(a)x_{k+1}+(-1)^{a(1+x_1+\ldots+x_k)}al_{k+1}(x_1,\ldots,x_{k+1}),
\end{eqnarray*}
here $x_1,\ldots,x_{k+1}\in \g,~a\in {A}.$ We usually denote an $L_\infty$-algebroid over ${A}$ by $({A},\g,\{l_k\}_{k=1}^{+\infty},\{\rho_k\}_{k=1}^{+\infty})$.
	\begin{proof}[Proof of Theorem \ref{action-L-infty}]
	The given an action $\{\rho_k:\Sym^k(\g)\lon \Der(A)\}_{k=1}^{+\infty}$ gives rise to the structure of an $L_\infty$-algebroid on the pair $(A,A\otimes \g)$, cf. \cite[Corollary 38]{Vita} where this statement is proved in a more general situation when $\g$ is itself an $L_\infty$-algebroid acting on $A$. A straightforward computation shows that the structure maps of this $L_\infty$-algebroid are as stated. Since an $L_\infty$-algebra is an $L_\infty$-algebroid of a special kind, the desired statement follows.
	\end{proof}
	
	Let $\{\rho_k:\Sym^k(\g)\lon \Der(A)\}_{k=1}^{+\infty}$ be an action as above. We define the graded linear map $$\frkM_{p,q}:\Sym^p(A\otimes \g)\otimes \Sym^q(A\otimes \g)\lon A\otimes \g$$ by
	\emptycomment{
		{\small{
				\begin{eqnarray*}\label{action-homo-post}
					\frkM_{p,q}\big((a_1\otimes x_1\ldots a_p\otimes x_p)\otimes(a_{p+1}\otimes x_{p+1}\ldots a_{p+q}\otimes x_{p+q})\big)=\left\{
					\begin{array}{lcll}
						d(a_1)\otimes x_1+(-1)^{a_1}a_1\otimes l_1(x_1)             &p=0,~q=1\\
						(-1)^{}a_1\ldots a_q\otimes l_q(x_1,\cdots,x_q)                 &p=0,~q\ge 2\\
						\dM_{\rho}^{(1,0)}X_0                             &p\ge 1,~q=1\\
						0      &\text{for other cases}
					\end{array}
					\right.
				\end{eqnarray*}
		}}
	}
	\begin{eqnarray*}
		\frkM_{0,1}(a_1\otimes x_1)&=&d(a_1)\otimes x_1+(-1)^{a_1}a_1\otimes l_1(x_1)\\
		\frkM_{0,q}(a_1\otimes x_1\ldots a_q\otimes x_q)&=&(-1)^{\sum_{i=1}^{q}(1+x_1+\ldots+x_{i-1})a_i}a_1\ldots a_q\otimes l_q(x_1,\ldots,x_q)\\
		\frkM_{p,1}\big((a_1\otimes x_1\ldots a_p\otimes x_p)\otimes(a_{p+1}\otimes x_{p+1})\big)&=&(-1)^{\sum_{i=1}^{p}(1+x_1+\ldots+x_{i-1})a_i}a_1\ldots a_p\rho_{p}(x_1,\ldots,x_p)(a_{p+1})\otimes x_{p+1},
	\end{eqnarray*}
	and
	$\frkM_{p,q}=0$ for other cases.

	\begin{thm}\label{homotopy-action-to-homo-post}
		With the notation as above, $(A\otimes \g,\{\frkM_{p,q}\}_{p\ge0,q\ge1})$ is a post-Lie$_\infty$ algebra, whose sub-adjacent $L_\infty$-algebra is exactly the  action $L_\infty$-algebra.
	\end{thm}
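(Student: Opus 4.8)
The plan is to deduce both assertions from the characterization of post-Lie$_\infty$ algebras via open-closed homotopy Lie algebras. By Theorem \ref{th:characterization-more}, $(A\otimes\g,\{\frkM_{p,q}\}_{p\ge0,q\ge1})$ is a post-Lie$_\infty$ algebra if and only if $\frkM_{p,0}=0$ for all $p\ge1$ and $(A\otimes\g,A\otimes\g,\{l^C_k\}_{k=1}^{+\infty},\{\frkM_{p,q}\}_{p+q\ge1})$ is an open-closed homotopy Lie algebra, where $l^C_k$ is given by \eqref{sub-homotopy-lie}. The vanishing $\frkM_{p,0}=0$ is immediate from the definition of the maps $\frkM_{p,q}$. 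Thus it remains to compute $l^C_k$ and to verify the open-closed compatibility \eqref{open-closed homotopy}; the computation of $l^C_k$ will simultaneously settle the claim about the sub-adjacent $L_\infty$-algebra.

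First I would compute $l^C_k$. Since $\frkM_{p,q}=0$ unless $p=0$ or $q=1$, only the summands $j=0$ and $j=k-1$ in \eqref{sub-homotopy-lie} survive, giving
\[
l^C_k(v_1,\ldots,v_k)=\frkM_{0,k}(v_1\ldots v_k)+\sum_{\sigma\in\mathbb S_{(k-1,1)}}\varepsilon(\sigma)\,\frkM_{k-1,1}(v_{\sigma(1)}\ldots v_{\sigma(k-1)}\otimes v_{\sigma(k)}).
\]
Substituting $v_i=a_i\otimes x_i$ and inserting the explicit formulas for $\frkM_{0,k}$ and $\frkM_{k-1,1}$, the first term reproduces the summand $a_1\ldots a_k\otimes l_k(x_1,\ldots,x_k)$ of $\frkl_k$, while the symmetrized second term reproduces, after matching Koszul signs, the anchor summands of $\frkl_k$ of the form $a_1\ldots\hat a_j\ldots a_k\,\rho_{k-1}(x_1,\ldots,\hat x_j,\ldots,x_k)(a_j)\otimes x_j$. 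Hence $l^C_k=\frkl_k$, which is exactly the action $L_\infty$-algebra of Theorem \ref{action-L-infty}; this proves the second assertion of the theorem.

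For the open-closed axiom \eqref{open-closed homotopy} I would exploit the same sparsity to collapse the double sum. Because $\frkM_{p,q}=0$ for $p\ge1,q\ge2$, the first sum survives only when $m=1$, and in the second sum the only nonvanishing compositions of an outer with an inner $\frkM$ are those in which both are of $L_\infty$-type $\frkM_{0,\bullet}$ (forcing $n=0$), those pairing an inner $\frkM_{n,1}$ into an outer $\frkM_{0,m}$, and those feeding an inner $\frkM_{0,m}$ into an outer $\frkM_{n,1}$. Accordingly the identity splits into three regimes, each of which follows from the $L_\infty$-algebroid structure on $(A,A\otimes\g)$ produced in the proof of Theorem \ref{action-L-infty}. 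For $n=0$ it reduces to the statement that $\{\frkM_{0,q}\}_{q\ge1}$ is an $L_\infty$-structure on $A\otimes\g$, which holds because $\{l_k\}$ is an $L_\infty$-algebra and $(A,\cdot,d)$ is a dg commutative algebra. For $m\ge2,\,n\ge1$ it reduces to the assertion that each operator $\frkM_{n,1}(v_1\ldots v_n\otimes -)$ is a graded derivation of the operations $\frkM_{0,m}$, i.e. the Leibniz/derivation axiom of the $L_\infty$-algebroid. For $m=1$ both sums contribute, and the surviving terms express that the collection $\{\frkM_{p,1}\}$, which encodes the algebroid anchor acting on the $A$-factor, is a representation up to homotopy of the sub-adjacent $L_\infty$-algebra $(A\otimes\g,\{l^C_k\})$; this is precisely the $L_\infty$-morphism property of the anchor in the sense of Definition \ref{L-infty-action-on-dgca}.

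The main obstacle is the $m=1$ regime and, throughout, the Koszul sign bookkeeping: in each composition of two $\frkM$'s one must commute the $A$-factors past the $\g$-factors and track the signs produced by the shuffles $\sigma,\tau$, then match them against the signs appearing in \eqref{L-infty-homo-on-dgca} and in the $L_\infty$-algebroid anchor axiom. Conceptually there is no difficulty once one recognizes that $\frkM_{0,\bullet}$ is the $\g$-bracket lifted tensorially and twisted by $d$, that $\frkM_{\bullet,1}$ is the anchor, and that \eqref{open-closed homotopy} is the strong-homotopy incarnation of the Lie--Rinehart identities; the whole verification is thus the homotopy analogue of checking that a Lie algebroid $A\otimes\g$ carries the post-Lie product $(a\otimes x)\rhd(b\otimes y)=a\,\rho(x)(b)\otimes y$ whose sub-adjacent bracket is the algebroid bracket. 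The sign computation is the only laborious step; everything else follows formally from Theorems \ref{th:characterization-more} and \ref{action-L-infty}.
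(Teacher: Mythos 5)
Your proposal is correct and takes essentially the same route as the paper: the paper likewise reduces to Theorem \ref{th:characterization-more}, identifies the sub-adjacent structure \eqref{sub-homotopy-lie} with the action $L_\infty$-algebra of Theorem \ref{action-L-infty}, and disposes of the open-closed axiom \eqref{open-closed homotopy} by noting that $\{\frkM_{p,1}\}_{p\ge1}$ is an $L_\infty$-action of the action $L_\infty$-algebra on $(A\otimes\g,\{\frkM_{0,q}\}_{q\ge1})$, which is precisely the content of your three-regime case analysis. The only difference is presentational: where the paper asserts the action property in one line (deferring to the $L_\infty$-algebroid structure behind Theorem \ref{action-L-infty}), you unpack it via the sparsity of the $\frkM_{p,q}$ and the attendant Koszul-sign checks.
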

	\begin{proof}
		By \eqref{sub-homotopy-lie}, we note that $(A\otimes \g,\{l^C_k\}_{k=1}^{+\infty})$ is the action $L_\infty$-algebra. Since $\{\rho_k:\Sym^k(\g)\lon \Der(A)\}_{k=1}^{+\infty}$ is an action of the $L_\infty$-algebra $(\g,\{l_k\}_{k=1}^{+\infty})$ on the dg commutative algebra $(A,\cdot,d)$, we deduce that $\{\frkM_{p,1}\}_{p=1}^{+\infty}$ is an action of the action $L_\infty$-algebra $(A\otimes \g,\{l^C_k\}_{k=1}^{+\infty})$ on the  $L_\infty$-algebra $(A\otimes \g,\{\frkM_{0,q}\}_{q=1}^{+\infty})$. Then, by Theorem \ref{th:characterization-more},   $(A\otimes \g,\{\frkM_{p,q}\}_{p\ge0,q\ge1})$ is a post-Lie$_\infty$ algebra.
	\end{proof}

	Recall from \cite{CF,LSX,OP,Vo}
	that a {\bf homotopy Poisson algebra} is a graded commutative algebra $(A,\cdot)$ with an $L_\infty$-algebra structure $\{l_k\}_{k=1}^{+\infty}$, such that
	\begin{eqnarray*}
		l_{k}(x_1,\ldots,x_{k-1},x\cdot y)=l_{k}(x_1,\ldots,x_{k-1},x)\cdot y+(-1)^{x(1+x_1+\ldots+x_{k-1})}x\cdot l_{k}(x_1,\ldots,x_{k-1},y),
	\end{eqnarray*}
	here $x_1,\ldots,x_{k-1},x,y\in {A}.$ We usually denote a homotopy Poisson algebra by $({A},\cdot,\{l_k\}_{k=1}^{+\infty})$.

	\begin{cor}\label{homotopy-poisson-to-homo-post}
		Let $({A},\cdot,\{l_k\}_{k=1}^{+\infty})$ be a homotopy Poisson algebra. Then  there is a post-Lie$_\infty$ algebra structure on $A\otimes A$  given by
		\begin{eqnarray*}
			\frkM_{0,1}(a_1\otimes x_1)&=&l_1(a_1)\otimes x_1+(-1)^{a_1}a_1\otimes l_1(x_1)\\
			\frkM_{0,q}(a_1\otimes x_1\ldots a_q\otimes x_q)&=&(-1)^{\sum_{i=1}^{q}(1+x_1+\ldots+x_{i-1})a_i}a_1\ldots a_q\otimes l_q(x_1,\ldots,x_q),\\
			\frkM_{p,1}\big((a_1\otimes x_1\ldots a_p\otimes x_p)\otimes(a_{p+1}\otimes x_{p+1})\big)&=&(-1)^{\sum_{i=1}^{p}(1+x_1+\ldots+x_{i-1})a_i}a_1\ldots a_pl_{p+1}(x_1,\ldots,x_p,a_{p+1})\otimes x_{p+1},
		\end{eqnarray*}
		and
		$\frkM_{p,q}=0$ for other cases.
	\end{cor}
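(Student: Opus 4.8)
The plan is to exhibit the homotopy Poisson structure as a special instance of the action of an $L_\infty$-algebra on a dg commutative algebra, and then to invoke Theorem \ref{homotopy-action-to-homo-post} verbatim. Concretely, I would take the acting $L_\infty$-algebra to be $(A,\{l_k\}_{k=1}^{+\infty})$ and let it act on the dg commutative algebra $(A,\cdot,l_1)$, i.e.\ on itself with differential $l_1$. First I would record that $(A,\cdot,l_1)$ really is a dg commutative algebra: $l_1^2=0$ is the $n=1$ case of the generalized Jacobi identity for $\{l_k\}$, and $l_1(x\cdot y)=l_1(x)\cdot y+(-1)^{x}x\cdot l_1(y)$ is the $k=1$ instance of the homotopy Poisson compatibility condition.

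Next I would define the candidate action $\{\rho_k:\Sym^k(A)\lon\Der(A)\}_{k=1}^{+\infty}$ by
$$\rho_k(x_1,\ldots,x_k)(a):=l_{k+1}(x_1,\ldots,x_k,a),\quad\forall a\in A.$$
The homotopy Poisson compatibility condition applied to $l_{k+1}$ states precisely that $l_{k+1}(x_1,\ldots,x_k,-)$ is a derivation of $(A,\cdot)$ of degree $1+|x_1|+\cdots+|x_k|$; this shows each $\rho_k(x_1,\ldots,x_k)$ lands in $\Der(A)$ and that $\rho_k$ has degree $1$, so the data is of the type required in Definition \ref{L-infty-action-on-dgca}.

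The crux is the verification of the action axiom \eqref{L-infty-homo-on-dgca}. My approach would be to evaluate the left-hand side on an arbitrary $a\in A$ and substitute the definition of $\rho$: every summand then becomes either a value $l_{n-i+2}(l_i(\ldots),\ldots,a)$ (when $a$ sits among the outer arguments), a nested composite $l_{i+1}(\ldots,l_{n-i+1}(\ldots,a))$ coming from the $\rho_i\circ\rho_{n-i}$ terms (when $a$ sits inside the inner bracket), or one of the two terms $l_1(l_{n+1}(\ldots,a))$ and $l_{n+1}(\ldots,l_1(a))$ arising from $d\circ\rho_n$ and $\rho_n\circ d$. Collecting these reproduces exactly the generalized Jacobi identity of $\{l_k\}$ evaluated on the $(n{+}1)$-tuple $(x_1,\ldots,x_n,a)$, organized according to the position of the distinguished argument $a$. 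I expect the main obstacle to be purely a matter of Koszul signs: one must check that the signs generated by singling out $a$ as the module argument (the factors $(-1)^{x_{\sigma(1)}+\cdots+x_{\sigma(i)}}$ and $(-1)^{x_1+\cdots+x_n+1}$ in \eqref{L-infty-homo-on-dgca}) coincide with those appearing in the $L_\infty$ relation. This is a direct but bookkeeping-heavy computation with no conceptual difficulty once the bijection of terms is fixed.

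Finally, with the action established, I would apply Theorem \ref{homotopy-action-to-homo-post} with $\g=A$. Substituting $d=l_1$ and $\rho_p(x_1,\ldots,x_p)(a)=l_{p+1}(x_1,\ldots,x_p,a)$ into the formulas of that theorem for $\frkM_{0,1}$, $\frkM_{0,q}$ and $\frkM_{p,1}$ yields exactly the maps displayed in the statement, with $\frkM_{p,q}=0$ in all remaining cases, producing the asserted post-Lie$_\infty$ algebra structure on $A\otimes A$; its sub-adjacent $L_\infty$-algebra is the action $L_\infty$-algebra, again by Theorem \ref{homotopy-action-to-homo-post}.
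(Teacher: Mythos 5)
Your proposal is correct and follows the paper's own route exactly: the paper's proof consists of the single observation that the adjoint maps $\ad_k(x_1,\ldots,x_k)=l_{k+1}(x_1,\ldots,x_k,-)$ constitute an action of $(A,\{l_k\}_{k=1}^{+\infty})$ on the dg commutative algebra $(A,\cdot,l_1)$, followed by an appeal to Theorem \ref{homotopy-action-to-homo-post}. You merely spell out the verifications the paper leaves implicit (that $(A,\cdot,l_1)$ is a dg commutative algebra, that each $\ad_k$ lands in $\Der(A)$ via the homotopy Poisson compatibility, and that the action axiom \eqref{L-infty-homo-on-dgca} is the generalized Jacobi identity on $(x_1,\ldots,x_n,a)$ sorted by the position of $a$), all of which are accurate.
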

	
	\begin{proof}
		Since $\{\ad_k:\Sym^k(A)\lon \Der(A)\}_{k=1}^{+\infty}$ is an  action of the $L_\infty$-algebra~$(A,\{l_k\}_{k=1}^{+\infty})$ on the dg commutative algebra~$(A,\cdot,l_1)$, by Theorem \ref{homotopy-action-to-homo-post}, we deduce that $(A\otimes A,\{\frkM_{p,q}\}_{p\ge0,q\ge1})$ is a post-Lie$_\infty$ algebra.
	\end{proof}

	\begin{cor}\label{homotopy-lie-algeoid-to-homo-post}
		Let $({A},\g,\{l_k\}_{k=1}^{+\infty},\{\rho_k\}_{k=1}^{+\infty})$ be an $L_\infty$-algebroid. Then  there is a post-Lie$_\infty$ algebra structure on $A\otimes \g$  given by
		\begin{eqnarray*}
			%\frkM_{0,1}(a_1\otimes x_1)&=&(-1)^{a_1}a_1\otimes l_1(x_1)\\
			\frkM_{0,q}(a_1\otimes x_1\ldots a_q\otimes x_q)&=&(-1)^{\sum_{i=1}^{q}(1+x_1+\ldots+x_{i-1})a_i}a_1\ldots a_q\otimes l_q(x_1,\ldots,x_q),\\
			\frkM_{p,1}\big((a_1\otimes x_1\ldots a_p\otimes x_p)\otimes(a_{p+1}\otimes x_{p+1})\big)&=&(-1)^{\sum_{i=1}^{p}(1+x_1+\ldots+x_{i-1})a_i}a_1\ldots a_p\rho_{p}(x_1,\ldots,x_p)(a_{p+1})\otimes x_{p+1},
		\end{eqnarray*}
		and
		$\frkM_{p,q}=0$ for other cases.
	\end{cor}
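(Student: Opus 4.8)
The plan is to deduce this corollary directly from Theorem \ref{homotopy-action-to-homo-post}, exactly as Corollary \ref{homotopy-poisson-to-homo-post} was obtained, by exhibiting the anchor of the $L_\infty$-algebroid as an action of an $L_\infty$-algebra on a dg commutative algebra. First I would observe that the data of an $L_\infty$-algebroid $({A},\g,\{l_k\}_{k=1}^{+\infty},\{\rho_k\}_{k=1}^{+\infty})$ already contains, as its anchor, a family $\{\rho_k:\Sym^k(\g)\lon \Der(A)\}_{k=1}^{+\infty}$ which is by definition an action of the $L_\infty$-algebra $(\g,\{l_k\}_{k=1}^{+\infty})$ in the sense of Definition \ref{L-infty-action-on-dgca}. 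Since in the present setting $A$ carries no differential, I would regard it as a dg commutative algebra $(A,\cdot,d)$ with $d=0$; with this convention the compatibility condition \eqref{L-infty-homo-on-dgca} is precisely the anchor compatibility built into the definition of an $L_\infty$-algebroid.

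Next I would feed this action into Theorem \ref{homotopy-action-to-homo-post}. That theorem produces, from any action of $(\g,\{l_k\}_{k=1}^{+\infty})$ on a dg commutative algebra, a post-Lie$_\infty$ algebra structure $\{\frkM_{p,q}\}_{p\ge0,q\ge1}$ on $A\otimes \g$ whose sub-adjacent $L_\infty$-algebra is the action $L_\infty$-algebra of Theorem \ref{action-L-infty}. It therefore suffices to specialise the formulas of Theorem \ref{homotopy-action-to-homo-post} to the case $d=0$ and check that they reduce to the three displayed formulas. The only term affected is $\frkM_{0,1}$: the summand $d(a_1)\otimes x_1$ vanishes, leaving $\frkM_{0,1}(a_1\otimes x_1)=(-1)^{a_1}a_1\otimes l_1(x_1)$, which is exactly the $q=1$ instance of the stated $\frkM_{0,q}$; the maps $\frkM_{0,q}$ for $q\ge2$ and $\frkM_{p,1}$ for $p\ge1$ are literally the same as in Theorem \ref{homotopy-action-to-homo-post}, and all remaining $\frkM_{p,q}$ vanish.

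The main point to verify, rather than a serious obstacle, is that the notion of action appearing in the definition of an $L_\infty$-algebroid coincides exactly, including all Koszul signs, with Definition \ref{L-infty-action-on-dgca} once $A$ is endowed with the zero differential. I would confirm this by matching the two displayed compatibility conditions termwise: the $\rho\circ\rho$ and $\rho\circ l$ terms correspond directly, while the $d\circ\rho_n$ and $\rho_n\circ d$ terms drop out. I would also remark that the extra data of an $L_\infty$-algebroid, namely the $A$-module structure on $\g$ and the anchor--Leibniz relations, play no role in the construction: only the action part of the structure is used, so the corollary in fact holds for any $L_\infty$-algebra acting on a graded commutative algebra. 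With the identification of actions in hand, the conclusion is immediate from Theorem \ref{homotopy-action-to-homo-post}, and no further computation is required.
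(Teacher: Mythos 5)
Your proposal is correct and follows essentially the same route as the paper, whose entire proof of this corollary is the one-line remark that it is a straightforward consequence of Theorem \ref{homotopy-action-to-homo-post}: you make explicit exactly the intended identification, namely that the anchor of an $L_\infty$-algebroid is an action in the sense of Definition \ref{L-infty-action-on-dgca} on $(A,\cdot,d)$ with $d=0$, so that the theorem's formulas specialize to the stated ones (in particular $\frkM_{0,1}(a_1\otimes x_1)=(-1)^{a_1}a_1\otimes l_1(x_1)$ is the $q=1$ instance of $\frkM_{0,q}$). Your further observations, that the $A$-module structure and Leibniz relations of the algebroid are unused and that only the action is needed, are accurate and simply make explicit what the paper leaves implicit.
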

	\begin{proof}
	Again, this is a straightforward consequence of Theorem \ref{homotopy-action-to-homo-post}.
	\end{proof}	
	
		To sum up, Theorem \ref{homotopy-action-to-homo-post}, Corollary \ref{homotopy-poisson-to-homo-post} and Corollary \ref{homotopy-lie-algeoid-to-homo-post} demonstrate that there is  a close relationship between post-Lie$_\infty$ algebras and higher geometric structures.

	%\subsection{Skeletal post-Lie 2-algebras}
	%\subsection{Strict post-Lie 2-algebras and crossed modules of post-Lie algebras}

	\section{Homotopy Rota-Baxter operators and post-Lie$_\infty$ algebras}
	\label{sec:rb}
	
	In this section, we introduce the notion of homotopy Rota-Baxter operators on  open-closed homotopy Lie algebras and establish their relationships with post-Lie$_\infty$ algebras.

	\subsection{Homotopy Rota-Baxter operators on open-closed homotopy Lie algebras}
	
	We first recall higher derived brackets, by which we construct an $L_\infty$-algebra from an open-closed homotopy Lie algebra. Then homotopy Rota-Baxter operators on an open-closed homotopy Lie algebra are defined to be Maurer-Cartan elements of this $L_\infty$-algebra.

	\begin{defi}{\rm (\cite{Vo})}
		A $V$-data consists of a quadruple $(L,H,P,\Phi)$ where
		\begin{itemize}
			\item[$\bullet$] $(L,[\cdot,\cdot])$ is a graded Lie algebra,
			\item[$\bullet$] $H$ is an abelian graded Lie subalgebra of $(L,[\cdot,\cdot])$,
			\item[$\bullet$] $P:L\lon L$ is a projection, that is $P\circ P=P$, whose image is $H$ and kernel is a  graded Lie subalgebra of $(L,[\cdot,\cdot])$,
			\item[$\bullet$] $\Phi$ is an element of $\ker(P)^1$ such that $[\Phi,\Phi]=0$.
		\end{itemize}
	\end{defi}
	
	\begin{thm}{\rm (\cite{Vo})}\label{thm:db}
		Let $(L,H,P,\Phi)$ be a $V$-data. Then $(H,\{{l_k}\}_{k=1}^{+\infty})$ is an $L_\infty$-algebra where
		\begin{eqnarray}\label{V-shla}
			l_k(a_1,\ldots,a_k)=P[\ldots[[\Phi,a_1],a_2],\ldots,a_k],
		\end{eqnarray}
		for homogeneous elements  $a_1,\ldots,a_k\in H$. We call $\{{l_k}\}_{k=1}^{+\infty}$ the {\bf higher derived brackets} of the $V$-data $(L,H,P,\Phi)$. %Moreover, we denote this $L_\infty$-algebra by $\h_{\Delta}^{P}$.
	\end{thm}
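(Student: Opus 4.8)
The plan is to verify directly the two defining conditions of an $L_\infty$-algebra in the sense of Definition \ref{graded-Nijenhuis-Richardson-bracket} (graded symmetry and the generalized Jacobi identities), working in the degree-$1$ convention in which each structure map raises degree by one. Since $\Phi$ is odd and the bracket is additive in degrees, the iterated bracket $[\ldots[[\Phi,a_1],a_2],\ldots,a_k]$ has degree $1+a_1+\cdots+a_k$, so each $l_k$ is indeed a degree-$1$ map $\Sym^k(H)\lon H$. Thus only two substantive points remain: the graded symmetry of the $l_k$ and the homotopy Jacobi relations. Throughout I would write $R_a(x)=[x,a]$ for $a\in H$, so that $l_k(a_1,\ldots,a_k)=P R_{a_k}\cdots R_{a_1}(\Phi)$, and $d_\Phi:=[\Phi,-]$, which is a degree-$1$ derivation of $[\cdot,\cdot]$ satisfying $d_\Phi^2=\tfrac12[[\Phi,\Phi],-]=0$ by the graded Jacobi identity together with the flatness $[\Phi,\Phi]=0$.

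I would first dispose of symmetry. It suffices to treat the transposition of two adjacent arguments $a_j,a_{j+1}$, since adjacent transpositions generate $\mathbb S_k$. The relevant subexpression is $[[X,a_j],a_{j+1}]$ with $X=R_{a_{j-1}}\cdots R_{a_1}(\Phi)$, and the graded Jacobi identity in $L$ gives
\[
[[X,a_j],a_{j+1}]=(-1)^{a_ja_{j+1}}[[X,a_{j+1}],a_j]+[X,[a_j,a_{j+1}]].
\]
Because $H$ is an abelian Lie subalgebra, $[a_j,a_{j+1}]=0$, so only the Koszul sign survives; equivalently, the operators $R_a$ for $a\in H$ graded-commute. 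This yields graded symmetry of every $l_k$.

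The heart of the argument is the generalized Jacobi identity. I would fix $n$, expand the left-hand side using $l_i(\ldots)=P R_{\ldots}(\Phi)\in H$ and $l_{m+1}(h,b_1,\ldots,b_m)=P R_{b_m}\cdots R_{b_1}d_\Phi(h)$, so that each summand is a doubly-nested expression containing $\Phi$ exactly twice. The three structural inputs are: (i) $d_\Phi^2=0$, i.e. flatness; (ii) that $\ker P$ is a subalgebra, so a bracket of $\ker P$-elements stays in $\ker P$ and is annihilated by the outer $P$; (iii) the graded-commutativity of the $R_a$ established above. The key step is to replace the inner projected element $l_i(\ldots)=PY$ by the unprojected $Y=R_{\ldots}(\Phi)$: the correction $d_\Phi((\Id-P)Y)$ lies in $\ker P$ (since $\Phi,(\Id-P)Y\in\ker P$), remains there after the surviving $R$-brackets, and dies under the final $P$, so it contributes nothing. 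After this replacement every term has the shape $P R_{\ldots}\,d_\Phi R_{\ldots}(\Phi)$, and using that $d_\Phi$ is a derivation commuting appropriately with the $R_a$, the whole doubly-shuffled sum telescopes so that all contributions not proportional to $[\Phi,\Phi]$ cancel in pairs, leaving an expression proportional to $P[\ldots[[\Phi,\Phi],a_1],\ldots,a_n]$, which vanishes. The simplest case $l_1^2=0$ already exhibits the full mechanism: $P[\Phi,P[\Phi,a]]=P[\Phi,[\Phi,a]]-P[\Phi,(\Id-P)[\Phi,a]]=\tfrac12 P[[\Phi,\Phi],a]-0=0$. I would organize the general case as an induction on $n$, or equivalently recast it, via the isomorphism $\coDer(\Sym(H))\cong\Hom(\Sym(H),H)$ recalled before Definition \ref{graded-Nijenhuis-Richardson-bracket}, as the statement that $\sum_k l_k$ is a Maurer-Cartan element of $(\Hom(\Sym(H),H),[\cdot,\cdot]_{\NR})$.

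The main obstacle is precisely this cancellation bookkeeping: one must match the shuffle decompositions of the two outer layers of the sum against the derivation identity for $d_\Phi$ and track Koszul signs so that the non-flatness terms pair off exactly. The conceptually delicate point is the justification for removing the inner projection $P$ inside $l_i$; this is where the hypotheses that $H$ is abelian and that $\ker P$ is a subalgebra are both indispensable, and aligning the signs in the telescoping is the most error-prone part of the computation.
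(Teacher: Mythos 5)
You are reproving a result the paper itself imports as a black box (it cites \cite{Vo} and gives no proof), which is legitimate; your degree bookkeeping and your symmetry argument (adjacent transpositions via the graded Jacobi identity plus $[a_j,a_{j+1}]=0$ in the abelian subalgebra $H$) are correct and agree with Voronov's. However, the central step of your Jacobiator computation is flawed: you drop the inner projection by claiming the correction $d_\Phi((\Id-P)Y)$ ``lies in $\ker P$, remains there after the surviving $R$-brackets, and dies under the final $P$.'' This is false in general, because $\ker P$ is only a \emph{subalgebra}, not an ideal: bracketing an element of $\ker P$ with an element of $H$ need not stay in $\ker P$. The correction genuinely dies only in the single term $l_1(l_n(\ldots))$, where no further $R_a$ is applied after the outer $d_\Phi$ --- which is exactly the case $l_1^2=0$ that you verified, and which is misleadingly benign.

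To see the failure concretely, take $n=2$. Since $[\Phi,[\Phi,a]]=\frac{1}{2}[[\Phi,\Phi],a]=0$, one has
\begin{equation*}
l_2(l_1(a),b)=P\big[[\Phi,P[\Phi,a]],b\big]=-P\big[[\Phi,(\Id-P)[\Phi,a]],b\big],
\end{equation*}
so if your correction-vanishing claim held, the terms $l_2(l_1(a),b)$ and $(-1)^{ab}l_2(l_1(b),a)$ would each vanish identically, and the $n=2$ Jacobi identity would then force $l_1\circ l_2=0$ for \emph{every} V-data --- which is absurd (any V-data whose twisted structure has nontrivial differential, such as those arising in Section \ref{sec:rb}, has $l_1(l_2(a,b))\neq 0$). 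What is actually true, and is the content of Voronov's proof, is the identity ``$n$-th Jacobiator $=$ $n$-th derived bracket of $\frac{1}{2}[\Phi,\Phi]$,'' proved by induction on $n$: there the nonvanishing corrections $P[\ldots[[\Phi,(\Id-P)Y],b_1],\ldots,b_m]$ cancel against cross-terms of the form $P[(\Id-P)[\Phi,a],P[\Phi,b]]$ produced when one expands $P[\Phi,[[\Phi,a],b]]$ by the graded Jacobi identity and uses both $[PX,PY]=0$ ($H$ abelian) and $P[(\Id-P)X,(\Id-P)Y]=0$ ($\ker P$ a subalgebra). These cancellations occur \emph{across} different terms of the shuffle sum, so the telescoping you assert cannot be organized term-by-term after projection-dropping; once the flawed replacement step is removed, your normal form $PR_{\ldots}d_\Phi R_{\ldots}(\Phi)$ is never reached, and recasting the claim as a Maurer--Cartan equation in $(\Hom(\Sym(H),H),[\cdot,\cdot]_{\NR})$ does not help, since it is the same identity in disguise. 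The proposal therefore has a genuine gap at its core step.
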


	Let $(\g,\h,\{l_k\}_{k=1}^{+\infty},\{\frkR_{p,q}\}_{p+q\ge1})$ be an open-closed homotopy Lie algebra. Any $f\in\Hom(\bar{\Sym}(\h),\g)\subset\Hom(\bar{\Sym}(\g\oplus \h),\g\oplus \h)$ gives rise to a coderivation on $\bar{\Sym}(\g\oplus \h)$, which is denoted by $\hat{f}$.
	\emptycomment{\yh{I do not think the following formula is clear.}
		\begin{eqnarray}\label{extension-coderivation}
			\hat{f}\big(\huaX\odot \huaU\big)=\sum_{\huaU}(-1)^{f\huaX}\huaX\odot\big(f(\huaU_{(1)})\odot\huaU_{(2)}+f(\huaU)\big),\quad \huaX\in{\Sym}(\g),\huaU\in\bar{\Sym}(\h).
		\end{eqnarray}
		Here $\bar{\Delta}(\huaU)=\sum_{\huaU}\huaU_{(1)}\otimes \huaU_{(2)}$
		and $\hat{f}\big(\huaX\big)=0,\,\,\huaX\in\bar{\Sym}(\g).$} Moreover, we denote by $\huaH$ the graded vector space of coderivations on $\bar{\Sym}(\g\oplus \h)$ given by $\Hom(\bar{\Sym}(\h),\g)$.

	\begin{pro}
		Let $(\g,\h,\{l_k\}_{k=1}^{+\infty},\{\frkR_{p,q}\}_{p+q\ge1})$ be an open-closed homotopy Lie algebra. Then the following quadruple is a V-data:
		\begin{itemize}
			\item[$\bullet$] the graded Lie algebra $(L,[\cdot,\cdot])$ is given by $(\coDer(\bar{\Sym}(\g\oplus\h)),[\cdot,\cdot]_C)$;
			\item[$\bullet$] the abelian graded Lie subalgebra $H$ is given by $\huaH;$
			\item[$\bullet$] $P:L\lon L$ is the projection onto the subspace $\huaH$;
			\item[$\bullet$] $\Phi=\hat{\huaR},$ where $\huaR=\sum_{k=1}^{+\infty}\huaR_k$ is given by \eqref{extension-homotopy-lie}.
		\end{itemize}
		Consequently, $(\huaH,\{\frkl_k\}_{k=1}^{+\infty})$ is an $L_\infty$-algebra, where $\frkl_k$ is given by
		\eqref{V-shla}.
	\end{pro}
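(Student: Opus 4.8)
The plan is to verify directly the four defining conditions of a $V$-data for the quadruple $\big((\coDer(\bar{\Sym}(\g\oplus\h)),[\cdot,\cdot]_C),\huaH,P,\Phi\big)$; the final assertion that $(\huaH,\{\frkl_k\}_{k=1}^{+\infty})$ is an $L_\infty$-algebra is then an immediate application of Theorem \ref{thm:db}. That $(L,[\cdot,\cdot])=(\coDer(\bar{\Sym}(\g\oplus\h)),[\cdot,\cdot]_C)$ is a graded Lie algebra is standard. Throughout I transport everything along the cofreeness isomorphism $\coDer(\bar{\Sym}(\g\oplus\h))\cong\Hom(\bar{\Sym}(\g\oplus\h),\g\oplus\h)$, under which $[\cdot,\cdot]_C$ becomes the Nijenhuis--Richardson bracket $[\cdot,\cdot]_{\NR}$ and compositions are computed by the pre-Lie product $\circ$. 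I decompose any $\phi$ on the right-hand side into components $\phi^{\g}_{p,q}$ and $\phi^{\h}_{p,q}$, namely the restrictions of $\phi$ to $\Sym^p(\g)\otimes\Sym^q(\h)$ with values in $\g$ and in $\h$ respectively. In this language $\huaH$ consists exactly of those coderivations all of whose components vanish except possibly $\phi^{\g}_{0,q}$ ($q\ge1$), the projection $P$ retains precisely these components, and $\ker(P)$ consists of those $\phi$ with $\phi^{\g}_{0,q}=0$ for all $q$. It is then clear that $P\circ P=P$ with image $\huaH$.

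Next I would dispose of the easier conditions. For $f,g\in\huaH$ the composite $f\circ g$ inserts the output of $g$ into an input slot of $f$; since $g$ takes values in $\g$ while $f$ is supported on $\bar{\Sym}(\h)$ and annihilates anything containing a $\g$-factor, one has $f\circ g=0$, and likewise $g\circ f=0$, so $[f,g]_{\NR}=0$ and $\huaH$ is an abelian graded Lie subalgebra. For the element $\Phi=\hat{\huaR}$ with $\huaR=\sum_{k\ge1}\huaR_k$ given by \eqref{extension-homotopy-lie}, the $\g$-valued component of $\huaR_k$ equals $l_k$ evaluated on the $\g$-entries of its arguments; on a pure $\h$-input (all $\g$-entries zero) this is $l_k(0,\ldots,0)=0$, so $\Phi^{\g}_{0,q}=0$ and hence $\Phi\in\ker(P)$. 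Its degree is $1$ because each $\huaR_k$ has degree $1$. Finally $[\Phi,\Phi]_{\NR}=0$ is exactly the statement that $\{\huaR_k\}_{k=1}^{+\infty}$ is an $L_\infty$-algebra structure on $\g\oplus\h$, which holds by the Remark following the definition of an open-closed homotopy Lie algebra (equivalently, the compatibility conditions \eqref{open-closed homotopy}).

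The one genuinely substantive point, and the expected main obstacle, is that $\ker(P)$ is closed under the bracket. Given $\phi,\psi\in\ker(P)$ I must show $[\phi,\psi]^{\g}_{0,m}=0$ for every $m$, i.e. that the pure $\h$-input, $\g$-valued component of $\phi\circ\psi$ and of $\psi\circ\phi$ vanishes. I would argue by a bookkeeping case analysis on the insertion term in $\phi\circ\psi$ evaluated on $m$ arguments all taken from $\h$: the inner map $\psi$ consumes some $q$ of these $\h$-arguments and its output is then fed, together with the remaining $m-q$ of them, into $\phi$. If $\psi$ produces a value in $\h$, then $\phi$ receives only $\h$-arguments and must land in $\g$, so the term uses $\phi^{\g}_{0,m-q+1}$, which vanishes since $\phi\in\ker(P)$. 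If instead $\psi$ produces a value in $\g$, then the relevant component of $\psi$ is $\psi^{\g}_{0,q}$, which vanishes since $\psi\in\ker(P)$. In either case every summand is zero, so $(\phi\circ\psi)^{\g}_{0,m}=0$, and symmetrically $(\psi\circ\phi)^{\g}_{0,m}=0$; hence $[\phi,\psi]\in\ker(P)$. With all four axioms verified, Theorem \ref{thm:db} produces the higher derived brackets $\frkl_k$ of \eqref{V-shla} and the asserted $L_\infty$-structure on $\huaH$.
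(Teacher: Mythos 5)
Your proof is correct and follows essentially the same route as the paper: verify the four $V$-data axioms for $\big(\coDer(\bar{\Sym}(\g\oplus\h)),\huaH,P,\hat{\huaR}\big)$ and then invoke Theorem \ref{thm:db}. The only difference is that where you establish by direct component bookkeeping that $\huaH$ is abelian (insertion of a $\g$-valued output kills a map supported on $\bar{\Sym}(\h)$) and that $\ker(P)$ is a graded Lie subalgebra (your two-case analysis on whether the inner map outputs into $\g$ or $\h$), the paper simply cites \cite[Propositions 3.13 and 3.14]{CC} for these two facts --- your arguments are precisely the content of those cited propositions, and your remaining steps, namely $P(\Phi)=0$ read off from \eqref{extension-homotopy-lie} and $[\Phi,\Phi]_C=0$ being equivalent to $\{\huaR_k\}_{k=1}^{+\infty}$ defining an $L_\infty$-structure on $\g\oplus\h$, coincide with the paper's.
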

	
	\begin{proof}
		By \cite[Proposition 3.13 and Proposition 3.14]{CC},  $\huaH$ is an abelian graded Lie subalgebra and $\ker P$ is a graded Lie subalgebra. Since $\huaR=\sum_{k=1}^{+\infty}\huaR_k$ is an $L_\infty$-algebra structure on $\g\oplus\h$, it follows that $[\Phi,\Phi]_C=0$. Moreover,  \eqref{extension-homotopy-lie}, implies $P(\Phi)=0$. Thus $(L,H,P,\Phi)$ is a V-data. Hence by Theorem \ref{thm:db}, we obtain higher derived brackets $\{{\frkl_k}\}_{k=1}^{+\infty}$ on $\huaH$.
	\end{proof}
	
	\begin{defi}\label{homotopy-embedding-tensors-homotopy-lie}
		A degree $0$ element  $\Theta=\sum_{k=1}^{+\infty}\Theta_k\in \Hom(\bar{\Sym}(\h),\g)$ is called a {\bf homotopy   Rota-Baxter operator}  on an open-closed homotopy Lie algebra $(\g,\h,\{l_k\}_{k=1}^{+\infty},\{\frkR_{p,q}\}_{p+q\ge1})$ if it is a Maurer-Cartan element of the  $L_\infty$-algebra $(\huaH,\{{\frkl_k}\}_{k=1}^{+\infty})$; in other words the following identity holds:
		\begin{eqnarray}\label{homotopy-rota-baxter}
			P\big(e^{[\cdot,\hat{\Theta}]_C}\Phi\big)=0.
		\end{eqnarray}
	\end{defi}

	\begin{rmk}
		Homotopy relative Rota-Baxter operators (also called $\huaO$-operators) on $L_\infty$-algebras with respect to $L_\infty$-actions of $L_\infty$-algebras were studied by Caseiro and Nunes da Costa in \cite{CC}. Since   $L_\infty$-actions of $L_\infty$-algebras are special cases of  open-closed homotopy Lie algebras, it follows that   homotopy   Rota-Baxter operators  on  open-closed homotopy Lie algebras contain homotopy relative Rota-Baxter operators  as special cases.
	\end{rmk}
	
\begin{thm}\label{twist-homotopy-lie}
		Let $\Theta=\sum_{k=1}^{+\infty}\Theta_k\in \Hom(\bar{\Sym}(\h),\g)$ be a homotopy   Rota-Baxter operator on an open-closed homotopy Lie algebra $(\g,\h,\{l_k\}_{k=1}^{+\infty},\{\frkR_{p,q}\}_{p+q\ge1})$. Define $\alpha_n:\Sym^n(\h)\lon\h$ by
		\begin{eqnarray}
			\label{descendant-homotopy-lie}&&\alpha_n(u_1,\ldots,u_n)=\frkR_{0,n}(u_1 \ldots  u_n)+\sum_{k=1}^{n}\sum_{i_1+\ldots+i_{k+1}=n\atop i_1,\ldots,i_k\ge 1,i_{k+1}\ge 0}\sum_{\sigma\in\mathbb S_{(i_1,\ldots,i_{k+1})}}\frac{\varepsilon(\sigma)}{k!}\\
			\nonumber&&\frkR_{k,i_{k+1}}\Big(\Theta_{i_1}(u_{\sigma(1)} \ldots  u_{\sigma(i_1)}) \ldots \Theta_{i_{k}}(u_{\sigma(i_1+\ldots+i_{k-1}+1)} \ldots  u_{\sigma(i_1+\ldots+i_{k})})\otimes u_{\sigma(i_1+\ldots+i_{k}+1)} \ldots  u_{\sigma(n)}\Big).
		\end{eqnarray}
		Then $(\h,\{\alpha_k\}_{k=1}^{+\infty})$ is an $L_\infty$-algebra and $\Theta$ is a homomorphism from $(\h,\{\alpha_k\}_{k=1}^{+\infty})$ to $(\g,\{l_k\}_{k=1}^{+\infty})$.
	\end{thm}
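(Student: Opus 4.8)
The plan is to exploit the twisting of the Maurer--Cartan element $\Theta$ inside the graded Lie algebra $L=\coDer(\bar{\Sym}(\g\oplus\h))$ underlying the $V$-data $(L,\huaH,P,\Phi)$ constructed in the preceding proposition. First I would form the twisted coderivation
$$
\Phi^\Theta:=e^{[\cdot,\hat{\Theta}]_C}\Phi=\sum_{n\ge 0}\frac{1}{n!}\underbrace{[\ldots[[\Phi,\hat{\Theta}],\hat{\Theta}],\ldots,\hat{\Theta}]}_{n}.
$$
Since $e^{[\cdot,\hat{\Theta}]_C}$ is an automorphism of the graded Lie algebra $(L,[\cdot,\cdot]_C)$ and $\Phi=\hat{\huaR}$ satisfies $[\Phi,\Phi]_C=0$ (because $\huaR=\sum_k\huaR_k$ of \eqref{extension-homotopy-lie} is an $L_\infty$-structure on $\g\oplus\h$), it follows that $[\Phi^\Theta,\Phi^\Theta]_C=0$. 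Hence $\Phi^\Theta$ is a degree $1$ square-zero coderivation of $\bar{\Sym}(\g\oplus\h)$, i.e. it defines a new $L_\infty$-structure on $\g\oplus\h$.

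Next I would feed in the homotopy Rota--Baxter condition. By Definition \ref{homotopy-embedding-tensors-homotopy-lie} the equation \eqref{homotopy-rota-baxter} says precisely that $P\Phi^\Theta=0$, so $\Phi^\Theta$ lies in $\ker P$; equivalently the new $L_\infty$-structure on $\g\oplus\h$ has no component in $\huaH=\Hom(\bar{\Sym}(\h),\g)$, that is, no structure map of the form $\bar{\Sym}(\h)\to\g$. Consequently, evaluating the brackets of $\Phi^\Theta$ on arguments drawn entirely from $\h$ yields outputs lying in $\h$, so that $\h$ is a sub-$L_\infty$-algebra of $(\g\oplus\h,\Phi^\Theta)$. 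It then remains to compute the $\bar{\Sym}(\h)\to\h$ component of $\Phi^\Theta$ explicitly: expanding $e^{[\cdot,\hat{\Theta}]_C}\Phi$ on a monomial $u_1\ldots u_n\in\Sym^n(\h)$, the term carrying $k$ copies of $\hat{\Theta}$ replaces $k$ disjoint clusters of the $u_i$ by their $\Theta$-images in $\g$ and then applies one factor $\frkR_{k,\bullet}$; collecting these with the correct Koszul signs and the combinatorial factor $\tfrac1{k!}$ reproduces exactly the formula \eqref{descendant-homotopy-lie} for $\alpha_n$. This identifies $\{\alpha_k\}_{k\ge1}$ with the induced sub-$L_\infty$-structure and establishes the first assertion.

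For the homomorphism statement I would use that $E:=e^{\hat{\Theta}}$ is a coalgebra automorphism of $\bar{\Sym}(\g\oplus\h)$ conjugating the two codifferentials, $\Phi\circ E=E\circ\Phi^\Theta$. Restrict this identity to the sub-coalgebra $\bar{\Sym}(\h)$ and project the target onto $\bar{\Sym}(\g)$. On the right, since $\h$ is a sub-$L_\infty$-algebra one has $\Phi^\Theta|_{\bar{\Sym}(\h)}=\widehat{\alpha}$ with values in $\bar{\Sym}(\h)$, so the right-hand side becomes $\bar{\Theta}\circ\widehat{\alpha}$, where $\bar{\Theta}:=\pr_{\bar{\Sym}(\g)}\circ E|_{\bar{\Sym}(\h)}$ is the canonical coalgebra lift of $\Theta=\pr_\g\circ E|_\h$. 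On the left, the $\g$-valued part of $\Phi=\hat{\huaR}$ is exactly $\widehat{l}$ (the component $l_k$ of \eqref{extension-homotopy-lie} takes only $\g$-inputs), so the left-hand side becomes $\widehat{l}\circ\bar{\Theta}$. The resulting equality $\widehat{l}\circ\bar{\Theta}=\bar{\Theta}\circ\widehat{\alpha}$ is precisely the $L_\infty$-morphism relation \eqref{L-infty-homo} for $\Theta$, proving that $\Theta$ is a homomorphism from $(\h,\{\alpha_k\})$ to $(\g,\{l_k\})$.

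The main obstacle is the bookkeeping in the two explicit computations: matching the projected twisted coderivation with the closed formula \eqref{descendant-homotopy-lie}, and verifying that the conjugation identity restricts to the morphism equation with all Koszul signs and shuffle sums in agreement. Conceptually nothing is in doubt once $\Phi^\Theta$ is known to be square-zero with $P\Phi^\Theta=0$; the labor lies entirely in unwinding $e^{[\cdot,\hat{\Theta}]_C}$ into components, so I would organize the calculation by the number $k$ of insertions of $\hat{\Theta}$ and reduce everything to the single-insertion identity governing the interaction of $\hat{\Theta}$ with $\frkR_{p,q}$.
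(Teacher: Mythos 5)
Your proposal is correct and follows essentially the same route as the paper's proof: twisting $\Phi=\hat{\huaR}$ by the gauge automorphism $e^{[\cdot,\hat{\Theta}]_C}$, using the conjugation identity $e^{[\cdot,\hat{\Theta}]_C}\Phi=e^{-\hat{\Theta}}\circ\Phi\circ e^{\hat{\Theta}}$ together with the Maurer--Cartan condition $P\bigl(e^{[\cdot,\hat{\Theta}]_C}\Phi\bigr)=0$ to exhibit $\bar{\Sym}(\h)$ as a sub-$L_\infty$-algebra whose induced structure is $\{\alpha_k\}_{k\ge1}$, and then extracting the morphism statement by projecting the conjugation identity onto $\bar{\Sym}(\g)$. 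Your projection $\pr_{\bar{\Sym}(\g)}$ is exactly the coalgebra map $\bar{p}$ coming from the strict surjection in the paper's short exact sequence, so the identification $\bar{\Theta}=\bar{p}\circ e^{\hat{\Theta}}\circ i$ is the same argument in slightly different packaging.
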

	\begin{proof}
		Since $[\cdot,\hat{\Theta}]_C$ is a locally nilpotent derivation on the graded Lie algebra $(\coDer(\bar{\Sym}(\g\oplus\h)),[\cdot,\cdot]_C)$ \cite{LST}, it follows  that $e^{[\cdot,\hat{\Theta}]_C}$ is an automorphism of $(\coDer(\bar{\Sym}(\g\oplus\h)),[\cdot,\cdot]_C)$. Therefore, $e^{[\cdot,\hat{\Theta}]_C}\Phi$ is a differential on the coalgebra $\bar{\Sym}(\g\oplus\h)$, and gives rise to an $L_\infty$-algebra structure on  $\g\oplus\h$.
		
		Since $\Theta\in \Hom(\bar{\Sym}(\h),\g)$, it follows that $\hat{\Theta}$ is a locally nilpotent coderivation of the coalgebra $\bar{\Sym}(\g\oplus\h)$,  and  $e^{\hat{\Theta}}$   is a well-defined automorphism of the  coalgebra $\bar{\Sym}(\g\oplus\h)$. For all $u_1,\ldots,u_n\in\h,$ we have
		\begin{eqnarray}
			\label{iso-coalgebra} &&e^{\hat{\Theta}}(u_1 \ldots  u_n)=u_1 \ldots  u_n+\sum_{k=1}^{n-1}\sum_{i_1+\ldots+i_{k+1}=n\atop i_1,\ldots,i_{k+1}\ge 1}\sum_{\sigma\in\mathbb S_{(i_1,\ldots,i_{k+1})}}\frac{\varepsilon(\sigma)}{k!}\\
			\nonumber &&\Theta_{i_1}(u_{\sigma(1)} \ldots  u_{\sigma(i_1)}) \ldots \Theta_{i_{k}}(u_{\sigma(i_1+\ldots+i_{k-1}+1)} \ldots  u_{\sigma(i_1+\ldots+i_{k})})  u_{\sigma(i_1+\ldots+i_{k}+1)} \ldots  u_{\sigma(n)}\\
			\nonumber &&+\sum_{k=1}^{n}\sum_{i_1+\ldots+i_k=n\atop i_1,\ldots,i_k\ge 1}\sum_{\sigma\in\mathbb S_{(i_1,\ldots,i_k)}}\frac{\varepsilon(\sigma)}{k!}\Theta_{i_1}(u_{\sigma(1)} \ldots  u_{\sigma(i_1)}) \ldots  \Theta_{i_k}(u_{\sigma(i_1+\ldots+i_{k-1}+1)} \ldots  u_{\sigma(n)}).
		\end{eqnarray}
		Moreover, since $[\Phi,\hat{\Theta}]_C=\Phi\circ \hat{\Theta}-\hat{\Theta}\circ \Phi$, and by induction, we have
		\begin{eqnarray}
			\underbrace{[\ldots[[}_k\Phi,\hat{\Theta}]_C,\hat{\Theta}]_C,\ldots,\hat{\Theta}]_C=\sum_{i=0}^{k}(-1)^{i}{k\choose i}\underbrace{\hat{\Theta}\circ\ldots\circ\hat{\Theta}}_i\circ \Phi\circ \underbrace{\hat{\Theta}\circ\ldots\circ\hat{\Theta}}_{k-i}.
		\end{eqnarray}
		Therefore, we have
		\begin{eqnarray}
			\nonumber e^{[\cdot,\hat{\Theta}]_C}\Phi&=&\sum_{k=0}^{+\infty}\frac{1}{k!}\underbrace{[\ldots[[}_k\Phi,\hat{\Theta}]_C,\hat{\Theta}]_C,\ldots,\hat{\Theta}]_C,\\
			\nonumber &=&\sum_{k=0}^{+\infty}\sum_{i=0}^{k}(-1)^{i}\frac{1}{i!(k-i)!}\underbrace{\hat{\Theta}\circ\ldots\circ\hat{\Theta}}_i\circ \Phi\circ \underbrace{\hat{\Theta}\circ\ldots\circ\hat{\Theta}}_{k-i}\\
			\label{twistor}&=&e^{-\hat{\Theta}}\circ \Phi\circ e^{\hat{\Theta}}.
		\end{eqnarray}
		Equivalently, we have the following  commutative diagram:
		\begin{equation}\label{eq:relation}
			\small{
				\xymatrix{\bar{\Sym}(\g\oplus\h) \ar[rr]^{e^{[\cdot,\hat{\Theta}]_C}\Phi}\ar[d]_{e^{\hat{\Theta}}} &  &  \bar{\Sym}(\g\oplus\h) \ar[d]^{e^{\hat{\Theta}}}  \\
					\bar{\Sym}(\g\oplus\h) \ar[rr]^{\Phi} & &\bar{\Sym}(\g\oplus\h),}
			}
		\end{equation}
		which implies that $e^{\hat{\Theta}}$ is a homomorphism from the $L_\infty$-algebra $\big(\bar{\Sym}(\g\oplus\h),\bar{\Delta},e^{[\cdot,\hat{\Theta}]_C}\Phi\big)$ to the $L_\infty$-algebra $\big(\bar{\Sym}(\g\oplus\h),\bar{\Delta},\Phi\big)$. %\yh{ $\bar{\Delta}$? $\Delta$? unify the notations, See P11}
		
		By \eqref{homotopy-rota-baxter}, we obtain that $\bar{\Sym}(\h)$ is an $L_\infty$-subalgebra of $\big(\bar{\Sym}(\g\oplus\h),\bar{\Delta},e^{[\cdot,\hat{\Theta}]_C}\Phi\big)$. More precisely, for all $u_1,\ldots,u_n\in\h,$ we have
\emptycomment{
		\begin{eqnarray*}
			&&\pr_{\h}\big(e^{[\cdot,\hat{\Theta}]_C}\Phi\big)(u_1,\ldots, u_n)\\
			&\stackrel{\eqref{twistor}}{=}&\pr_{\h}\big(e^{-\hat{\Theta}}\circ \Phi\circ e^{\hat{\Theta}}\big)(u_1,\ldots, u_n)\\
			&\stackrel{\eqref{iso-coalgebra}}{=}&\sum_{k=1}^{n-1}\sum_{i_1+\ldots+i_{k+1}=n\atop i_1,\ldots,i_{k+1}\ge 1}\sum_{\sigma\in\mathbb S_{(i_1,\ldots,i_{k+1})}}\frac{\varepsilon(\sigma)}{k!}\\
			&&\frkR_{k,i_{k+1}}\Big(\Theta_{i_1}(u_{\sigma(1)} \ldots  u_{\sigma(i_1)}) \ldots \Theta_{i_{k}}(u_{\sigma(i_1+\ldots+i_{k-1}+1)} \ldots  u_{\sigma(i_1+\ldots+i_{k})}) \otimes u_{\sigma(i_1+\ldots+i_{k}+1)} \ldots  u_{\sigma(n)}\Big)\\
			&&+\sum_{k=1}^{n}\sum_{i_1+\ldots+i_k=n\atop i_1,\ldots,i_k\ge 1}\sum_{\sigma\in\mathbb S_{(i_1,\ldots,i_k)}}\frac{\varepsilon(\sigma)}{k!}\\
			&&\frkR_{k,0}\Big(\Theta_{i_1}(u_{\sigma(1)} \ldots u_{\sigma(i_1)}) \ldots  \Theta_{i_k}(u_{\sigma(i_1+\ldots+i_{k-1}+1)} \ldots  u_{\sigma(n)})\Big)\\
			&&+\frkR_{0,n}(u_1 \ldots  u_n)\\
			&=&\sum_{k=1}^{n}\sum_{i_1+\ldots+i_{k+1}=n\atop i_1,\ldots,i_k\ge 1,i_{k+1}\ge 0}\sum_{\sigma\in\mathbb S_{(i_1,\ldots,i_{k+1})}}\frac{\varepsilon(\sigma)}{k!}\\
			&&\frkR_{k,i_{k+1}}\Big(\Theta_{i_1}(u_{\sigma(1)} \ldots  u_{\sigma(i_1)}) \ldots \Theta_{i_{k}}(u_{\sigma(i_1+\ldots+i_{k-1}+1)} \ldots  u_{\sigma(i_1+\ldots+i_{k})})\otimes  u_{\sigma(i_1+\ldots+i_{k}+1)} \ldots  u_{\sigma(n)}\Big)\\
			&&+\frkR_{0,n}(u_1 \ldots  u_n),
		\end{eqnarray*}
lllllllllllll
}
\begin{eqnarray*}
			&&\pr_{\h}\big(e^{[\cdot,\hat{\Theta}]_C}\Phi\big)(u_1,\ldots, u_n)\stackrel{\eqref{twistor}}{=}\pr_{\h}\big(e^{-\hat{\Theta}}\circ \Phi\circ e^{\hat{\Theta}}\big)(u_1,\ldots, u_n)\\
			&\stackrel{\eqref{iso-coalgebra}}{=}&\sum_{k=1}^{n}\sum_{i_1+\ldots+i_{k+1}=n\atop i_1,\ldots,i_k\ge 1,i_{k+1}\ge 0}\sum_{\sigma\in\mathbb S_{(i_1,\ldots,i_{k+1})}}\frac{\varepsilon(\sigma)}{k!}\\
			&&\frkR_{k,i_{k+1}}\Big(\Theta_{i_1}(u_{\sigma(1)} \ldots  u_{\sigma(i_1)}) \ldots \Theta_{i_{k}}(u_{\sigma(i_1+\ldots+i_{k-1}+1)} \ldots  u_{\sigma(i_1+\ldots+i_{k})})\otimes  u_{\sigma(i_1+\ldots+i_{k}+1)} \ldots  u_{\sigma(n)}\Big)\\
			&&+\frkR_{0,n}(u_1 \ldots  u_n),
		\end{eqnarray*}
		which is exactly $\alpha_n$ given by \eqref{descendant-homotopy-lie}.
		
		Since $(\g,\h,\{l_k\}_{k=1}^{+\infty},\{\frkR_{p,q}\}_{p+q=1}^{+\infty})$ is an open-closed homotopy Lie algebra, we obtain the short exact sequence of $L_\infty$-algebras with strict morphisms:
		$$ 0\longrightarrow\big(\h,\{\frkR_{0,q}\}_{q=1}^{+\infty}\big)\stackrel{\iota}{\longrightarrow}\big(\g\oplus \h,\{\huaR_k\}_{k=1}^{+\infty}\big)\stackrel{p}\longrightarrow\big(\g,\{l_k\}_{k=1}^{+\infty}\big)\longrightarrow0.$$
		Here the linear maps $\iota$ and $p$ are given by
		\begin{eqnarray}\label{open-closed-to-extension}
			\iota(u)=(0,u),\,\,\,p(x,u)=x,\,\,\,\forall x\in\g,\,\,\,u\in\h.
		\end{eqnarray}
		Since $\bar{\Sym}(\h)$ is an $L_\infty$-subalgebra of $\big(\bar{\Sym}(\g\oplus\h),\bar{\Delta},e^{[\cdot,\hat{\Theta}]_C}\Phi\big)$, it is clear that the embedding map $$i:\big(\bar{\Sym}(\h),\bar{\Delta},e^{[\cdot,\hat{\Theta}]_C}\Phi|_{\bar{\Sym}(\h)}\big)\lon \big(\bar{\Sym}(\g\oplus\h),\bar{\Delta},e^{[\cdot,\hat{\Theta}]_C}\Phi\big)$$
		is an $L_\infty$-algebra homomorphism. By  \eqref{eq:relation},   $\bar{p}\circ e^{\hat{\Theta}}\circ i$ is an $L_\infty$-algebra homomorphism from $(\bar{\Sym}(\h),\bar{\Delta},e^{[\cdot,\hat{\Theta}]_C}\Phi|_{\bar{\Sym}(\h)})$ to $(\bar{\Sym}(\g),\bar{\Delta},\Psi(\sum_{k=1}^{+\infty}l_k))$. For all $u_1,\ldots,u_n\in\h,$ we have
		\begin{eqnarray*}
			\pr_{\g}\big((\bar{p}\circ e^{\hat{\Theta}}\circ i)(u_1 \ldots  u_n)\big)&=&\pr_{\g}\big(\bar{p}( e^{\hat{\Theta}}(u_1 \ldots u_n))\big)
			\stackrel{\eqref{iso-coalgebra},\eqref{open-closed-to-extension}}{=}\Theta_n(u_1 \ldots u_n),
		\end{eqnarray*}
		which implies that $\bar{\Theta}=\bar{p}\circ e^{\hat{\Theta}}\circ i$. Thus, we deduce that $\Theta$ is an $L_\infty$-algebra homomorphism from $(\h,\{\alpha_k\}_{k=1}^{+\infty})$ to $(\g,\{l_k\}_{k=1}^{+\infty})$. The proof is finished.
	\end{proof}

	\begin{rmk}
		Theorem \ref{twist-homotopy-lie} above generalizes   \cite[Proposition 3.10]{CC} and    \cite[Proposition 5.11]{LST}.
	\end{rmk}

	We now give an explicit description of homotopy Rota-Baxter operators.
	
	\begin{pro}\label{twist-homotopy-lie-homomorphism}
		$\Theta=\sum_{k=1}^{+\infty}\Theta_k\in \Hom^0(\bar{\Sym}(\h),\g)$ is a homotopy    Rota-Baxter operator on the open-closed homotopy Lie algebra $(\g,\h,\{l_k\}_{k=1}^{+\infty},\{\frkR_{p,q}\}_{p+q\ge1})$ if and only if the following equalities hold for all $n\geq 1$ and all homogeneous elements $u_1,\ldots,u_n\in \h$,
{\footnotesize
		\begin{eqnarray}\label{homotopy-rota-baxter-equivalence}
			\nonumber&&\sum_{k=1}^{n}\sum_{i_1+\ldots+i_k=n\atop i_1,\ldots,i_k\ge1}\sum_{\sigma\in\mathbb S_{(i_1,\ldots,i_k)}}\frac{\varepsilon(\sigma)}{k!}l_{k}\Big(\Theta_{i_1}(u_{\sigma(1)} \ldots  u_{\sigma(i_1)}),\ldots, \Theta_{i_k}(u_{\sigma(i_1+\ldots+i_{k-1}+1)} \ldots  u_{\sigma(n)})\Big)\\
			&&=\sum_{k=1}^{n}\sum_{\sigma\in\mathbb S_{(k,n-k)}}\varepsilon(\sigma)\Theta_{n-k+1}\Big(\frkR_{0,k}(u_{\sigma(1)} \ldots  u_{\sigma(k)}) u_{\sigma(k+1)} \ldots u_{\sigma(n)}\Big)\\
			\nonumber&&+\sum_{k=1}^{n}\sum_{i_1+\ldots+i_{k+1}=n\atop i_1,\ldots,i_k\ge1,i_{k+1}\ge0}\sum_{\sigma\in\mathbb S_{(i_1,\ldots,i_{k+1})}}\frac{\varepsilon(\sigma)}{k!}\\
			\nonumber &&\Theta_{i_{k+1}+1}\Big(\frkR_{k,0}\big(\Theta_{i_1}(u_{\sigma(1)} \ldots  u_{\sigma(i_1)}) \ldots \Theta_{i_{k}}(u_{\sigma(i_1+\ldots+i_{k-1}+1)} \ldots  u_{\sigma(i_1+\ldots+i_{k})})\big) u_{\sigma(i_1+\ldots+i_{k}+1)} \ldots u_{\sigma(n)}\Big)\\
			\nonumber&&+\sum_{k=1}^{n-1}\sum_{i_1+\ldots+i_{k+2}=n\atop i_1,\ldots,i_{k+1}\ge 1,i_{k+2}\ge0}\sum_{\sigma\in\mathbb S_{(i_1,\ldots,i_{k+2})}}\frac{\varepsilon(\sigma)}{k!}\\
			\nonumber &&\Theta_{i_{k+2}+1}\Big(\frkR_{k,i_{k+1}}\big(\Theta_{i_1}(u_{\sigma(1)} \ldots u_{\sigma(i_1)}) \ldots \Theta_{i_{k}}(u_{\sigma(i_1+\ldots+i_{k-1}+1)} \ldots u_{\sigma(i_1+\ldots+i_{k})})\otimes u_{\sigma(i_1+\ldots+i_{k}+1)} \ldots u_{\sigma(i_1+\ldots+i_{k+1})}\big)\\
			\nonumber&&u_{\sigma(i_1+\ldots+i_{k+1}+1)} \ldots u_{\sigma(n)}\Big).
		\end{eqnarray}
}
	\end{pro}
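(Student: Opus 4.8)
The plan is to unpack the Maurer--Cartan equation \eqref{homotopy-rota-baxter} defining a homotopy Rota--Baxter operator into the explicit component identities \eqref{homotopy-rota-baxter-equivalence}. The first step is to observe that, since $\huaH=\Hom(\bar{\Sym}(\h),\g)$ is precisely the subspace of those coderivations of $\bar{\Sym}(\g\oplus\h)$ whose corestriction factors through $\bar{\Sym}(\h)\to\g$, and since a coderivation of the cofree coalgebra $\bar{\Sym}(\g\oplus\h)$ is determined by its corestriction, the condition $P\big(e^{[\cdot,\hat{\Theta}]_C}\Phi\big)=0$ is equivalent to the vanishing of
\[
\pr_{\g}\Big(\big(e^{[\cdot,\hat{\Theta}]_C}\Phi\big)(u_1\ldots u_n)\Big)
\]
for all $n\ge 1$ and all homogeneous $u_1,\ldots,u_n\in\h$. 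Thus the whole proposition reduces to computing this $\g$-projection explicitly, and the asserted biconditional will follow automatically.

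Next I would invoke the twisting identity \eqref{twistor} established in the proof of Theorem \ref{twist-homotopy-lie}, namely $e^{[\cdot,\hat{\Theta}]_C}\Phi=e^{-\hat{\Theta}}\circ\Phi\circ e^{\hat{\Theta}}$, and evaluate $\pr_\g\big(e^{-\hat{\Theta}}\circ\Phi\circ e^{\hat{\Theta}}\big)(u_1\ldots u_n)$. This is the exact analogue of the computation already carried out in the proof of Theorem \ref{twist-homotopy-lie}, where the $\h$-projection was shown to produce the $\alpha_n$ of \eqref{descendant-homotopy-lie}; here one only changes the final projection from $\pr_\h$ to $\pr_\g$. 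Concretely, I would first apply the explicit expansion \eqref{iso-coalgebra} of $e^{\hat{\Theta}}(u_1\ldots u_n)$, which groups the inputs into blocks, converting some blocks to $\g$-elements via the $\Theta_{i_j}$ and leaving the rest in $\h$; then let $\Phi=\hat{\huaR}$ act as a coderivation, recalling from \eqref{extension-homotopy-lie} that its $\g$-valued corestriction is $l_k$ and its $\h$-valued corestriction is the family $\frkR_{p,q}$; and finally apply $e^{-\hat{\Theta}}$ and project to $\g$.

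The key bookkeeping point is that a $\g$-valued term survives $\pr_\g$ in exactly two ways. Either $e^{\hat{\Theta}}$ converts \emph{all} of $u_1\ldots u_n$ into $k$ $\g$-elements and $\Phi$ then acts through the $\g$-component $l_k$ of $\huaR$, on which $e^{-\hat{\Theta}}$ acts trivially; this contributes the left-hand side of \eqref{homotopy-rota-baxter-equivalence}. Or $\Phi$ acts through one of the $\h$-valued maps $\frkR_{p,q}$ (with $p=0$, with $q=0$, or with both positive), producing a single $\h$-element that, together with the $u$'s left untouched by $e^{\hat{\Theta}}$, is collapsed to a single $\g$-element by exactly one factor of $e^{-\hat{\Theta}}$, i.e. by a single $\Theta_{\bullet}$; these three cases contribute respectively the three families of terms on the right-hand side of \eqref{homotopy-rota-baxter-equivalence}, carrying $\frkR_{0,k}$, $\frkR_{k,0}$ and $\frkR_{k,i_{k+1}}$. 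Matching the block sizes produces the index ranges and the multiplicities $\tfrac{1}{k!}$, while the Koszul signs $\varepsilon(\sigma)$ arise from the shuffles in \eqref{iso-coalgebra} and in the coderivation action of $\Phi$.

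The main obstacle I anticipate is purely combinatorial: tracking the partition indices $i_1,\ldots,i_{k+1},i_{k+2}$, the factorial normalizations, and the signs so that the contribution of the final $e^{-\hat{\Theta}}$ conversion is correctly distributed among the three families of $\frkR$-terms, and verifying that no further $\g$-valued contribution can arise---in particular that two or more $\Theta$-conversions by $e^{-\hat{\Theta}}$ cannot land in $\Sym^1(\g)$. Once one has established that $\pr_\g\big(e^{[\cdot,\hat{\Theta}]_C}\Phi\big)(u_1\ldots u_n)$ equals the difference of the two sides of \eqref{homotopy-rota-baxter-equivalence}, the equivalence is immediate, since a coderivation of a cofree coalgebra vanishes if and only if its corestriction vanishes on every input, so no separate argument for the converse is required.
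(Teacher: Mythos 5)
Your proposal is correct and takes essentially the same route as the paper: the paper likewise reduces the Maurer--Cartan condition \eqref{homotopy-rota-baxter} to the vanishing of $\pr_{\g}\big(e^{[\cdot,\hat{\Theta}]_C}\Phi\big)(u_1\ldots u_n)$ for all $u_1,\ldots,u_n\in\h$, and then expands via the conjugation identity \eqref{twistor} and the expansion \eqref{iso-coalgebra} of $e^{\hat{\Theta}}$, exactly as in the proof of Theorem \ref{twist-homotopy-lie}. Your bookkeeping of which terms survive the projection --- all inputs converted to $\g$ with $l_k$ applied (and $e^{-\hat{\Theta}}$ acting trivially on the $\g$-part), versus a single $\h$-valued output of $\frkR_{0,k}$, $\frkR_{k,0}$ or $\frkR_{k,i_{k+1}}$ collapsed together with the untouched $u$'s by exactly one $\Theta$-factor, with leftover $\g$-elements forcing length at least two --- reproduces the paper's computation term by term, including the three families on the right-hand side of \eqref{homotopy-rota-baxter-equivalence}.
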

	
	\begin{proof}
		A degree 0 element  $\Theta=\sum_{k=1}^{+\infty}\Theta_k\in \Hom^0(\bar{\Sym}(\h),\g)$  is a homotopy    Rota-Baxter operator on the open-closed homotopy Lie algebra $(\g,\h,\{l_k\}_{k=1}^{+\infty},\{\frkR_{p,q}\}_{p+q=1}^{+\infty})$ if and only if for all $n\geq 1$  and  all homogeneous elements $u_1,\ldots,u_n\in \h$, we have
		\begin{eqnarray*}
			\pr_{\g}\big(e^{[\cdot,\hat{\Theta}]_C}\Phi\big)(u_1 \ldots  u_n)=0.
		\end{eqnarray*}
By \eqref{twistor},~\eqref{iso-coalgebra} and   similar calculations as the proof of Theorem \ref{twist-homotopy-lie}, one can show that this is  equivalent to \eqref{homotopy-rota-baxter-equivalence}.
\emptycomment{
Moreover, for any $u_1,\ldots,u_n\in\h,$ we have
		\begin{eqnarray*}
			&&\pr_{\g}\big(e^{[\cdot,\hat{\Theta}]_c}\hat{\huaR}\big)(u_1 \ldots  u_n)\\
			&\stackrel{\eqref{twistor}}{=}&\pr_{\g}\big(e^{-\hat{\Theta}}\circ \hat{\huaR}\circ e^{\hat{\Theta}}\big)(u_1 \ldots  u_n)\\
			&\stackrel{\eqref{iso-coalgebra}}{=}&\sum_{k=1}^{n}\sum_{i_1+\ldots+i_k=n\atop i_1,\ldots,i_k\ge1}\sum_{\sigma\in\mathbb S_{(i_1,\ldots,i_k)}}\frac{\varepsilon(\sigma)}{k!}\\
			&&l_{k}\Big(\Theta_{i_1}(u_{\sigma(1)} \ldots  u_{\sigma(i_1)}),\ldots, \Theta_{i_k}(u_{\sigma(i_1+\ldots+i_{k-1}+1)} \ldots  u_{\sigma(n)})\Big)\\
			&&-\sum_{k=1}^{n}\sum_{\sigma\in\mathbb S_{(k,n-k)}}\varepsilon(\sigma)\Theta_{n-k+1}\Big(\frkR_{0,k}(u_{\sigma(1)} \ldots u_{\sigma(k)}) u_{\sigma(k+1)} \ldots u_{\sigma(n)}\Big)\\
			&&-\sum_{k=1}^{n-1}\sum_{i_1+\ldots+i_{k+1}=n\atop i_1,\ldots,i_{k+1}\ge1}\sum_{\sigma\in\mathbb S_{(i_1,\ldots,i_{k+1})}}\frac{\varepsilon(\sigma)}{k!}\\
			\nonumber &&\Theta_{i_{k+1}+1}\Big(\frkR_{k,0}\big(\Theta_{i_1}(u_{\sigma(1)} \ldots  u_{\sigma(i_1)}) \ldots \Theta_{i_{k}}(u_{\sigma(i_1+\ldots+i_{k-1}+1)} \ldots  u_{\sigma(i_1+\ldots+i_{k})})\big) u_{\sigma(i_1+\ldots+i_{k}+1)} \ldots u_{\sigma(n)}\Big)\\
			&&-\sum_{k=1}^{n-1}\sum_{i_1+\ldots+i_{k+2}=n\atop i_1,\ldots,i_{k+1}\ge 1,i_{k+2}\ge0}\sum_{\sigma\in\mathbb S_{(i_1,\ldots,i_{k+2})}}\frac{\varepsilon(\sigma)}{k!}\\
			\nonumber &&\Theta_{i_{k+2}+1}\Big(\frkR_{k,i_{k+1}}\big(\Theta_{i_1}(u_{\sigma(1)} \ldots  u_{\sigma(i_1)}) \ldots \Theta_{i_{k}}(u_{\sigma(i_1+\ldots+i_{k-1}+1)} \ldots  u_{\sigma(i_1+\ldots+i_{k})})\otimes u_{\sigma(i_1+\ldots+i_{k}+1)} \ldots u_{\sigma(i_1+\ldots+i_{k+1})}\big)\\
			&&u_{\sigma(i_1+\ldots+i_{k+1}+1)} \ldots u_{\sigma(n)}\Big)\\
			&&-\sum_{k=1}^{n}\sum_{i_1+\ldots+i_k=n\atop i_1,\ldots,i_k\ge1}\sum_{\sigma\in\mathbb S_{(i_1,\ldots,i_k)}}\frac{\varepsilon(\sigma)}{k!}\\
			\nonumber &&\Theta_{1}\Big(\frkR_{k,0}\big(\Theta_{i_1}(u_{\sigma(1)} \ldots  u_{\sigma(i_1)}) \ldots  \Theta_{i_k}(u_{\sigma(i_1+\ldots+i_{k-1}+1)} \ldots  u_{\sigma(n)})\big)\Big)\\
			&=&\sum_{k=1}^{n}\sum_{i_1+\ldots+i_k=n\atop i_1,\ldots,i_k\ge1}\sum_{\sigma\in\mathbb S_{(i_1,\ldots,i_k)}}\frac{\varepsilon(\sigma)}{k!}\\
			&&l_{k}\Big(\Theta_{i_1}(u_{\sigma(1)} \ldots  u_{\sigma(i_1)}),\ldots, \Theta_{i_k}(u_{\sigma(i_1+\ldots+i_{k-1}+1)} \ldots  u_{\sigma(n)})\Big)\\
			&&-\sum_{k=1}^{n}\sum_{\sigma\in\mathbb S_{(k,n-k)}}\varepsilon(\sigma)\Theta_{n-k+1}\Big(\frkR_{0,k}(u_{\sigma(1)} \ldots  u_{\sigma(k)}) u_{\sigma(k+1)} \ldots u_{\sigma(n)}\Big)\\
			&&-\sum_{k=1}^{n}\sum_{i_1+\ldots+i_{k+1}=n\atop i_1,\ldots,i_k\ge1,i_{k+1}\ge0}\sum_{\sigma\in\mathbb S_{(i_1,\ldots,i_{k+1})}}\frac{\varepsilon(\sigma)}{k!}\\
			\nonumber &&\Theta_{i_{k+1}+1}\Big(\frkR_{k,0}\big(\Theta_{i_1}(u_{\sigma(1)} \ldots  u_{\sigma(i_1)}) \ldots \Theta_{i_{k}}(u_{\sigma(i_1+\ldots+i_{k-1}+1)} \ldots  u_{\sigma(i_1+\ldots+i_{k})})\big) u_{\sigma(i_1+\ldots+i_{k}+1)} \ldots u_{\sigma(n)}\Big)\\
			&&-\sum_{k=1}^{n-1}\sum_{i_1+\ldots+i_{k+2}=n\atop i_1,\ldots,i_{k+1}\ge 1,i_{k+2}\ge0}\sum_{\sigma\in\mathbb S_{(i_1,\ldots,i_{k+2})}}\frac{\varepsilon(\sigma)}{k!}\\
			\nonumber &&\Theta_{i_{k+2}+1}\Big(\frkR_{k,i_{k+1}}\big(\Theta_{i_1}(u_{\sigma(1)} \ldots  u_{\sigma(i_1)}) \ldots \Theta_{i_{k}}(u_{\sigma(i_1+\ldots+i_{k-1}+1)} \ldots  u_{\sigma(i_1+\ldots+i_{k})})\otimes u_{\sigma(i_1+\ldots+i_{k}+1)} \ldots u_{\sigma(i_1+\ldots+i_{k+1})}\big) \\
			&&u_{\sigma(i_1+\ldots+i_{k+1}+1)} \ldots u_{\sigma(n)}\Big).
		\end{eqnarray*}
		This finishes the proof.
}
	\end{proof}

\emptycomment{
	\begin{thm}\label{twist-homotopy-lie}
		Let $\Theta=\sum_{k=1}^{+\infty}\Theta_k\in \Hom(\bar{\Sym}(\h),\g)$ be a homotopy   Rota-Baxter operator on an open-closed homotopy Lie algebra $(\g,\h,\{l_k\}_{k=1}^{+\infty},\{\frkR_{p,q}\}_{p+q=1}^{+\infty})$. Define $\alpha_n:\Sym^n(\h)\lon\h$ by the formulas
		\begin{eqnarray}
			\label{descendant-homotopy-lie}&&\alpha_n(u_1,\ldots,u_n)=\frkR_{0,n}(u_1 \ldots  u_n)+\sum_{k=1}^{n}\sum_{i_1+\ldots+i_{k+1}=n\atop i_1,\ldots,i_k\ge 1,i_{k+1}\ge 0}\sum_{\sigma\in\mathbb S_{(i_1,\ldots,i_{k+1})}}\frac{\varepsilon(\sigma)}{k!}\\
			\nonumber&&\frkR_{k,i_{k+1}}\Big(\Theta_{i_1}(u_{\sigma(1)} \ldots  u_{\sigma(i_1)}) \ldots \Theta_{i_{k}}(u_{\sigma(i_1+\ldots+i_{k-1}+1)} \ldots  u_{\sigma(i_1+\ldots+i_{k})})\otimes u_{\sigma(i_1+\ldots+i_{k}+1)} \ldots  u_{\sigma(n)}\Big).
		\end{eqnarray}
		Then $(\h,\{\alpha_k\}_{k=1}^{+\infty})$ is an $L_\infty$-algebra and $\Theta$ is an $L_\infty$-algebra homomorphism from $(\h,\{\alpha_k\}_{k=1}^{+\infty})$ to $(\g,\{l_k\}_{k=1}^{+\infty})$.
	\end{thm}
	\begin{proof}
		Since $[\cdot,\hat{\Theta}]_c$ is a locally nilpotent derivation \cite{LST} on the graded Lie algebra $(\coDer(\bar{\Sym}(\g\oplus\h)),[\cdot,\cdot]_c)$, it follows  that $e^{[\cdot,\hat{\Theta}]_c}$ is an automorphism of $(\coDer(\bar{\Sym}(\g\oplus\h)),[\cdot,\cdot]_c)$. Therefore, $e^{[\cdot,\hat{\Theta}]_c}\Phi$ is a differential on the coalgebra $\bar{\Sym}(\g\oplus\h)$, and gives rise to an $L_\infty$-algebra structure on the graded vector space $\g\oplus\h$.
		
		Since $\Theta\in \Hom(\bar{\Sym}(\h),\g)$, it follows that $\hat{\Theta}$ is a locally nilpotent coderivation of the coalgebra $\bar{\Sym}(\g\oplus\h)$,  and  $e^{\hat{\Theta}}$   is a well-defined automorphism of the  coalgebra $\bar{\Sym}(\g\oplus\h)$. For all $u_1,\ldots,u_n\in\h,$ we have
		\begin{eqnarray}
			\label{iso-coalgebra} &&e^{\hat{\Theta}}(u_1 \ldots  u_n)=u_1 \ldots  u_n+\sum_{k=1}^{n-1}\sum_{i_1+\ldots+i_{k+1}=n\atop i_1,\ldots,i_{k+1}\ge 1}\sum_{\sigma\in\mathbb S_{(i_1,\ldots,i_{k+1})}}\frac{\varepsilon(\sigma)}{k!}\\
			\nonumber &&\Theta_{i_1}(u_{\sigma(1)} \ldots  u_{\sigma(i_1)}) \ldots \Theta_{i_{k}}(u_{\sigma(i_1+\ldots+i_{k-1}+1)} \ldots  u_{\sigma(i_1+\ldots+i_{k})})  u_{\sigma(i_1+\ldots+i_{k}+1)} \ldots  u_{\sigma(n)}\\
			\nonumber &&+\sum_{k=1}^{n}\sum_{i_1+\ldots+i_k=n\atop i_1,\ldots,i_k\ge 1}\sum_{\sigma\in\mathbb S_{(i_1,\ldots,i_k)}}\frac{\varepsilon(\sigma)}{k!}\Theta_{i_1}(u_{\sigma(1)} \ldots  u_{\sigma(i_1)}) \ldots  \Theta_{i_k}(u_{\sigma(i_1+\ldots+i_{k-1}+1)} \ldots  u_{\sigma(n)}).
		\end{eqnarray}
		Moreover, since $[\Phi,\hat{\Theta}]_c=\Phi\circ \hat{\Theta}-\hat{\Theta}\circ \Phi$, and by induction, we have
		\begin{eqnarray}
			\underbrace{[\ldots[[}_k\Phi,\hat{\Theta}]_c,\hat{\Theta}]_c,\ldots,\hat{\Theta}]_c=\sum_{i=0}^{k}(-1)^{i}{k\choose i}\underbrace{\hat{\Theta}\circ\ldots\circ\hat{\Theta}}_i\circ \Phi\circ \underbrace{\hat{\Theta}\circ\ldots\circ\hat{\Theta}}_{k-i}.
		\end{eqnarray}
		Therefore, we have
		\begin{eqnarray}
			\nonumber e^{[\cdot,\hat{\Theta}]_c}\Phi&=&\sum_{k=0}^{+\infty}\frac{1}{k!}\underbrace{[\ldots[[}_k\Phi,\hat{\Theta}]_c,\hat{\Theta}]_c,\ldots,\hat{\Theta}]_c,\\
			\nonumber &=&\sum_{k=0}^{+\infty}\sum_{i=0}^{k}(-1)^{i}\frac{1}{i!(k-i)!}\underbrace{\hat{\Theta}\circ\ldots\circ\hat{\Theta}}_i\circ \Phi\circ \underbrace{\hat{\Theta}\circ\ldots\circ\hat{\Theta}}_{k-i}\\
			\label{twistor}&=&e^{-\hat{\Theta}}\circ \Phi\circ e^{\hat{\Theta}}.
		\end{eqnarray}
		Equivalently, we have the following  commutative diagram:
		\begin{equation}\label{eq:relation}
			\small{
				\xymatrix{\bar{\Sym}(\g\oplus\h) \ar[rr]^{e^{[\cdot,\hat{\Theta}]_c}\Phi}\ar[d]_{e^{\hat{\Theta}}} &  &  \bar{\Sym}(\g\oplus\h) \ar[d]^{e^{\hat{\Theta}}}  \\
					\bar{\Sym}(\g\oplus\h) \ar[rr]^{\Phi} & &\bar{\Sym}(\g\oplus\h),}
			}
		\end{equation}
		which implies that $e^{\hat{\Theta}}$ is a homomorphism from the $L_\infty$-algebra $\big(\bar{\Sym}(\g\oplus\h),\bar{\Delta},e^{[\cdot,\hat{\Theta}]_c}\Phi\big)$ to the $L_\infty$-algebra $\big(\bar{\Sym}(\g\oplus\h),\bar{\Delta},\Phi\big)$. %\yh{ $\bar{\Delta}$? $\Delta$? unify the notations, See P11}
		
		By \eqref{homotopy-rota-baxter}, we obtain that $\bar{\Sym}(\h)$ is an $L_\infty$-subalgebra of $\big(\bar{\Sym}(\g\oplus\h),\bar{\Delta},e^{[\cdot,\hat{\Theta}]_c}\Phi\big)$. More precisely, for all $u_1,\ldots,u_n\in\h,$ we have
		\begin{eqnarray*}
			&&\pr_{\h}\big(e^{[\cdot,\hat{\Theta}]_c}\Phi\big)(u_1,\ldots, u_n)\\
			&\stackrel{\eqref{twistor}}{=}&\pr_{\h}\big(e^{-\hat{\Theta}}\circ \Phi\circ e^{\hat{\Theta}}\big)(u_1,\ldots, u_n)\\
			&\stackrel{\eqref{iso-coalgebra}}{=}&\sum_{k=1}^{n-1}\sum_{i_1+\ldots+i_{k+1}=n\atop i_1,\ldots,i_{k+1}\ge 1}\sum_{\sigma\in\mathbb S_{(i_1,\ldots,i_{k+1})}}\frac{\varepsilon(\sigma)}{k!}\\
			&&\frkR_{k,i_{k+1}}\Big(\Theta_{i_1}(u_{\sigma(1)} \ldots  u_{\sigma(i_1)}) \ldots \Theta_{i_{k}}(u_{\sigma(i_1+\ldots+i_{k-1}+1)} \ldots  u_{\sigma(i_1+\ldots+i_{k})}) \otimes u_{\sigma(i_1+\ldots+i_{k}+1)} \ldots  u_{\sigma(n)}\Big)\\
			&&+\sum_{k=1}^{n}\sum_{i_1+\ldots+i_k=n\atop i_1,\ldots,i_k\ge 1}\sum_{\sigma\in\mathbb S_{(i_1,\ldots,i_k)}}\frac{\varepsilon(\sigma)}{k!}\\
			&&\frkR_{k,0}\Big(\Theta_{i_1}(u_{\sigma(1)} \ldots u_{\sigma(i_1)}) \ldots  \Theta_{i_k}(u_{\sigma(i_1+\ldots+i_{k-1}+1)} \ldots  u_{\sigma(n)})\Big)\\
			&&+\frkR_{0,n}(u_1 \ldots  u_n)\\
			&=&\sum_{k=1}^{n}\sum_{i_1+\ldots+i_{k+1}=n\atop i_1,\ldots,i_k\ge 1,i_{k+1}\ge 0}\sum_{\sigma\in\mathbb S_{(i_1,\ldots,i_{k+1})}}\frac{\varepsilon(\sigma)}{k!}\\
			&&\frkR_{k,i_{k+1}}\Big(\Theta_{i_1}(u_{\sigma(1)} \ldots  u_{\sigma(i_1)}) \ldots \Theta_{i_{k}}(u_{\sigma(i_1+\ldots+i_{k-1}+1)} \ldots  u_{\sigma(i_1+\ldots+i_{k})})\otimes  u_{\sigma(i_1+\ldots+i_{k}+1)} \ldots  u_{\sigma(n)}\Big)\\
			&&+\frkR_{0,n}(u_1 \ldots  u_n),
		\end{eqnarray*}
		which is exactly $\alpha_n$ given by \eqref{descendant-homotopy-lie}.
		
		Since $(\g,\h,\{l_k\}_{k=1}^{+\infty},\{\frkR_{p,q}\}_{p+q=1}^{+\infty})$ is an open-closed homotopy Lie algebra, we obtain the short exact sequence of $L_\infty$-algebras with strict morphisms:
		$$ 0\longrightarrow\big(\h,\{\frkR_{0,q}\}_{q=1}^{+\infty}\big)\stackrel{\iota}{\longrightarrow}\big(\g\oplus \h,\{\huaR_k\}_{k=1}^{+\infty}\big)\stackrel{p}\longrightarrow\big(\g,\{l_k\}_{k=1}^{+\infty}\big)\longrightarrow0.$$
		Here the linear maps $\iota$ and $p$ are given by
		\begin{eqnarray}\label{open-closed-to-extension}
			\iota(u)=(0,u),\,\,\,p(x,u)=x,\,\,\,\forall x\in\g,\,\,\,u\in\h.
		\end{eqnarray}
		Since $\bar{\Sym}(\h)$ is an $L_\infty$-subalgebra of $\big(\bar{\Sym}(\g\oplus\h),\bar{\Delta},e^{[\cdot,\hat{\Theta}]_c}\Phi\big)$, it is clear that the embedding map $$i:\big(\bar{\Sym}(\h),\bar{\Delta},e^{[\cdot,\hat{\Theta}]_c}\Phi|_{\bar{\Sym}(\h)}\big)\lon \big(\bar{\Sym}(\g\oplus\h),\bar{\Delta},e^{[\cdot,\hat{\Theta}]_c}\Phi\big)$$
		is an $L_\infty$-algebra homomorphism. By  \eqref{eq:relation},   $\bar{p}\circ e^{\hat{\Theta}}\circ i$ is an $L_\infty$-algebra homomorphism from $(\bar{\Sym}(\h),\bar{\Delta},e^{[\cdot,\hat{\Theta}]_c}\Phi|_{\bar{\Sym}(\h)})$ to $(\bar{\Sym}(\g),\bar{\Delta},\Psi(\sum_{k=1}^{+\infty}l_k))$. For all $u_1,\ldots,u_n\in\h,$ we have
		\begin{eqnarray*}
			\pr_{\g}\big((\bar{p}\circ e^{\hat{\Theta}}\circ i)(u_1 \ldots  u_n)\big)&=&\pr_{\g}\big(\bar{p}( e^{\hat{\Theta}}(u_1 \ldots u_n))\big)\\
			&\stackrel{\eqref{iso-coalgebra},\eqref{open-closed-to-extension}}{=}&\Theta_n(u_1 \ldots u_n),
		\end{eqnarray*}
		which implies that $\bar{\Theta}=\bar{p}\circ e^{\hat{\Theta}}\circ i$. Thus, we deduce that $\Theta$ is an $L_\infty$-algebra homomorphism from $(\h,\{\alpha_k\}_{k=1}^{+\infty})$ to $(\g,\{l_k\}_{k=1}^{+\infty})$. The proof is finished.
	\end{proof}

	\begin{rmk}
		Theorem \ref{twist-homotopy-lie} above generalizes   \cite[Proposition 3.10]{CC} and    \cite[Proposition 5.11]{LST}.
	\end{rmk}
	}
	
	\emptycomment{
		\subsection{Homotopy Rota-Baxter operators}
		\label{ss:hoop}
		
		Now we are ready to give the main notion of this paper.
		
		\begin{defi}
			Let $(V,\{\rho_k\}_{k=1}^{\infty})$ be a representation of an $L_\infty$-algebra $(\g,\{l_k\}_{k=1}^\infty)$. A degree $0$ element $T=\sum_{i=1}^{+\infty}T_i\in \Hom(\bar{S}(V),\g)$ with $T_i\in \Hom(S^i(\h),\g)$ is called a {\bf  homotopy $\huaO$-operator } on an $L_\infty$-algebra $(\g,\{l_k\}_{k=1}^\infty)$ with respect to the representation $(V,\{\rho_k\}_{k=1}^{\infty})$ if the following equalities hold for all $p\geq 1$ and all homogeneous elements $v_1,\ldots,v_p\in V$,
			\begin{eqnarray}
				\nonumber&&\sum_{1\le i< j\le p}(-1)^{v_i(v_1+\ldots+v_{i-1})+v_j(v_1+\ldots+v_{j-1})+v_iv_j}T_{p-1}(\lambda[v_i,v_j]_{\h},v_1,\ldots,\hat{v}_i,\ldots,\hat{v}_{j},\ldots,v_{p})\\
				\label{homotopy-rota-baxter-o}&&+\sum_{k+l=p+1}\sum_{\sigma\in \mathbb S_{(l,1,p-l-1)}}\varepsilon(\sigma)T_{k-1}\Big(\rho\big(T_l(v_{\sigma(1)},\ldots,v_{\sigma(l)})\big)v_{\sigma(l+1)},v_{\sigma(l+2)},\ldots,v_{\sigma(p)}\Big)\\
				&=&\frac{1}{2}\sum_{k+l=p+1}\sum_{\sigma\in \mathbb S_{(k-1,l)}}\varepsilon(\sigma)[T_{k-1}(v_{\sigma(1)},\ldots,v_{\sigma(k-1)}),T_l(v_{\sigma(k)},\ldots,v_{\sigma(p)})]_\g.
				%&=&\frac{1}{2}\sum_{k+l=s+1}\sum_{\sigma\in \mathbb S_{(s-l,l)}}\varepsilon(\sigma)[T_{k-1}(v_{\sigma(1)},\ldots,v_{\sigma(s-l)}),T_l(v_{\sigma(s-l+1)},\ldots,v_{\sigma(s)})]_\g.
				\nonumber
			\end{eqnarray}

			????????????????????????????????
			{\footnotesize
				\begin{eqnarray}
					\nonumber
					\label{homotopy-rota-baxter-o}&&\sum_{k_1+\ldots+k_m=t\atop 1\le t\le p-1}\sum_{\sigma\in \mathbb S_{(k_1,\ldots,k_m,1,p-1-t)}}\frac{\varepsilon(\sigma)}{m!}T_{p-t}\Big(\rho_{m+1}\Big(T_{k_1}\big(v_{\sigma(1)},\ldots,v_{\sigma(k_1)}\big),\ldots,T_{k_m}\big(v_{\sigma(k_1+\ldots+k_{m-1}+1)},\ldots,v_{\sigma(t)}\big),v_{\sigma(t+1)}\Big),v_{\sigma(t+2)},\ldots,v_{\sigma(p)}\Big)\\
					&=&\sum_{k_1+\ldots+k_n=p}\sum_{\sigma\in \mathbb S_{(k_1,\ldots,k_n)}}\frac{\varepsilon(\sigma)}{n!}l_n\Big(T_{k_1}\big(v_{\sigma(1)},\ldots,v_{\sigma(k_1)}\big),\ldots,T_{k_n}\big(v_{\sigma(k_1+\ldots+k_{n-1}+1)},\ldots,v_{\sigma(p)}\big)\Big).
					\nonumber
				\end{eqnarray}
				\label{de:homoop}
			}
		\end{defi}

		\begin{defi}
			A degree $0$ element $R=\sum_{i=1}^{+\infty}R_i\in \Hom(\bar{S}(\g),\g)$ with $R_i\in \Hom(S^i(\g),\g)$ is called a {\bf  homotopy Rota-Baxter operator } on a \sgla~ $(\g,[\cdot,\cdot]_\g)$ if the following equalities hold for all $p\geq 0$ and all homogeneous elements $x_1,\ldots,x_p\in \g$,
			\begin{eqnarray*}
				&&\sum_{1\le i< j\le p}(-1)^{x_i(x_1+\ldots+x_{i-1})+x_j(x_1+\ldots+x_{j-1})+x_ix_j}R_{p-1}(\lambda[x_i,x_j]_{\g},x_1,\ldots,\hat{x}_i,\ldots,\hat{x}_{j},\ldots,x_{p})\\
				&&+\sum_{k+l=p+1}\sum_{\sigma\in \mathbb S_{(l,1,p-l-1)}}\varepsilon(\sigma)R_{k-1}\big([R_l(x_{\sigma(1)},\ldots,x_{\sigma(l)}),x_{\sigma(l+1)}]_\g,x_{\sigma(l+2)},\ldots,x_{\sigma(p)}\big)\\
				&=&\frac{1}{2}\sum_{k+l=p+1}\sum_{\sigma\in \mathbb S_{(k-1,l)}}\varepsilon(\sigma)[R_{k-1}(x_{\sigma(1)},\ldots,x_{\sigma(k-1)}),R_l(x_{\sigma(k)},\ldots,x_{\sigma(p)})]_\g.
				%&=&\frac{1}{2}\sum_{k+l=s+1}\sum_{\sigma\in \mathbb S_{(s-l,l)}}\varepsilon(\sigma)[R_{k-1}(x_{\sigma(1)},\ldots,x_{\sigma(s-l)}),R_l(x_{\sigma(s-l+1)},\ldots,x_{\sigma(s)})]_\g.
			\end{eqnarray*}
		\end{defi}
		
		\begin{rmk}
			A homotopy Rota-Baxter operator $R=\sum_{i=0}^{+\infty}R_i\in \Hom(S(\g),\g)$ of weight $\lambda$ on a \sgla~ $(\g,[\cdot,\cdot]_\g)$ is a homotopy $\huaO$-operator of weight $\lambda$ with respect to the adjoint action $\ad$.
			If moreover the \sgla~  reduces to a Lie algebra, then the resulting linear operator $R:\g\longrightarrow \g$ is a {\bf Rota-Baxter operator of weight $\lambda$} in the sense that
			$$ [R(x),R(y)]_\g=R\big([R(x),y]_\g+ [x,R(y)]_\g +\lambda  [x,y]_\g\big), \quad \forall x, y \in \g.
			$$
		\end{rmk}
		
		In the sequel, we construct a \dgla~ and show that  homotopy $\huaO$-operators of weight $\lambda$ can be characterized as its Maurer-Cartan elements to justify our definition of homotopy $\huaO$-operators of weight $\lambda$.
		For this purpose, we recall the derived bracket construction of graded Lie algebras. Let $(\g,[\cdot,\cdot]_\g,d)$ be a \dgla. We define a new bracket on $s\g$  by
		\begin{eqnarray}
			[sx,sy]_{d}:=(-1)^{x}s[dx,y]_\g,\quad \forall x,y\in\g.
		\end{eqnarray}
		The new bracket is called the  {\bf derived bracket} \cite{Kosmann-Schwarzbach}. It is well known that the derived bracket is a graded Leibniz bracket on the shifted graded space $s\g$. Note that the derived bracket is not graded skew-symmetric in general. We recall a basic result.
		
		\begin{pro}{\rm (\cite{Kosmann-Schwarzbach})}\label{old-derived}
			Let $(\g,[\cdot,\cdot]_\g,d)$ be a \dgla, and let $\h\subset \g$ be a subalgebra which is abelian, i.e. $[\h,\h]_\g=0$. If the derived bracket is closed on $s\h$, then $(s\h,[\cdot,\cdot]_d)$ is a \gla.
		\end{pro}
		
		Let $\rho$ be an action of a \sgla~ $(\g,[\cdot,\cdot]_\g)$ on a \sgla~ $(\h,[\cdot,\cdot]_\h)$. Consider the graded vector space
		$C^*(\h,\g):=\oplus_{n\in\mathbb Z}\Hom^n(S(\h),\g)$.
		Define a linear map $\dM:\Hom^n(S(\h),\g)\longrightarrow \Hom^{n+1}(S(\h),\g)$ by
		\begin{eqnarray}
			\nonumber&&(\dM g)_p ( v_1,\ldots, v_{p} )\\
			\label{diff}&=&\sum_{1\le i< j\le p}(-1)^{n+1+v_i(v_1+\ldots+v_{i-1})+v_j(v_1+\ldots+v_{j-1})+v_iv_j} g_{p-1}(\lambda[v_i,v_j]_{\h},v_1,\ldots,\hat{v}_i,\ldots,\hat{v}_{j}, \ldots,v_{p}).
		\end{eqnarray}
		Also define a graded bracket operation
		$$\Courant{\cdot,\cdot}: \Hom^m(S(\h),\g)\times \Hom^n(S(\h),\g)\longrightarrow \Hom^{m+n+1}(S(\h),\g)$$
		by
		\begin{eqnarray}
			\nonumber&&\Courant{f,g}_p( v_1,\ldots,v_{p})\\
			\label{graded-Lie}&=&-\sum_{k+l=p+1}\sum_{\sigma\in \mathbb S_{(l,1,p-l-1)}}\varepsilon(\sigma)f_{k-1}\Big(\rho\big(g_l(v_{\sigma(1)},\ldots,v_{\sigma(l)})\big)v_{\sigma(l+1)},v_{\sigma(l+2)},\ldots,v_{\sigma(p)}\Big)\\
			\nonumber&&+(-1)^{(m+1)(n+1)}\sum_{k+l=s+1}\sum_{\sigma\in \mathbb S_{(k-1,1,p-k)}}\varepsilon(\sigma)g_l\Big(\rho\big(f_{k-1}(v_{\sigma(1)},\ldots,v_{\sigma(k-1)})\big)v_{\sigma(k)}, v_{\sigma(k+1)},\ldots,v_{\sigma(p)}\Big)\\
			\nonumber&&-\sum_{k+l=p+1}\sum_{\sigma\in \mathbb S_{(k-1,l)}}(-1)^{n(v_{\sigma(1)}+\ldots+v_{\sigma(k-1)})+m+1}\varepsilon(\sigma)[f_{k-1}(v_{\sigma(1)},\ldots,v_{\sigma(k-1)}),g_l(v_{\sigma(k)},\ldots,v_{\sigma(p)})]_\g
		\end{eqnarray}
		for all  $f=\sum_if_i\in\Hom^m(S(\h),\g)$, $g=\sum g_i\in \Hom^n(S(\h),\g)$  with  $f_i, g_i\in\Hom(S^i(\h),\g)$
		and $v_1,\ldots, v_{p} \in\h.$ Here we write $\dM g=\sum_i(\dM g)_i$ with $(\dM g)_i\in\Hom(S^i(\h),\g)$, and $\Courant{f,g}=\sum_i\Courant{f,g}_i$ with $\Courant{f,g}_i\in\Hom(S^i(\h),\g)$.
		
		\begin{thm}\label{dgla-deforamtion-homotopy}
			Let $\rho$ be an action of a \sgla~ $(\g,[\cdot,\cdot]_\g)$ on a \sgla~ $(\h,[\cdot,\cdot]_\h)$. Then $(sC^*(\h,\g),\Courant{\cdot,\cdot},\dM)$ is a \dgla.
		\end{thm}
		
		\begin{proof}
			By Theorem \ref{graded-Nijenhuis-Richardson-bracket}, the graded Nijenhuis-Richardson bracket $[\cdot,\cdot]_{NR}$ associated to the direct sum vector space $\g\oplus \h$ gives rise to a \gla~ $(C^*(\g\oplus\h,\g\oplus\h),[\cdot,\cdot]_{NR})$. Obviously
			$$
			C^*(\h,\g)=\bigoplus_{n\in\mathbb Z}\Hom^n(S(\h),\g)
			$$
			is an abelian subalgebra. We denote the symmetric graded Lie brackets $[\cdot,\cdot]_\g$ and $[\cdot,\cdot]_\h$ by $\mu_\g$ and $\mu_\h$ respectively.  Since $\rho$ is an  action of the \sgla~ $(\g,[\cdot,\cdot]_\g)$, $\mu_\g+\rho$ is a semidirect product \sgla~ structure on $\g\oplus\h$. By Theorem \ref{graded-Nijenhuis-Richardson-bracket}, we deduce that $\mu_\g+\rho$ and $\lambda\mu_\h$ are Maurer-Cartan elements of the \gla~ $(C^*(\g\oplus\h,\g\oplus\h),[\cdot,\cdot]_{NR})$. Define a differential $d_{\mu_\g+\rho}$ on $(C^*(\g\oplus\h,\g\oplus\h),[\cdot,\cdot]_{NR})$ via
			$$
			d_{\mu_\g+\rho}:=[\mu_\g+\rho,\cdot]_{NR}.
			$$
			Further, we define the derived bracket on the graded vector space $\oplus_{n\in\mathbb Z}\Hom^n(S(\h),\g)$ by
			\begin{eqnarray}
				\label{d-bracket}\Courant{f,g}:=(-1)^{m}[d_{\mu_\g+\rho}f,g]_{NR}=(-1)^{m}[[\mu_\g+\rho,f]_{NR},g]_{NR},
			\end{eqnarray}
			for all $f=\sum f_i\in\Hom^m(S(\h),\g), ~g=\sum_ig_i\in\Hom^n(S(\h),\g).$ Write
			$$[\mu_\g+\rho,f]_{NR}=\sum_{i=0}^{\infty}[\mu_\g+\rho,f]_{NR}^i \quad \mbox{with} \quad [\mu_\g+\rho,f]_{NR}^i\in\Hom(S^i(\h),\g).$$
			By \eqref{NR-circ}, for all $k\ge 2$,  $x_1,\ldots,x_k\in\g$ and $v_1\ldots,v_k\in\h$, we have
			\begin{eqnarray*}
				&&[\mu_\g+\rho,f]_{NR}^k\Big((x_1,v_1),\ldots,(x_k,v_k)\Big)\\
				&=&\Big((\mu_\g+\rho)\circ f_{k-1}-(-1)^mf_{k-1}\circ (\mu_\g+\rho)\Big)\Big((x_1,v_1),\ldots,(x_k,v_k)\Big)\\
				&=&\sum_{i=1}^{k}(-1)^\alpha(\mu_\g+\rho)\Big(f_{k-1}\big((x_1,v_1),\ldots,\widehat{(x_i,v_i)},\ldots,(x_k,v_k)\big),(x_i,v_i)\Big)\\
				&&-(-1)^m\sum_{1\le i<j\le k}(-1)^\beta f_{k-1}\Big((\mu_\g+\rho)\big((x_i,v_i),(x_j,v_j)\big),(x_1,v_1),\ldots,\widehat{(x_i,v_i)},\ldots,\widehat{(x_j,v_j)},\ldots,(x_k,v_k)\Big)\\
				&=&\sum_{i=1}^{k}(-1)^\alpha(\mu_\g+\rho)\Big(\big(f_{k-1}(v_1,\ldots,\hat{v}_i,\ldots,v_k),0\big),(x_i,v_i)\Big)\\
				&&-(-1)^m\hspace{-.4cm}\sum_{1\le i<j\le k}(-1)^\beta f_{k-1}\Big(\big([x_i,x_j]_\g,\rho(x_i)v_j+(-1)^{v_iv_j}\rho(x_j)v_i\big),(x_1,v_1),\ldots,\widehat{(x_i,v_i)},\ldots,\widehat{(x_j,v_j)},\ldots,(x_k,v_k)\Big)\\
				&=&\sum_{i=1}^{k}(-1)^\alpha\big([f_{k-1}(v_1,\ldots,\hat{v}_i,\ldots,v_k),x_i]_\g,\rho(f_{k-1}(v_1,\ldots,\hat{v}_i,\ldots,v_k))v_i\big)\\
				&&-(-1)^m\sum_{1\le i<j\le k}(-1)^\beta \big(f_{k-1}(\rho(x_i)v_j+(-1)^{v_iv_j}\rho(x_j)v_i,v_1,\ldots,\hat{v}_i,\ldots,\hat{v}_j,\ldots,v_k),0\big),
			\end{eqnarray*}
			here $\alpha=v_i(v_{i+1}+\ldots+v_k)$ and $\beta=v_i(v_1+\ldots+v_{i-1})+v_j(v_1+\ldots+v_{j-1})+v_iv_j$. On the other hand, we have $[\mu_\g+\rho,f]_{NR}^0=0$ and $[\mu_\g+\rho,f]_{NR}^1(x_1,v_1)=\big([f_0,x_1]_\g,\rho(f_0)v_1\big)$.
			
			Moreover, we obtain
			\begin{eqnarray*}
				&&[[\mu_\g+\rho,f]_{NR},g]_{NR}^p\Big((x_1,v_1),\ldots,(x_p,v_p)\Big)\\
				&=&\Big(\sum_{k+l=p+1}[\mu_\g+\rho,f]_{NR}^k\circ g_l-(-1)^{(m+1)n}\sum_{k+l=p+1}g_l\circ [\mu_\g+\rho,f]_{NR}^k\Big)\Big((x_1,v_1),\ldots,(x_p,v_p)\Big).
			\end{eqnarray*}
			By straightforward computations, we have
			\begin{eqnarray*}
				&&([\mu_\g+\rho,f]_{NR}^k\circ g_l)\Big((x_1,v_1),\ldots,(x_p,v_p)\Big)\\
				&=&\sum_{\sigma\in \mathbb S_{(l,p-l)}}\varepsilon(\sigma)[\mu_\g+\rho,f]_{NR}^k\Big(g_l\big((x_{\sigma(1)},v_{\sigma(1)}),\ldots,
				(x_{\sigma(l)},v_{\sigma(l)})\big),(x_{\sigma(l+1)},v_{\sigma(l+1)}),\ldots,(x_{\sigma(p)},v_{\sigma(p)})\Big)\\
				&=&\sum_{\sigma\in \mathbb S_{(l,p-l)}}\varepsilon(\sigma)[\mu_\g+\rho,f]_{NR}^k\Big(\big(g_l(v_{\sigma(1)},\ldots,v_{\sigma(l)}),0\big),(x_{\sigma(l+1)},
				v_{\sigma(l+1)}),\ldots,(x_{\sigma(p)},v_{\sigma(p)})\Big)\\
				&=&\sum_{\sigma\in \mathbb S_{(l,p-l)}}\varepsilon(\sigma)(-1)^{\bar{\alpha}}\big([f_{k-1}(v_{\sigma(l+1)},\ldots,v_{\sigma(p)}),g_l(v_{\sigma(1)},\ldots,v_{\sigma(l)})]_\g,0\big)\\
				&&-(-1)^m\sum_{\sigma\in \mathbb S_{(l,p-l)}}\varepsilon(\sigma)\sum_{j=l+1}^{p}(-1)^{\bar{\beta}}\Big(f_{k-1}\big(\rho(g_l(v_{\sigma(1)},\ldots,
				v_{\sigma(l)}))v_{\sigma(j)},v_{\sigma(l+1)},\ldots,\hat{v}_{\sigma(j)},\ldots,v_{\sigma(p)}\big),0\Big),
			\end{eqnarray*}
			where $\bar{\alpha}=(v_{\sigma(1)}+\ldots+v_{\sigma(l)}+n)(v_{\sigma(l+1)}+\ldots+v_{\sigma(p)})$ and $\bar{\beta}=v_{\sigma(j)}(v_{\sigma(l+1)}+\ldots+v_{\sigma(j-1)})$. For any $\sigma\in\mathbb S_{(l,p-l)}$, we define $\tau=\tau_\sigma\in\mathbb S_{(s-l,l)}$ by
			\[
			\tau(i)=\left\{
			\begin{array}{ll}
				\sigma(i+l), & 1\le i\le p-l;\\
				\sigma(i-p+l), & p-l+1\le i\le s.
			\end{array}
			\right.
			\]
			Thus $\varepsilon(\tau;v_1,\ldots,v_p)=\varepsilon(\sigma;v_1,\ldots,v_p)(-1)^{(v_{\sigma(1)}+\ldots+v_{\sigma(l)})(v_{\sigma(l+1)}+\ldots+v_{\sigma(p)})}$.
			In fact, the elements of $\mathbb S_{(l,p-l)}$ are in bijection with the elements of $\mathbb S_{(p-l,l)}$. Moreover, by $k+l=p+1$, we have
			\begin{eqnarray*}
				&&\sum_{\sigma\in \mathbb S_{(l,p-l)}}\varepsilon(\sigma)(-1)^{\bar{\alpha}}\big([f_{k-1}(v_{\sigma(l+1)},\ldots,v_{\sigma(p)}),g_l(v_{\sigma(1)},\ldots,v_{\sigma(l)})]_\g,0\big)\\
				&=&\sum_{\tau\in \mathbb S_{(k-1,l)}}\varepsilon(\tau)(-1)^{n(v_{\tau(1)}+\ldots+v_{\tau(k-1)})}\big([f_{k-1}(v_{\tau(1)},\ldots,v_{\tau(k-1)}),g_l(v_{\tau(k)},\ldots,v_{\tau(p)})]_\g,0\big).
			\end{eqnarray*}
			For any $\sigma\in\mathbb S_{(l,p-l)}$ and $l+1\le j\le p$, we define $\tau=\tau_{\sigma,j}\in\mathbb S_{(l,1,p-l-1)}$ by
			\[
			\tau(i)=\left\{
			\begin{array}{ll}
				\sigma(i), & 1\le i\le l;\\
				\sigma(j), &  i=l+1;\\
				\sigma(i-1),   & l+2\le i\le j;\\
				\sigma(i),& j+1\le i\le p.
			\end{array}
			\right.
			\]
			Thus we have $\varepsilon(\tau;v_1,\ldots,v_p)=\varepsilon(\sigma;v_1,\ldots,v_p)(-1)^{v_{\sigma(j)}(v_{\sigma(l+1)}+\ldots+v_{\sigma(j-1)})}$. Then
			\begin{eqnarray*}
				&&\sum_{\sigma\in \mathbb S_{(l,p-l)}}\varepsilon(\sigma)\sum_{j=l+1}^{p}(-1)^{\bar{\beta}}\Big(f_{k-1}\big(\rho(g_l(v_{\sigma(1)},\ldots,v_{\sigma(l)}))v_{\sigma(j)},
				v_{\sigma(l+1)},\ldots,\hat{v}_{\sigma(j)},\ldots,v_{\sigma(p)}\big),0\Big)\\
				&=&\sum_{\tau\in \mathbb S_{(l,1,p-l-1)}}\varepsilon(\tau)\Big(f_{k-1}\big(\rho(g_l(v_{\tau(1)},\ldots,v_{\tau(l)}))v_{\tau(l+1)},v_{\tau(l+2)},\ldots,v_{\tau(p)}\big),0\Big).
			\end{eqnarray*}
			Therefore, we obtain
			\begin{eqnarray*}
				&&([\mu_\g+\rho,f]_{NR}^k\circ g_l)\Big((x_1,v_1),\ldots,(x_p,v_p)\Big)\\
				&=&\sum_{\sigma\in \mathbb S_{(k-1,l)}}\varepsilon(\sigma)(-1)^{n(v_{\sigma(1)}+\ldots+v_{\sigma(k-1)})}\big([f_{k-1}(v_{\sigma(1)},\ldots,v_{\sigma(k-1)}),
				g_l(v_{\sigma(k)},\ldots,v_{\sigma(p)})]_\g,0\big)\\
				&&-(-1)^m\sum_{\sigma\in \mathbb S_{(l,1,p-l-1)}}\varepsilon(\sigma)\Big(f_{k-1}\big(\rho(g_l(v_{\sigma(1)},\ldots,v_{\sigma(l)}))v_{\sigma(l+1)},
				v_{\sigma(l+2)},\ldots,v_{\sigma(p)}\big),0\Big).
			\end{eqnarray*}
			
			On the other hand,
			\begin{eqnarray*}
				&&\Big(g_l\circ [\mu_\g+\rho,f]_{NR}^k\Big)\Big((x_1,v_1),\ldots,(x_p,v_p)\Big)\\
				&=&\sum_{\sigma\in \mathbb S_{(k,n-k)}}\varepsilon(\sigma)g_l\Big([\mu_\g+\rho,f]_{NR}^k\big((x_{\sigma(1)},v_{\sigma(1)}),\ldots,(x_{\sigma(k)},v_{\sigma(k)})\big),
				(x_{\sigma(k+1)},v_{\sigma(k+1)}),\ldots,(x_{\sigma(p)},v_{\sigma(p)})\Big)\\
				&=&\sum_{\sigma\in \mathbb S_{(k,p-k)}}\varepsilon(\sigma)\sum_{i=1}^{k}(-1)^{\alpha'}\Big(g_l\big(\rho(f_{k-1}(v_{\sigma(1)},\ldots,\hat{v}_{\sigma(i)},\ldots,
				v_{\sigma(k)}))v_{\sigma(i)},v_{\sigma(k+1)}\ldots,v_{\sigma(p)}\big),0\Big),
			\end{eqnarray*}
			where $\alpha'=v_{\sigma(i)}(v_{\sigma(i+1)}+\ldots+v_{\sigma(k)})$. For any $\sigma\in\mathbb S_{(k,p-k)}$ and $1\le i\le k$, we define $\tau=\tau_{\sigma,i}\in\mathbb S_{(k-1,1,p-k)}$ by
			\[
			\tau(j)=\left\{
			\begin{array}{ll}
				\sigma(j), & 1\le j\le i-1;\\
				\sigma(j+1), &  i\le j\le k-1;\\
				\sigma(i),   & j=k;\\
				\sigma(j),& k+1\le j\le p.
			\end{array}
			\right.
			\]
			Thus we have $\varepsilon(\tau;v_1,\ldots,v_p)=\varepsilon(\sigma;v_1,\ldots,v_p)(-1)^{v_{\sigma(i)}(v_{\sigma(i+1)}+\ldots+v_{\sigma(k)})}$. Then we have
			\begin{eqnarray*}
				&&\Big(g_l\circ [\mu_\g+\rho,f]_{NR}^k\Big)\Big((x_1,v_1),\ldots,(x_p,v_p)\Big)\\
				&=&\sum_{\sigma\in \mathbb S_{(k-1,1,p-k)}}\varepsilon(\sigma)\Big(g_l\big(\rho(f_{k-1}(v_{\sigma(1)},\ldots,v_{\sigma(k-1)}))v_{\sigma(k)},v_{\sigma(k+1)}\ldots,v_{\sigma(p)}\big),0\Big).
			\end{eqnarray*}
			By \eqref{d-bracket}, we obtain that the derived bracket $\Courant{\cdot,\cdot}$ is closed on $sC^*(\h,\g)$, and given by \eqref{graded-Lie}. Therefore,   $(sC^*(\h,\g),\Courant{\cdot,\cdot})$ is a \gla.
			
			Moreover, by $\Img\rho\subset\Der(\h)$, we have $[\mu_\g+\rho,\lambda\mu_\h]_{NR}=0.$ We define a linear map $\dM=:[\lambda\mu_\h,\cdot]_{NR}$ on the graded space $C^*(\g\oplus\h,\g\oplus\h)$. For all $g\in\Hom^n(S(\h),\g)$, we have
			\begin{eqnarray*}
				&&(\dM g)_p\Big((x_1,v_1),\ldots,(x_p,v_p)\Big)=[\lambda\mu_\h,g]_{NR}^p\Big((x_1,v_1),\ldots,(x_p,v_p)\Big)\\
				&=&\Big(\lambda\mu_\h\circ g_{p-1}-(-1)^ng_{p-1}\circ \lambda\mu_\h\Big)\Big((x_1,v_1),\ldots,(x_p,v_p)\Big)\\
				&=&\sum_{1\le i< j\le p}(-1)^{n+1+v_i(v_1+\ldots+v_{i-1})+v_j(v_1+\ldots+v_{j-1})+v_iv_j}
				\Big(g_{p-1}(\lambda[v_i,v_j]_\h,v_1,\ldots,\hat{v}_i,\ldots,\hat{v}_j,\ldots,v_p),0\Big).
			\end{eqnarray*}
			Thus $\dM$ is closed on the subspace $sC^*(\h,\g)$, and is given by \eqref{diff}. By $[\lambda\mu_\h,\lambda\mu_\h]_{NR}=0$, we obtain that $\dM^2=0.$ Moreover, by $[\mu_\g+\rho,\lambda\mu_\h]_{NR}=0$, we deduce that $\dM$ is a derivation of the \gla~$(sC^*(\h,\g),\Courant{\cdot,\cdot})$. Therefore,  $(sC^*(\h,\g),\Courant{\cdot,\cdot},\dM)$ is a \dgla.
		\end{proof}
		
		Homotopy  $\huaO$-operators of weight $\lambda$ can be characterized as Maurer-Cartan elements of the above \dgla. Note that an element $T=\sum_{i=0}^{+\infty}T_i\in \Hom(S(\h),\g)$ is of degree $0$ if and only if the corresponding element $T\in s\Hom(S(\h),\g)$ is of degree $1$.
		
		\begin{thm}\label{hmotopy-o-operator-dgla}
			Let $\rho$ be an action of a \sgla~ $(\g,[\cdot,\cdot]_\g)$ on a \sgla~ $(\h,[\cdot,\cdot]_\h)$. A degree $0$ element $T=\sum_{i=0}^{+\infty}T_i\in \Hom(S(\h),\g)$ is a homotopy $\huaO$-operator of weight $\lambda$ on $\g$ with respect to the action $\rho$ if and only if $T=\sum_{i=0}^{+\infty}T_i$ is a Maurer-Cartan element of the \dgla $(sC^*(\h,\g),\Courant{\cdot,\cdot},\dM)$, i.e.
			$$\dM T+\half\Courant{T,T}=0.$$
		\end{thm}
		\begin{proof}
			For a degree $0$ element $T=\sum_{i=0}^{+\infty}T_i$ of the graded vector space $C^*(\h,\g)$, we write
			$$
			\dM T+\half\Courant{T,T}=\sum_i(\dM T+\half\Courant{T,T})_i\quad \mbox{with}\quad (\dM T+\half\Courant{T,T})_i\in\Hom(S^i(\h),\g).
			$$
			By Theorem \ref{dgla-deforamtion-homotopy}, we have
			\begin{eqnarray*}
				&&(\dM T+\half\Courant{T,T})_p(v_1,\ldots,v_p)\\
				&=&\sum_{1\le i< j\le p}(-1)^{1+v_i(v_1+\ldots+v_{i-1})+v_j(v_1+\ldots+v_{j-1})+v_iv_j}T_{p-1}(\lambda[v_i,v_j]_\h,v_1,\ldots,\hat{v}_i,\ldots,\hat{v}_j,\ldots,v_p)\\
				&&-\half\sum_{k+l=p+1}\sum_{\sigma\in \mathbb S_{(l,1,p-l-1)}}\varepsilon(\sigma)T_{k-1}\Big(\rho\big(T_l(v_{\sigma(1)},\ldots,v_{\sigma(l)})\big)v_{\sigma(l+1)},v_{\sigma(l+2)},\ldots,v_{\sigma(p)}\Big)\\
				&&-\half\sum_{k+l=p+1}\sum_{\sigma\in \mathbb S_{(k-1,1,p-k)}}\varepsilon(\sigma)T_l\Big(\rho\big(T_{k-1}(v_{\sigma(1)},\ldots,v_{\sigma(k-1)})\big)v_{\sigma(k)}, v_{\sigma(k+1)},\ldots,v_{\sigma(p)}\Big)\\
				&&+\half\sum_{k+l=p+1}\sum_{\sigma\in \mathbb S_{(k-1,l)}}\varepsilon(\sigma)[T_{k-1}(v_{\sigma(1)},\ldots,v_{\sigma(k-1)}),T_l(v_{\sigma(k)},\ldots,v_{\sigma(p)})]_\g\\
				&=&-\sum_{1\le i< j\le p}(-1)^{v_i(v_1+\ldots+v_{i-1})+v_j(v_1+\ldots+v_{j-1})+v_iv_j}T_{p-1}(\lambda[v_i,v_j]_\h,v_1,\ldots,\hat{v}_i,\ldots,\hat{v}_j,\ldots,v_p)\\
				&&-\sum_{k+l=p+1}\sum_{\sigma\in \mathbb S_{(l,1,p-l-1)}}\varepsilon(\sigma)T_{k-1}\Big(\rho\big(T_l(v_{\sigma(1)},\ldots,v_{\sigma(l)})\big)v_{\sigma(l+1)},v_{\sigma(l+2)},\ldots,v_{\sigma(p)}\Big)\\
				&&+\half\sum_{k+l=p+1}\sum_{\sigma\in \mathbb S_{(k-1,l)}}\varepsilon(\sigma)[T_{k-1}(v_{\sigma(1)},\ldots,v_{\sigma(k-1)}),T_l(v_{\sigma(k)},\ldots,v_{\sigma(p)})]_\g.
			\end{eqnarray*}
			Thus,  $T=\sum_{i=0}^{+\infty}T_i\in \Hom(S(\h),\g)$ is a  homotopy $\huaO$-operator of weight $\lambda$ on $\g$ with respect to the action $\rho$ if and only if $T=\sum_{i=0}^{+\infty}T_i$ is a Maurer-Cartan element of the \dgla~ $(sC^*(\h,\g),\Courant{\cdot,\cdot},\dM)$.
		\end{proof}
		
		We note that a Lie algebra is a \sgla~ concentrated at degree $-1$. Moreover, a Lie algebra action is the same as an action of the \sgla~ on a graded vector space concentrated at degree $-1$. Therefore, we have the following corollary.
		
		\begin{cor}
			Let $\rho:\g\lon\Der(\h)$ be an action of a Lie algebra $\g$ on a Lie algebra $\h$. Then a linear map $T:\h\lon\g$  is an $\huaO$-operator of weight $\lambda$ on $\g$ with respect to the action $\rho$ if and only if $T$ is a Maurer-Cartan element of the \dgla~ $(\oplus_{n=0}^{\dim(\h)}\Hom(\wedge^{n}\h,\g),\Courant{\cdot,\cdot},\dM)$, where the differential $\dM:\Hom(\wedge^{n}\h,\g)\lon\Hom(\wedge^{n+1}\h,\g)$ is given by
			\begin{eqnarray*}
				(\dM g) ( v_1,\ldots, v_{n+1} )=\sum_{1\le i< j\le n+1}(-1)^{n+i+j-1}g(\lambda[v_i,v_j]_{\h},v_1,\ldots,\hat{v}_i,\ldots,\hat{v}_{j},\ldots,v_{n+1}),
			\end{eqnarray*}
			for all $g\in C^n(\h,\g)$ and $v_1,\ldots, v_{n+1} \in\h$, and the graded Lie bracket  $$\Courant{\cdot,\cdot}: \Hom(\wedge^n\h,\g)\times \Hom(\wedge^m\h,\g)\longrightarrow \Hom(\wedge^{m+n}\h,\g)$$ is given by
			\begin{eqnarray*}
				&&\Courant{g_1,g_2} ( v_1,\ldots, v_{m+n} )\\
				&:=&-\sum_{\sigma\in \mathbb S_{(m,1,n-1)}}(-1)^{\sigma}g_1\Big(\rho\big(g_2(v_{\sigma(1)},\ldots,v_{\sigma(m)})\big)v_{\sigma(m+1)}, v_{\sigma(m+2)},\ldots,v_{\sigma(m+n)}\Big)\\
				&&+(-1)^{mn}\sum_{\sigma\in \mathbb S_{(n,1,m-1)}}(-1)^{\sigma}g_2\Big(\rho\big(g_1(v_{\sigma(1)},\ldots,v_{\sigma(n)})\big)v_{\sigma(n+1)}, v_{\sigma(n+2)},\ldots,v_{\sigma(m+n)}\Big)\\
				&&-(-1)^{mn}\sum_{\sigma\in \mathbb S_{(n,m)}}(-1)^{\sigma}[g_1(v_{\sigma(1)},\ldots,v_{\sigma(n)}),g_2(v_{\sigma(n+1)},\ldots,v_{\sigma(m+n)})]_{\g}
			\end{eqnarray*}
			for all  $g_1\in \Hom(\wedge^n\h,\g),~g_2\in \Hom(\wedge^m\h,\g)$ and $v_1,\ldots, v_{m+n} \in\h.$
		\end{cor}
		
		If the Lie algebra $\h$ is abelian in the above corollary, we recover the  \gla~ that controls the deformations of $\huaO$-operators of weight $0$ given in \cite[Proposition~2.3]{TBGS}.
	}

	\subsection{Post-Lie$_\infty$ algebras and homotopy Rota-Baxter operators}
	In this subsection, we study the relation between post-Lie$_\infty$ algebras and  homotopy Rota-Baxter operators on  the open-closed homotopy Lie algebras given by $L_\infty$-actions of $L_\infty$-algebras.
	\emptycomment{Homotopy relative Rota-Baxter operators (also called $\huaO$-operators) on $L_\infty$-algebras with respect to actions of $L_\infty$-algebras were first studied by Caseiro and Nunes da Costa in \cite{CC},  which are homotopy   Rota-Baxter operators  on  the open-closed homotopy Lie algebras given by actions of $L_\infty$-algebras.}
	
	Observe that a post-Lie$_\infty$ algebra naturally gives rise to a homotopy Rota-Baxter operator.

	\begin{thm}\label{homotopy-pre-lie-id-o}
		Let $(\g,\{\frkM_{p,q}\}_{p\ge0,q\ge1})$ be a post-Lie$_\infty$ algebra. Then the identity map $\Id:\g\lon\g$ is a   homotopy Rota-Baxter operator on the open-closed homotopy Lie algebra  $(\g,\g,\{l^C_k\}_{k=1}^{+\infty},\{\frkM_{p,q}\}_{p+q\ge1})$.
	\end{thm}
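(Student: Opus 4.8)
The plan is to verify directly that $\Theta=\Id$ satisfies the explicit defining equation of a homotopy Rota-Baxter operator, namely \eqref{homotopy-rota-baxter-equivalence} of Proposition \ref{twist-homotopy-lie-homomorphism}, applied to the open-closed homotopy Lie algebra $(\g,\g,\{l^C_k\}_{k=1}^{+\infty},\{\frkM_{p,q}\}_{p+q\ge1})$ furnished by Theorem \ref{homotopy-post-lie-to-lie-cor}. Here the ambient $L_\infty$-algebra is the sub-adjacent one, so $l_k=l^C_k$, and the open-closed maps are $\frkR_{p,q}=\frkM_{p,q}$; crucially, Theorem \ref{homotopy-post-lie-to-lie-cor} also guarantees $\frkM_{p,0}=0$ for all $p\ge 1$. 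Viewed as an element of $\Hom^0(\bar{\Sym}(\g),\g)$, the identity map is the collection $\Theta_1=\Id_\g$ and $\Theta_k=0$ for $k\ge 2$, which is manifestly of degree $0$, so the statement is meaningful and it remains to check \eqref{homotopy-rota-baxter-equivalence}.

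First I would substitute this $\Theta$ into \eqref{homotopy-rota-baxter-equivalence} and simplify both sides for each fixed $n\ge 1$. On the left-hand side, the factors $\Theta_{i_1},\ldots,\Theta_{i_k}$ force $i_1=\cdots=i_k=1$, hence $k=n$, and the remaining shuffle sum over $\mathbb S_{(1,\ldots,1)}=\mathbb S_{n}$ collapses, using the graded symmetry of $l^C_n$ together with $\varepsilon(\sigma)^2=1$, to $l^C_n(u_1,\ldots,u_n)$. On the right-hand side the three sums simplify in turn: in the first sum the factor $\Theta_{n-k+1}$ forces $k=n$, contributing exactly $\frkM_{0,n}(u_1\ldots u_n)$; the second sum vanishes identically because every term contains a factor $\frkR_{k,0}=\frkM_{k,0}$ with $k\ge 1$, which is zero; and in the third sum the surviving indices are $i_1=\cdots=i_k=1$, $i_{k+2}=0$, $i_{k+1}=n-k$, with both the inner factors $\Theta_{i_1},\ldots,\Theta_{i_k}$ and the outer factor $\Theta_{i_{k+2}+1}$ equal to the identity.

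The one genuine computation, which I expect to be the main (though still routine) obstacle, is to recognize that the third sum equals $\sum_{k=1}^{n-1}\sum_{\sigma\in\mathbb S_{(k,n-k)}}\varepsilon(\sigma)\frkM_{k,n-k}(u_{\sigma(1)}\ldots u_{\sigma(k)}\otimes u_{\sigma(k+1)}\ldots u_{\sigma(n)})$. This follows from the standard fact that each $(k,n-k)$-shuffle lifts to exactly $k!$ shuffles in $\mathbb S_{(1,\ldots,1,n-k)}$ by permuting the first $k$ slots; using the graded symmetry of $\frkM_{k,n-k}$ in its first $k$ entries, all $k!$ lifts carry the same signed value, so the prefactor $1/k!$ exactly cancels the over-counting. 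Adding the first sum as the missing $k=0$ term then assembles the right-hand side into $\sum_{k=0}^{n-1}\sum_{\sigma\in\mathbb S_{(k,n-k)}}\varepsilon(\sigma)\frkM_{k,n-k}(u_{\sigma(1)}\ldots u_{\sigma(k)}\otimes u_{\sigma(k+1)}\ldots u_{\sigma(n)})$, which is precisely $l^C_n(u_1,\ldots,u_n)$ by the defining formula \eqref{sub-homotopy-lie}. Hence both sides of \eqref{homotopy-rota-baxter-equivalence} equal $l^C_n(u_1,\ldots,u_n)$ for every $n\ge 1$, so the equation holds and $\Id$ is a homotopy Rota-Baxter operator.
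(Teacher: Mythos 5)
Your proposal is correct and takes essentially the same route as the paper: the paper's two-line proof cites Theorem \ref{homotopy-post-lie-to-lie-cor} for the open-closed structure and then says ``by \eqref{sub-homotopy-lie}'', leaving implicit precisely the verification you carry out, namely substituting $\Theta_1=\Id$, $\Theta_k=0$ for $k\ge 2$ into \eqref{homotopy-rota-baxter-equivalence} and watching both sides collapse to $l^C_n$. Your details check out, including the key combinatorial step that each $(k,n-k)$-shuffle lifts to exactly $k!$ elements of $\mathbb S_{(1,\ldots,1,n-k)}$, which by the graded symmetry of $\frkM_{k,n-k}$ in its first $k$ slots cancels the $1/k!$ prefactor, and the use of $\frkM_{p,0}=0$ to kill the second sum.
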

	
	\begin{proof}
		By Theorem \ref{homotopy-post-lie-to-lie-cor},  $(\g,\g,\{l^C_k\}_{k=1}^{+\infty},\{\frkM_{p,q}\}_{p+q\ge1})$ is an open-closed homotopy Lie algebra. By \eqref{sub-homotopy-lie}, we deduce that $\Id$ is a  homotopy Rota-Baxter operator on   the open-closed homotopy Lie algebra  $(\g,\g,\{l^C_k\}_{k=1}^{+\infty},\{\frkM_{p,q}\}_{p+q=1}^{+\infty})$.
	\end{proof}

	Now we show that homotopy Rota-Baxter operators on the open-closed homotopy Lie algebras given by $L_\infty$-actions of $L_\infty$-algebras  induce post-Lie$_\infty$ algebras. This generalizes \cite[Proposition 5.1]{Aguiar}, \cite[Theorem 5.4]{BGN} and  \cite[Theorem 3.11]{TBGS}.% \cite[Theorem 5.3]{Sheng}.
	
	\begin{thm}\label{homotopy-RB-homotopy-post-lie}
		Let $\Theta=\sum_{k=1}^{+\infty}\Theta_k\in \Hom(\bar{\Sym}(\h),\g)$ be a homotopy   Rota-Baxter operator on an open-closed homotopy Lie algebra given by an $L_\infty$-action $\rho=\{\rho_k\}_{k=1}^{+\infty}$ of an $L_\infty$-algebra $(\g,\{l_k\}_{k=1}^{+\infty})$ on an $L_\infty$-algebra $(\h,\{\mu_k\}_{k=1}^{+\infty})$. Then $(\h,\{\frkM_{p,q}\}_{p\ge0,q\ge1})$ is a post-Lie$_\infty$ algebra, where $\frkM_{0,q}=\mu_q$ and
		\begin{eqnarray*}
			&&\frkM_{p,q}(u_1\ldots u_p\otimes u_{p+1} \ldots u_{p+q})\\
			&&=\sum_{k=1}^{p}\sum_{i_1+\ldots+i_{k}=p\atop i_1,\ldots,i_{k}\ge 1}\sum_{\sigma\in\mathbb S_{(i_1,\ldots,i_{k})}}\frac{\varepsilon(\sigma)}{k!}\frkR_{k,q}\Big(\Theta_{i_1}(u_{\sigma(1)} \ldots  u_{\sigma(i_1)}) \ldots \Theta_{i_{k}}(u_{\sigma(i_1+\ldots+i_{k-1}+1)} \ldots  u_{\sigma(p)})\otimes u_{p+1} \ldots u_{p+q}\Big),
		\end{eqnarray*}
		where the maps $\{\frkR_{p,q}\}_{p\ge1,q\ge1}$ are given by \eqref{open-closed-to-lie-homo}.
	\end{thm}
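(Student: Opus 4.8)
The plan is to deduce the statement from the characterization of post-Lie$_\infty$ algebras given in Theorem \ref{th:characterization-more}. That is, it suffices to exhibit on the single space $\h$ an open-closed homotopy Lie algebra $(\h,\h,\{l^C_k\}_{k=1}^{+\infty},\{\frkM_{p,q}\}_{p+q\ge1})$ whose acting $L_\infty$-structure is the sub-adjacent one built from $\{\frkM_{p,q}\}$ by \eqref{sub-homotopy-lie}, and which satisfies $\frkM_{p,0}=0$ for all $p\ge1$. Since $\rho$ is an $L_\infty$-action we have $\frkR_{p,0}=0$, so $\frkM_{p,0}=0$ is immediate from the defining formula; the real content is to produce $\{\frkM_{p,q}\}$ as an open-closed structure and to match its sub-adjacent algebra.

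The conceptual engine is composition of $L_\infty$-morphisms. By Theorem \ref{twist-homotopy-lie}, the homotopy Rota-Baxter operator $\Theta$ is an $L_\infty$-homomorphism $\Theta\colon(\h,\{\alpha_k\}_{k=1}^{+\infty})\lon(\g,\{l_k\}_{k=1}^{+\infty})$, where $\{\alpha_k\}$ is the descendant $L_\infty$-structure of \eqref{descendant-homotopy-lie}. On the other hand, the hypothesis that the open-closed homotopy Lie algebra arises from an $L_\infty$-action means precisely that $\{\rho_k\}$ defines an $L_\infty$-homomorphism $\rho\colon(\g,\{l_k\}_{k=1}^{+\infty})\lon s^{-1}\coDer(\bar{\Sym}(\h))$, with $\frkR_{p,q}$ recovered from $\rho$ by \eqref{open-closed-to-lie-homo}. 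First I would form the composite $L_\infty$-homomorphism $\rho\circ\Theta\colon(\h,\{\alpha_k\}_{k=1}^{+\infty})\lon s^{-1}\coDer(\bar{\Sym}(\h))$; by the equivalence between such homomorphisms and $L_\infty$-actions, this composite is exactly an $L_\infty$-action of $(\h,\{\alpha_k\})$ on $(\h,\{\mu_k\})$, hence an open-closed homotopy Lie algebra with $\frkM_{p,0}=0$. The next step is to read off its components: applying the standard formula for the components of a composition of $L_\infty$-morphisms, namely $\sum_{k}\frac{1}{k!}\sum_{i_1+\ldots+i_k=p}\sum_{\sigma}\varepsilon(\sigma)\,\rho_k(\Theta_{i_1}(\cdots)\ldots\Theta_{i_k}(\cdots))$, and then applying $\pr_\h(s(-)(u_{p+1}\ldots u_{p+q}))$ as in \eqref{open-closed-to-lie-homo}, one obtains precisely the maps $\frkM_{p,q}$ written in the statement.

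It then remains to verify that the acting $L_\infty$-structure of this open-closed homotopy Lie algebra is the sub-adjacent structure $l^C_k$ of $\{\frkM_{p,q}\}$, so that Theorem \ref{th:characterization-more} applies verbatim; equivalently, I must show $l^C_k=\alpha_k$. This is the main (purely combinatorial) obstacle: substituting the definition of $\frkM_{j,k-j}$ into \eqref{sub-homotopy-lie} and resumming, the point is that composing a $(j,k-j)$-shuffle with an $(i_1,\ldots,i_m)$-shuffle acting on its first $j$ slots produces, bijectively and sign-coherently, an $(i_1,\ldots,i_m,k-j)$-shuffle; matching this bookkeeping against \eqref{descendant-homotopy-lie}, and using $\frkR_{p,0}=0$ to kill the boundary terms in which no $\h$-slot survives, yields $l^C_k=\alpha_k$. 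Once this identity is in hand, Theorem \ref{th:characterization-more} gives that $(\h,\{\frkM_{p,q}\}_{p\ge0,q\ge1})$ is a post-Lie$_\infty$ algebra, and by construction its sub-adjacent $L_\infty$-algebra is the descendant algebra $(\h,\{\alpha_k\})$ of Theorem \ref{twist-homotopy-lie}, as asserted.
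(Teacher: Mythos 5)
Your proposal is correct and follows essentially the same route as the paper's proof: invoke Theorem \ref{twist-homotopy-lie} to view $\Theta$ as an $L_\infty$-homomorphism from the descendant algebra $(\h,\{\alpha_k\}_{k=1}^{+\infty})$, compose $\bar{\rho}\circ\bar{\Theta}$ to obtain an $L_\infty$-action of $(\h,\{\alpha_k\}_{k=1}^{+\infty})$ on $(\h,\{\mu_k\}_{k=1}^{+\infty})$, and conclude via Theorem \ref{th:characterization-more}. The only difference is presentational: the paper simply \emph{observes} that $l^C_k=\alpha_k$, whereas you sketch the shuffle-composition bookkeeping that justifies it, which is a harmless (indeed welcome) elaboration of the same argument.
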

	\begin{proof}
		First we observe that for   $\{\frkM_{p,q}\}_{p\ge0,q\ge1}$ defined above,   $\{l^C_n\}_{n=1}^{+\infty}$ defined by \eqref{sub-homotopy-lie} is exactly the $L_\infty$-algebra structure $(\h,\{\alpha_n\}_{n=1}^{+\infty})$   given in Theorem \ref{twist-homotopy-lie}.
		
		By Theorem \ref{twist-homotopy-lie}, $\bar{\Theta}$ is a homomorphism from the dg coalgebra $(\bar{\Sym}(\h),\bar{\Delta},e^{[\cdot,\hat{\Theta}]_C}\Phi|_{\bar{\Sym}(\h)})$ to the dg coalgebra $(\bar{\Sym}(\g),\bar{\Delta},\Psi(\sum_{k=1}^{+\infty}l_k))$. Moreover, since $\rho$ is an  $L_\infty$-action of the $L_\infty$-algebra $(\g,\{l_k\}_{k=1}^{+\infty})$ on the  $L_\infty$-algebra $(\h,\{\mu_k\}_{k=1}^{+\infty})$, we deduce that $\bar{\rho}\circ \bar{\Theta}:\bar{\Sym}(\h)\lon \bar{\Sym}(s^{-1}\overline{\coDer}(\Sym(\h)))$ is a homomorphism of dg coalgebras. Thus,
		$\bar{\rho}\circ \bar{\Theta}$ is an $L_\infty$-action of $(\h,\{\alpha_k\}_{k=1}^{+\infty})$ on $(\h,\{\mu_k\}_{k=1}^{+\infty})$. Equivalently, $(\h,\h,\{l^C_n\}_{n=1}^{+\infty},\{\frkM_{p,q}\}_{p+q\ge1})$ is an open-closed homotopy Lie algebra such that  $\frkM_{p,0}=0$ for all $p\ge1$. By Theorem \ref{th:characterization-more}, we deduce that $\{\frkM_{p,q}\}_{p\ge0,q\ge1}$ is a post-Lie$_\infty$ algebra structure on $\h$.
	\end{proof}

	\vspace{2mm}
	\noindent
	{\bf Acknowledgements.} The first author (A. Lazarev) was partially supported by the EPSRC grant EP/T029455/1. The second and third authors (Y. Sheng and R. Tang) were partially supported by NSFC (12471060, W2412041, 12371029) and the Fundamental Research Funds for the Central Universities.
	This work was completed in part while R. Tang was visiting the Key Laboratory of Mathematics and Its Applications of Peking University and he wishes to thank this laboratory for excellent working conditions.  R. Tang also thanks Xiaomeng Xu for the hospitality  during his stay at Peking University.

\end{document}